\definecolor{darkblue}{rgb}{0, 0, .4}
\definecolor{grey}{rgb}{.7, .7, .7}
\newtheorem{theorem}{Theorem}[section]
\newtheorem{lemma}[theorem]{Lemma}
\theoremstyle{definition}
\newtheorem{definition}[theorem]{Definition}
\newtheorem{example}[theorem]{Example}
\theoremstyle{remark}
\newtheorem{remark}[theorem]{Remark}
\numberwithin{equation}{section}
\theoremstyle{theorem}
\newtheorem{proposition}[theorem]{Proposition}
\newtheorem{conjecture}[theorem]{Conjecture}
\newtheorem{deflemma}[theorem]{Definition-Lemma}
\newcommand{\N}[0]{\mathbb{N}}
\newcommand{\Z}[0]{\mathbb{Z}}
\newcommand{\s}[0]{\sigma}
\newcommand{\bftau}[0]{{\boldsymbol{\tau}}}
\newcommand{\Gr}[0]{\operatorname*{Gr}}
\newcommand{\Span}[0]{\operatorname*{Span}}
\newcommand{\hs}{}  
\newcommand{\hd}{\diamond}  
\newcommand{\hz}{\circ}  
\newcommand{\hf}{\bullet}  
\newcommand{\hb}{{\color{grey} \bullet}}  
\newcommand{\ho}{{\ast}}  
\newcommand{\gn}{\bullet}  
\newcommand{\hp}{+}
\newcommand{\hm}{-}
\newcommand{\heap}{ \xymatrix @=-9pt @! } 
\def\SDSize{6}  
\def\SDSizeTANRIGHT{4}  
\def\SDSizeTANLEFT{2}  
\def\SDMidpt{3}  
\def\SDColor{blue}
\def\SDEColor{black}
\newcommand{\StringL}[1]{{\color{\SDColor}\xy (\SDSize, \SDSize)*{}; (0, 0)*{}; (0, \SDSize)*{}; **\crv{(\SDSizeTANLEFT,\SDMidpt)};  (\SDMidpt, \SDMidpt)*{{\color{\SDEColor}#1}}; \endxy}}
\newcommand{\StringR}[1]{{\color{\SDColor}\xy (0,0)*{}; (\SDSize, 0)*{}; (\SDSize, \SDSize)*{}; **\crv{(\SDSizeTANRIGHT,\SDMidpt)}; (\SDMidpt, \SDMidpt)*{{\color{\SDEColor}#1}}; \endxy }}
\newcommand{\StringLX}[1]{{\color{\SDColor}\xy (0, \SDSize)*{}; (\SDSize, 0)*{}; **\crv{(\SDMidpt,\SDMidpt)};  (\SDMidpt, \SDMidpt)*{{\color{\SDEColor}#1}}; \endxy }}
\newcommand{\StringRX}[1]{{\color{\SDColor}\xy (\SDSize, \SDSize)*{}; (0, 0)*{}; **\crv{(\SDMidpt,\SDMidpt)};  (\SDMidpt, \SDMidpt)*{{\color{\SDEColor}#1}}; \endxy }}
\newcommand{\StringLR}[1]{{\color{\SDColor}\xy (0, 0)*{}; (0, \SDSize)*{}; **\crv{(\SDSizeTANLEFT,\SDMidpt)};  (\SDSize, 0)*{}; (\SDSize, \SDSize)*{}; **\crv{( \SDSizeTANRIGHT,\SDMidpt)}; (\SDMidpt, \SDMidpt)*{{\color{\SDEColor}#1}}; \endxy }}
\newcommand{\StringLRX}[1]{{\color{\SDColor}\xy (0, \SDSize)*{}; (\SDSize, 0)*{}; **\crv{(\SDMidpt,\SDMidpt)};  (\SDSize, \SDSize)*{}; (0, 0)*{}; **\crv{(\SDMidpt,\SDMidpt)}; (\SDMidpt, \SDMidpt)*{{\color{\SDEColor}#1}}; \endxy }}
\newcommand{\w}{\mathsf{w}}
\renewcommand{\v}{\mathsf{v}} 
\newcommand{\after}{\mathsf{rpred}}
\newcommand{\before}{\mathsf{lpred}}
\newcommand{\last}{\mathsf{last}}
\newcommand{\ldim}{\mathsf{ldim}}
\newcommand{\rdim}{\mathsf{rdim}}
\newcommand{\id}{\mathrm{id}}
\begin{document}

\title{Mask formulas for cograssmannian Kazhdan--Lusztig polynomials}

\begin{abstract}
We give two contructions of sets of masks on cograssmannian permutations that
can be used in Deodhar's formula for Kazhdan--Lusztig basis elements of the
Iwahori--Hecke algebra.  The constructions are respectively based on a formula
of Lascoux--Sch\"utzenberger and its geometric interpretation by Zelevinsky.
The first construction relies on a basis of the Hecke algebra constructed from
principal lower order ideals in Bruhat order and a translation of this basis
into sets of masks.  The second construction relies on an interpretation of
masks as cells of the Bott--Samelson resolution.  These constructions give
distinct answers to a question of Deodhar.
\end{abstract}

\author{Brant Jones}
\address{Department of Mathematics and Statistics, MSC 1911, James Madison University, Harrisonburg, VA 22807}
\email{\href{mailto:brant@math.jmu.edu}{\texttt{brant@math.jmu.edu}}}
\urladdr{\url{http://www.math.jmu.edu/\~brant/}}
\thanks{The first author received support from NSF grant DMS-0636297.}

\author{Alexander Woo}
\address{Department of Mathematics, Statistics and Computer Science, St. Olaf
  College, 1520 St. Olaf Avenue, Northfield, MN 55057}
\email{\href{mailto:woo@stolaf.edu}{\texttt{woo@stolaf.edu}}}

\keywords{}

\date{\today}

\maketitle


\bigskip
\section{Introduction}\label{s:background}

Kazhdan--Lusztig polynomials were introduced in \cite{k-l} as the entries of
the transition matrix for expanding the Kazhdan--Lusztig canonical basis of
the Hecke algebra in terms of the standard basis.  Shortly afterwards these
polynomials were shown to have important interpretations as Poincar\'e
polynomials for the intersection cohomology of Schubert varieties \cite{K-L2}
and as a $q$-analog for the multiplicities of Verma modules \cite{BeilBern,
BryKash}.  The first of these interpretations shows, in an entirely
non-constructive fashion, that Kazhdan--Lusztig polynomials for Weyl groups
have positive integer coefficients.  While general closed formulas
\cite{billera-brenti, brenti} are known, they are fairly complicated and
non-positive.  A manifestly positive combinatorial rule for Kazhdan--Lusztig
polynomials is still sought, as is a proof of their positivity beyond the case
of Weyl groups.

Deodhar \cite{d} proposed a framework to express Kazhdan--Lusztig polynomials
positively in terms of combinatorial objects known as {\it masks} that are
defined on reduced expressions.  He gave two properties on sets of masks,
called {\it boundedness} and {\it admissibility}, and showed that bounded
admissible sets of masks can be appropriately counted to give Kazhdan--Lusztig
polynomials.  An independent explicit construction of bounded admissible sets
of masks satisfying this characterization would then be a combinatorial proof
of nonnegativity for Kazhdan--Lusztig polynomials.  However, he was only able
to provide a recursive method, dependent upon {\it a priori} knowledge of
positivity, for constructing such a set.

At the end of \cite{d}, Deodhar mentions that it should be possible to
reconcile his framework with formulas that had already appeared in the
literature.  One such formula is that of Lascoux and
Sch\"utzenberger \cite{LS11} for cograssmannian elements of the symmetric group
$S_n$.  In this paper we give two constructions of sets of masks which realize
the Lascoux--Sch\"utzenberger formula.

The Lascoux--Sch\"utzenberger formula states that the Kazhdan--Lusztig
polynomial $P_{u,w}(q)$ is given by $q$-counting edge-labellings of a rooted
tree, where the tree depends only on $w$ and the precise set of valid
edge-labellings depends on $u$.  Our first construction is combinatorial and
proceeds from this formula.  This construction depends both on a basis of the
Hecke algebra, which we denote $\{B^\prime_w\}$, constructed from principal
lower order ideals in Bruhat order and on an interpretation of this basis in
terms of sets of masks.  The combinatorics of this construction takes place on
heaps~\cite{viennot}, first used in conjunction with Deodhar's framework by
Billey and Warrington~\cite{b-w}.  We build a mask for each edge-labelling by
cutting the tree as well as the heap associated to our permutation into
segments and directly constructing a portion of the mask on each segment.  The
details of this construction involve not only heaps and trees but also some
partition combinatorics.

Our second construction is based on a geometric interpretation of the
Lascoux--Sch\"utzenberger formula due to Zelevinsky \cite{zelevinsky}.  He
constructs some resolutions of singularities for Schubert varieties and shows
geometrically that Kazhdan--Lusztig polynomials can be calculated from a cell
decomposition (in the usual topological sense) of this resolution.  These
cells have a combinatorial indexing set with a nontrivial bijection to the
Lascoux--Sch\"utzenberger trees.  Our construction proceeds by constructing a
mask from the combinatorial data indexing each cell.  Behind our construction
is a bijective association between masks on a reduced decomposition and cells
on the Bott--Samelson resolution, a more general resolution of singularities
for Schubert varieties.  There is a map $\pi$ from the Bott--Samelson
resolution to the Zelevinsky resolution.  We call a set of masks {\bf
  geometric} (with respect to the Zelevinsky resolution) if, for each cell of
the Zelevinsky resolution, the mask set contains one cell in its pre-image
(under $\pi$) of the same dimension.  We give an algorithm for producing a
geometric mask set.

Our constructions show that the problem of finding a set of masks that
implements the Lascoux--Sch\"utzenberger formula is rather delicate, with
choices which, while highly constrained, are numerous.  Both of our
constructions admit families of variations which produce different bounded
admissible sets of masks.  In addition, we give an example showing that our two
constructions fail to conincide even after allowing any choice in their
respective families of variations.  These differences show that the set of
masks that represent the terms of a particular cograssmannian Kazhdan--Lusztig
basis element is far from canonical.  Furthermore, our constructions are highly
algorithmic in nature; only in a few limited cases does it seem possible to
derive an explicit characterization of all the masks in our bounded admissible
set, or even a characterization of the masks which evaluate to a particular
element, for example the identity.  However, as Kazhdan--Lusztig polynomials
behave in mysterious and unexpected ways (as shown in \cite{MW02} for example),
such complexity and subtlety is not surprising.

Nevertheless, we believe there may be some important consequences and extensions
to our work.  As far as we know, our results provide the first example of a large
class of elements for which Deodhar's model does not produce a unique mask set.
One can interpret this as evidence that Deodhar's model is somehow incomplete in
the sense that additional axioms beyond boundedness and admissibility should be
included so as to produce mask sets that are canonical.

As part of our first construction, we introduce the $B'$ basis of the Hecke
algebra that is constructed from principal lower order ideals in Bruhat order.
This basis has the property that both nonnegativity and monotonicity of the
corresponding Kazhdan--Lusztig polynomials follow immediately in any case where
the Kazhdan--Lusztig basis element $C_{w}'$ can be expressed as a positive
combination of the $B_{x}'$.  Although this positivity is not true in general,
it does hold for the Kazhdan--Lusztig basis elements associated to
cograssmannian permutations.  In light of this, it would be interesting to
characterize the elements $w$ such that $C_w'$ expands nonnegatively into the
$B'$ basis.

The Lascoux--Sch\"utzenberger formula has been generalized to covexillary
permutations by Lascoux \cite{Lascoux95}.  However, the Zelevinsky resolution
has not been generalized to this case, though recent work of Li and
Yong~\cite{LiYong10} suggest some geometric explanation for Lascoux's formula.
It would be natural to try to extend either of our constructions to all
covexillary permutations, though the failure of the Zelevinsky resolution to
properly generalize to the covexillary case suggests that our first
construction or some other non-geometric set of masks may be better for this
purpose.  Work of Cortez~\cite{Cortez} suggests that the covexillary case may
be an important base case to the problem of finding an explicit nonnegative
formula for the Kazhdan--Lusztig polynomials in type $A$.  In addition, the
Lascoux--Sch\"utzenberger formula has been generalized to (co)minuscule
elements in other types \cite{boe}, so it would be natural to generalize our
work to this setting.  The Zelevinsky resolution has also been generalized in
that case \cite{perrin, SanVan}.  Finally, the Lascoux--Sch\"utzenberger
formula has also been recently studied by Shegechi and
Zinn-Justin~\cite{SZJ10}, and it would be interesting to explicitly compare
our constructions to the formulas found there.

In addition, it is an open question whether the map $\pi$ from the
Bott--Samelson to the Zelevinsky resolution always restricts to a bijection
over the cells we choose, since it might be the case that a cell in the
Zelevinsky resolution has the same dimension as the cell we choose in its
pre-image for more subtle reasons.

We now describe the contents of this paper in further detail.
Sections~\ref{s:bases} and~\ref{s:fwp} present background material and introduce
the tools we need for our first construction.  In Section~\ref{s:bases} we
introduce the $B'$ basis of the Hecke algebra that is constructed from principal
lower order ideals in Bruhat order.  In Section~\ref{s:fwp}, we show how to
decompose the set of masks associated to a particular reduced expression into
subsets that are compatible with the $B'$ basis.

Our first construction is given in Section~\ref{s:cog}.  We begin by recalling
the combinatorial formula of Lascoux and Sch\"utzenberger~\cite{LS11} for
Kazhdan--Lusztig polynomials associated to cograssmannian permutations as a
sum of powers of $q$ over certain edge-labelled trees.  The first main result
of this section is Theorem~\ref{t:cog_bp}, which states that each coefficient
of our ideal basis $B_x'$ in the expansion of the Kazhdan--Lusztig basis
element $C_w'$ corresponds precisely to one of Lascoux--Sch\"utzenberger's
edge labelled trees.  The second main result is Theorem~\ref{t:main}, which
gives our first construction of a set of masks that realizes the formula of
Lascoux--Sch\"utzenberger in Deodhar's framework.

Before giving our second construction, we explain the connection between
Bott--Samelson resolutions~\cite{BS58} and Deodhar's theorem in
Section~\ref{s:b-s}.  In particular, we describe the natural bijection between
cells of the Bott--Samelson resolution and masks on $\w$.  Although such a
connection was pointed out in Deodhar's paper~\cite{d} by the anonymous referee
and is likely well-known to experts, the details of this connection have never
before, as far as we can tell, appeared in print.

In Section~\ref{s:zel}, we give our second construction.  First we describe
the resolutions of Zelevinsky~\cite{zelevinsky} and a map from Bott--Samelson
resolutions to Zelevinsky resolutions.  The main result in this section is
Theorem~\ref{t:main2}, which gives our second construction realizing the
Lascoux--Sch\"utzenberger formula in Deodhar's framework.  At the end of this
section, we give an example showing our two constructions do not coincide.
More precisely, we show that our the construction of Section~\ref{s:cog} can
produce mask sets which are not geometric.

\bigskip
\section{Bases for the Hecke algebra}\label{s:bases}

\subsection{Coxeter groups}
Let $W$ be a Coxeter group with a generating set $S$ of involutions and
relations of the form $(s_{i}s_{j})^{m(i,j)} = 1$.  The {\bf Coxeter graph}
for $W$ is the graph on the generating set $S$ with edges connecting $s_{i}$
and $s_{j}$ labelled $m(i,j)$ for all pairs $i,j$ with $m(i,j)>2$.  Note that
if $m(i,j)=3$ it is customary to leave the corresponding edge unlabelled.
Also, if $m(i,j)=2$ then the relations imply that $s_i$ and $s_j$ commute.

We view the symmetric group $S_n$ as a Coxeter group of type $A$ with generators
$S = \{ s_1, \dots, s_{n-1} \}$ and relations of the form $(s_i s_{i \pm 1})^3 =
1$ together with $(s_{i}s_{j})^{2} = 1$ for $| i - j | \geq 2$ and $s_i^2 = 1$.  The Coxeter
graph of type $A_{n-1}$ is a path with $n-1$ vertices.

We may also refer to elements in the symmetric group by the {\bf 1-line
notation} $w=[w_{1}w_{2} \dots w_{n}]$ where $w$ is the bijection mapping $i$
to $w_{i}$.  Then the generators $s_{i}$ are the adjacent transpositions
interchanging the entries $i$ and $i+1$ in the 1-line notation.

An {\bf expression} is any product of generators from $S$ and the {\bf length}
$\ell(w)$ is the minimum length of any expression for the element $w$.  Such a
minimum length expression is called {\bf reduced}.  Each element $w \in W$ can
have several different reduced expressions representing it; for example, the
reduced expressions for $[3412]$ are $\{ s_2 s_3 s_1 s_2, s_2 s_1 s_3 s_2 \}$.
Given $w \in W$, we represent reduced expressions for $w$ in sans serif font,
say $\w=\w_{1} \w_{2}\cdots \w_{p}$ where each $\w_{i} \in S$.  We call any
expression of the form $s_i s_{i \pm 1} s_i$ a {\bf short-braid} after A.
Zelevinski (see \cite{fan1}).  We say that $x \leq w$ in {\bf Bruhat order} if a
reduced expression for $x$ appears as a subword (that is not necessarily
consecutive) of some reduced expression for $w$.  If $s_i$ appears as the last
(first, respectively) factor in some reduced expression for $w$, then we say
that $s_i$ is a {\bf right (left, respectively) descent} for $w$; otherwise,
$s_i$ is a {\bf right (left, respectively) ascent} for $w$.

The following lemma gives a useful property of Bruhat order.

\begin{lemma}{\bf (Lifting Lemma) \cite[Proposition 2.2.7]{b-b}}\label{l:lifting}
Suppose $x < w$, $s_i$ is a right descent for $w$, and $s_i$ is a right ascent for $x$.
Then, $x s_i \leq w$ and $w s_i \geq x$.
\end{lemma}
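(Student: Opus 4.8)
The plan is to deduce everything from the subword characterization of Bruhat order. Since $s_i$ is a right descent for $w$, I would first fix a reduced expression $\w = \w_{1}\w_{2}\cdots\w_{p}$ for $w$ whose last letter is $\w_{p} = s_i$; then $\w_{1}\cdots\w_{p-1}$ is a reduced expression for $ws_i$ and $\ell(ws_i) = \ell(w)-1$. By the subword property of Bruhat order (\cite{b-b}), the hypothesis $x < w$ guarantees that some reduced expression $s_{a_{1}}\cdots s_{a_{k}}$ for $x$, with $k = \ell(x)$, occurs as a subword of $\w$, occupying positions $1 \leq j_{1} < \cdots < j_{k} \leq p$.

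Next I would split into two cases according to whether $j_{k} = p$. Suppose first that $j_{k} < p$. Then $s_{a_{1}}\cdots s_{a_{k}}$ is already a subword of $\w_{1}\cdots\w_{p-1}$, which is a reduced expression for $ws_i$; hence $x \leq ws_i$, giving the second conclusion. For the first conclusion, since $s_i$ is a right ascent for $x$ we have $\ell(xs_i) = \ell(x)+1$, so $s_{a_{1}}\cdots s_{a_{k}} s_i$ is a reduced expression for $xs_i$. Appending the final letter $\w_{p} = s_i$ to the chosen subword (which is legal precisely because $j_{k} < p$) exhibits this reduced expression for $xs_i$ as a subword of $\w$, so $xs_i \leq w$. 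Thus the case $j_{k} < p$ already yields both assertions.

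It then remains to rule out the case $j_{k} = p$, and this is where the ascent hypothesis on $x$ is used. If $j_{k} = p$ then $s_{a_{k}} = \w_{p} = s_i$, so $s_{a_{1}}\cdots s_{a_{k-1}}$ is a reduced expression (a prefix of a reduced expression is reduced, since otherwise the length of $x = (s_{a_{1}}\cdots s_{a_{k-1}})s_i$ would be strictly less than $k$) representing $xs_i$. This forces $\ell(xs_i) = k-1 = \ell(x)-1$, i.e.\ $s_i$ is a right descent for $x$, contradicting the hypothesis that $s_i$ is a right ascent for $x$. Hence only the case $j_{k} < p$ can occur, and the proof is complete.

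The only real subtlety is the very first step: one must be able to choose the reduced word for $w$ ending in $s_i$, which is exactly the content of the subword property; once that is in hand, the remaining argument is a short case analysis in which the ascent hypothesis on $x$ does all the work by eliminating the bad case. (Alternatively, one could run essentially the same dichotomy using the Strong Exchange Property instead of the subword property, but the subword version seems cleanest here.)
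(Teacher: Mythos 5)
Your proof is correct. The paper itself does not prove this lemma; it quotes it directly as \cite[Proposition 2.2.7]{b-b}, so there is no in-paper argument to compare against. Your argument via the subword property (fix a reduced word for $w$ ending in $s_i$, extract a reduced subword for $x$, and split on whether that subword occupies the last position) is the standard and clean route: the ascent hypothesis on $x$ kills the case $j_k=p$, and in the remaining case both conclusions drop out at once, $x\le ws_i$ because the subword already lives in the prefix, and $xs_i\le w$ by appending the final letter. The only point worth being explicit about, which you handle correctly, is that the subword characterization is being used in its strong form (any fixed reduced word for $w$ admits a reduced subword for each $x\le w$), not merely the weak existential form.
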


It is a theorem of Matsumoto \cite{matsumoto} and Tits \cite{t} that every
reduced expression for an element $w$ of a Coxeter group can be obtained from
any other by applying a sequence of braid moves of the form 
\[
{\underbrace{s_i s_j s_i s_j \cdots }_{m(i,j)} } \mapsto
{\underbrace{s_j s_i s_j s_i \cdots}_{m(i,j)}}
\]
where $s_i$ and $s_j$ are generators in $S$ that appear in the reduced
expression for $w$, and each factor in the move has $m(i,j)$ letters.

As in \cite{s1}, we define an equivalence relation on the set of reduced
expressions for a permutation by saying that two reduced expressions are in
the same {\bf commutativity class} if one can be obtained from the other by a
sequence of {\bf commuting moves} of the form $s_i s_j \mapsto s_j s_i$ where
$|i-j| \geq 2$.  If the reduced expressions for a permutation $w$ form a
single commutativity class, then we say $w$ is {\bf fully commutative}.  A
permutation $w$ is {\bf short-braid avoiding} if none of its reduced words
contain a short-braid; in $S_n$ fully commutative permutations are short-braid
avoiding.

\subsection{Heaps}

If $\w = \w_1 \cdots \w_k$ is a reduced expression, then following \cite{s1} we
define a partial ordering on the indices $\{1, \cdots, k\}$ by the transitive
closure of the relation $i \prec j$ if $i > j$ and $\w_i$ does not commute with
$\w_j$.  We label each element $i$ of the poset by the corresponding generator
$\w_i$.  It follows from the definition that if $\w$ and $\w'$ are two reduced
expressions for a permutation $w$ that are in the same commutativity class, then
the labelled posets of $\w$ and $\w'$ are isomorphic.  This isomorphism class of
labelled posets is called the {\bf heap} of $\w$, where $\w$ is a reduced
expression representative for a commutativity class of $w$.  In particular, if
$w$ is fully commutative then it has a single commutativity class, and so there
is a unique heap of $w$.

As in \cite{b-w}, we will represent a heap as a set of lattice points embedded
in $\N^2$.  To do this, we assign coordinates $(x,y) \in \N^2$ to each entry of
the labelled Hasse diagram for the heap of $\w$ in such a way that:
\begin{enumerate}
\item[(1)] An entry represented by $(x,y)$ is labelled $s_i$ in the heap if and
only if $x = i$, and
\item[(2)] If an entry represented by $(x,y)$ is greater than an entry
represented by $(x',y')$ in the heap, then $y > y'$.
\end{enumerate}
Since the Coxeter graph of type $A$ is a path, it follows from the definition
that $(x,y)$ covers $(x',y')$ in the heap if and only if $x = x' \pm 1$, $y >
y'$, and there are no entries $(x'', y'')$ such that $x'' \in \{x, x'\}$ and $y'
< y'' < y$.  Hence, we can completely reconstruct the edges of the Hasse
diagram and the corresponding heap poset from a lattice point representation.
This representation will enable us to make arguments ``by picture'' that would
otherwise be difficult to formulate.  Although there are many coordinate
assignments for any particular heap, the $x$ coordinates of each entry are
fixed for all of them, and the coordinate assignments of any two entries only
differ in the amount of vertical space between them.

Note that in contrast to conventions used in other work, we are drawing the heap
so that the left side of the reduced expression occurs at the top of the picture
and the right side occurs at the bottom.

\begin{example}\label{ex:heap}
One lattice point representation of the heap of $w = s_2 s_3 s_1 s_2 s_4$ is
shown below, together with the labelled Hasse diagram for the unique heap poset
of $w$.

\smallskip
\begin{center}
\begin{tabular}{ll}
\xymatrix @=-4pt @! {
\hs & \hs & \hs & \hs & \hs \\
& \hf & \hs & \hs & \hs & \hs \\
\hf & \hs & \hf & \hs & \hs & \hs \\
& \hf & \hs & \hf & \hs & \hs \\
s_1 & s_2 & s_3 & s_4 \\
} &
\xymatrix @=0pt @! {
& \hf_{s_2} \ar@{-}[dl] \ar@{-}[dr] \\
\hf_{s_1} \ar@{-}[dr] & & \hf_{s_3} \ar@{-}[dl] \ar@{-}[dr] \\
 & \hf_{s_2} & & \hf_{s_4} \\
} \\
\end{tabular}
\end{center}
\end{example}

To describe the local structure of heaps of fully commutative permutations,
suppose that $x$ and $y$ are a pair of entries in the heap of $\w$ that
correspond to the same generator $s_i$, so that they lie in the same column $i$
of the heap.  Assume that $x$ and $y$ are a {\bf minimal pair} in the sense that
there is no other entry between them in column $i$.  If $\w$ is short-braid
avoiding, there must actually be two heap entries that lie strictly between $x$
and $y$ in the heap and do not commute with them.  In type $A$, these entries
must lie in distinct columns.  This property is called {\bf lateral convexity}
and is known to characterize those permutations that are fully commutative
\cite{b-w}.

In type $A$, the heap construction can be combined with another combinatorial
model for permutations in which the entries from the 1-line notation are
represented by strings.  The points at which two strings cross can be viewed as
adjacent transpositions of the 1-line notation.  Hence, we may overlay strings
on top of a heap diagram to recover the 1-line notation for the permutation by
drawing the strings from top to bottom so that they cross at each entry in the
heap where they meet and bounce at each lattice point not in the heap.
Conversely, each permutation string diagram corresponds with a heap by taking
all of the points where the strings cross as the entries of the heap.

For example, we can overlay strings on the two heaps of $[3214]$.  Note that the
labels in the picture below refer to the strings, not the generators.
\begin{center}
\begin{tabular}{ll}
    \heap{
    & 1 \ \ 2 &  & 3 \ \ 4 & \\
    \StringR{\hs} & \hs & \StringLRX{\hf} & \hs & \StringL{\hs} \\
    \hs & \StringLRX{\hf} & \hs & \StringLR{\hs} & \hs \\
    \StringR{\hs} & \hs & \StringLRX{\hf} & \hs & \StringL{\hs} \\
    & 3 \ \ 2 &  & 1 \ \ 4 & \\
    } &
    \heap{
    & 1 \ \ 2 &  & 3 \ \ 4 &  \\
    \hs & \StringLRX{\hf} & \hs & \StringLR{\hs} \\
    \StringR{\hs} & \hs & \StringLRX{\hf} & \hs & \StringL{\hs} \\
    \hs & \StringLRX{\hf} & \hs & \StringLR{\hs} \\
    & 3 \ \ 2 &  & 1 \ \ 4 &  \\
    } \\
\end{tabular}
\end{center}

For a more leisurely introduction to heaps and string diagrams, as well as
generalizations to Coxeter types $B$ and $D$, see \cite{billey-jones}.  Cartier and
Foata \cite{cartier-foata} were among the first to study heaps of dimers, which
were generalized to other settings by Viennot \cite{viennot}.  Stembridge has
studied enumerative aspects of heaps \cite{s1,s2} in the context of fully
commutative elements.  Green has also considered heaps of pieces with
applications to Coxeter groups in \cite{green1,green2,green3}.

\subsection{Hecke algebras}

Given any Coxeter group $W$, we can form the {\bf Hecke algebra}
$\mathcal{H}$ over the ring $\Z[q^{1/2}, q^{-1/2}]$ with basis $\{ T_w
: w \in W \}$ and relations:
\begin{align}\label{e:hecke.def}
T_s T_w = & T_{s w} \text{ for } \ell(s w) > \ell(w)  \\
T_s T_w = & (q - 1) T_w + q T_{s w} \text{ for } \ell(s w) < \ell(w) \label{e:sinv}
\end{align}
where $T_1$ corresponds to the identity element.  In particular, this implies
that \[ T_w = T_{\w_1} T_{\w_2} \dots T_{\w_p} \] whenever $\w_1 \w_2 \dots
\w_p$ is a reduced expression for $w$.  Also, it follows from (\ref{e:sinv})
that the basis elements $T_w$ are invertible.  Observe that when $q = 1$, the
Hecke algebra $\mathcal{H}$ becomes the group algebra of $W$.  

Let $\{C_w' : w \in W \}$ denote the basis of $\mathcal{H}$ defined by Kazhdan
and Lusztig \cite{k-l}.  This basis is invariant under the ring involution on
the Hecke algebra defined by $\overline{q^{1/2}} = q^{-1/2}$, $\overline{T_w}
= (T_{w^{-1}})^{-1}$; we denote this involution with a bar over the element.
The {\bf Kazhdan--Lusztig polynomials} $P_{x,w}(q)$ describe how to change
between the $T$ and $C'$ bases of $\mathcal{H}$:
\begin{equation*}
C_w' = q^{-\frac{1}{2} \ell(w)} \sum_{x \leq w} P_{x,w}(q) T_x. 
\end{equation*}
The $C_{w}'$ are defined uniquely to be the Hecke algebra elements
that are invariant under the bar involution and have expansion
coefficients as above, where $P_{x,w}$ is a polynomial in $q$ required to
satisfy
\[ \text{degree } P_{x,w}(q) \leq \frac{\ell(w)-\ell(x)-1}{2} \]
for all $x<w$ in Bruhat order and $P_{w,w}(q) = 1$ for all $w$.  We use the
notation $C_{w}'$ to be consistent with the literature because there is
already a related basis denoted $C_{w}$.

The following open conjecture is one of the motivations for our work.  This conjecture is
known to be true in a number of special cases including when the Coxeter group
is finite or affine~\cite{K-L2}.

\begin{conjecture}{\bf (Nonnegativity Conjecture) \cite{k-l}}\label{c:nn}
The coefficients of $P_{x,w}(q)$ are nonnegative in the Hecke algebra associated
to any Coxeter group.
\end{conjecture}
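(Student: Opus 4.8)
The plan is to realize Conjecture~\ref{c:nn} inside Deodhar's combinatorial framework~\cite{d}: for each $w \in W$ one seeks a reduced expression $\w$ and a \emph{bounded admissible} set of masks on $\w$ such that, for every $x \leq w$, the $q$-enumeration of those masks in the set which evaluate to $x$ equals $P_{x,w}(q)$. Since each mask contributes a monomial $q^{d}$ with $d \geq 0$, this expresses $P_{x,w}(q)$ as a sum of nonnegative powers of $q$, and nonnegativity of its coefficients follows at once. The conjecture is thereby reduced to the purely combinatorial problem of exhibiting a bounded admissible mask set for every pair $(W,w)$.

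The steps would be as follows. First, fix a convenient reduced expression $\w = \w_1 \cdots \w_p$ for $w$ and organize its combinatorics --- for instance via the heap of $\w$~\cite{viennot,b-w} --- so that the effect of local moves on masks can be controlled pictorially. Second, propose an explicit family of masks, built (as in the cograssmannian case treated below) by cutting $\w$, or its heap, into controlled segments and prescribing the mask on each segment separately. Third, verify Deodhar's two axioms for this family: boundedness, which bounds the degree of the generating polynomial of the subset evaluating to $x$ by $\tfrac{1}{2}(\ell(w) - \ell(x) - 1)$, and admissibility, which forces the mask set to behave correctly under appending a generator so that its total weight matches the multiplication rule for the Kazhdan--Lusztig element $C_w'$. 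Fourth, invoke Deodhar's theorem that a bounded admissible mask set computes $C_w'$, and hence the $P_{x,w}$, to conclude.

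The hard part will be the second and third steps in full generality. No uniform construction of bounded admissible mask sets is known across all Coxeter groups, and Deodhar's own construction is recursive and already presupposes the positivity one is trying to establish; the constructions of this paper show that even in the comparatively rigid cograssmannian case the available choices, while highly constrained, are numerous and far from canonical, which suggests that a uniform recipe --- if one exists --- will demand substantial new combinatorial input. A partial result is accessible by a different route: for finite and affine Weyl groups the $P_{x,w}(q)$ are Poincar\'e polynomials for the local intersection cohomology of Schubert varieties~\cite{K-L2}, and so have nonnegative coefficients directly, but this geometric argument does not extend to general Coxeter groups, for which the full statement appears to require genuinely new ideas (for instance an algebraic or categorical model of the Hecke algebra in which positivity is structural rather than incidental). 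I therefore expect the true obstacle to lie precisely in producing the mask sets uniformly, not in checking Deodhar's axioms once a plausible candidate is in hand.
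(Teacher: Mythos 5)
The statement you are asked to prove is labelled a \emph{conjecture} in the paper, and the paper offers no proof of it: it is presented as an open problem and cited as one of the motivations for the work, with the remark that it is known only in special cases (finite and affine Coxeter groups, via the geometric interpretation of \cite{K-L2}). Your write-up is therefore best read not as a proof but as an accurate summary of the state of the art, and on that score you have it right: Deodhar's framework reduces nonnegativity to producing a bounded admissible mask set for every $(W,w)$; the monomial $q^{d(\s)}$ contributions make positivity automatic once such a set exists; and the genuine obstruction, which you correctly isolate, is that no uniform, non-circular construction of such mask sets is known, Deodhar's own recursion presupposing the positivity it would be meant to establish.

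Two small refinements worth noting. First, your ``partial result by a different route'' (intersection cohomology for Weyl groups) is precisely the special case the paper itself cites; it is not an alternative proof strategy for the general conjecture but a closed-off avenue for Coxeter groups without an associated geometry. Second, since this paper appeared, the general conjecture was in fact resolved (by Elias and Williamson, via Soergel bimodule and Hodge-theoretic methods), which instantiates exactly the kind of ``algebraic or categorical model in which positivity is structural'' that you speculate about in your final sentence; that resolution does not proceed through Deodhar mask sets, and the problem of exhibiting a uniform bounded admissible mask set remains open even now. So your diagnosis of where the difficulty lies is sound, but the honest conclusion is that neither you nor the paper proves the statement — the paper explicitly does not claim to, and your proposal stops, as you yourself acknowledge, at an unfilled gap.
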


In addition, there is a related conjecture that implies nonnegativity and has
been proven for finite and affine Coxeter groups~\cite{irving,BM}.

\begin{conjecture}{\bf (Monotonicity Conjecture)}\label{c:mo}
If $x \leq y \leq w$ in a Coxeter group $W$, then $P_{x,w}(q) - P_{y,w}(q) \in
\N[q]$.
\end{conjecture}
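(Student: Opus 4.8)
The plan has to acknowledge that this is an open conjecture, so a proof proposal splits by available tools: I would settle the finite and affine Coxeter groups geometrically, settle the cograssmannian permutations (and, more generally, any $w$ whose $C_w'$ expands nonnegatively in the $B'$ basis) combinatorially via the $B'$ basis, and then isolate the obstruction to the general case.

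For finite and affine $W$, recall that $P_{x,w}(q)$ is the Poincar\'e polynomial of the stalk, taken at a point of the Schubert cell indexed by $x$, of the intersection cohomology complex $\mathrm{IC}(X_w)$ of the Schubert variety $X_w$. The idea is to produce, for each Bruhat relation $x \le y \le w$, a morphism of graded vector spaces from the stalk at $x$ onto the stalk at $y$ that is surjective in every degree; then $P_{x,w}(q) - P_{y,w}(q)$ is the Poincar\'e polynomial of its kernel and so lies in $\N[q]$ automatically. First I would reduce to the case of a Bruhat cover by chaining; for a cover one can argue either through the Braden--MacPherson canonical sheaf on the Bruhat moment graph \cite{BM}, where the surjection is a restriction morphism along the corresponding edge and surjectivity is the relevant self-duality (Gorenstein) property of Schubert varieties, or through category $\mathcal{O}$, where an inclusion of Verma modules forced by the Bruhat relation, together with the Kazhdan--Lusztig conjecture identifying coefficients of $P_{x,w}$ with graded composition multiplicities, yields the multiplicity inequality degree by degree \cite{irving}. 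Either route reduces monotonicity to a known structural theorem.

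The combinatorial route uses the $B'$ basis built from principal lower order ideals in Bruhat order (Section~\ref{s:bases}): because each $B_x'$ is supported on $\{z : z \le x\}$ with coefficients that are nonnegative and depend only on the pair, a nonnegative integer expansion $C_w' = \sum_{x \le w} a_{x,w} B_x'$ becomes a formula $P_{x,w}(q) = \sum_{z \,:\, x \le z \le w} a_{z,w}\, g_{x,z}(q)$ in which every $g_{x,z} \in \N[q]$. Since $x \le y$ forces the nesting $\{z : y \le z \le w\} \subseteq \{z : x \le z \le w\}$, the difference $P_{x,w}(q) - P_{y,w}(q)$ equals the sum of the nonnegative terms indexed by the $z$ with $x \le z \le w$ but $y \not\le z$. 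Monotonicity therefore holds for every $w$ whose $C_w'$ expands nonnegatively into the $B'$ basis, and in particular for all cograssmannian $w$ by the expansion established later in this paper.

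The main obstacle is the general case, and I do not expect it to come cheaply --- indeed, specializing $y = w$ already recovers Conjecture~\ref{c:nn} (using $P_{w,w} = 1$ and the fact that $P_{x,w}$ has constant term $1$), so any general proof of monotonicity subsumes the still-open Nonnegativity Conjecture. An arbitrary Coxeter group has no Schubert variety and hence no intersection cohomology complex, so the geometric argument has no literal analogue; and the $B'$-basis argument cannot be run verbatim because $C_w'$ is \emph{not} always a nonnegative combination of the $B_x'$. A proof in full generality thus seems to require a genuinely new positivity structure on the Hecke algebra that is nested with respect to Bruhat order in the way the $B'$ basis is in the cograssmannian case --- for instance a Deodhar mask set whose evaluation map is suitably compatible with Bruhat covers --- which is exactly the kind of tightly constrained object the present paper shows to be delicate even to construct.
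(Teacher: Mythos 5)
This statement is labeled a \emph{conjecture} in the paper, and the paper does not prove it: it only records (immediately after the statement) that the conjecture is known for finite and affine Coxeter groups, citing \cite{irving,BM}, and later observes (after introducing the $B'$ basis in Section~\ref{s:bases}) that whenever $C_w'$ expands as a nonnegative combination of $B_x'$'s, both Conjectures~\ref{c:nn} and~\ref{c:mo} follow for that $w$. You have correctly recognized that the conjecture is open in general and have structured your write-up accordingly, rather than manufacturing a false proof.

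Your three ingredients are all sound and align with what the paper itself points to. The finite/affine case via moment-graph restriction maps (Braden--MacPherson) or via Jantzen/$\mathcal{O}$-theoretic multiplicity monotonicity (Irving) is the standard route and is exactly what the paper's citations refer to. The $B'$-basis argument is correct: since $B_z' = q^{-\frac{1}{2}\ell(z)}\sum_{u\le z} T_u$, a nonnegative expansion $C_w'=\sum_{z}a_{z,w}(q)B_z'$ yields $P_{x,w}(q)=q^{\frac12\ell(w)}\sum_{z\,:\,x\le z\le w} a_{z,w}(q)\,q^{-\frac12\ell(z)}$, and the index set shrinks as $x$ grows, so $P_{x,w}-P_{y,w}$ is a sum of the same nonnegative monomials over $\{z: x\le z\le w,\ y\not\le z\}$; this is exactly the paper's unstated justification for its remark, and Theorem~\ref{t:cog_bp} supplies the needed positivity when $w$ is cograssmannian. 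Your observation that the specialization $y=w$ already forces the Nonnegativity Conjecture (since $P_{w,w}=1$) is correct and useful context for why the general case cannot be expected to be easy.

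One small point of precision to keep in mind: the $B'$-basis coefficient of $T_x$ in $B_z'$ is the single monomial $q^{-\frac12\ell(z)}$ whenever $x\le z$ and zero otherwise, not a polynomial $g_{x,z}(q)$ that genuinely depends on $x$; the dependence on $x$ in your formula is entirely through the Bruhat support $\{z: x\le z\}$. This does not affect the conclusion, but stating it this way makes the argument cleaner and matches the structure the paper actually uses. Beyond that, the proposal is an accurate assessment of the state of the conjecture and of what this paper contributes to it.
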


We now consider another basis for the Hecke algebra $\mathcal{H}$.

\begin{proposition}
Let $w \in W$.  Define
\[ B_w' = q^{-\frac{1}{2} \ell(w)} \sum_{x \leq w} T_x. \] 
Then, $\{B_w' : w \in W \}$ is a linear basis of $\mathcal{H}$.
\end{proposition}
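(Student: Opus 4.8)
The plan is to show that the transition matrix between $\{B'_w : w \in W\}$ and the standard basis $\{T_w : w \in W\}$ is unitriangular with respect to Bruhat order, hence invertible. Fix a linear extension of Bruhat order to a total order on $W$, refined so that $\ell$ is weakly increasing. First I would observe that by definition
\[
B'_w = q^{-\frac12 \ell(w)} \sum_{x \leq w} T_x = q^{-\frac12 \ell(w)} T_w + \sum_{x < w} q^{-\frac12 \ell(w)} T_x,
\]
so when the coefficients are written out, $B'_w$ is $q^{-\frac12 \ell(w)}$ times $T_w$ plus a $\Z[q^{\pm 1/2}]$-combination of $T_x$ with $x < w$ strictly below in Bruhat order. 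In particular the ``leading term'' of $B'_w$ (with respect to the chosen total order) is $q^{-\frac12\ell(w)} T_w$, with an invertible scalar coefficient in $\Z[q^{1/2}, q^{-1/2}]$.

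Next I would make this triangularity argument precise. List the elements of $W$ (or, if $W$ is infinite, it suffices to argue finite-rank by finite-rank on each Bruhat order ideal, since every $B'_w$ and every $T_x$ lies in the span of $\{T_y : y \leq w\}$ for suitable $w$, and these ideals are finite) as $w^{(1)}, w^{(2)}, \ldots$ compatibly with the total order, so that $w^{(i)} < w^{(j)}$ in Bruhat order implies $i < j$. Then the matrix $M$ expressing $(B'_{w^{(1)}}, B'_{w^{(2)}}, \ldots)$ in terms of $(T_{w^{(1)}}, T_{w^{(2)}}, \ldots)$ is lower triangular, and its diagonal entries are $q^{-\frac12 \ell(w^{(i)})}$, all of which are units in $\Z[q^{1/2}, q^{-1/2}]$. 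A lower triangular matrix over a commutative ring with unit diagonal entries is invertible, with lower triangular inverse; restricting to any finite Bruhat ideal shows that each $T_x$ is a $\Z[q^{\pm 1/2}]$-linear combination of $\{B'_y : y \leq x\}$. Hence $\{B'_w : w \in W\}$ spans $\mathcal{H}$, and since it has the same cardinality as the basis $\{T_w\}$ and the transition matrix is invertible over the ground ring, it is itself a basis.

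I expect the only real subtlety to be the bookkeeping when $W$ is infinite: one must either phrase everything in terms of the filtration of $\mathcal{H}$ by the (finite) submodules spanned by Bruhat order ideals, or simply note that linear independence and spanning are finitary conditions and can each be checked inside such a finite ideal. The algebraic core --- unitriangularity forces invertibility --- is immediate once the indexing is set up. I would also remark, for the reader, that this is exactly the same structural observation that makes $\{C'_w\}$ a basis in the Kazhdan--Lusztig setup; the only difference is that here the below-diagonal coefficients are the trivial constants $q^{-\frac12\ell(w)}$ rather than the Kazhdan--Lusztig polynomials.
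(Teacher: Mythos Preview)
Your proof is correct, and it is the standard unitriangularity argument. The paper takes a slightly different route: rather than arguing abstractly that the transition matrix is triangular with unit diagonal, it writes down the inverse explicitly via M\"obius inversion, using the fact (due to Verma and Deodhar) that Bruhat order is Eulerian, so its M\"obius function is $\mu(x,w)=(-1)^{\ell(w)-\ell(x)}$. This yields the closed formula
\[
T_w = \sum_{x \leq w} (-1)^{\ell(w)-\ell(x)} q^{\frac{1}{2}\ell(x)} B_x',
\]
from which the basis claim is immediate. Your approach is more elementary in that it avoids invoking the Eulerian property altogether; the paper's approach buys an explicit inversion formula, which is a pleasant bonus even though it is not needed for the bare statement.
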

\begin{proof}
By M\"obius inversion together with the fact that Bruhat order is Eulerian,
proved independently by Verma \cite{verma} and Deodhar \cite{deodhar_mobius}, we
can recover the $T_w$ basis elements uniquely as 
\[ T_w = \sum_{x \leq w} (-1)^{\ell(w)-\ell(x)} q^{\frac{1}{2} \ell(x)} B_x'. \]
Hence, $\{B_w' : w \in W\}$ forms a basis of $\mathcal{H}$.
\end{proof}

Observe that when $C_{w}'$ can be expressed as a positive polynomial
combination of $B_{x}'$, we obtain both nonnegativity and monotonicity in the
sense of Conjectures~\ref{c:nn} and \ref{c:mo}.  Although $C_w'$ cannot be
expressed as a positive polynomial combination of $B_x'$ in general, $C_w'$
can be so expressed when $w$ is cograssmannian do have this property, as we
will see in Theorem~\ref{t:cog_bp}.

We say $w \in W$ is {\bf rationally smooth} if $P_{x,w}(q) = 1$ for all $x \leq
w$.  This terminology arises because, when $W$ is a finite Weyl group, it
follows from \cite{K-L2} that $w$ indexes a rationally smooth Schubert variety
precisely when $w$ is a rationally smooth element.  Observe that when $w$ is
rationally smooth, the basis element $B_w'$ is exactly equal to the
Kazhdan--Lusztig basis element $C_w'$.

\bigskip
\section{Deodhar's model and masks with prescribed defects}\label{s:fwp}

The main object of this work is to give formulas for Kazhdan--Lusztig
polynomials of cograssmannian permutations in terms of the combinatorial
model introduced by Deodhar \cite{d} and further developed by
Billey--Warrington \cite{b-w}.  We now proceed to describe this model.  Fix a
reduced expression $\w = \w_1 \w_2 \cdots \w_p$.  Define a {\bf mask} $\s$
associated to the reduced expression $\w$ to be any binary vector $(\s_1,
\ldots, \s_p)$ of length $p = \ell(w)$.  Every mask corresponds to a
subexpression of $\w$ defined by $\w^\s = \w_{1}^{\s_1} \cdots \w_{p}^{\s_p}$
where
\[
\w_{j}^{\s_j}  =
\begin{cases}
\w_{j}  &  \text{ if  }\s_j=1\\
\text{1}  &  \text{ if  }\s_j=0.
\end{cases}
\]
Each $\w^\s$ is a product of generators so it determines an element of $W$.  For
$1\leq j\leq p$, we also consider initial sequences of a mask denoted $\s[j] =
(\s_1, \ldots, \s_j)$, and the corresponding initial subexpression $\w^{\s[j]}
= \w_{1}^{\s_1} \cdots \w_{j}^{\s_j}$.  In particular, we have $\w^{\s[p]} =
\w^\s$.  We also use this notation to denote initial sequences of expressions,
so $\w[j] = \w_1 \cdots \w_j$.

We say that a position $j$ (for $2 \leq j \leq p$) of the fixed reduced
expression $\w$ is a {\bf defect} with respect to the mask $\s$ if
\begin{equation*}
\w^{\s[j-1]} \w_{j} < \w^{\s[j-1]}.
\end{equation*}
Note that the defect status of position $j$ does not depend on the value of
$\s_j$.  We say that a defect position is a {\bf zero-defect} if it has
mask-value 0, and call it a {\bf one-defect} if it has mask-value 1.  We call a
position that is not a defect a {\bf plain-zero} if it has mask-value 0, and we
call it a {\bf plain-one} if it has mask-value 1.

Let $d_{\w}(\s)$ denote the number of defects of $\w$ for the mask $\s$.
We will use the notation $d(\s) = d_{\w}(\s)$ if the reduced word $\w$ is fixed.
Deodhar's framework gives a combinatorial interpretation
for the Kazhdan--Lusztig polynomial $P_{x,w}(q)$ as the generating function for
masks $\s$ on a reduced expression $\w$ with respect to the defect statistic
$d(\s)$.  We begin by considering subsets of the set 
\[ \mathcal{S} = \{ 0, 1 \}^{\ell(w)}. \]
of all possible masks on $\w$.
For $\mathcal{E} \subset \mathcal{S}$, we define a prototype for $P_{x,w}(q)$:
\[ P_x(\mathcal{E}) = \sum_{ \substack{ \s \in \mathcal{E} \\ \w^{\s} = x } } q^{d(\s)} \]
and a corresponding prototype for the Kazhdan--Lusztig basis element $C_{w}'$:
\[ h(\mathcal{E}) = q^{-{1 \over 2} \ell(w)} \sum_{ \s \in \mathcal{E} } q^{d(\s)}
T_{\w^{\s}}. \]

\begin{definition}{\bf \cite{d}}\label{d:admissible}
Fix a reduced word $\w = \w_1 \w_2 \dots \w_k$.  We say that $\mathcal{E}
\subset \mathcal{S}$ is {\bf admissible} on $\w$ if:
\begin{enumerate}
\item $\mathcal{E}$ contains $\s = (1, 1, \dots, 1)$.
\item $\mathcal{E} = \tilde{\mathcal{E}}$ where $\tilde{\s} = (\s_1, \s_2, \dots, \s_{k-1}, 1 - \s_k)$.
\item $h(\mathcal{E}) = \overline{h(\mathcal{E})}$ is invariant under the bar involution on the Hecke algebra.
\end{enumerate}

We say that $\mathcal{E}$ is {\bf bounded} on $\w$ if $P_x(\mathcal{E})$ has
degree $\leq {1 \over 2} (\ell(w) - \ell(x) - 1)$ for all $x < w$ in Bruhat order.
\end{definition}

\begin{theorem}{\bf \cite{d}}\label{t:deodhar}
Let $x, w$ be elements in any Coxeter group $W$, and fix a reduced expression
$\w$ for $w$.  If $\mathcal{E} \subset \mathcal{S}$ is bounded and admissible
on $\w$, then 
\[ P_{x,w}(q) = P_x(\mathcal{E}) = \sum_{ \substack{ \s \in \mathcal{E} \\ \w^{\s} = x } } q^{d(\s)} \]
and hence
\[ C_{w}' = h(\mathcal{E}) = q^{-{1 \over 2} \ell(w)} \sum_{ \s \in \mathcal{E} } q^{d(\s)} T_{\w^{\s}}. \]
\end{theorem}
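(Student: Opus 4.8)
The plan is to verify that $h(\mathcal{E})$ has the three properties that uniquely characterize the Kazhdan--Lusztig basis element $C_w'$, and then to invoke the uniqueness half of the Kazhdan--Lusztig theorem \cite{k-l} recalled in Section~\ref{s:bases}. First I would observe that every subexpression $\w^\s$ is a subword of the reduced word $\w$, hence $\w^\s \le w$ in Bruhat order; grouping the defining sum for $h(\mathcal{E})$ according to the value $\w^\s = x$ then gives
\[
h(\mathcal{E}) \;=\; q^{-\frac12 \ell(w)} \sum_{x \le w} P_x(\mathcal{E})\, T_x ,
\]
where each $P_x(\mathcal{E}) = \sum_{\s \in \mathcal{E},\ \w^\s = x} q^{d(\s)}$ lies in $\N[q]$ because $d(\s)$ is a nonnegative integer. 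This is already the correct shape for the $T$-expansion of $C_w'$, with coefficient polynomials automatically having nonnegative integer coefficients.

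Next I would check the three defining conditions. For the leading term: the all-ones mask is the unique mask with $\w^\s = w$, since every proper subword of a reduced word is strictly shorter, and it lies in $\mathcal{E}$ by condition (1) of admissibility; because $\w$ is reduced this mask satisfies $\w^{\s[j-1]}\w_j = \w[j] > \w[j-1] = \w^{\s[j-1]}$ for every $j$, so it has no defects, and therefore $P_w(\mathcal{E}) = q^0 = 1$, matching $P_{w,w}(q) = 1$. For the degree bound: the inequality $\deg P_x(\mathcal{E}) \le \tfrac12(\ell(w) - \ell(x) - 1)$ for $x < w$ is exactly the boundedness hypothesis on $\mathcal{E}$. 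For bar invariance: $\overline{h(\mathcal{E})} = h(\mathcal{E})$ is condition (3) of admissibility. Since $C_w'$ is the unique bar-invariant element of $\mathcal{H}$ whose $T$-expansion has the form $q^{-\frac12\ell(w)}\sum_{x\le w} P_x(q) T_x$ with $P_x \in \Z[q]$, $P_w = 1$, and $\deg P_x \le \tfrac12(\ell(w)-\ell(x)-1)$ for all $x < w$, we conclude $h(\mathcal{E}) = C_w'$; comparing coefficients of $T_x$ then yields $P_x(\mathcal{E}) = P_{x,w}(q)$, which is the asserted formula for $P_{x,w}$, and rereading the identity above gives the asserted formula for $C_w'$.

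The substantive ingredient is the Kazhdan--Lusztig uniqueness theorem, which does the real work; everything else is bookkeeping. The one point requiring care is checking that the coefficients of the $T$-expansion of $h(\mathcal{E})$ have the integrality and degree parity needed for that uniqueness statement to apply verbatim, which is precisely why it matters that $d(\s) \in \N$. I also note that admissibility condition (2), the symmetry of $\mathcal{E}$ under flipping the last mask coordinate, is not logically invoked in this argument; it is built into the definition because Deodhar uses it to produce bar-invariant mask sets recursively, and it becomes essential in the later constructions of this paper rather than in the proof of Theorem~\ref{t:deodhar} itself.
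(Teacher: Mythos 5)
The paper states Theorem~\ref{t:deodhar} by citation to Deodhar~\cite{d} and does not supply its own proof, so there is no in-paper argument to compare against. On its own merits your reconstruction is correct, and it is indeed the standard proof: a subexpression $\w^\s$ of the reduced word $\w$ always yields an element $\le w$ by the subword property, so $h(\mathcal{E})$ is supported on $\{T_x : x\le w\}$ with coefficients $P_x(\mathcal{E})\in\N[q]$; the all-ones mask is the only mask evaluating to $w$ (a shorter subword cannot multiply to an element of length $\ell(w)$), it has no defects because $\w$ is reduced, and it lies in $\mathcal{E}$ by admissibility (1), giving $P_w(\mathcal{E})=1$; boundedness supplies the degree inequality; admissibility (3) is exactly bar-invariance; and the uniqueness half of Kazhdan--Lusztig's characterization of $C_w'$ then forces $h(\mathcal{E})=C_w'$ and hence $P_x(\mathcal{E})=P_{x,w}(q)$. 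Your observation that admissibility condition (2) is not logically required in this deduction is also accurate; that condition is present because it is used in Deodhar's recursive construction of admissible sets and in later manipulations, not in the proof of the identity itself.
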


If a mask has no defect positions at all, then we say it is a {\bf constant
mask} on the reduced expression $\w$ for the element $\w^{\s}$.  This
terminology arises from the fact that these masks correspond precisely to the
unique constant term in the Kazhdan--Lusztig polynomial $P_{x,w}(q)$ in the
combinatorial model above.  Other authors \cite{marsh-rietsch,rietsch-williams}
have used the term ``positive distinguished subexpression'' to define an
equivalent notion.

\begin{definition}
Let $\w = \w_1 \w_2 \cdots \w_p$ be a fixed reduced expression for an element $w \in W$.
Suppose $P \subset \{2, 3, \ldots p\}$.  Define
\[ \mathcal{F}_{\w}^P = \{ \text{ masks $\s$ on $\w$ with defects precisely at the positions in
$P$ } \}. \]
\end{definition}

The following result generalizes \cite[Proposition 2.3(iii)]{d} which has been
used in the work \cite{marsh-rietsch} related to totally nonnegative flag
varieties, as well as \cite{armstrong} in the context of sorting algorithms on
Coxeter groups.  See \cite{jones-match} for a derivation of the M\"obius
function of Bruhat order based on a specialization of this result.

\begin{lemma}\label{l:fwp}
Let $\w$ be a reduced expression for an element $w \in W$.  Then, each $x \in
W$ occurs at most once in $\{ \w^{\s} : \s \in \mathcal{F}_{\w}^P \}$.  In fact, the set
of elements
\[ F_{\w}^P := \{ \w^{\s} : \s \in \mathcal{F}_{\w}^P \} \]
is a lower order ideal in the Bruhat order of $W$.  In other words, if $v \in
F_{\w}^P$ and $u \leq v$, then $u \in F_{\w}^P$.
\end{lemma}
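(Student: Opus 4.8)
The plan is to induct on $p = \ell(w)$, peeling off the last letter $\w_p$ of the reduced expression. Write $\w' = \w_1 \cdots \w_{p-1}$, a reduced expression for $w' = w\w_p$, and let $s = \w_p$. Given a set $P \subset \{2, \ldots, p\}$, set $P' = P \cap \{2, \ldots, p-1\}$, so that $\mathcal{F}_{\w'}^{P'}$ is defined. The key observation is that position $p$ is a defect for a mask $\s = (\s_1, \ldots, \s_p)$ if and only if it is a defect for its truncation $\s[p-1]$ together with the datum that $s$ is a right descent of $\w'^{\s[p-1]}$ — and this depends only on $\s[p-1]$, not on $\s_p$. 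So there are two cases according to whether $p \in P$ or not, and in each case a mask $\s \in \mathcal{F}_{\w}^P$ restricts to a mask $\s[p-1] \in \mathcal{F}_{\w'}^{P'}$, with the value $\s_p$ constrained by the defect requirement at position $p$.

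Concretely, I would analyze the map $\s \mapsto \w^{\s}$ in terms of $x' := \w'^{\s[p-1]} \in F_{\w'}^{P'}$, which by the inductive hypothesis ranges over a lower order ideal and is determined by $\s$. If $p \notin P$, then $s$ must be a right ascent of $x'$, and we are free to choose $\s_p \in \{0,1\}$, giving $\w^{\s} \in \{x', x's\}$; moreover $x's > x'$ in Bruhat order. If $p \in P$, then $s$ must be a right descent of $x'$; the defect requires mask-value... actually both choices of $\s_p$ are legal as the defect status is independent of $\s_p$, so again $\w^{\s} \in \{x', x's\}$, but now $x's < x'$. In either case, the set $F_{\w}^P$ is obtained from the lower order ideal $F_{\w'}^{P'}$ by a controlled operation: in the ascent case, adjoin $x's$ for each $x'$ in the ideal with $s$ a right ascent; in the descent case, one keeps $x'$ and also $x's$ (which is already $\le x'$, hence already implied to be in any ideal containing $x'$ once we check $x' \in F_{\w'}^{P'} \Rightarrow x's \in F_{\w'}^{P'}$). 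I would then check (a) injectivity — that distinct $x'$ give distinct elements of $F_{\w}^P$ and that the two elements $x', x's$ coming from a single $x'$ are realized by distinct masks, which forces checking no collisions across different $x'$; and (b) that the resulting set is downward closed.

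The main obstacle is the lower-order-ideal (downward closure) step, and this is exactly where the Lifting Lemma (Lemma~\ref{l:lifting}) enters. In the ascent case, $F_{\w}^P = F_{\w'}^{P'} \cup \{x's : x' \in F_{\w'}^{P'},\ s \text{ a right ascent of } x'\}$; to show this is an ideal, take $u \le x's$ with $x's$ of the new type and show $u \in F_{\w}^P$. If $s$ is a right ascent of $u$, then by the Lifting Lemma $us \le x'$, so $us \in F_{\w'}^{P'}$ by induction, hence $u = (us)s \in F_{\w}^P$ (as $s$ is then a right ascent of $us$... which needs a small check, or one argues $u \le x's$ and $s$ ascent of $u$ gives $u \le x'$ directly since $u \le x's$ and $us \le x'$ with $u < us$). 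If instead $s$ is a right descent of $u$, then $u \le x's$ with $us < u$; I want $u \le x's$, and since $x's$ has $s$ as a right descent while ... here one uses that $u \le x's \Rightarrow us \le x's$, and $us \le x'$ (Lifting Lemma again, lifting $us < u \le x's$ against the descent $s$ of $x's$... more precisely $u \le x's$, $s$ descent of $x's$, $s$ ascent of $us$ gives $us \le x'$), so $us \in F_{\w'}^{P'} \subseteq F_{\w}^P$ and then $u = (us)s$ with appropriate defect bookkeeping puts $u \in F_{\w}^P$. Getting all these Bruhat-order manipulations and the accompanying defect-position bookkeeping to line up cleanly — in particular verifying that the mask realizing $u$ actually has its defect set equal to $P$ and not a proper subset — is the delicate part; the rest is routine induction. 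I would organize the write-up so that the two cases ($p \in P$ versus $p \notin P$) are handled in parallel, isolating the Bruhat-order lemma applications into a short sublemma if they recur.
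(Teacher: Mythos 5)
Your approach --- peel off the last letter of $\w$, set up an induction on word length, and describe $F_\w^P$ in terms of $F_{\w'}^{P'}$ --- does yield a valid proof, but it is organized quite differently from the paper's argument and contains a couple of slips. The paper avoids your set-theoretic decomposition entirely: it gives a right-to-left algorithm that, for an \emph{arbitrary} $x \le w$, constructs the unique candidate mask in $\mathcal{F}_\w^P$ for $x$ (or fails), so the uniqueness claim is immediate since every assignment is forced. The ideal property is then obtained by running the algorithm \emph{simultaneously} for $x < y$ and maintaining the single invariant $r_i(x) \le r_i(y)$ step by step via the Lifting Lemma; since the algorithm succeeds for $y$ (meaning $r_1(y) = 1$), it must succeed for $x$. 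This packages all of your ascent/descent casework into one inductive invariant. Your version instead needs the auxiliary observation that, writing $F^{\mathrm{des}}$ and $F^{\mathrm{asc}}$ for the subsets of $F_{\w'}^{P'}$ where $s$ is a right descent (resp.\ ascent), one has $F^{\mathrm{des}}\subseteq\{x's : x'\in F^{\mathrm{asc}}\}$ --- which does follow from the inductive ideal property of $F_{\w'}^{P'}$ --- followed by a direct verification that the resulting union is downward closed.

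Two slips worth fixing. Your first Lifting Lemma invocation in the ascent case is wrong as stated: from $u \le x's$, with $s$ a right descent of $x's$ and a right ascent of $u$, Lemma~\ref{l:lifting} gives $us \le x's$ and $u \le x'$, \emph{not} $us \le x'$ (for a counterexample take $u = e$, $s = s_1$, $x' = s_2$ in $S_3$). Your parenthetical ``$u \le x'$ directly'' is the correct fix and in fact makes the detour through $us$ unnecessary. Second, you defer the $p \in P$ case to a ``parallel'' argument, but the structure there is genuinely different: when $p \in P$ one gets $F_\w^P = F^{\mathrm{des}} \cup \{x's : x' \in F^{\mathrm{des}}\}$, which is in general a \emph{proper subset} of $F_{\w'}^{P'}$ rather than a superset, so the bookkeeping cannot be copied over verbatim and must be redone (it does go through with the same Lifting Lemma manipulations, showing that for $z < u$ with $u \in F_\w^P$, either $z$ or $zs$ lies in $F^{\mathrm{des}}$). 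Once these points are addressed, your induction produces a correct, if somewhat lengthier, proof.
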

\begin{proof}
The constraint that $\s$ have defects precisely at the positions in $P$ forces
there to be at most one mask $\s$ on $\w$ for $x$.  We describe an algorithm to
construct such a mask.

Let $r_{p+1}(x) = x$ and $i = p$.  We inductively assign
\[ \s_i := \begin{cases}
    0 &  \text{ if ($i \notin P$ and $\w_i$ is a right ascent for $r_{i+1}(x)$) or } \\
      &  \text{ ($i \in P$ and $\w_i$ is a right descent for $r_{i+1}(x)$)} \\
    1 &  \text{ otherwise } \\
\end{cases} \ \ 
\]
\[
\text{ and } \ \ \ \ r_i(x) := \begin{cases}
    r_{i+1}(x) &  \text{ if ($i \notin P$ and $\w_i$ is a right ascent for $r_{i+1}(x)$) } \\
               &  \text{ or ($i \in P$ and $\w_i$ is a right descent for $r_{i+1}(x)$)} \\
    r_{i+1}(x) \cdot \w_i &  \text{ otherwise } \\
\end{cases} 
\]
for each $i$ from $p$ down to $1$.  An inductive argument on the length of $\w$
shows that the assignments given above are the only ones that can produce a
mask for $x$ in $\mathcal{F}_{\w}^P$.  Hence, there is at most one mask for $x$
in $\mathcal{F}_{\w}^P$.  Observe that the algorithm succeeds if and only if
$r_1(x)$ is the identity.

Suppose the algorithm succeeds in constructing a mask in $\mathcal{F}_{\w}^P$
for $y \leq w$.  If $x < y$ and we run the algorithm for both elements
simultaneously, we initially have $r_{p+1}(x) = x \leq y = r_{p+1}(y)$.
Observe that for each $i \leq p$, if we have $r_{i+1}(x) \leq r_{i+1}(y)$, then
the algorithm considers right multiplying these elements by the same $s_i$ and
whether $i \in P$ or not is the same for both elements $r_{i+1}(x)$ and
$r_{i+1}(y)$.  Therefore, by the Lifting Lemma~\ref{l:lifting} we have
$r_{i}(x) \leq r_{i}(y)$.  Since $r_1(y) = 1$, this implies by induction that
$r_1(x) = 1$ so the algorithm succeeds for all $x<y$.  Hence, $F_{\w}^P$ is a
lower ideal in Bruhat order.
\end{proof}

For example, if $P = \emptyset$ then $\mathcal{F}_{\w}^P$ corresponds to the set
of masks on $\w$ with no defects at all.  In this case, $F_{\w}^{\emptyset}$ is
the Bruhat interval $[1, w]$.  Since the constant term of every Kazhdan--Lusztig
polynomial $P_{x,w}(q)$ is 1 when $x \leq w$, we see that the masks in
$\mathcal{F}_{\w}^{\emptyset}$ correspond precisely to the constant terms of
$P_{x,w}(q)$.

\begin{example}
If $W = A_4$, $\w = s_2 s_1 s_3 s_2 s_3$, and $P = \{5\}$, then $F_{\w}^P$ has maximal elements $s_2 s_3 s_2$ and $s_2 s_1 s_3$.
The mask for $s_2 s_1$ is
\[
\begin{tabular}{cccccc}
    $s_2$ & $s_1$ & $s_3$ & $s_2$ & $s_3$ \\
    1 & 1 & 1 & 0 & 1 \\
\end{tabular}
\]
as a result of 
\[ r_6(x) = s_2 s_1, r_5(x) = s_2 s_1 s_3 = r_4(x), r_3(x) = s_2 s_1, r_2(x) = s_2, r_1(x) = 1. \]
\end{example}

\bigskip
\section{The cograssmannian construction}\label{s:cog}

\subsection{The Lascoux--Sch\"utzenberger formula}
We now restrict to the case where $w$ is a permutation with at most one right
ascent.  We call such permutations {\bf cograssmannian}.  Following
\cite[Section 6]{Brenti98}, we describe a formula for $C_{w}'$ when $w$ is
cograssmannian that is originally due to Lascoux and Sch\"utzenberger
\cite{LS11}.  Lascoux \cite{Lascoux95} has generalized this formula
to the case where $w$ is covexillary; a permutation $w$ is {\bf covexillary} if
there do not exist indices $i_1<i_2<i_3<i_4$ for which
$w(i_3)<w(i_4)<w(i_1)<w(i_2)$.

Fix a cograssmannian permutation $w$, and let $s_z$ be the unique right ascent
of $w$.  Then, let $J = S \setminus \{ s_z \}$ and $W_J$ be the corresponding
parabolic subgroup of $W$.  By the parabolic decomposition (see
\cite[Proposition 2.4]{b-b}, for example), there is a unique reduced
decomposition $w = v w_0^J$ where $v$ is a minimal length coset representative
in $W / W_J$ and $w_0^J$ is the unique element of maximal length inside the
parabolic subgroup $W_J$.  Here, $v$ has a unique right descent, so we say that
it is {\bf grassmannian}.  Also, this implies that $v$ is fully-commutative.

Since $v$ has a unique commutativity class, we can assume that the heap of $w$
has a prescribed form in which $w_0^J$ is the product of 
\[ (s_1 s_2 \cdots s_{z-1}) (s_1 s_2 \cdots s_{z-2}) \cdots \text{ and } (s_{n-1} s_{n-2} \cdots s_{z+1}) (s_{n-1}
s_{n-2} \cdots s_{z+2}) \cdots. \]
We now fix a reduced expression $\w$ belonging to this commutativity class.
Figure~\ref{f:co_heap} illustrates a typical cograssmannian heap.  Recall that
in contrast to conventions used in other work, we draw the heap so that the
left side of the reduced expression occurs at the top of the picture and the
right side occurs at the bottom.  This allows the partition associated to $v$
to appear inside the heap in the ``Russian'' style.  
Here,
\[ v = s_1 s_5 s_7 s_2 s_4 s_6 s_3 s_5 s_4 \] 
\[ w_0^J = s_1 s_2 s_3 s_1 s_2 s_1 s_7 s_6 s_5 s_7 s_6 s_7 \]
and $w = v w_0^J$.

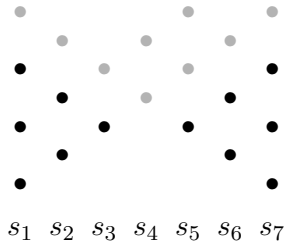
\begin{figure}[h]
\begin{center}
\begin{tabular}{c}
\xymatrix @-1.5pc@M=0pt {
{\hs} & & {\hs} & & {\hs} & & {\hs} & & {\hs} & & {\hs} \\
& {\hs} & & {\hs} & & {\hs} & & {\hs} & & {\hs} & & {\hs} \\
{\hs} & & {\hs} & & {\hs} & & {\hs} & & {\hs} & & {\hs} \\
& {\hs} & & {\hs} & & {\hs} & & {\hs} & & {\hs} & & {\hs} \\
{\hs} & & {\hs} & & {\hs} & & {\hs} & & {\hs} & & {\hs} \\
& {\hs} & & {\hs} & & {\hs} & & {\hs} & & {\hs} & & {\hs} \\
{\hb} & & {\hs} & & {\hb} & & {\hb} & & {\hs} & & {\hs} \\
& {\hb} & & {\hb} & & {\hb} & & {\hs} & & {\hs} & & {\hs} \\
{\hf} & & {\hb} & & {\hb} & & {\hf} & & {\hs} & & {\hs} \\
& {\hf} & & {\hb} & & {\hf} & & {\hs} & & {\hs} & & {\hs} \\
{\hf} & & {\hf} & & {\hf} & & {\hf} & & {\hs} & & {\hs} \\
& {\hf} & & {\hs} & & {\hf} & & {\hs} & & {\hs} & & {\hs} \\
{\hf} & & {\hs} & & {\hs} & & {\hf} & & {\hs} & & {\hs} \\
 & & & \\
\tiny s_1 & \tiny s_2 & \tiny s_3 & \tiny s_4 & \tiny s_5 & \tiny s_6 & \tiny s_7 \\
}
\end{tabular}
\end{center}
\caption{Heap of a cograssmannian element}\label{f:co_heap}
\end{figure}

We consider the {\bf ridgeline} of the heap of $w$ to be the lattice path
formed from the maximal entry of the heap of $v$ in each column.  These entries
naturally form a lattice path in which the entry in column $i+1$ is either just
above or just below the entry in column $i$.  Record this lattice path as a
string of parentheses where ``('' corresponds to a down-move, and ``)'' corresponds
to an up-move, reading left to right along the ridgeline.
In the example shown in Figure~\ref{f:co_heap}, the parentheses would be
\[ ( \ ( \ ) \ ) \ ( \ ). \]

Next, we form a rooted tree $T(w)$ by matching these parentheses.  Each vertex
of the tree corresponds to a matching pair ``( \dots )'' of parentheses, and
one vertex is a descendent of another if and only if its pair of parentheses
is enclosed by the other pair.  Each consecutive matching pair is called a
{\bf valley}, and the valleys are the leaves of the tree.  In addition, we add
a single additional root node that is attached to all maximal elements of the
tree.  The result is denoted $T(w)$.

Next, we define certain nonnegative integers called {\bf capacities} that are
assigned to the leaves of the tree $T(w)$.  Each leaf corresponds to some
valley say in column $j$ of the ridgeline, and the capacity of the valley is
defined to be the number of entries in the heap of $v$ in column
$j$.  In other words, the capacity is the number of levels of the heap
between the valley and the entries of $w_0^J$.

In the running example, $T(w)$ is
\[ \xymatrix @-1pc {
\gn \ar@{-}[d] \ar@{-}[dr] & \\
\gn \ar@{-}[d] & \gn_1 \\
\gn_1 & } \]
where we have indicated the capacities of the leaf nodes.

Finally, we define $A_{w}$ to be the set of edge-labelings of $T(w)$ with
entries of $\mathbb{Z}_{\geq 0}$ such that:
\begin{enumerate}
    \item[(1)]  Labels weakly increase along all paths from the root to any
      leaf, and
    \item[(2)]  No edge that is adjacent to a leaf node has a label that
        strictly exceeds the capacity of the leaf.
\end{enumerate}

Let $t \in A_{w}$ be an edge-labelled tree and denote the sum of the edge
labels by $|t|$.  We associate a permutation $x \leq w$ to $t$.  Begin with
the heap of $\w = v w_0^J$ as described above.  Consider each valley column
$j$.  If the corresponding leaf edge is labelled by $m \in \mathbb{Z}_{\geq
  0}$, then set the top $m$ entries in column $j$ to have mask-value 0, and
also set all of the entries that lie above these entries in the heap
to have mask-value 0.  Once this has been done for each valley, we are left
with a constant mask $\gamma(t)$ on $\w$ that encodes a cograssmannian
element; we denote this element by $x(t)$, so $x(t) = \w^{\gamma(t)}$.
Note that it is possible for some leaves to implicitly zero out other leaves
above.  If $t$ and $t'$ have the same leaf edge labels, then $x(t) = x(t')$.

\begin{example}\label{e:el_trees}
The valid edge labelings of $T(w)$ are
\[
\xymatrix @-1pc { \gn \ar@{-}[d]^0 \ar@{-}[dr]^0 & \\ \gn \ar@{-}[d]^0 & \gn \\ \gn & }  \hspace{0.2in}
\xymatrix @-1pc { \gn \ar@{-}[d]^0 \ar@{-}[dr]^1 & \\ \gn \ar@{-}[d]^0 & \gn \\ \gn & }  \hspace{0.2in}
\xymatrix @-1pc { \gn \ar@{-}[d]^0 \ar@{-}[dr]^0 & \\ \gn \ar@{-}[d]^1 & \gn \\ \gn & }  \hspace{0.2in}
\xymatrix @-1pc { \gn \ar@{-}[d]^0 \ar@{-}[dr]^1 & \\ \gn \ar@{-}[d]^1 & \gn \\ \gn & }  \hspace{0.2in}
\xymatrix @-1pc { \gn \ar@{-}[d]^1 \ar@{-}[dr]^0 & \\ \gn \ar@{-}[d]^1 & \gn \\ \gn & }  \hspace{0.2in}
\xymatrix @-1pc { \gn \ar@{-}[d]^1 \ar@{-}[dr]^1 & \\ \gn \ar@{-}[d]^1 & \gn \\
\gn & }
\]

Let $t$ be the second of these edge-labelled trees.  Then $x(t)$ is obtained by
starting with the heap of $w$ and then ``zeroing out'' the entries above the
valley in column $s_6$ in the heap.  The constant mask $\gamma(t)$ that is
associated to $x(t)$ is shown in Figure~\ref{f:co_xt}.  Hence, $x(t)$ is
\[ x(t) = (s_1 s_2 s_4 s_3 s_5 s_4) (s_1 s_2 s_3)(s_1 s_2)(s_1)(s_7 s_6 s_5)(s_7 s_6)(s_7). \]

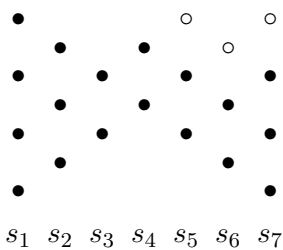
\begin{figure}[h]
\begin{center}
\begin{tabular}{c}
\xymatrix @-1.5pc@M=0pt {
{\hs} & & {\hs} & & {\hs} & & {\hs} & & {\hs} & & {\hs} \\
& {\hs} & & {\hs} & & {\hs} & & {\hs} & & {\hs} & & {\hs} \\
{\hs} & & {\hs} & & {\hs} & & {\hs} & & {\hs} & & {\hs} \\
& {\hs} & & {\hs} & & {\hs} & & {\hs} & & {\hs} & & {\hs} \\
{\hs} & & {\hs} & & {\hs} & & {\hs} & & {\hs} & & {\hs} \\
& {\hs} & & {\hs} & & {\hs} & & {\hs} & & {\hs} & & {\hs} \\
{\hf} & & {\hs} & & {\hz} & & {\hz} & & {\hs} & & {\hs} \\
& {\hf} & & {\hf} & & {\hz} & & {\hs} & & {\hs} & & {\hs} \\
{\hf} & & {\hf} & & {\hf} & & {\hf} & & {\hs} & & {\hs} \\
& {\hf} & & {\hf} & & {\hf} & & {\hs} & & {\hs} & & {\hs} \\
{\hf} & & {\hf} & & {\hf} & & {\hf} & & {\hs} & & {\hs} \\
& {\hf} & & {\hs} & & {\hf} & & {\hs} & & {\hs} & & {\hs} \\
{\hf} & & {\hs} & & {\hs} & & {\hf} & & {\hs} & & {\hs} \\
 & & & \\
\tiny s_1 & \tiny s_2 & \tiny s_3 & \tiny s_4 & \tiny s_5 & \tiny s_6 & \tiny s_7 \\
}
\end{tabular}
\end{center}
\caption{Constant mask $\gamma(t)$ for $x(t)$}\label{f:co_xt}
\end{figure}
\end{example}

We are now in a position to state our first main result that each coefficient of
$B_x'$ in $C_w'$ corresponds precisely to one of Lascoux--Sch\"utzenberger's
edge labelled trees.

\begin{theorem}\label{t:cog_bp}
Let $w$ be a cograssmannian permutation.  We have
\[ C_w' = q^{-\frac{1}{2} \ell(w)} \sum_{t \in A_{w}} q^{d_{\w}(t)} B_{x(t)}' \]
where $d_{\w}(t) = |t| + \frac{1}{2} \ell(x(t))$.
\end{theorem}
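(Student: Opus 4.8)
The plan is to reduce the identity to the classical Lascoux--Sch\"utzenberger formula and then to carry out the combinatorics that relates the element $x(t)$ to the edge-labelling $t$. First I would expand the right-hand side using $B_{x(t)}' = q^{-\frac12\ell(x(t))}\sum_{u\le x(t)}T_u$. Since $d_{\w}(t)=|t|+\frac12\ell(x(t))$, the factor $q^{\frac12\ell(x(t))}$ cancels, and
\[
q^{-\frac12\ell(w)}\sum_{t\in A_w}q^{d_{\w}(t)}B_{x(t)}'
 = q^{-\frac12\ell(w)}\sum_{t\in A_w}q^{|t|}\sum_{u\le x(t)}T_u
 = q^{-\frac12\ell(w)}\sum_{u\in W}\Bigl(\sum_{\substack{t\in A_w\\ u\le x(t)}}q^{|t|}\Bigr)T_u .
\]
As $x(t)\le w$ always, the inner sum vanishes unless $u\le w$; comparing with $C_w'=q^{-\frac12\ell(w)}\sum_{u\le w}P_{u,w}(q)T_u$ then reduces the theorem to showing
\[
P_{u,w}(q)=\sum_{\substack{t\in A_w\\ u\le x(t)}}q^{|t|}\qquad\text{for every }u\le w .
\]

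Next I would pass to the cograssmannian case. Since each mask $\gamma(t)$ changes only entries of the heap of $v$ lying above the valleys, one has $x(t)=v(t)\,w_0^J$ for a grassmannian $v(t)\le v$, so $w_0^J$ is a right factor of both $x(t)$ and $w$, and hence every generator in $J$ is a right descent of each. By the Lifting Lemma~\ref{l:lifting}, $u\le x(t)$ iff $\bar u\le x(t)$, where $\bar u$ is obtained from $u$ by right multiplication by generators in $J$ as long as this increases length, and by the standard invariance of Kazhdan--Lusztig polynomials under such multiplications~\cite{k-l} we have $P_{u,w}=P_{\bar u,w}$; thus we may assume $u=\bar u$ is cograssmannian. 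For such $u$, the Lascoux--Sch\"utzenberger formula~\cite{LS11} (see also \cite[Section~6]{Brenti98}) expresses $P_{u,w}(q)$ as $\sum_t q^{|t|}$ over those $t\in A_w$ whose leaf-edge labels are bounded, valley by valley, by the integers recording how far the ridgeline of $u$ has retreated from that of $w$. So it remains to identify this set of labellings with $\{t\in A_w : u\le x(t)\}$.

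This identification is the heart of the matter, and is where the partition combinatorics enters. The grassmannian elements $v(t)\le v$ are exactly those whose heap -- equivalently, whose partition drawn inside the heap in the Russian convention -- is contained in that of $v$, and Bruhat order among them is precisely this containment, so $u\le x(t)$ says that the partition of $\bar u$ is contained in that of $x(t)$. I would then read the partition of $x(t)$ directly off $t$: a leaf label $m$ at the valley in column $j$ deletes the top $m$ boxes of column $j$ together with all boxes that the mask rule then forces to be zeroed above them, and the region zeroed ``above'' a given valley is controlled by the chain of ancestors of that leaf in $T(w)$. Running this computation should show that the partition of $\bar u$ is contained in that of $x(t)$ exactly when, at every valley, the retreat of $\bar u$ is at least the leaf label of $t$, which is precisely the Lascoux--Sch\"utzenberger condition; summing $q^{|t|}$ over the two resulting equal index sets then yields the displayed identity, and hence the theorem. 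The step I expect to be the main obstacle is handling the ``implicit'' zeroing -- a deeply nested valley can be overwritten by an ancestor irrespective of its own label -- and verifying that this overwriting is exactly compatible with the nesting in $T(w)$ and with the two conditions defining $A_w$ (weak increase along root-to-leaf paths, and the capacity bound), so that the two descriptions of the admissible labellings agree term by term.
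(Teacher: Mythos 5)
Your proposal is correct and follows essentially the same route as the paper's proof: both invoke the Lascoux--Sch\"utzenberger formula, replace $u$ by the longest element $\bar u$ (the paper's $\tilde x$) of its $W_J$-coset using $P_{u,w}=P_{\tilde x,w}$ together with the Lifting Lemma, and then exchange the order of summation so that everything reduces to the single combinatorial fact that, for cograssmannian $u$, the labelling $t$ contributes to $P_{u,w}$ exactly when $u\le x(t)$. The paper packages this last fact as the statement that $I(t)=\{x : t\in A_{\tilde x,w}\}$ is a principal lower Bruhat order ideal with maximum element $x(t)$, which is precisely the term-by-term identification you propose to check; the ``implicit zeroing'' you flag is what makes $x(t)$ the top of this ideal, since the mask-value-$0$ entries of $\gamma(t)$ are exactly the upper order ideal of the heap of $v$ generated by the prescribed tops of the valley columns, so any cograssmannian $y$ satisfying the valley-by-valley capacity bounds has a zero set containing that ideal and hence satisfies $y\le x(t)$.
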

\begin{proof}
Let $s_z$ be the unique right ascent of $w$, and define $J = S \setminus \{ s_z
\}$ with the corresponding parabolic subgroup of $W$ denoted $W_J$.  Let $x
\leq w$.  Then there is a unique reduced parabolic decomposition of the form $x
= v u$ where $v \in W^J$, so $v$ is grassmannian with $s_z$ as the unique right
descent, and $u \in W_J$.  Let $\tilde{x}$ be $v w_0^J$ where $w_0^J$ is the
unique longest element of $W_J$.  Then $\tilde{x}$ is cograssmannian, and since
we can obtain $\tilde{x}$ from $x$ using right multiplication by elements of $J
= D_{R}(w)$, we have by \cite{k-l} that $P_{x, w}(q) = P_{\tilde{x},
w}(q)$.

Hence,
\[ C_{w}' = q^{-\frac{1}{2} \ell(w)} \sum_{x \leq w} P_{x,w}(q) T_{x} =
q^{-\frac{1}{2} \ell(w)} \sum_{x \leq w} P_{\tilde{x},w}(q) T_{x}. \]  Lascoux
and Sch\"utzenberger~\cite{LS11} showed that $P_{\tilde{x},w}(q) = \sum_{t}
q^{|t|}$ when $w$ and $\tilde{x}$ are cograssmannian.  (See also \cite[Theorem
  6.10]{Brenti98} for another description of this theorem.)  Here, the sum is
over the set $A_{\tilde{x},w}$ of all edge labelled trees from $A_w$ having
leaf capacities given by the number of mask-value 0 entries in a
given valley column of the constant mask for $\tilde{x}$ on $w = v w_0^J$.
Hence,
\[ C_{w}' = q^{-\frac{1}{2} \ell(w)} \sum_{x \leq w} \sum_{t \in A_{\tilde{x},w}}
q^{|t|} T_{x}. \]

Next, fix a tree $t \in A_w$, and let $I(t)$ be the set of elements $x$ that have
$t$ contributing to the coefficient of $T_x$ in the expansion of $C_{w}'$.  To
be explicit, $I(t)$ consists of all elements $x$ such that the grassmannian part
of the unique constant mask for $\tilde{x}$ has at least the number of
mask-value 0 entries in a given valley column as the corresponding leaf edge
label of $t$.  Then, we can rewrite the equation as
\[ C_{w}' = q^{-\frac{1}{2} \ell(w)} \sum_{t \in A_{w}} \sum_{x \in I(t)} q^{|t|} T_{x}. \]
We claim that $I(t)$ is a principal lower order ideal in Bruhat order on $W$,
with maximum element $x(t)$.

First, suppose $x \in I(t)$ and $y < x$.  Then we have $y^J \leq x^J$ by
\cite[Proposition 2.5.1]{b-b}, and this implies $\tilde{y} \leq \tilde{x}$.
Thus, the number of zeros in each leaf column of $\tilde{y}$ is greater than or
equal to the number of zeros in each leaf column of $\tilde{x}$.  Hence, the
leaf capacities for the trees in $A_{\tilde{y},w}$ are greater than or equal to
the leaf capacities for the trees in $A_{\tilde{x},w}$.  Therefore, $y \in
I(t)$, proving that $I(t)$ is a Bruhat lower order ideal.

Since $\tilde{x} \in I(t)$ whenever $x \in I(t)$, we observe that any Bruhat
maximal element of $I(t)$ must be cograssmannian.  Moreover, it follows from the
definition of $I(t)$ that to construct a Bruhat maximal element of $I(t)$, we
must precisely follow the procedure described to construct $x(t)$.  The
cograssmannian condition forces the mask value 0 entries while the Bruhat
maximal condition forces all the other entries to have mask value 1.  Hence,
$x(t)$ is the unique Bruhat maximal element in $I(t)$.  

Thus $\sum_{x \in I(t)} q^{|t|} T_{x} = q^{|t| + \frac{1}{2} \ell(x(t))}
B_{x(t)}'$, and the formula is proved.
\end{proof}

To conclude the running example, we have that $C_w'$ is given by 6 terms
\begin{align*}
C_w' = B_w' & + q^{-\frac{1}{2}} B_{s_6 s_7 s_5 w}' + q^{-\frac{3}{2}} B_{s_3
s_2 s_4 s_1 s_5 w}' + q^{-\frac{3}{2}} B_{s_3 s_2
s_4 s_6 s_1 s_5 s_7 w}' \\ & + q^{-\frac{1}{2}} B_{s_3 s_2 s_4 s_1 s_5 w}' +
q^{-\frac{1}{2}} B_{s_3 s_2
s_4 s_6 s_1 s_5 s_7 w}'
\end{align*}
corresponding precisely to the 6 edge labelled trees shown in
Example~\ref{e:el_trees}.  Hence, the expansion of $C_w'$ into $B'$ basis
elements has a combinatorial interpretation when $w$ is cograssmannian.  It
would be interesting to know whether there exist other classes of elements $w$
for which $C_w'$ always expands nonnegatively into the $B'$ basis.

\subsection{Masks for cograssmannian permutations}
Our next goal is to use the main result of Section~\ref{s:fwp} to give a set
of masks on $\w$ that encode $C_{w}'$.  We approach this by encoding each term
$q^{|t| + \frac{1}{2} \ell(x(t))} B_{x(t)}'$ from the formula of
Theorem~\ref{t:cog_bp} as $\sum_{\s \in \mathcal{F}_{\w}^{P(t)}} q^{|P(t)|}
T_{\w^{\s}}$ for some set of defect positions $P(t)$.  By Lemma~\ref{l:fwp},
it suffices to give a single mask $\s(t)$ with $|t|$ defects that encodes the
element $x(t)$, because the other elements of $B_{x(t)}'$ can all be encoded
by masks in $\mathcal{F}_{\w}^{P(t)}$ with defects in the same positions
$P(t)$ as $\s(t)$.  Note that there are generally several ways to construct an
appropriate $\s(t)$, and these different constructions may produce different
mask sets.

Preserving the notation of the previous section, let $T(w)$ be the unlabelled
tree constructed in the Lascoux--Sch\"utzenberger algorithm.  
Let $t \in A_w$ be one of the valid edge labelings of the tree $T(w)$.  Each
leaf of $t$ corresponds to a valley column in the heap of $\w = v w_0^J$.  Let
$\gamma(t)$ be the unique constant mask for $x(t)$ on $\w$.  Recall that this mask is
obtained by zeroing out entries in the heap of $\w$ starting from valley columns
as specified by the leaf labels of $t$.

{\bf Definition of valley statistics:}
Fix $\w$ and $t$.  Each valley in the ridgeline of the heap of $\w$ has several
parameters associated to it.  Let $v$ be a valley column of the heap, and let
$p(v)$ be the number of mask-value 0 entries in column $v$ in $\gamma(t)$.  We
call these mask-value 0 entries in the heap of $\gamma(t)$ {\bf valley
entries}.  We number these sequentially so that the lowest such entry in the
picture is the first valley entry, and the top valley entry is the $p(v)$-th
valley entry.  Define $q(v)$ to be the number of ``up steps'' in the ridgeline
lying between $v$ and the next peak in the ridgeline to the right of $v$.  We
say that the $i$th {\bf valley diagonal} consists of the entries extending to
the southeast in the heap from the $i$th valley entry.  If there exist
mask-value 1 entries below the $i$th entry of the $i$th valley diagonal in
$\gamma(t)$, then we say that the $i$th valley diagonal is {\bf not zeroed out}.
Otherwise, we say that the $i$th valley diagonal is {\bf zeroed out}.  These
zeroed out diagonals arise from mask-value 0 entries in a valley column further
to the right in the construction of $\gamma(t)$, so once a valley column is
zeroed out, all higher valley columns are also zeroed out.  Let $r(v)$ be the
number of valley diagonals that are not zeroed out.

{\bf Definition of segments and regions:}
We now define the {\bf segment} associated to $v$ to be the collection of
entries of the heap of $\w$ given as the following union of {\bf regions}.
Region $\mathfrak{1}$ is defined to be the top $p(v)$ entries in columns $v$
through $v+q(v)$.  Region $\mathfrak{2}$ is defined to be the entries of the
$r(v)$ valley diagonals that are not zeroed out and do not lie in region
$\mathfrak{1}$.  Region $\mathfrak{3}$ is defined to be those entries of
columns $v$ through $v+q(v)$ that lie on zeroed out valley diagonals and do not
lie in region $\mathfrak{1}$.

For example, Figure~\ref{f:heap_segments} illustrates how a particular heap
decomposes into segments.  The mask-values in Figure~\ref{f:heap_segments} come
from the constant mask $\gamma(t)$ that corresponds to a particular
edge-labelled tree $t$ (that we have not specified completely).  The labels $a, b,
\ldots, k$ correspond to the edge labels from the tree $t$ as shown in
Figure~\ref{f:el_trees}.  Figure~\ref{f:segment_regions} shows how one of the
segments in Figure~\ref{f:heap_segments} decomposes into regions.

\newcommand{\ab}{\ar@{-}'[dddddddddddddd]'[dddddddddddddddr]'[ur][]}
\newcommand{\ad}{\ar@{-}'[dddddddddddddddd]'[dddddddddddddddddr]'[ur][]}
\newcommand{\ac}{\ar@{-}'[dddddddddddddddddd]'[dddddddddddddddddddrdrdrdrdrdrdrdrdrdrdr]'[dddddddddddddddddrdrdrdrdrdrdrdrdrdrdrdr]'[dddddddddddddddddr]'[ur][]}
\newcommand{\af}{\ar@{-}'[dddddddddddddd]'[dddddddddddddddrdrdrdrdrdrdrdrdrdrdr]'[dddddddddddddrdrdrdrdrdrdrdrdrdrdrdrur]'[dddddddddddddrrurdrdddddddururur]'[ururururururur][]}
\newcommand{\ag}{\ar@{-}'[dddddddddddddddddddddddd]'[dddddddddddddddddddddddr]'[ur][]}

\begin{figure}[h]
\begin{center}
\begin{tabular}{c}
\xymatrix @-1.7pc@M=0pt {
& {\hs} & & {\hs} & & {\hs} & & {\hs} & & {\hs} & & {\hs} & & {\hs} & & {\hs} & & {\hs} & & {\hs} & & {\hs} & & {\hs} & & {\hs} & & {\hs} & & {\hs} & & {\hz} & & {\hs} & & {\hs} & & {\hs} & & {\hs} & & {\hs} & & {\hs} & & {\hs} & & {\hs} & & {\hs} \\
{\hs} & & {\hs} & & {\hs} & & {\hs} & & {\hs} & & {\hz} & & {\hs} & & {\hs} & & {\hs} & & {\hs} & & {\hs} & & {\hs} & & {\hs} & & {\hs} & & {\hs} & {\scriptscriptstyle j} & {\hz} & &  {\hz} & & {\hs} & & {\hs} & & {\hs} & & {\hs} & & {\hs} & & {\hs} & & {\hs} & & {\hs} \\
& {\hs} & & {\hs} & & {\hs} & & {\hs} & & {\hz} & & {\hz} & & {\hs} & &{\hs} & & {\hs} & & {\hs} & & {\hs} & & {\hs} & & {\hs} & & {\hs} & {\scriptscriptstyle i} & {\hz} & & {\hz} & & {\hz} & & {\hs} & & {\hs} & & {\hs} & & {\hs} & & {\hs} & & {\hs} & & {\hs} & & {\hs} \\
{\hs} & & {\hs} & & {\hs} & & {\hs} & & {\hz} & & {\hz} & & {\hz} & {\scriptscriptstyle a} & {\hz} & & {\hs} & & {\hs} & & {\hs} & & {\hs} & & {\hs} & & {\hs} & {\scriptscriptstyle h} & {\hz} & & {\hz} & & {\hz} & & {\hz} & & {\hs} & & {\hs} & & {\hs} & & {\hs} & & {\hs} & & {\hs} & & {\hs} \\
& {\hs} & & {\hs} & & {\hs} & & {\hz} & & {\hz} & & {\hz} & & {\hz}\ab & & {\hz} & & {\hs} & & {\hs} & & {\hs} & & {\hs} & & {\hs} & {\scriptscriptstyle g} & {\hz} & & {\hz} & & {\hz} & & {\hz} & & {\hz} & & {\hs} & & {\hs} & & {\hs} & & {\hs} & & {\hs} & & {\hs} & & {\hs} \\
{\hs} & & {\hs} & & {\hs} & & {\hz} & & {\hz} & & {\hz} & & {\hz} & & {\hz} & & {\hz} & & {\hs} & & {\hs} & & {\hs} & & {\hs} & {\scriptscriptstyle f} & {\hz} & & {\hz} & & {\hz} & & {\hz} & & {\hz} & & {\hz} & {\scriptscriptstyle k} & {\hz} & & {\hs} & & {\hs} & & {\hs} & & {\hs} & & {\hs} & & {\hs} \\
& {\hs} & & {\hs} & & {\hz} & & {\hz} & & {\hz} & & {\hz} & & {\hz} & & {\hz} & & {\hz} & {\scriptscriptstyle b} & {\hz} & {\scriptscriptstyle c} & {\hz} & & {\hs} & {\scriptscriptstyle e} & {\hz} & & {\hz} & & {\hz} & & {\hz} & & {\hz} & & {\hz} & & {\hz} \ag& & {\hz} & & {\hs} & & {\hs} & & {\hs} & & {\hs} & & {\hs} & & {\hs} \\
{\hs} & & {\hs} & & {\hz} & & {\hz} & & {\hz} & & {\hz} & & {\hz} & & {\hz} & & {\hz} & & {\hz}\ad & & {\hz}\ac& & {\hz} & {\scriptscriptstyle d} & {\hz} & & {\hz} & & {\hz} & & {\hz} & & {\hz} & & {\hz} & & {\hz} & & {\hz} & & {\hz} & & {\hs} & & {\hs} & & {\hs} & & {\hs} & & {\hs} \\
& {\hs} & & {\hz} & & {\hz} & & {\hz} & & {\hz} & & {\hz} & & {\hz} & & {\hz} & & {\hz} & & {\hz} & & {\hz} & & {\hz}\af& & {\hz} & & {\hz} & & {\hz} & & {\hz} & & {\hz} & & {\hz} & & {\hz} & & {\hz} & & {\hz} & & {\hs} & & {\hs} & & {\hs} & & {\hs} \\
{\hs} & & {\hf} & & {\hz} & & {\hz} & & {\hz} & & {\hz} & & {\hz} & & {\hz} & & {\hz} & & {\hz} & & {\hz} & & {\hz} & & {\hz} & & {\hz} & & {\hz} & & {\hz} & & {\hz} & & {\hz} & & {\hz} & & {\hz} & & {\hz} & & {\hz} & & {\hs} & & {\hs} & & {\hs} \\
& {\hf} & & {\hf} & & {\hz} & & {\hz} & & {\hz} & & {\hz} & & {\hz} & & {\hz} & & {\hz} & & {\hz} & & {\hz} & & {\hz} & & {\hz} & & {\hz} & & {\hz} & & {\hz} & & {\hz} & & {\hz} & & {\hz} & & {\hz} & & {\hz} & & {\hz} & & {\hs} & & {\hs} & & {\hs} \\
{\hs} & & {\hf} & & {\hf} & & {\hz} & & {\hz} & & {\hz} & & {\hz} & & {\hz} & & {\hz} & & {\hz} & & {\hz} & & {\hz} & & {\hz} & & {\hz} & & {\hz} & & {\hz} & & {\hz} & & {\hz} & & {\hz} & & {\hz} & & {\hz} & & {\hz} & & {\hz} & & {\hs} & & {\hs} \\
& {\hs} & & {\hf} & & {\hf} & & {\hz} & & {\hz} & & {\hz} & & {\hz} & & {\hz} & & {\hz} & & {\hz} & & {\hz} & & {\hz} & & {\hz} & & {\hz} & & {\hz} & & {\hz} & & {\hz} & & {\hz} & & {\hz} & & {\hz} & & {\hz} & & {\hz} & & {\hz} & & {\hs} & & {\hs} \\
{\hs} & & {\hs} & & {\hf} & & {\hf} & & {\hz} & & {\hz} & & {\hz} & & {\hz} & & {\hz} & & {\hz} & & {\hz} & & {\hz} & & {\hz} & & {\hz} & & {\hz} & & {\hz} & & {\hz} & & {\hz} & & {\hz} & & {\hz} & & {\hz} & & {\hz} & & {\hz} & & {\hz} & & {\hs} \\
& {\hs} & & {\hs} & & {\hf} & & {\hf} & & {\hz} & & {\hz} & & {\hz} & & {\hz} & & {\hz} & & {\hz} & & {\hz} & & {\hz} & & {\hz} & & {\hz} & & {\hz} & & {\hz} & & {\hz} & & {\hz} & & {\hz} & & {\hz} & & {\hz} & & {\hz} & & {\hz} & & {\hz} & & {\hs} \\
{\hs} & & {\hs} & & {\hs} & & {\hf} & & {\hf} & & {\hz} & & {\hz} & & {\hz} & & {\hz} & & {\hz} & & {\hz} & & {\hz} & & {\hz} & & {\hz} & & {\hz} & & {\hz} & & {\hz} & & {\hz} & & {\hz} & & {\hz} & & {\hz} & & {\hz} & & {\hz} & & {\hz} & & {\hz} \\
& {\hs} & & {\hs} & & {\hs} & & {\hf} & & {\hf} & & {\hz} & & {\hz} & & {\hz} & & {\hz} & & {\hz} & & {\hz} & & {\hz} & & {\hz} & & {\hz} & & {\hz} & & {\hz} & & {\hz} & & {\hz} & & {\hz} & & {\hz} & & {\hz} & & {\hz} & & {\hz} & & {\hz} & & {\hz} \\
{\hs} & & {\hs} & & {\hs} & & {\hs} & & {\hf} & & {\hf} & & {\hz} & & {\hz} & & {\hz} & & {\hz} & & {\hz} & & {\hz} & & {\hz} & & {\hz} & & {\hz} & & {\hz} & & {\hz} & & {\hz} & & {\hz} & & {\hz} & & {\hz} & & {\hz} & & {\hz} & & {\hz} & & {\hz} & & {\hz} \\
& {\hs} & & {\hs} & & {\hs} & & {\hs} & & {\hf} & & {\hf} & & {\hz} & & {\hz} & & {\hz} & & {\hz} & & {\hz} & & {\hz} & & {\hz} & & {\hz} & & {\hz} & & {\hz} & & {\hz} & & {\hz} & & {\hz} & & {\hz} & & {\hz} & & {\hz} & & {\hz} & & {\hz} & & {\hz} \\
{\hs} & & {\hs} & & {\hs} & & {\hs} & & {\hs} & & {\hf} & & {\hf} & & {\hz} & & {\hz} & & {\hz} & & {\hz} & & {\hz} & & {\hz} & & {\hz} & & {\hz} & & {\hz} & & {\hz} & & {\hz} & & {\hz} & & {\hz} & & {\hz} & & {\hz} & & {\hz} & & {\hz} & & {\hz} \\
& {\hs} & & {\hs} & & {\hs} & & {\hs} & & {\hs} & & {\hf} & & {\hf} & & {\hz} & & {\hz} & & {\hz} & & {\hz} & & {\hz} & & {\hz} & & {\hz} & & {\hz} & & {\hz} & & {\hz} & & {\hz} & & {\hz} & & {\hz} & & {\hz} & & {\hz} & & {\hz} & & {\hz} & & {\hs} \\
{\hs} & & {\hs} & & {\hs} & & {\hs} & & {\hs} & & {\hs} & & {\hf} & & {\hf} & & {\hz} & & {\hz} & & {\hz} & & {\hz} & & {\hz} & & {\hf} & & {\hz} & & {\hz} & & {\hz} & & {\hz} & & {\hz} & & {\hz} & & {\hz} & & {\hz} & & {\hz} & & {\hz} & & {\hs} \\
& {\hs} & & {\hs} & & {\hs} & & {\hs} & & {\hs} & & {\hs} & & {\hf} & & {\hf} & & {\hz} & & {\hz} & & {\hz} & & {\hz} & & {\hf} & & {\hf} & & {\hz} & & {\hz} & & {\hz} & & {\hz} & & {\hz} & & {\hz} & & {\hz} & & {\hz} & & {\hz} & & {\hs} & & {\hs} \\
{\hs} & & {\hs} & & {\hs} & & {\hs} & & {\hs} & & {\hs} & & {\hs} & & {\hf} & & {\hf} & & {\hz} & & {\hz} & & {\hz} & & {\hf} & & {\hf} & & {\hf} & & {\hz} & & {\hz} & & {\hz} & & {\hz} & & {\hz} & & {\hz} & & {\hz} & & {\hz} & & {\hs} & & {\hs} \\
& {\hs} & & {\hs} & & {\hs} & & {\hs} & & {\hs} & & {\hs} & & {\hs} & & {\hf} & & {\hf} & & {\hz} & & {\hz} & & {\hf} & & {\hf} & & {\hf} & & {\hf} & & {\hz} & & {\hz} & & {\hz} & & {\hz} & & {\hz} & & {\hz} & & {\hz} & & {\hs} & & {\hs} & & {\hs} \\
{\hs} & & {\hs} & & {\hs} & & {\hs} & & {\hs} & & {\hs} & & {\hs} & & {\hs} & & {\hf} & & {\hf} & & {\hz} & & {\hf} & & {\hf} & & {\hf} & & {\hf} & & {\hf} & & {\hz} & & {\hz} & & {\hz} & & {\hz} & & {\hz} & & {\hz} & & {\hs} & & {\hs} & & {\hs} \\
& {\hs} & & {\hs} & & {\hs} & & {\hs} & & {\hs} & & {\hs} & & {\hs} & & {\hs} & & {\hf} & & {\hf} & & {\hf} & & {\hf} & & {\hf} & & {\hf} & & {\hf} & & {\hf} & & {\hz} & & {\hz} & & {\hz} & & {\hz} & & {\hz} & & {\hs} & & {\hs} & & {\hs} & & {\hs} \\
{\hs} & & {\hs} & & {\hs} & & {\hs} & & {\hs} & & {\hs} & & {\hs} & & {\hs} & & {\hs} & & {\hf} & & {\hf} & & {\hf} & & {\hf} & & {\hf} & & {\hf} & & {\hf} & & {\hf} & & {\hz} & & {\hz} & & {\hz} & & {\hz} & & {\hs} & & {\hs} & & {\hs} & & {\hs} \\
& {\hs} & & {\hs} & & {\hs} & & {\hs} & & {\hs} & & {\hs} & & {\hs} & & {\hs} & & {\hs} & & {\hf} & & {\hf} & & {\hf} & & {\hf} & & {\hf} & & {\hf} & & {\hf} & & {\hf} & & {\hz} & & {\hz} & & {\hz} & & {\hs} & & {\hs} & & {\hs} & & {\hs} & & {\hs} \\
{\hs} & & {\hs} & & {\hs} & & {\hs} & & {\hs} & & {\hs} & & {\hs} & & {\hs} & & {\hs} & & {\hs} & & {\hf} & & {\hf} & & {\hf} & & {\hf} & & {\hf} & & {\hf} & & {\hf} & & {\hf} & & {\hz} & & {\hz} & & {\hs} & & {\hs} & & {\hs} & & {\hs} & & {\hs} \\
& {\hs} & & {\hs} & & {\hs} & & {\hs} & & {\hs} & & {\hs} & & {\hs} & & {\hs} & & {\hs} & & {\hs} & & {\hf} & & {\hf} & & {\hf} & & {\hf} & & {\hf} & & {\hf} & & {\hf} & & {\hf} & & {\hz} & & {\hs} & & {\hs} & & {\hs} & & {\hs} & & {\hs} & & {\hs} \\
{\hs} & & {\hs} & & {\hs} & & {\hs} & & {\hs} & & {\hs} & & {\hs} & & {\hs} & & {\hs} & & {\hs} & & {\hs} & & {\hf} & & {\hf} & & {\hf} & & {\hf} & & {\hf} & & {\hf} & & {\hf} & & {\hf} & & {\hs} & & {\hs} & & {\hs} & & {\hs} & & {\hs} & & {\hs} \\
& {\hs} & & {\hs} & & {\hs} & & {\hs} & & {\hs} & & {\hs} & & {\hs} & & {\hs} & & {\hs} & & {\hs} & & {\hs} & & {\hf} & & {\hf} & & {\hf} & & {\hf} & & {\hf} & & {\hf} & & {\hf} & & {\hs} & & {\hs} & & {\hs} & & {\hs} & & {\hs} & & {\hs} & & {\hs} \\
{\hs} & & {\hs} & & {\hs} & & {\hs} & & {\hs} & & {\hs} & & {\hs} & & {\hs} & & {\hs} & & {\hs} & & {\hs} & & {\hs} & & {\hf} & & {\hf} & & {\hf} & & {\hf} & & {\hf} & & {\hf} & & {\hs} & & {\hs} & & {\hs} & & {\hs} & & {\hs} & & {\hs} & & {\hs} \\
& {\hs} & & {\hs} & & {\hs} & & {\hs} & & {\hs} & & {\hs} & & {\hs} & & {\hs} & & {\hs} & & {\hs} & & {\hs} & & {\hs} & & {\hf} & & {\hf} & & {\hf} & & {\hf} & & {\hf} & & {\hs} & & {\hs} & & {\hs} & & {\hs} & & {\hs} & & {\hs} & & {\hs} & & {\hs} \\
{\hs} & & {\hs} & & {\hs} & & {\hs} & & {\hs} & & {\hs} & & {\hs} & & {\hs} & & {\hs} & & {\hs} & & {\hs} & & {\hs} & & {\hs} & & {\hf} & & {\hf} & & {\hf} & & {\hf} & & {\hs} & & {\hs} & & {\hs} & & {\hs} & & {\hs} & & {\hs} & & {\hs} & & {\hs} \\
& {\hs} & & {\hs} & & {\hs} & & {\hs} & & {\hs} & & {\hs} & & {\hs} & & {\hs} & & {\hs} & & {\hs} & & {\hs} & & {\hs} & & {\hs} & & {\hf} & & {\hf} & & {\hf} & & {\hs} & & {\hs} & & {\hs} & & {\hs} & & {\hs} & & {\hs} & & {\hs} & & {\hs} & & {\hs} \\
{\hs} & & {\hs} & & {\hs} & & {\hs} & & {\hs} & & {\hs} & & {\hs} & & {\hs} & & {\hs} & & {\hs} & & {\hs} & & {\hs} & & {\hs} & & {\hs} & & {\hf} & & {\hf} & & {\hs} & & {\hs} & & {\hs} & & {\hs} & & {\hs} & & {\hs} & & {\hs} & & {\hs} & & {\hs} \\
& {\hs} & & {\hs} & & {\hs} & & {\hs} & & {\hs} & & {\hs} & & {\hs} & & {\hs} & & {\hs} & & {\hs} & & {\hs} & & {\hs} & & {\hs} & & {\hs} & & {\hf} & & {\hs} & & {\hs} & & {\hs} & & {\hs} & & {\hs} & & {\hs} & & {\hs} & & {\hs} & & {\hs} & & {\hs} \\
}
\end{tabular}
\end{center}
\caption{The heap broken into segments}\label{f:heap_segments}
\end{figure}
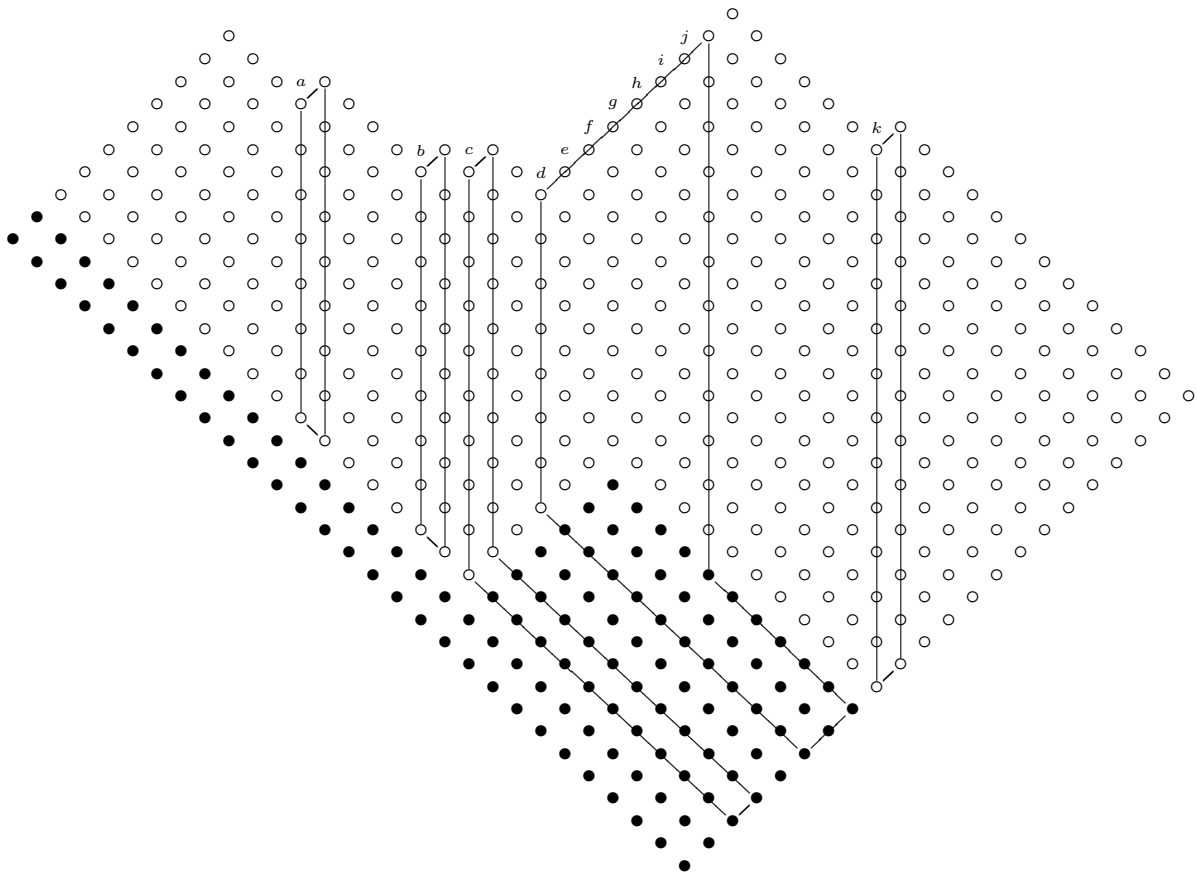

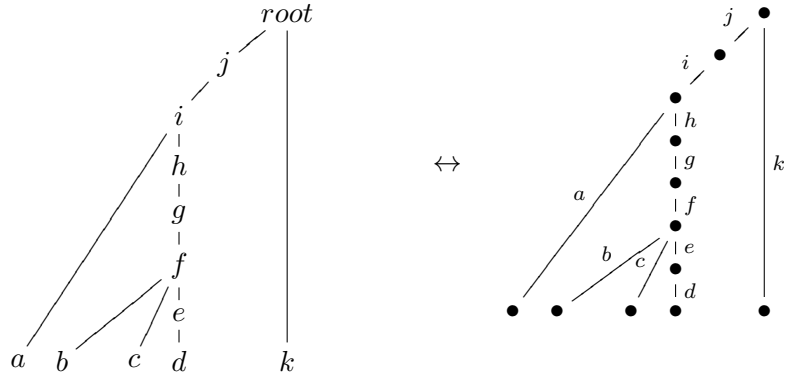
\begin{figure}[h]
\begin{center}
\begin{tabular}{cc}
\xymatrix @-1.6pc {
& & & & & & root \ar@{-}[dl] \ar@{-}[ddddddd] & \\
& & & & & j \ar@{-}[dl] & & \\
& & & & i \ar@{-}[d] \ar@{-}[dddddllll] & & \\
& & & & h \ar@{-}[d] & & & & & & \leftrightarrow \\
& & & & g \ar@{-}[d] & & \\
& & & & f \ar@{-}[d] \ar@{-}[ddl] \ar@{-}[ddlll] & & \\
& & & & e \ar@{-}[d] & & \\
a & b & & c & d & & k \\
} & \xymatrix @-1.6pc {
& & & & & & \gn \ar@{-}[dl]_{j} \ar@{-}[ddddddd]^{k} & \\
& & & & & \gn \ar@{-}[dl]_{i} & & \\
& & & & \gn \ar@{-}[d]^{h} \ar@{-}[dddddllll]_{a} & & \\
& & & & \gn \ar@{-}[d]^{g} & & \\
& & & & \gn \ar@{-}[d]^{f} & & \\
& & & & \gn \ar@{-}[d]^{e} \ar@{-}[ddl]_{c} \ar@{-}[ddlll]_{b} & & \\
& & & & \gn \ar@{-}[d]^{d} & & \\
\gn & \gn & & \gn & \gn & & \gn \\
}
\end{tabular}
\end{center}
\caption{Edge-labelled trees give ridgeline labels}\label{f:el_trees}
\end{figure}

\newcommand{\ah}{\ar@{-}'[dddddddddddddd]'[dddddddddddddddrdrdrdrdrdrdrdrdrdrdrdrdr]'[dddddddddddddrdrdrdrdrdrdrdrdrdrdrdrur]'[dddddddddddddrrurdrddddddddrurururul]'[ururururururur][]}
\newcommand{\ai}{\ar@{-}[ururururururur]}
\newcommand{\aj}{\ar@{-}[drdrdrdr]}
\newcommand{\ajs}{\ar@{-}[drdrdrdr]}

\begin{figure}[h]
\begin{center}
\begin{tabular}{c}
\xymatrix @-1.7pc@M=0pt {
 & {\hs} & & {\hs} & & {\hs} & & {\hs} & & {\hz} & & {\hs} & & {\hs} & & {\hs}\\
 {\hs} & & {\hs} & & {\hs} & & {\hs} & & {\hz} & &  {\hz} & & {\hs} & & {\hs} \\
 & {\hs} & & {\hs} & & {\hs} & & {\hz} & & {\hz} & & {\hs} & & {\hs} & & {\hs} \\
 {\hs} & & {\hs} & & {\hs} & & {\hz} & & {\hz} & & {\hz} & & {\hs} & & {\hs} &  \\
 & {\hs} & & {\hs} & & {\hz} & & {\hz} & & {\hz} & & {\hs} & & {\hs} & & {\hs}  \\
 {\hs} & & {\hs} & & {\hz} & & {\hz} & & {\hz} & & {\hz} & & {\hs} & & {\hs} &  \\
 & {\hs} & & {\hz} & & {\hz} & & {\hz} & & {\hz} & & {\hs} & & {\hs} & & {\hs}  \\
 {\hs} & & {\hz} & & {\hz} & & {\hz} & & {\hz} & & {\hz} & & {\hs} & & {\hs} & &  \\
 & {\hz}\ah& & {\hz} & & {\hz} & & {\hz} & & {\hz} & & {\hs} & & {\hs} & & {\hs} \\
 {\hs} & & {\hz} & & {\hz} & & {\hz} & & {\hz} & & {\hz} & & {\hs} & & {\hs} &  \\
 & {\hz} & & {\hz} & & {\hz} & & {\hz} & & {\hz} & & {\hs} & & {\hs} & & {\hs}  \\
 {\hs} & & {\hz} & & {\hz} & & {\hz} & & {\hz} & & {\hz} & & {\hs} & & {\hs} &  \\
 & {\hz} & & {\hz} & & {\hz} & & {\hz} & & {\hz} & & {\hs} & & {\hs} & & {\hs}  \\
 {\mathfrak{1}} & & {\hz} & & {\hz} & & {\hz} & & {\hz} & & {\hz} & & {\hs} & & {\hs} &  \\
 & {\hz} & & {\hz} & & {\hz} & & {\hz} & & {\hz} & & {\hs} & & {\hs} & & {\hs}  \\
 {\hs} & & {\hz} & & {\hz} & & {\hz} & & {\hz} & & {\hz} & & {\hs} & & {\hs} &  \\
 & {\hz} & & {\hz} & & {\hz} & & {\hz} & & {\hz} & & {\hs} & & {\hs} & & {\hs}  \\
 {\hs} & & {\hz} & & {\hz} & & {\hz} & & {\hz} & & {\hz} & & {\hs} & & {\hs} &  \\
 & {\hz} & & {\hz} & & {\hz} & & {\hz} & & {\hz} & & {\hs} & & {\hs} & & {\hs}  \\
 {\hs} & & {\hz} & & {\hz} & & {\hz} & & {\hz} & & {\hz} & & {\hs} & & {\hs} &  \\
 & {\hz} & & {\hz} &\aj& {\hz} & & {\hz} & & {\hz} & & {\mathfrak{3}} & & {\hs} & & {\hs}  \\
 {\hs} & & {\hz} & & {\hf} & & {\hz} & & {\hz} & & {\hz} & & {\hs} & & {\hs} &  \\
 & {\hz} & & {\hf} & & {\hf} & & {\hz} & & {\hz} & & {\hs} & & {\hs} & & {\hs}  \\
 {\hs} &\ai& {\hf} & & {\hf} & & {\hf} & & {\hz} & & {\hz} & & {\hs} & & {\hs} &  \\
 & {\hs} & & {\hf} & & {\hf} & & {\hf} & & {\hz} & & {\hs} & & {\hs} & & {\hs}  \\
 {\hs} & & {\hs} & & {\hf} & & {\hf} & & {\hf} & & {\hz} & & {\hs} & & {\hs} &  \\
 & {\hs} & & {\hs} & & {\hf} & & {\hf} & & {\hf} & & {\hs} & & {\hs} & & {\hs}  \\
 {\hs} & & {\hs} & & {\hs} & & {\hf} & & {\hf} & & {\hf} & & {\hs} & & {\hs} &  \\
 & {\hs} & & {\hs} & & {\hs} & & {\hf} & & {\hf} & & {\hf} & & {\hs} & & {\hs}  \\
 {\hs} & & {\hs} & & {\hs} & & {\hs} & & {\hf} & & {\hf} & & {\hf} & & {\hs} &  \\
 & {\hs} & & {\hs} & & {\hs} & & {\hs} & & {\hf} & & {\hf} & & {\hf} & & {\hs}  \\
 {\hs} & & {\hs} & & {\hs} & & {\hs} & & {\hs} & & {\hf} & & {\hf} & & {\hf} &  \\
 & {\hs} & & {\hs} & & {\hs} & & {\hs} & & {\hs} & & {\hf} & & {\hf} & & {\hs}  \\
 {\hs} & & {\hs} & & {\hs} & & {\hs} & & {\hs} & & {\hs} & & {\hf} & & {\hf} &  \\
 & {\hs} & & {\hs} & & {\hs} & & {\hs} & & {\hs} & & {\hs} & & {\hf} & & {\hs} & & {\mathfrak{2}} \\
 {\hs} & & {\hs} & & {\hs} & & {\hs} & & {\hs} & & {\hs} & & {\hs} & & {\hf} & \\
 & {\hs} & & {\hs} & & {\hs} & & {\hs} & & {\hs} & & {\hs} & & {\hs} & & {\hs}  \\
 {\hs} & & {\hs} & & {\hs} & & {\hs} & & {\hs} & & {\hs} & & {\hs} & & {\hs} &  \\
 & {\hs} & & {\hs} & & {\hs} & & {\hs} & & {\hs} & & {\hs} & & {\hs} & & {\hs}  \\
}
\end{tabular}
\end{center}
\caption{A single segment broken into regions}\label{f:segment_regions}
\end{figure}
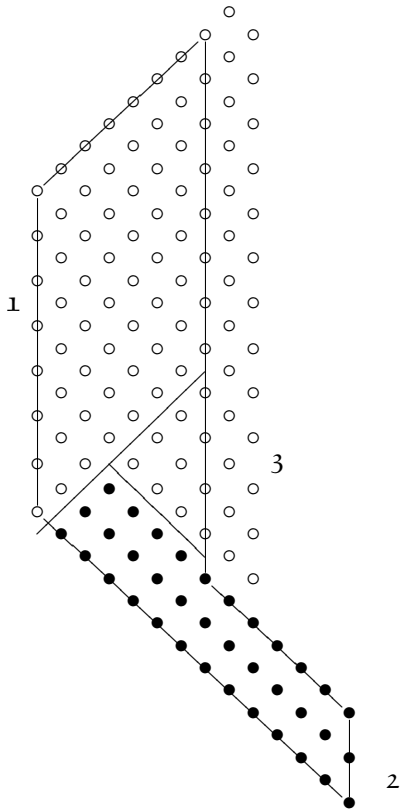

\bigskip

The following lemma, which allows us to work segment by segment in specifying
the mask $\s(t)$, follows immediately from the definition.

\begin{lemma}\label{l:distinct_entries}
Suppose $v$ and $v'$ are distinct valley columns in the heap of $\w$.  Then the
entries in the segment associated to $v$ are disjoint from the entries of the
segment associated to $v'$.
\end{lemma}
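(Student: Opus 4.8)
The plan is to fix two distinct valley columns and, using left--right symmetry, to reduce to the case $v < v'$; then to verify the claimed disjointness region by region, the two essential ingredients being the separation of column ranges forced by the ridgeline and the fact that valley diagonals are parallel lines.

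First I would record the ridgeline separation. Between the valley columns $v$ and $v'$ the ridgeline must pass over a peak, and the first such peak to the right of $v$ occurs in column $v+q(v)$: by definition $q(v)$ counts the up-steps of the ridgeline from $v$ to that peak, with no intervening down-steps, so those $q(v)$ steps advance the column by exactly $q(v)$. Hence $v+q(v) < v'$. On the other hand every entry of the segment of $v'$ lies in a column $\geq v'$: regions $\mathfrak{1}$ and $\mathfrak{3}$ of $v'$ lie by definition in columns $v',\dots,v'+q(v')$, and region $\mathfrak{2}$ of $v'$ lies on valley diagonals, each of which runs southeast out of column $v'$. Since regions $\mathfrak{1}$ and $\mathfrak{3}$ of $v$ lie in columns $v,\dots,v+q(v)\subseteq\{v,\dots,v'-1\}$, they are disjoint from the entire segment of $v'$. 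By symmetry, the only remaining point is that region $\mathfrak{2}$ of $v$ is disjoint from the segment of $v'$.

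For this I would split the segment of $v'$ into its part in columns $v',\dots,v'+q(v')$ (regions $\mathfrak{1}$ and $\mathfrak{3}$) and its region $\mathfrak{2}$, which lies on the valley diagonals of $v'$. Region $\mathfrak{2}$ of $v$ lies on the (top $r(v)$) valley diagonals of $v$, that is, on southeast diagonals through the valley entries in column $v$. Since two distinct southeast diagonals are disjoint, it is enough to check that a southeast diagonal through a valley entry in column $v$, once it reaches column $v'$, sits strictly below every entry of the heap of $v$ in that column: a valley entry of $v$ lies weakly below the ridgeline over column $v$, so after descending the further $v'-v\geq 1$ levels of the diagonal it lies strictly below the ridgeline value over column $v'$, hence below every heap entry of $v$ in column $v'$; in particular it lies below all the valley entries of $v'$ and, continuing the same estimate in columns $v'+1,\dots,v'+q(v')$, below the non-region-$\mathfrak{1}$ entries there as well. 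This is exactly what one sees in the lattice-point picture of a cograssmannian heap, as displayed in Figure~\ref{f:heap_segments}, which is the sense in which the statement is immediate from the definition.

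The step I expect to be the main obstacle is precisely that level estimate for region $\mathfrak{2}$: turning the staircase shape of the cograssmannian heap, together with the Russian-style partition of $v$ sitting inside it, into the clean inequality asserting that a valley diagonal of $v$, once it has passed the peak column $v+q(v)$, has descended below every partition-part entry of $\w$ in each column occupied by a later valley's segment. Everything else is bookkeeping of column ranges through the ridgeline peaks.
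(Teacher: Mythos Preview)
The paper itself offers no proof beyond the remark that the lemma ``follows immediately from the definition,'' so there is nothing to compare against in that sense.

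Your overall strategy is sound: separate the column ranges to handle regions~$\mathfrak{1}$ and~$\mathfrak{3}$, then argue on southeast diagonals for region~$\mathfrak{2}$.  The column--range step is correct: the peak between $v$ and $v'$ forces $v+q(v)<v'$, so the regions~$\mathfrak{1}$ and~$\mathfrak{3}$ of $v$ and of $v'$ occupy disjoint column intervals, and region~$\mathfrak{2}$ of $v'$ lies in columns $\geq v'$.

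The gap is in your level estimate for region~$\mathfrak{2}$ of $v$.  You show that a valley diagonal of $v$, at column $v'$, sits strictly below the ridgeline $h_{v'}$ of~$\w$.  But that is far too weak: the segment of $v'$ at column $v'$ consists of the top $p(v')$ entries, which reach down to height $h_{v'}-p(v')+1$, potentially well below the ridgeline.  In fact the inequality you would need---that \emph{every} valley diagonal of $v$ lies below \emph{every} valley entry of $v'$ at column $v'$---is false in general: if $p(v')$ exceeds twice the number of up-steps between $v$ and $v'$, then the highest valley diagonal of $v$ passes through a valley entry of $v'$.  (Your parenthetical ``top~$r(v)$'' is also reversed; the non-zeroed-out diagonals are the \emph{lowest}~$r(v)$.)

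What rescues the lemma is precisely the ``not zeroed out'' condition that defines region~$\mathfrak{2}$, which your argument never invokes.  As the paper remarks, a zeroed-out diagonal of $v$ ``arises from mask-value~$0$ entries in a valley column further to the right''; contrapositively, a \emph{non}-zeroed-out diagonal of $v$ does \emph{not} pass through any valley entry of any later valley~$v'$.  Since at column~$v'$ such a diagonal lies within the heap (below the ridgeline) but not among the top $p(v')$ entries, it must lie strictly below the lowest valley entry of~$v'$.  Equivalently, its southeast diagonal-sum is strictly less than $v'+h_{v'}-p(v')+1$, which is a lower bound for the diagonal-sum of every entry in the segment of~$v'$ (region~$\mathfrak{1}$ entries have diagonal-sum at least this, and regions~$\mathfrak{2}$ and~$\mathfrak{3}$ of $v'$ lie on valley diagonals of~$v'$).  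That is the inequality you need, and it comes from the definition of~$r(v)$ rather than from the ridgeline of~$\w$.
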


{\bf Definition of $\lambda(v, t)$:} Recall that the vertices of $T(w)$
correspond to pairs of edges in the ridgeline that lie at the same level in
the heap such that, in the lattice path associated to the ridgeline, the left
edge is a ``down step'', and the right edge is an ``up step''.  Since we also
adjoin a root vertex to the tree, we can associate each vertex, and hence each
pair of matching steps in the ridgeline, to the unique edge above it in the
rooted tree.  Although each edge in $T(w)$ is represented by two different
steps in the ridgeline, we choose the ``up steps'' as our representatives for
a given edge in the tree $T(w)$.  (Observe that this is a choice from which a
dual construction could be developed.)  We label these ``up steps'' by the
edge label given in $t$.  Restricting to a particular segment associated with
valley $v$, let $e_1, e_2, \ldots, e_{q(v)}$ denote these edge labels
associated with the ``up steps'' of the ridgeline in columns $v$ through
$v+q(v)$, where the edges are ordered sequentially with $e_1$ labeling the
leaf edge which is closest to the valley $v$.  The edge labels form a
partition
\[ p(v) \geq  e_1 \geq e_2 \geq \cdots \geq e_{q(v)} \geq 0 \] 
that we denote by $\lambda(v,t)$.  Observe that $e_1 < p(v)$ precisely when
there exists a valley to the left or right of $v$ with a large leaf edge label
that ``zeros out'' entries above in the construction of $\gamma(t)$.

For example, in Figure~\ref{f:heap_segments} the labels $d, e, \ldots, j$ give
the parts of the the partition $\lambda(v,t)$ where $v$ is the fourth valley
column from the left and $t$ is the tree shown in Figure~\ref{f:el_trees}.

{\bf Definition of the mask $\s(t)$:}
We are now in a position to define our mask $\s(t)$ for $x(t)$ on $\w$, working
segment by segment.  Fix a valley column $v$ and associated partition of edge
labels $\lambda(v,t)$.  We define $\s(t)$ so that the number of defects in
$\s(t)$ restricted to the segment is $|\lambda(v,t)| = \sum_{i=1}^{q(v)} e_i$.

The mask-values of the entries in the segment will be completely determined by
the partition $\lambda(v,t)$ of edge labels from $t$, but we define some
auxiliary partitions to simplify notation.  Let 
\[ \lambda'(v,t) = \text{ nonnegative parts of } \lambda(v,t) - (1, 2, 3, \ldots), \]
\[ \nu(v,t) = \text{ nonnegative parts of } \lambda(v,t)^\dag - (0, 1, 2, \ldots), \text{ and } \]
\[ \eta(v,t) = \text{ nonnegative parts of } \nu(v,t) - (r(v), r(v), r(v), \ldots). \]
Here, $\lambda^\dag$ denotes the transpose of a partition $\lambda$.  We
freely identify $\lambda$ with its Ferrers diagram, which has $\lambda_i$
boxes in the $i$-th row.

\begin{lemma}\label{l:distinct_rows}
The partitions $\lambda'(v,t)$, $\nu(v,t)$ and $\eta(v,t)$ have all distinct row lengths.
\end{lemma}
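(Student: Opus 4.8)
The plan is to reduce all three claims to one elementary fact about integer sequences: if $a_1 \ge a_2 \ge a_3 \ge \cdots$ is weakly decreasing with all $a_i \ge 0$, and $b_1, b_2, b_3, \ldots$ are integers with $b_{i+1} - b_i \ge 1$ for every $i$, then the sequence $c_i := a_i - b_i$ is strictly decreasing, since $c_i - c_{i+1} = (a_i - a_{i+1}) + (b_{i+1} - b_i) \ge 0 + 1 = 1$. A strictly decreasing integer sequence can drop below $0$ at most once and thereafter stays negative, so the terms with $c_i \ge 0$ form an initial run $c_1 > c_2 > \cdots > c_m \ge 0$; these values are pairwise distinct, hence they are the parts of a partition with all distinct row lengths. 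This is just the familiar staircase-subtraction trick relating partitions and strict partitions, and it gets applied essentially verbatim in each case.

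First I would apply this with $a = \lambda(v,t)$, which is weakly decreasing because it is a partition, and $b = (1, 2, 3, \ldots)$, whose consecutive differences are exactly $1$; this shows that the nonnegative parts of $\lambda(v,t) - (1,2,3,\ldots)$, i.e.\ $\lambda'(v,t)$, have distinct row lengths. Next I would apply it with $a = \lambda(v,t)^\dag$, which is again a partition and hence weakly decreasing, and $b = (0, 1, 2, \ldots)$, again with consecutive differences $1$; this gives the claim for $\nu(v,t)$ and, as a byproduct, records that the parts of $\nu(v,t)$ are themselves \emph{strictly} decreasing. Finally, for $\eta(v,t)$ I would use that subtracting the constant sequence $(r(v), r(v), r(v), \ldots)$ from the strictly decreasing sequence of parts of $\nu(v,t)$ leaves it strictly decreasing; passing to the nonnegative terms again yields pairwise distinct values. (Equivalently, the opening observation goes through unchanged with the hypothesis $b_{i+1} - b_i \ge 1$ weakened to $\ge 0$ once $a$ is known to be strictly, rather than merely weakly, decreasing.)

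There is essentially no obstacle here; the only point deserving a word of care is the reading of ``nonnegative parts,'' which must be understood as the initial run of nonnegative terms of the indicated integer sequence. Strict monotonicity of that sequence is precisely what guarantees both that this initial run captures \emph{all} nonnegative terms and that no value can be repeated, so distinctness of the row lengths is immediate in all three cases.
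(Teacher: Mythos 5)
Your proposal is correct and rests on the same underlying fact as the paper's own argument, namely that subtracting a staircase-type sequence with unit (or nonnegative) gaps from a weakly (or strictly) decreasing integer sequence yields a strictly decreasing sequence. The paper phrases this as a short proof by contradiction (assume $\eta_i = \eta_{i+1} \ne 0$, back up to $\nu$ and then to $\lambda^\dag$ to contradict the partition inequality), while you give the direct, monotonicity-based formulation; the content is the same, only the logical packaging differs.
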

\begin{proof}
If $\eta(v,t)_i = \eta(v,t)_{i+1} \neq 0$ then $\nu(v,t)_{i} = \eta(v,t)_i +
r(v) = \eta(v,t)_{i+1} + r(v) = \nu(v,t)_{i+1} \neq 0$.  But then,
$\lambda(v,t)^\dag_i = \nu(v,t)_i + (i-1) < \nu(v,t)_{i+1} + i =
\lambda(v,t)^\dag_{i+1}$, contradicting that $\lambda(v,t)^\dag$ is a partition.
The argument for $\lambda'(v,t)$ is similar.
\end{proof}

Begin with the mask-values from $\gamma(t)$ restricted to the current segment.
In region $\mathfrak{1}$, set the lowest $\lambda'(v,t)_i$ entries in column
$v+i$ to be zero-defects $\hd$.  Observe that this is equivalent to inserting
$\min( \lambda(v,t)^\dag_i, i-1 )$ zero-defects on a consecutive NW-SE row in
region $\mathfrak{1}$ of valley diagonal $i$.

Next, insert row $i$ of $\nu(v,t)$ as a consecutive sequence of plain-one $\hf$
entries lying on a diagonal emanating to the northeast from the entry just above
the highest zero-defect $\hd$ in column $v+i$; if there is no zero-defect in
column $v+i$ (because $\lambda'(v,t)_i=0$), then the plain-one $\hf$ entries
start just above the lowest entry of column $v+i$ in region $\mathfrak{1}$.  All
other entries in region $\mathfrak{1}$ are plain-zeros $\hz$.

In region $\mathfrak{3}$, we define a {\bf feasible subregion} where entries
are initially defined to have mask-value 1.  All of the entries of region
$\mathfrak{3}$ lying outside of the feasible subregion will remain mask-value 0.
The feasible subregion is defined by inserting row $i$ of the partition
$\eta(v,t)$ as a consecutive sequence of plain-one $\hf$ entries lying on a
diagonal emanating to the southeast from the highest entry in column $i$ of
region $\mathfrak{3}$.

Next, we consider the strings going southeast from either the border between
region $\mathfrak{1}$ and region $\mathfrak{2}$, or the border between region
$\mathfrak{1}$ and the first $\eta(v,t)^\dag_1 + 1$ columns of region
$\mathfrak{3}$.  If we follow these strings down in the heap, they either
eventually hit the end of a row of $\eta(v,t)$, or they hit the end of a valley
diagonal at an entry of $w_0^J$ in region $\mathfrak{2}$.  In either case, the
strings then change direction so that they emanate downward in the southwest
direction and cross the remaining diagonals of regions $\mathfrak{2}$ and
$\mathfrak{3}$, which consist entirely of mask-value 1 entries.  We call the
intersection of these northeast-southwest string paths with a given valley
diagonal the {\bf cross-diagonal entries} of the valley diagonal.  Since all of
the rowlengths of $\eta(v,t)$ are distinct by Lemma~\ref{l:distinct_rows}, we
may observe that the cross-diagonal entries are ordered so that a cross-diagonal
entry corresponding to the string path emanating from column $i$ will lie below
a cross-diagonal entry corresponding to the string path emanating from column
$j>i$ on the lower boundary of region $\mathfrak{1}$.

For example, Figure~\ref{f:cross_diagonals} illustrates the string paths that
yield cross-diagonals for a particular heap.

\begin{figure}[h]
\includegraphics[scale=0.7]{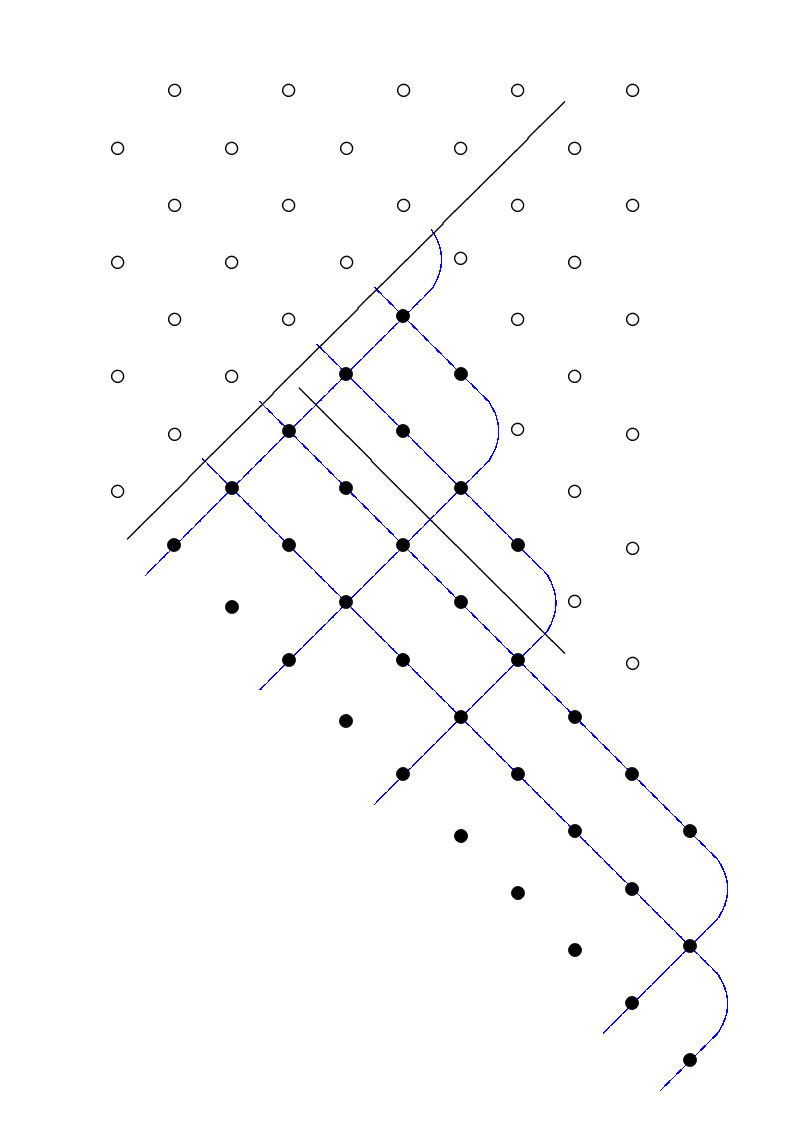}
\caption{Cross-diagonals}\label{f:cross_diagonals}
\end{figure}

In region $\mathfrak{2}$ and in the feasible subregion of region $\mathfrak{3}$,
we insert into valley diagonal $i$ exactly $\nu(v,t)_i - \eta(v,t)_i$
zero-defects in the lowest cross-diagonal entries of the valley diagonal.
(Observe that by definition, $\nu(v,t)_i - \eta(v,t)_i \leq r(v)$ for all $i$.)
We also insert row $i$ of $\eta(v,t)$ as a consecutive sequence of one-defect
$\ho$ entries lying on a diagonal emanating to the southeast from the top entry
in valley diagonal $i$ not lying in region $\mathfrak{1}$.

Table~\ref{t:part_ins} summarizes how the partitions are inserted into the
regions.

\begin{table}[h!]
\begin{center}
\caption{Partition insertion}\label{t:part_ins}
\begin{tabular}{|p{0.3in}|p{3.4in}|p{2.2in}|}
\hline
$\lambda^\dag_i$, $\lambda_i'$ & $\min( \lambda(v,t)^\dag_i, i-1 )$ inserted as a
consecutive NW-SE row of zero-defects on valley diagonal $i$; this is equivalent
to inserting $\lambda_i'$ as a column of zero-defects & region $\mathfrak{1}$ \\
\hline
$\nu_i$ &  inserted as defects on the $i$th valley diagonal; 
\begin{itemize}
    \item $\eta_i$ of these are one-defects lying at the top of the diagonal 
    \item $\nu_i - \eta_i \leq r(v)$ of these are zero-defects lying at cross-diagonal positions.
\end{itemize} & region $\mathfrak{2}$ and feasible subregion of $\mathfrak{3}$ \\
&  also inserted as a consecutive SW-NE row of plain-one entries & region $\mathfrak{1}$ \\
\hline
$\eta_i$ &  inserted as a consecutive NW-SE row of one-defect entries along valley
diagonal $i$ (followed by a single zero-defect) & region $\mathfrak{2}$ and feasible subregion of $\mathfrak{3}$ \\
&  also inserted as a consecutive NW-SE row of mask-value 1 entries defining the feasible subregion & region $\mathfrak{3}$ \\
\hline
\end{tabular}
\end{center}
\end{table}

\begin{figure}[p]
\includegraphics[scale=1.0]{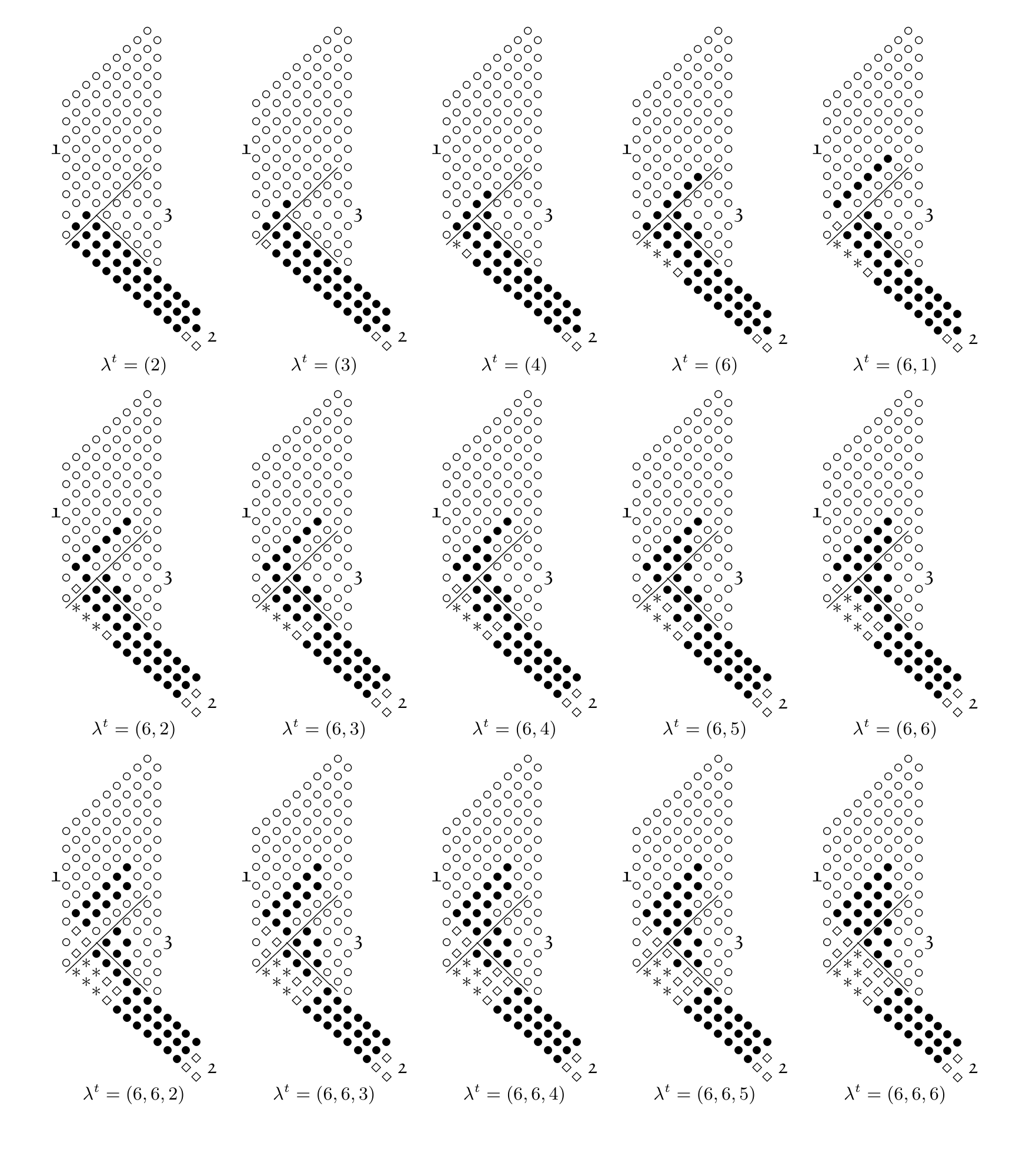}
\caption{Some mask assignments for $r=3$}\label{f:insertion}
\end{figure}

\begin{example}
Figure~\ref{f:insertion} illustrates the construction for some typical
partitions and $r=3$.  Observe that each of the partitions $\lambda(v,t)^\dag$,
$\lambda'(v,t)$, $\nu(v,t)$ and $\eta(v,t)$ are clearly embedded inside
the heap.
\end{example}

\begin{lemma}\label{l:well_defined}
The construction of $\s(t)$ is consistent in the sense that valley diagonal $i$
always has at least $\nu(v,t)_i - \eta(v,t)_i$ cross-diagonal entries.
Furthermore, the $i$th valley diagonal in $\s(t)$ contains exactly
$\lambda(v,t)^\dag_i$ defects.
\end{lemma}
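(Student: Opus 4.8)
The plan is to count the defects lying on valley diagonal $i$ region by region, following the description summarized in Table~\ref{t:part_ins}, and to reduce the entire bookkeeping to a single elementary identity among the partitions involved. Write $\lambda^\dag_i := \lambda(v,t)^\dag_i$. Directly from the definitions one has $\nu(v,t)_i = \max(\lambda^\dag_i - (i-1),\,0)$ and $\eta(v,t)_i = \max(\nu(v,t)_i - r(v),\,0)$, so that $\nu(v,t)_i - \eta(v,t)_i = \min(\nu(v,t)_i,\,r(v)) \leq r(v)$ (this is the parenthetical claim in the construction), and, treating the cases $\lambda^\dag_i \geq i-1$ and $\lambda^\dag_i < i-1$ separately,
\[ \min(\lambda^\dag_i,\,i-1) + \nu(v,t)_i = \lambda^\dag_i . \]
I would then match the first summand with the defects placed on valley diagonal $i$ in region $\mathfrak{1}$, and the second summand with the $\eta(v,t)_i$ one-defects together with the $\nu(v,t)_i - \eta(v,t)_i$ cross-diagonal zero-defects placed on valley diagonal $i$ in region $\mathfrak{2}$ and the feasible subregion of region $\mathfrak{3}$; no defects are placed anywhere else on the diagonal, so adding these gives exactly $\lambda^\dag_i$.

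For the region $\mathfrak{1}$ count: by construction the entries turned into zero-defects $\hd$ in region $\mathfrak{1}$ in column $v+j$ are the lowest $\lambda'(v,t)_j = \max(e_j - j,\,0)$ of them. The total $\sum_j \max(e_j - j,\,0)$ counts the cells of the Ferrers diagram of $\lambda(v,t)$ lying strictly to the right of the staircase $(1,2,3,\dots)$, and counting these same cells by column rather than by row gives $\sum_i \min(\lambda^\dag_i,\,i-1)$. Under the coordinate shear that straightens the valley diagonals, ``column $v+j$'' and ``valley diagonal $i$'' become precisely the rows and the columns of this sub-Ferrers diagram, so the same count, refined, shows that exactly $\min(\lambda^\dag_i,\,i-1)$ of these zero-defects sit on valley diagonal $i$. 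This is the observation already flagged when $\lambda'(v,t)$ is defined; making it rigorous amounts to fixing coordinates on the staircase-shaped region $\mathfrak{1}$.

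The substantive step is the first assertion of the lemma, that valley diagonal $i$ always carries at least $\nu(v,t)_i - \eta(v,t)_i$ cross-diagonal entries --- which is also exactly what is needed for the region $\mathfrak{2}$/feasible-$\mathfrak{3}$ part of the construction to be well defined, since by the identity above $\nu(v,t)_i - \eta(v,t)_i \leq r(v)$. Here I would argue ``by picture'', as the surrounding discussion suggests. The northeast--southwest string paths defining the cross-diagonal entries emanate from the lower border of region $\mathfrak{1}$ with region $\mathfrak{2}$ and with the first $\eta(v,t)^\dag_1 + 1$ columns of region $\mathfrak{3}$; in particular one string comes off each of the $r(v)$ non-zeroed-out valley diagonals. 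Because the row lengths of $\eta(v,t)$ are pairwise distinct by Lemma~\ref{l:distinct_rows}, the heights at which these strings change from the southeast to the southwest direction are pairwise distinct, so the string paths are linearly ordered in the heap and none of them is absorbed by another before sweeping across valley diagonal $i$; inspection of the heap then shows that valley diagonal $i$ meets at least $r(v)$ of these string paths, hence has at least $r(v) \geq \nu(v,t)_i - \eta(v,t)_i$ cross-diagonal entries. The same distinctness guarantees that the $\eta(v,t)_i$ one-defects fit on the southeast run of valley diagonal $i$ inside region $\mathfrak{2}$ and the feasible subregion before these cross-diagonal entries begin, so that the construction stays within the segment.

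Putting the pieces together, valley diagonal $i$ receives $\min(\lambda^\dag_i,\,i-1)$ zero-defects in region $\mathfrak{1}$, then $\eta(v,t)_i$ one-defects and $\nu(v,t)_i - \eta(v,t)_i$ zero-defects in region $\mathfrak{2}$ and the feasible subregion of region $\mathfrak{3}$, and nothing else, for a total of $\min(\lambda^\dag_i,\,i-1) + \nu(v,t)_i = \lambda^\dag_i$ defects, which is the second assertion. I expect the main obstacle to be precisely the cross-diagonal step: verifying carefully, against the explicit heap geometry rather than a schematic picture, that the turning heights of the strings, the shape of the feasible subregion, and the positions of the zeroed-out valley diagonals are arranged so that each valley diagonal really meets enough mask-value-$1$ cells to receive its zero-defects and that nothing overflows the segment. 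Once that geometry is confirmed, the lemma follows from the displayed identity and the standard ``count the same cells two ways'' argument for a partition minus a staircase.
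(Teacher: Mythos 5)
Your elementary identity $\min(\lambda(v,t)^\dag_i,\,i-1) + \nu(v,t)_i = \lambda(v,t)^\dag_i$ is correct and is indeed the arithmetic behind the second assertion, and the two-way count on the Ferrers diagram minus the staircase is a clean way to nail down the region $\mathfrak{1}$ contribution. But the heart of the lemma is the first assertion, and your argument for it contains a false step.

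You claim that every valley diagonal $i$ meets at least $r(v)$ of the string paths, and conclude that it always has at least $r(v) \geq \nu(v,t)_i - \eta(v,t)_i$ cross-diagonal entries. This is not true. The cross-diagonal count is $r(v)$ only while $\eta(v,t)_i \neq 0$ (in that case the lowest inserted one-defect $p$ on diagonal $i$ has a mask-value-$1$ partner $q$ in the feasible subregion exactly $r(v)$ diagonals to the NE, which forces $r(v)$ of the SW-going strings to still lie below $p$). Once $\eta(v,t)_i = 0$, the string that would leave from column $i$ of region $\mathfrak{3}$ has already turned, and each further step to the right \emph{loses} one cross-diagonal entry; eventually a valley diagonal has strictly fewer than $r(v)$ such entries. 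The distinctness of the rows of $\eta(v,t)$ from Lemma~\ref{l:distinct_rows}, which you invoke, orders the turning heights but does not prevent this attrition. What rescues the construction is a counting argument you do not make: starting from the first diagonal $j$ with $\eta(v,t)_j = 0$ there are exactly $r(v)$ cross-diagonal entries, and as $i$ increases past $j$ the number required, $\nu(v,t)_i - \eta(v,t)_i = \nu(v,t)_i$, decreases by at least one per step, because a new zero-defect slot opens in region $\mathfrak{1}$ and the partition inequality $\lambda(v,t)^\dag_{i+1} \leq \lambda(v,t)^\dag_i$ forces $\nu(v,t)_i$ down; an induction on $i$ then shows supply never falls behind demand. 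Without this two-case split (on whether $\eta(v,t)_i$ vanishes) and the induction, the "at least $r(v)$ cross-diagonal entries" claim is simply wrong and the first assertion is unproved.

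Your final paragraph essentially foresees this ("the main obstacle\dots verifying carefully\dots that each valley diagonal really meets enough mask-value-$1$ cells"), but the way to close it is not a more careful picture of the strings meeting $r(v)$ times; it is the monotonicity comparison between the decreasing cross-diagonal count and the decreasing $\nu(v,t)_i$.
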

\begin{proof}
First, suppose that $\eta(v,t)_i \neq 0$.  Then, the lowest one-defect $p$ on
valley diagonal $i$ has a corresponding entry $q$ in the feasible subregion of
region $\mathfrak{3}$ located exactly $r(v)$ diagonals to the NE, and $q$ has
mask-value 1.  Therefore, there exist by construction at least $r(v) \geq
\nu(v,t)_i - \eta(v,t)_i$ cross-diagonal entries below $p$ on valley diagonal
$i$.

Next, suppose that $j$ is the first diagonal having $\eta(v,t)_{j} = 0$, so
$\eta(v,t)_i \neq 0$ for all $i < j$.  Then, for each $i \geq j$, we insert into
valley diagonal $i$ precisely $\min( \lambda(v,t)^\dag_i, i-1 )$ zero-defects in
region $\mathfrak{1}$ together with $\nu(v,t)_i$ zero-defects in region
$\mathfrak{2}$ or the feasible subregion of region $\mathfrak{3}$.

By construction, there exist $r(v)$ cross-diagonal entries in valley diagonal
$j$.  As we move one diagonal to the right, we lose one cross-diagonal entry.
However, we gain a potential zero-defect position in region $\mathfrak{1}$, so
the partition inequality $\lambda(v,t)^\dag_{i+1} \leq \lambda(v,t)^\dag_i$
implies that $\nu(v,t)_i$ decreases by at least 1.  Hence, we always have
enough cross-diagonal entries to insert into by induction.

Note that although our construction may require changing some of the mask-value
1 entries in the feasible subregion of region $\mathfrak{3}$ to be one-defects,
we never change any of the entries outside of the feasible subregion in region
$\mathfrak{3}$ because $\eta(v,t)$ is a partition.

Finally, it follows from the definitions that the $i$th valley diagonal in
$\s(t)$ contains exactly $\lambda(v,t)^\dag_i$ defects, because we insert $\min(
\lambda(v,t)^\dag_i, i-1 )$ zero-defects in region $\mathfrak{1}$ together with
$\nu(v,t)_i$ total defects below region $\mathfrak{1}$.  Hence, we insert
$\lambda(v,t)^\dag_i$ defects in all.
\end{proof}

\begin{lemma}\label{l:good_insertion}
Each entry in $\s(t)$ restricted to the current segment has the defect status
that we claimed in the construction, and $\w^{\s(t)} = x(t)$.  Furthermore, the
construction preserves the mask-value and defect status of all entries outside
the current segment.
\end{lemma}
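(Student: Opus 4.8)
The plan is to verify the lemma ``by picture'' in the string-diagram model of Section~\ref{s:bases}, processing the heap one segment at a time --- legitimate because, by Lemma~\ref{l:distinct_entries}, distinct segments occupy disjoint sets of heap entries --- and, within a segment, region by region. Recall the basic dictionary: overlaying strings on the heap so that mask-value-$1$ entries are crossings and mask-value-$0$ entries are bounces, the permutation $\w^{\s}$ is the one recorded by the strings, and a heap entry $j$ is a defect exactly when the two strings meeting at $j$ arrive there out of order (crossing them there would be a right descent of $\w^{\s[j-1]}$). Since $\gamma(t)$ is a constant mask with $\w^{\gamma(t)} = x(t)$, all its entries have their two strings in order and its string diagram reproduces the one-line notation of $x(t)$. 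As $\s(t)$ agrees with $\gamma(t)$ outside the segments, it suffices to show that, inside a single segment, the string diagram of $\s(t)$ permutes the strings entering the segment in exactly the same way as that of $\gamma(t)$ (this yields the preservation claim~(3) and, composing over all segments, $\w^{\s(t)} = x(t)$), and that every entry we called a defect really is one while every entry we called plain really is not.

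For the defect-status check I would trace the strings through regions $\mathfrak{1}$, $\mathfrak{2}$, $\mathfrak{3}$ using the auxiliary partitions $\lambda'(v,t)$, $\nu(v,t)$, $\eta(v,t)$. In region $\mathfrak{1}$ the row of plain-one entries coming from $\nu(v,t)$ reroutes precisely those strings which then arrive out of order at the column of zero-defects coming from $\lambda'(v,t)$ below it, so those entries are genuine zero-defects and the remaining entries of region $\mathfrak{1}$ stay in order (plain-zeros). Passing into regions $\mathfrak{2}$ and $\mathfrak{3}$, the cross-diagonal strings defined just before the lemma carry this disorder down each valley diagonal: the $\nu(v,t)_i - \eta(v,t)_i$ cross-diagonal entries of valley diagonal $i$ are bounces of out-of-order strings (zero-defects), and the $\eta(v,t)_i$ entries at the top of the diagonal are where those strings cross (one-defects), followed by the single trailing zero-defect recorded in Table~\ref{t:part_ins}. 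Lemma~\ref{l:well_defined} supplies exactly what is needed here: valley diagonal $i$ has enough cross-diagonal entries and ends up with precisely $\lambda(v,t)^\dag_i$ defects, so the segment contains $\sum_i \lambda(v,t)^\dag_i = |\lambda(v,t)|$ defects, matching claim~(1).

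Finally, for claim~(2) and the preservation part of claim~(3), the point is that every block of entries we inserted --- a run of plain-ones in region $\mathfrak{1}$, a run of one-defects along a valley diagonal --- is a maximal consecutive diagonal run capped by a zero-defect (a plain one in region $\mathfrak{1}$, a cross-diagonal or trailing one in regions $\mathfrak{2}$/$\mathfrak{3}$), so each extra crossing of $\s(t)$ relative to $\gamma(t)$ is matched by an extra bounce; hence the two string diagrams agree on the boundary of the segment, and the strings exit on the same diagonals and columns as in $\gamma(t)$. Composing over the disjoint segments, and using that entries lying in no segment are left untouched, gives $\w^{\s(t)} = x(t)$ and the preservation of mask-value and defect status outside the current segment. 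I expect the main obstacle to be exactly this bookkeeping --- keeping track of which string sits where as it crosses from region $\mathfrak{1}$ into region $\mathfrak{2}$ versus region $\mathfrak{3}$, and checking that the lone zero-defect trailing each $\eta(v,t)$-row is precisely what closes off the inversion and restores the $\gamma(t)$ configuration. This is where the distinctness of row lengths in Lemma~\ref{l:distinct_rows} and the cross-diagonal ordering noted before the lemma do the work, and turning the picture into a rigorous argument, rather than a routine one, is the crux.
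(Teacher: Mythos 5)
Your proposal correctly identifies the ingredients the argument needs---the string-diagram dictionary (crossing $\leftrightarrow$ mask-value $1$, bounce $\leftrightarrow$ mask-value $0$, defect $\leftrightarrow$ strings arrive out of order), the auxiliary partitions $\lambda'$, $\nu$, $\eta$, Lemmas~\ref{l:distinct_entries}, \ref{l:distinct_rows}, \ref{l:well_defined}, and a segment-by-segment decomposition---but it is a plan of attack, not a proof, and you concede as much in your final sentence.  The load-bearing assertions are stated rather than established: that the plain-one entries of $\nu$ ``reroute precisely those strings which then arrive out of order at the zero-defects below,'' that the cross-diagonal entries on valley diagonal $i$ really are zero-defects and the top $\eta_i$ entries really are one-defects, and above all that ``each extra crossing of $\s(t)$ relative to $\gamma(t)$ is matched by an extra bounce; hence the two string diagrams agree on the boundary of the segment.''  That last claim is false as a general principle---a crossing and a bounce do not cancel unless they occur on the \emph{same pair} of strings, and checking that pairing across all three regions simultaneously is exactly the global bookkeeping you call ``the crux'' and do not carry out.

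The paper sidesteps this by a genuinely different route: an induction on the number of cells in $\lambda(v,t)^\dag$.  Adding one box to $\lambda^\dag$ is realized by one or two \emph{string moves}, each a local operation flipping the mask values at a pair of entries where two strings meet.  A string move by construction preserves the encoded element (it moves a crossing of a fixed pair of strings along those strings) and, with a small check, preserves the defect status of every other entry.  The proof then splits into cases according to whether adding the box changes $\nu$ and/or $\eta$, with a further subcase on $r(v)=0$ versus $r(v)>0$, and in each case verifies that the appropriate string move produces the mask the construction prescribes.  This converts your global string-tracking problem into a short sequence of local, checkable steps---precisely the invariant your direct approach is missing.  If you wanted to push your version through, you would in effect have to reconstruct that inductive invariant (which string occupies which diagonal at each stage), at which point you would be reproving the paper's argument in a less organized form.
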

\begin{proof}
We work by induction on the number of entries in $\lambda(v,t)^\dag$.  The base
case is when $\lambda(v,t)^\dag = \emptyset$, which corresponds to the mask
$\gamma(t)$.

Since $v$ and $t$ will be fixed throughout the proof, we sometimes omit these
arguments from the partitions $\lambda^\dag$, $\nu$ and $\eta$.  Also, we denote
$\s(t)$ restricted to the current segment by $\s(\lambda^\dag)$.  To begin our
proof of the inductive case, suppose the mask $\s(\lambda^\dag)$ is constructed
as in the definition, all of the entries have the correct defect status, and
the mask restricted to the current segment encodes the element obtained by
restricting $x(t)$ to the current segment.

We now consider adding an entry $b$ at the end of row $i$ and hence in column
$j=\lambda^\dag_i+1$ of $\lambda^\dag$.  Row $i$ of $\lambda^\dag$ containing
$b$ corresponds to the valley diagonal $i$ in the heap.  There are three
cases:
\begin{enumerate}
\item[(1)]  Adding $b$ to $\lambda^\dag$ does not change $\nu$ nor $\eta$.
\item[(2)]  Adding $b$ to $\lambda^\dag$ adds an entry to $\nu$ but not $\eta$.
\item[(3)]  Adding $b$ to $\lambda^\dag$ adds an entry to both $\nu$ and $\eta$.
\end{enumerate}
Denote the result of adding $b$ to $\lambda^\dag$, $\nu$, and $\eta$
respectively by $\widetilde{\lambda^\dag}$, $\widetilde{\nu}$, and
$\widetilde{\eta}$.  We consider each case in turn.

{\bf Case (1).}
In this case, $\s(\widetilde{\lambda^\dag})$ is obtained from $\s(\lambda^\dag)$
by adding a defect to column $v+j$ of the $i$th valley diagonal in region
$\mathfrak{1}$, and the entries in regions $\mathfrak{2}$ and $\mathfrak{3}$
all remain the same.

First, we claim that $\nu_j > 0$, so some entry in column $v+j$ has mask-value 1
in $\s(\lambda^\dag)$.  To see this, consider that $b$ is inserted into row $i$
and column $j$ of $\lambda^\dag$ with $j<i$.  Otherwise, adding $b$ would have
changed $\nu$.  Hence, $\lambda^\dag_j \geq j$, so $\nu_j > 0$.

By the induction hypothesis, we have that $\s(\widetilde{\lambda^\dag})$ is
obtained from $\s(\lambda^\dag)$ by first shifting all of the entries of
$\nu_j$ up one level in their columns of the heap in region $\mathfrak{1}$,
then changing the entry $b$ that was the start of $\nu_j$ in $\s(\lambda^\dag)$
to be a zero-defect in $\s(\widetilde{\lambda^\dag})$.

The schematic shown in Figure~\ref{f:c1} shows how to move a single row of $\nu$
up one level in the heap using string moves.  Here, a {\bf string move} is an
operation on masks in which we change the mask values of two entries
$\{\sigma_p, \sigma_q\}$ in the heap whose strings meet.  The mask values
$\sigma_p$ and $\sigma_q$ are replaced by $1-\sigma_p$ and $1-\sigma_q$
respectively, and no other mask-values are changed.  Observe that each of these
string moves preserves the element being encoded by the mask as well as the
defect-status of the other entries in the heap.

\begin{figure}[h]
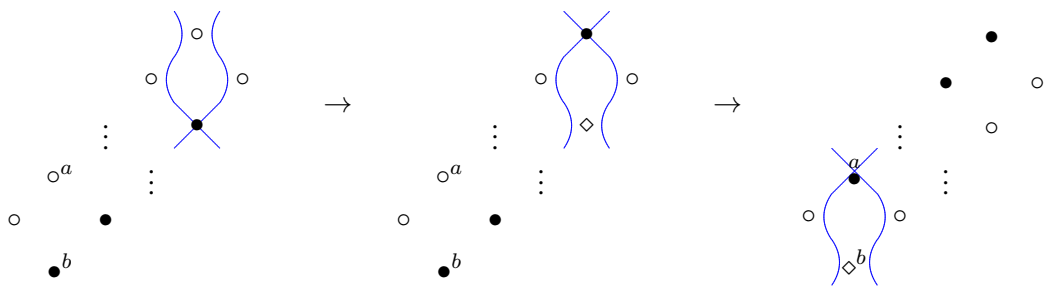

\begin{center}
\begin{tabular}{ccccc}
\heap {
{\hs} & & {\hs} & & \StringLR{\hz} & & \\
& {\hs} & & \StringR{\hz} & & \StringL{\hz} & \\
{\hs} & & {\vdots} & & \StringLRX{\hf} & & \\
& {\hz^a} & & {\vdots} & & {\hs} & \\
{\hz} & & {\hf} & & {\hs} & & \\
& {\hf^b} & & {\hs} & & {\hs} & \\
} & \parbox{0.2in}{\vspace{0.8in}$\rightarrow$} & \heap {
{\hs} & & {\hs} & & \StringLRX{\hf} & & \\
& {\hs} & & \StringR{\hz} & & \StringL{\hz} & \\
{\hs} & & {\vdots} & & \StringLR{\hd} & & \\
& {\hz^a} & & {\vdots} & & {\hs} & \\
{\hz} & & {\hf} & & {\hs} & & \\
& {\hf^b} & & {\hs} & & {\hs} & \\
} & \parbox{0.2in}{\vspace{0.8in}$\rightarrow$} & \heap {
{\hs} & & {\hs} & & {\hf} & & \\
& {\hs} & & {\hf} & & {\hz} & \\
{\hs} & & {\vdots} & & {\hz} & & \\
& \StringLRX{\stackrel{a}{\hf}} & & {\vdots} & & {\hs} & \\
\StringR{\hz} & & \StringL{\hz} & & {\hs} & & \\
& \StringLR{\hd^b} & & {\hs} & & {\hs} & \\
}
\end{tabular}
\end{center}
\caption{Case (1) of the proof}\label{f:c1}
\end{figure}

{\bf Case (2).}
In this case, we add $b$ as a zero-defect to region $\mathfrak{2}$ or
$\mathfrak{3}$.  Abusing notation, let $b$ denote the lowest cross-diagonal
entry on valley diagonal $i$ that is not a zero-defect.  By the induction
hypothesis, we have that $\s(\widetilde{\lambda^\dag})$ is obtained from
$\s(\lambda^\dag)$ by changing $b$ from a plain-one to a zero-defect and adding an
additional plain-one at the end of $\nu_i$ in region $\mathfrak{1}$.

We claim that both of these can be accomplished with a single string move.  To
see this, observe that the right string of $b$ does not meet any mask-value 0
entries in region $\mathfrak{2}$ because as we move right, we lose a
cross-diagonal entry and gain an entry from region $\mathfrak{1}$ on the valley
diagonal, so the level of the highest cross-diagonal entry that is a zero-defect is
strictly decreasing.  By the definition, the right string emanating up from a
cross-diagonal entry eventually hits a plain-zero in region $\mathfrak{3}$ and
turns towards the northwest, following a column of $\eta$.  Observe that the
total number of valley diagonals crossed by the right string is equal to the
number of zero-defects in row $i$ of $\nu$ by the definition because all of
the rowlengths in $\eta$ are distinct by Lemma~\ref{l:distinct_rows}.  Hence,
we have that the right string of $b$ meets the plain-zero $a$ that occurs just
after the last entry in row $i$ of $\nu$ in region $\mathfrak{1}$.

The left string of $b$ emanates up towards the northwest and by induction
eventually meets row $\nu_i$ in region $\mathfrak{1}$.  Hence, the left and
right strings of $b$ meet at the mask-value 0 entry $a$ just beyond the last
entry in $\nu_i$.  This is illustrated in Figure~\ref{f:c2}.  If we apply a
string move to $b$ and $a$, we achieve the desired effect.

\begin{figure}[h]
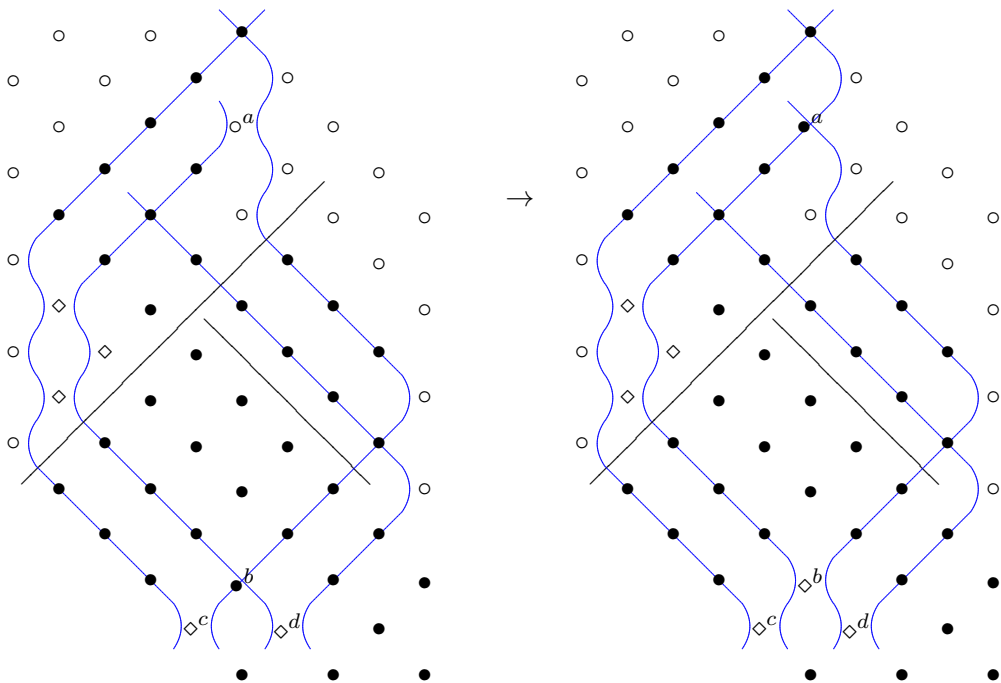

\begin{center}
\begin{tabular}{ccccc}
\heap { 
 & {\hz} & & {\hz} & & \StringLRX{\hf} & & {\hs} & & {\hs} & \\
 {\hz} & & {\hz} & & \StringRX{\hf} & & \StringL{\hz} & & {\hs} & \\
 & {\hz} & & \StringRX{\hf} & & \StringLR{\hz^a} & & {\hz} & & {\hs} \\
 {\hz} & & \StringRX{\hf} & & \StringRX{\hf} & & \StringL{\hz} & & {\hz} & \\
 & \StringRX{\hf} & & \StringLRX{\hf} & & \StringR{\hz} & & {\hz} & & {\hz} \\
 \StringR{\hz} & & \StringRX{\hf} & & \StringLX{\hf} & & \StringLX{\hf} & & {\hz} & \\
 & \StringLR{\hd} & & {\hf} &\aj& \StringLX{\hf} & & \StringLX{\hf} & & {\hz} \\
 \StringR{\hz} & & \StringL{\hd} & & {\hf} & & \StringLX{\hf} & & \StringLX{\hf} & \\
 & \StringLR{\hd} & & {\hf} & & {\hf} & & \StringLX{\hf} & & \StringL{\hz} \\
 \StringR{\hz} & & \StringLX{\hf} & & {\hf} & & {\hf} & & \StringLRX{\hf} & \\
 \ai& \StringLX{\hf} & & \StringLX{\hf} & & {\hf} & & \StringRX{\hf} & & \StringL{\hz} \\
 {\hs} & & \StringLX{\hf} & & \StringLX{\hf} & & \StringRX{\hf} & & \StringRX{\hf} & \\
 & {\hs} & & \StringLX{\hf} & & \StringLRX{\hf^b} & & \StringRX{\hf} & & {\hf} \\
 {\hs} & & {\hs} & & \StringLR{\hd^c} & & \StringLR{\hd^d} & & {\hf} & \\
 & {\hs} & & {\hs} & & {\hf} & & {\hf} & & {\hf} & \\
} & \parbox{0.1in}{\vspace{1.8in}$\rightarrow$} & 
\heap { 
 & {\hz} & & {\hz} & & \StringLRX{\hf} & & {\hs} & & {\hs} & \\
 {\hz} & & {\hz} & & \StringRX{\hf} & & \StringL{\hz} & & {\hs} & \\
 & {\hz} & & \StringRX{\hf} & & \StringLRX{\hf^a} & & {\hz} & & {\hs} \\
 {\hz} & & \StringRX{\hf} & & \StringRX{\hf} & & \StringL{\hz} & & {\hz} & \\
 & \StringRX{\hf} & & \StringLRX{\hf} & & \StringR{\hz} & & {\hz} & & {\hz} \\
 \StringR{\hz} & & \StringRX{\hf} & & \StringLX{\hf} & & \StringLX{\hf} & & {\hz} & \\
 & \StringLR{\hd} & & {\hf} &\aj& \StringLX{\hf} & & \StringLX{\hf} & & {\hz} \\
 \StringR{\hz} & & \StringL{\hd} & & {\hf} & & \StringLX{\hf} & & \StringLX{\hf} & \\
 & \StringLR{\hd} & & {\hf} & & {\hf} & & \StringLX{\hf} & & \StringL{\hz} \\
 \StringR{\hz} & & \StringLX{\hf} & & {\hf} & & {\hf} & & \StringLRX{\hf} & \\
 \ai& \StringLX{\hf} & & \StringLX{\hf} & & {\hf} & & \StringRX{\hf} & & \StringL{\hz} \\
 {\hs} & & \StringLX{\hf} & & \StringLX{\hf} & & \StringRX{\hf} & & \StringRX{\hf} & \\
 & {\hs} & & \StringLX{\hf} & & \StringLR{\hd^b} & & \StringRX{\hf} & & {\hf} \\
 {\hs} & & {\hs} & & \StringLR{\hd^c} & & \StringLR{\hd^d} & & {\hf} & \\
 & {\hs} & & {\hs} & & {\hf} & & {\hf} & & {\hf} & \\
}
\end{tabular}
\end{center}
\caption{Case (2) of the proof}\label{f:c2}
\end{figure}

Next, observe that the defect status of the other entries in the heap remain
unchanged.  Suppose there exists a defect $c$ whose right string is the right
string of $b$ in $\s(\lambda^\dag)$.  If the strings of $c$ cross above $a$ in the
heap, then $c$ remains a defect in $\s(\widetilde{\lambda^\dag})$.
Otherwise, the strings of $c$ must cross at one of the mask-value 1 entries on
the column of $\eta$ that the right string of $b$ traverses.  In this case, the
strings of $c$ will still cross at a corresponding mask-value 1 entry on the
valley diagonal containing $b$ in $\s(\widetilde{\lambda^\dag})$.  Similarly, if
there exists a defect $c$ whose right string is the left string of $b$ in
$\s(\widetilde{\lambda^\dag})$, then $c$ must have been a defect in
$\s(\lambda^\dag)$.

Suppose there exists a defect $d$ whose left string is the left string of $b$ in
$\s(\lambda^\dag)$.  If the strings of $d$ cross above $a$ in the heap,
then $d$ remains a defect in $\s(\widetilde{\lambda^\dag})$.  Otherwise, the
strings of $d$ must cross at an entry of $\nu_i$ in region $\mathfrak{1}$ as
shown.  After the string move, the left string of $d$ is the right string of
$b$.  Since the right string of $b$ proceeds past the end of row $\nu_i$, it
must cross the right string of $d$.  Hence, $d$ remains a defect in
$\s(\widetilde{\lambda^\dag})$.  Similarly, if there exists a defect $d$ whose left
string is the right string of $b$ in $\s(\widetilde{\lambda^\dag})$, then $d$ must
have been a defect in $\s(\lambda^\dag)$.

It is possible that the strings of a defect cross at $b$ in $\s(\lambda^\dag)$, in
which case the defect status of the entry is preserved in
$\s(\widetilde{\lambda^\dag})$.  The only other defects on the path of the strings
of $b$ are the zero-defects in region $\mathfrak{1}$ that lie on the left string
of $b$.  It is straightforward to see that these defects are preserved by the
string move and its inverse as well.  Hence, the defect status of the other
entries in the heap remain unchanged.

{\bf Case (3).}
Here, we have two subcases.  By induction, there must exist exactly $r$
zero-defects on valley diagonal $i$.  Either $r = 0$, or $r > 0$.

{\bf Case (3), Subcase $\mathbf{r = 0}$. }
If $r = 0$, then we let $b$ be the highest plain-zero in region $\mathfrak{3}$
on valley diagonal $i$.  By the induction hypothesis we have that
$\s(\widetilde{\lambda^\dag})$ is obtained from $\s(\lambda^\dag)$ by changing $b$
from a plain-zero to a one-defect and adding an additional plain-one at the end
of $\nu_i$ in region $\mathfrak{1}$.  

To accomplish this, we use a single string move.  To see this, observe that the
right string of $b$ does not meet any mask-value 1 entries in region
$\mathfrak{3}$ by Lemma~\ref{l:distinct_rows}.  Moreover, the number of
plain-ones of row $\nu_i$ in region $\mathfrak{1}$ is equal to the number of
one-defects in valley diagonal $i$, by induction.  Hence, the right string of
$b$ meets the plain-zero $a$ that occurs just after the last entry in row $i$ of
$\nu$ in region $\mathfrak{1}$.  The left string of $b$ emanates up towards the
northwest and by induction eventually meets row $\nu_i$ in region
$\mathfrak{1}$.  Hence, the left and right strings of $b$ meet at $a$ in
$\s(\lambda^\dag)$, as illustrated in Figure~\ref{f:c3r0}.  If we apply a string
move to $b$ and $a$, we achieve the desired effect.

\begin{figure}[h]
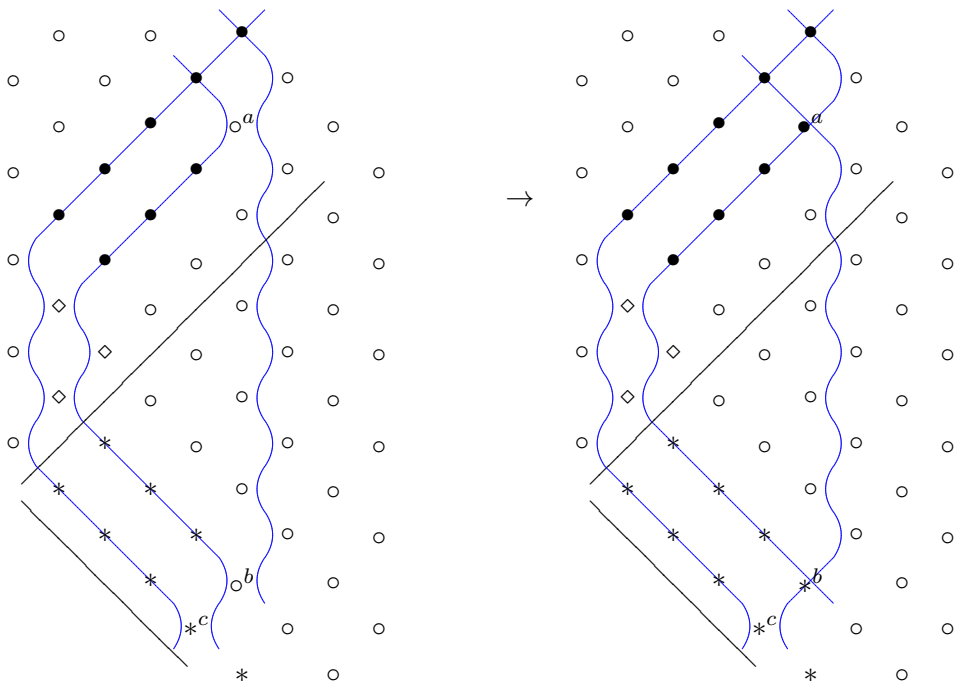

\begin{center}
\begin{tabular}{ccccc}
\heap { 
 & {\hz} & & {\hz} & & \StringLRX{\hf} & & {\hs} & & {\hs} & \\
 {\hz} & & {\hz} & & \StringLRX{\hf} & & \StringL{\hz} & & {\hs} & \\
 & {\hz} & & \StringRX{\hf} & & \StringLR{\hz^a} & & {\hz} & & {\hs} \\
 {\hz} & & \StringRX{\hf} & & \StringRX{\hf} & & \StringL{\hz} & & {\hz} & \\
 & \StringRX{\hf} & & \StringRX{\hf} & & \StringR{\hz} & & {\hz} & \\
 \StringR{\hz} & & \StringRX{\hf} & & {\hz} & & \StringL{\hz} & & {\hz} & \\
 & \StringLR{\hd} & & {\hz} & & \StringR{\hz} & & {\hz} & \\
 \StringR{\hz} & & \StringL{\hd} & & {\hz} & & \StringL{\hz} & & {\hz} & \\
 & \StringLR{\hd} & & {\hz} & & \StringR{\hz} & & {\hz} & \\
 \StringR{\hz} & & \StringLX{\ho} & & {\hz} & & \StringL{\hz} & & {\hz} & \\
 \ai \ajs & \StringLX{\ho} & & \StringLX{\ho} & & \StringR{\hz} & & {\hz} & \\
 {\hs} & & \StringLX{\ho} & & \StringLX{\ho} & & \StringL{\hz} & & {\hz} & \\
 & {\hs} & & \StringLX{\ho} & & \StringLR{\hz^b} & & {\hz} & \\
 {\hs} & & {\hs} & & \StringLR{\ho^c} & & {\hz} & & {\hz} & \\
 & {\hs} & & {\hs} & & {\ho} & & {\hz} & \\
} & \parbox{0.1in}{\vspace{1.8in}$\rightarrow$} & 
\heap { 
 & {\hz} & & {\hz} & & \StringLRX{\hf} & & {\hs} & & {\hs} & \\
 {\hz} & & {\hz} & & \StringLRX{\hf} & & \StringL{\hz} & & {\hs} & \\
 & {\hz} & & \StringRX{\hf} & & \StringLRX{\hf^a} & & {\hz} & & {\hs} \\
 {\hz} & & \StringRX{\hf} & & \StringRX{\hf} & & \StringL{\hz} & & {\hz} & \\
 & \StringRX{\hf} & & \StringRX{\hf} & & \StringR{\hz} & & {\hz} & \\
 \StringR{\hz} & & \StringRX{\hf} & & {\hz} & & \StringL{\hz} & & {\hz} & \\
 & \StringLR{\hd} & & {\hz} & & \StringR{\hz} & & {\hz} & \\
 \StringR{\hz} & & \StringL{\hd} & & {\hz} & & \StringL{\hz} & & {\hz} & \\
 & \StringLR{\hd} & & {\hz} & & \StringR{\hz} & & {\hz} & \\
 \StringR{\hz} & & \StringLX{\ho} & & {\hz} & & \StringL{\hz} & & {\hz} & \\
 \ai \ajs & \StringLX{\ho} & & \StringLX{\ho} & & \StringR{\hz} & & {\hz} & \\
 {\hs} & & \StringLX{\ho} & & \StringLX{\ho} & & \StringL{\hz} & & {\hz} & \\
 & {\hs} & & \StringLX{\ho} & & \StringLRX{\ho^b} & & {\hz} & \\
 {\hs} & & {\hs} & & \StringLR{\ho^c} & & {\hz} & & {\hz} & \\
 & {\hs} & & {\hs} & & {\ho} & & {\hz} & \\
}
\end{tabular}
\end{center}
\caption{Case (3) of the proof with $r = 0$}\label{f:c3r0}
\end{figure}

Next, observe that these moves do not change the defect status of any other
entries in the heap.  The only entries that could conceivably change must use
the strings of $b$.  If there is a defect $c$ whose right string becomes the
left string of $b$ in $\s(\lambda^\dag)$, then $c$ must be a one-defect lying in
the same segment as $b$.  
Hence, the strings of $c$ must cross above the entry $a$ in the heap,
so $c$ remains a defect in $\s(\widetilde{\lambda^\dag})$.  Similarly, if there
is a defect $c$ whose right string becomes the right string of $b$ in
$\s(\widetilde{\lambda^\dag})$, then $c$ must be a one-defect lying in the same
segment as $b$, so the strings of $c$ cross above $a$, and $c$ must have been a
defect in $\s(\lambda^\dag)$.  The only other defects on the path of the strings
of $a$ and $b$ are the zero-defects in region $\mathfrak{1}$ that lie on the
left string of $b$.  It is straightforward to see that these defects are
preserved by the string moves and their inverse as well.  Hence, the defect
status of the other entries in the heap remain unchanged.

{\bf Case (3), Subcase $\mathbf{r > 0}$. }
In this case, let $b$ be the entry immediately southeast of the highest
zero-defect $a$ in region $\mathfrak{2}$ or $\mathfrak{3}$ on valley diagonal
$i$.  By the induction hypothesis, we have that $\s(\widetilde{\lambda^\dag})$ is
obtained from $\s(\lambda^\dag)$ by changing $b$ from a plain-one to a zero-defect,
changing $a$ from a zero-defect to a one-defect, adding an additional plain-one at
the end of $\nu_i$ in region $\mathfrak{1}$, and adding an additional plain-one
at the end of $\eta_i$ in region $\mathfrak{3}$.  To accomplish this, we will
use two string moves.

Using similar reasoning as in the cases above, observe that the strings of $b$
meet at the plain-zero $c$ lying just after the last entry in row $i$ of $\eta$
in region $\mathfrak{3}$.  We perform a string move on $b$ and $c$.  This
destroys the defect status of $a$, but the strings of $a$ will now meet at the
plain-zero $d$ that occurs just after the last entry of row $i$ of $\nu$ in
region $\mathfrak{1}$.  Hence, we can perform a string move between $a$ and $d$
to recover the defect status of $a$, and this changes $a$ into a one-defect.
This is illustrated in Figure~\ref{f:c3}.

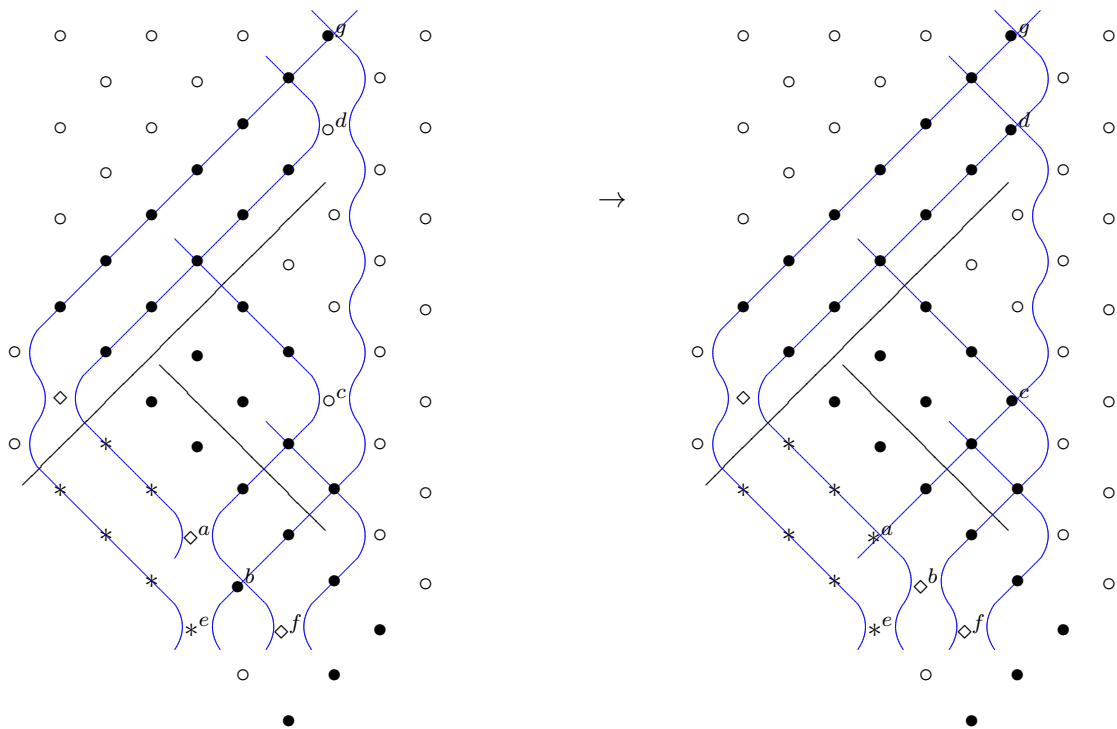
\begin{figure}[h]
\begin{center}
\begin{tabular}{ccccc}
\xymatrix @=-9pt @! {
 {\hs} & & {\hz} & & {\hz} & & {\hz} & & \StringLRX{\hf^g} & & {\hz} \\
 & {\hs} & & {\hz} & & {\hz} & & \StringLRX{\hf} & & \StringL{\hz} & & {\hs} & & \\
 {\hs} & & {\hz} & & {\hz} & & \StringRX{\hf} & & \StringLR{\hz^d} & & {\hz} \\
 & {\hs} & & {\hz} & & \StringRX{\hf} & & \StringRX{\hf} & & \StringL{\hz} & \\
 {\hs} & & {\hz} & & \StringRX{\hf} & & \StringRX{\hf} & & \StringR{\hz} & & {\hz} & \\
 & {\hs} & & \StringRX{\hf} & & \StringLRX{\hf} & & {\hz} & & \StringL{\hz} & & {\hs} \\
 {\hs} & & \StringRX{\hf} & & \StringRX{\hf} & & \StringLX{\hf} & & \StringR{\hz} & & {\hz} \\
 & \StringR{\hz} & & \StringRX{\hf} &\aj& {\hf} & & \StringLX{\hf} & & \StringL{\hz} & \\
 {\hs} & & \StringLR{\hd} & & {\hf} & & {\hf} & & \StringLR{\hz^c} & & {\hz} & \\
 & \StringR{\hz} & & \StringLX{\ho} & & {\hf} & & \StringLRX{\hf} & & \StringL{\hz} & \\
 {\hs} &\ai& \StringLX{\ho} & & \StringLX{\ho} & & \StringRX{\hf} & & \StringLRX{\hf} & & {\hz} & \\
 & {\hs} & & \StringLX{\ho} & & \StringLR{\hd^a} & & \StringRX{\hf} & & \StringL{\hz} & \\
 {\hs} & & {\hs} & & \StringLX{\ho} & & \StringLRX{\hf^b} & & \StringRX{\hf} & & {\hz} & \\
 & {\hs} & & {\hs} & & \StringLR{\ho^e} & & \StringLR{\hd^f} & & {\hf} & \\
 {\hs} & & {\hs} & & {\hs} & & {\hz} & & {\hf} & & {\hs} & \\
 & {\hs} & & {\hs} & & {\hs} & & {\hf} & & {\hs} & \\
} & \parbox{0.1in}{\vspace{1.8in}$\rightarrow$} & 
\xymatrix @=-9pt @! {
 {\hs} & & {\hz} & & {\hz} & & {\hz} & & \StringLRX{\hf^g} & & {\hz} \\
 & {\hs} & & {\hz} & & {\hz} & & \StringLRX{\hf} & & \StringL{\hz} & & {\hs} & & \\
 {\hs} & & {\hz} & & {\hz} & & \StringRX{\hf} & & \StringLRX{\hf^d} & & {\hz} \\
 & {\hs} & & {\hz} & & \StringRX{\hf} & & \StringRX{\hf} & & \StringL{\hz} & \\
 {\hs} & & {\hz} & & \StringRX{\hf} & & \StringRX{\hf} & & \StringR{\hz} & & {\hz} & \\
 & {\hs} & & \StringRX{\hf} & & \StringLRX{\hf} & & {\hz} & & \StringL{\hz} & & {\hs} \\
 {\hs} & & \StringRX{\hf} & & \StringRX{\hf} & & \StringLX{\hf} & & \StringR{\hz} & & {\hz} \\
 & \StringR{\hz} & & \StringRX{\hf} &\aj& {\hf} & & \StringLX{\hf} & & \StringL{\hz} & \\
 {\hs} & & \StringLR{\hd} & & {\hf} & & {\hf} & & \StringLRX{\hf^c} & & {\hz} & \\
 & \StringR{\hz} & & \StringLX{\ho} & & {\hf} & & \StringLRX{\hf} & & \StringL{\hz} & \\
 {\hs} &\ai& \StringLX{\ho} & & \StringLX{\ho} & & \StringRX{\hf} & & \StringLRX{\hf} & & {\hz} & \\
 & {\hs} & & \StringLX{\ho} & & \StringLRX{\ho^a} & & \StringRX{\hf} & & \StringL{\hz} & \\
 {\hs} & & {\hs} & & \StringLX{\ho} & & \StringLR{\hd^b} & & \StringRX{\hf} & & {\hz} & \\
 & {\hs} & & {\hs} & & \StringLR{\ho^e} & & \StringLR{\hd^f} & & {\hf} & \\
 {\hs} & & {\hs} & & {\hs} & & {\hz} & & {\hf} & & {\hs} & \\
 & {\hs} & & {\hs} & & {\hs} & & {\hf} & & {\hs} & \\
} 
\end{tabular}
\end{center}
\caption{Case (3) of the proof with $r > 0$}\label{f:c3}
\end{figure}

Next, observe that these moves do not change the defect status of any other
entries in the heap.  The only entries that could conceivably change must use
the strings of $b$ or $a$.  If there is a defect $f$ whose left string becomes
the right string of $a$ in $\s(\lambda^\dag)$, then the strings of $f$ cross below
$c$ in the heap.  After the string move, the left string of $f$ becomes
the right string of $b$ which passes through $c$, so the strings of $f$ will
still cross, and $f$ remains a defect in $\s(\widetilde{\lambda^\dag})$.  
Similarly, if there is a defect $f$ whose left string becomes the right string
of $b$ in $\s(\widetilde{\lambda^\dag})$, then $f$ must have been a defect in
$\s(\lambda^\dag)$.

If there is a defect $e$ whose right string becomes the right string of $b$ in
$\s(\lambda^\dag)$, then the strings of $e$ must cross above the entry $g$
directly above $d$ in the heap, so $e$ remains a defect in
$\s(\widetilde{\lambda^\dag})$.  If there is a defect $e$ whose right string
becomes the left string of $b$ in $\s(\widetilde{\lambda^\dag})$, then $e$ must
be a one-defect lying in the same segment as $b$.  This follows because, by
induction and Lemma~\ref{l:distinct_rows}, the only mask-value 0 entries in
region $\mathfrak{2}$ are zero-defects whose strings cross and remain within
their own segment.  Hence, the left string of $e$ travels along row $i-1$ of
$\nu$ in region $\mathfrak{1}$, and so the strings of $e$ cross above the entry
$g$ directly above $d$ in the heap.  Therefore, $e$ must have been a
defect in $\s(\lambda^\dag)$.

The arguments are similar for the case of a defect whose right string becomes
the left string of $a$ in $\s(\lambda^\dag)$, and for the case of a defect whose
right string becomes the right string of $a$ in $\s(\widetilde{\lambda^\dag})$.
The only other defects on the path of the strings of $a$ and $b$ are the
zero-defects in region $\mathfrak{1}$ that lie on the left string of $a$.  It is
straightforward to see that these defects are preserved by the string moves and
their inverse as well.  Hence, the defect status of the other entries in the
heap remain unchanged.

We have covered all of the cases, so the proof is complete.
\end{proof}

Lemmas~\ref{l:distinct_entries} and \ref{l:good_insertion} allow us to construct $\s(t)$
for each $t\in A_w$ by working one segment at a time.  Now we define $P(t)$ to
be the defect positions in this mask.  Then the set $F_{\w}^{P(t)}$ is a lower
order ideal containing $x(t)$ by Lemma~\ref{l:fwp}.  Let $\mathcal{E}(t)$ be the
set of masks
$$\mathcal{E}(t)=\{\sigma\in\mathcal{F}^{P(t)}_\w\mid \w^\sigma\leq x(t)\}.$$
The masks in $\mathcal{E}(t)$ precisely encode the terms of $q^{|t| +
\frac{1}{2} \ell(x(t))} B_{x(t)}'$.

\begin{lemma}\label{l:injective}
If $t$ and $t'$ are two different edge-labelings in $A_w$, then $P(t)\neq
P(t')$.  In particular, $\mathcal{E}(t)$ and $\mathcal{E}(t')$ are disjoint.
\end{lemma}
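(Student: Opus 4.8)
The ``in particular'' clause is immediate and I would dispose of it first: a mask determines its own set of defect positions, so $\mathcal{F}_\w^{P(t)}$ and $\mathcal{F}_\w^{P(t')}$ are disjoint as soon as $P(t)\neq P(t')$, whence $\mathcal{E}(t)\subseteq\mathcal{F}_\w^{P(t)}$ and $\mathcal{E}(t')\subseteq\mathcal{F}_\w^{P(t')}$ are disjoint. So the whole content is the statement $P(t)\neq P(t')$, and the plan is to recover the edge-labelled tree $t$ from the position set $P(t)$.

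The first reduction is to recovering the partitions $\lambda(v,t)$ for every valley column $v$. Indeed, each edge of $T(w)$ is the ``up step'' of a unique matched pair of parentheses along the ridgeline, that up step lies among columns $v$ through $v+q(v)$ for exactly one valley $v$, and the labels carried by the up steps in those columns are by definition the parts of $\lambda(v,t)$; hence $\{\lambda(v,t)\}_v$ records all edge labels of $t$ and so determines $t$. Moreover, by Lemma~\ref{l:well_defined} the number of positions of $P(t)$ lying on the $i$th valley diagonal of the segment attached to $v$ is exactly $\lambda(v,t)^\dag_i$, and by Lemma~\ref{l:distinct_entries} the segments are disjoint. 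So once we know how $P(t)$ is partitioned into segments and, within each segment, among valley diagonals, we read off every $\lambda(v,t)^\dag$, hence every $\lambda(v,t)$, hence $t$.

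The segment decomposition and the valley diagonals are determined by the constant mask $\gamma(t)$, equivalently by the cograssmannian element $x(t)=\w^{\gamma(t)}=\w^{\s(t)}$, so the heart of the proof is to show $x(t)$ is recoverable from $P(t)$ alone. The plan here is to prove that $x(t)$ is the unique Bruhat-maximal element of the lower order ideal $F_\w^{P(t)}$ of Lemma~\ref{l:fwp}. Running the algorithm of that lemma with defect set $P(t)$, the requirement that defects occur \emph{exactly} at the positions of $P(t)$ forces, position by position, the same choices that produce the zero-defects of region $\mathfrak{1}$ and the one-defects and cross-diagonal zero-defects of regions $\mathfrak{2}$ and $\mathfrak{3}$ in the construction of $\s(t)$; in particular the zero-defects of region $\mathfrak{1}$, which sit at the very top of the heap, pin down precisely how much of each valley column is zeroed out in $\gamma(t)$. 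One then checks, exactly as in the proof of Theorem~\ref{t:cog_bp} where $x(t)$ is shown to be the maximum of $I(t)$, that the Bruhat-maximal admissible choice is to make every remaining unforced entry a plain-one, and that this reconstructs $\gamma(t)$ and therefore $x(t)$. With $x(t)$, hence $\gamma(t)$, the segments, and the valley diagonals recovered from $P(t)$, the counts above produce each $\lambda(v,t)^\dag$, and the reduction of the previous paragraph finishes the argument.

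The main obstacle is this last step: Lemma~\ref{l:fwp} only guarantees that $F_\w^{P(t)}$ is a lower order ideal, not a principal one, so establishing that it has a \emph{unique} Bruhat-maximal element and that this element equals $x(t)$ genuinely uses the special shape of the position sets $P(t)$ coming from the construction, chiefly that the region-$\mathfrak{1}$ zero-defects completely encode the staircase region zeroed in $\gamma(t)$ while the region-$\mathfrak{2}$ and $\mathfrak{3}$ defects only mark string moves that do not enlarge the encoded element. Should that route prove awkward, the fallback is to assume $P(t)=P(t')$ directly: by Lemma~\ref{l:fwp} the masks $\s(t)$ and $\s(t')$, sharing a defect set, coincide as soon as $\w^{\s(t)}=\w^{\s(t')}$, so it suffices to exclude $x(t)\neq x(t')$; but $\s(t)$ and $\s(t')$ are each obtained from their respective constant masks by string moves confined to segments (the mechanism of Lemma~\ref{l:good_insertion}), and comparing these within the common defect set forces the two segment decompositions, and hence $\gamma(t)=\gamma(t')$, to agree, giving $x(t)=x(t')$ and then $t=t'$ as above.
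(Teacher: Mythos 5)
Your reduction is sound up to a point: you correctly observe that the problem is to recover $\gamma(t)$ (equivalently $x(t)$) from $P(t)$, after which Lemma~\ref{l:well_defined} reads off each $\lambda(v,t)^\dag$ as a defect count per valley diagonal and Lemma~\ref{l:distinct_entries} keeps the segments apart. This is the same framing the paper uses. The gap is in how you propose to recover $x(t)$.

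Your main route asserts that $x(t)$ is the unique Bruhat-maximal element of $F_{\w}^{P(t)}$, but Lemma~\ref{l:fwp} gives you only that $F_{\w}^{P}$ is a lower order ideal, and the paper's own example immediately after that lemma ($\w=s_2s_1s_3s_2s_3$, $P=\{5\}$) exhibits an $F_{\w}^{P}$ with two incomparable maximal elements, so principality is not generic. You flag this yourself, but then wave at ``the special shape of $P(t)$'' without providing the argument. The algorithm of Lemma~\ref{l:fwp} runs \emph{from} an element $x$ and a prescribed defect set \emph{to} a candidate mask; it does not enumerate the elements of $F_{\w}^{P}$ or produce a maximum, so ``running it with defect set $P(t)$'' and making ``the Bruhat-maximal admissible choice'' is not a procedure the lemma supplies. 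Your fallback has the same problem in different clothing: from $P(t)=P(t')$ alone you do not yet know the segment decompositions (those depend on $\gamma(t)$ and $\gamma(t')$, which is precisely what is to be determined), so ``comparing these within the common defect set'' is circular.

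What actually closes this gap in the paper is a concrete local criterion. One first shows that it suffices to determine the leaf labels $e_v$ at the \emph{primitive} valleys (those with $p(v)=e_v$), and these are detected from $P(t)$ alone because a primitive valley is exactly one whose column-$(v+1)$ defect run in region $\mathfrak{1}$ reaches all the way up to just below the ridgeline. For such a valley, a run of $k$ defects in column $v+1$ leaves a single ambiguity, $e_v=k$ or $e_v=k+1$, depending on whether the lowest of those defects sits in region $\mathfrak{1}$ or in region $\mathfrak{2}$/$\mathfrak{3}$ (the two configurations in Figure~\ref{f:inj}); this is resolved by checking for defects on the SE diagonal emanating from the position just below that run, using the structural fact that the rightmost valley diagonal of region $\mathfrak{2}$ for a neighboring valley $u$ carries defects only under region $\mathfrak{3}$ of $u$. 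Only with this analysis in hand is $\gamma(t)$, and hence the segment/diagonal decomposition, recoverable from $P(t)$, and then the count-per-diagonal step you already have finishes the argument. As written, your proposal does not supply this step, so the injectivity claim is not established.
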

\begin{proof}
In summary, our construction associated a set of masks of the form
$\mathcal{F}_{\w}^{P(t)}$ to each edge-labeling $t \in A_w$ via
\[ t \ \ \ \ \text{-- \tiny zero out the leaves of $t$} \rightarrow \ \ \ \ 
\gamma(t) \ \ \ \ \text{-- \tiny add defects according to edge-labels of $t$} \rightarrow \ \ \ \  
\sigma(t) \ \ \ \ \text{-- \tiny take defect set of $\sigma(t)$} \rightarrow \ \ \ \ P(t). \]
Suppose we are given a defect set $P(t)$ arising from our construction.  To
prove that the construction gives an injection, it suffices to show how to
recover the edge-labels of $t$.  As a preliminary step, we must first recover
$\gamma(t)$ from $P(t)$.  

Begin by labeling the entries in the heap of $\w$ according their defect status
from $P(t)$.  Recall that the ridgeline of the heap of $\w$ corresponds to the
shape $T(w)$ of the tree, and is independent of the edge-labeling $t$.  In our
construction, each valley column $v$ of the heap of $\w$ corresponds to a leaf
of the tree.  Let $e_v$ denote the label of the leaf $v$ in $t$.  Then our
construction always places a vertical segment of either $e_v - 1$ or $e_v$
defects in column $v + 1$ as a result of processing $\lambda(v,t)_1' = e_v -
1$.

More precisely, we always place $e_v - 1$ defects in region $\mathfrak{1}$,
and sometimes one additional defect is placed in column $v+1$ on the lowest
valley diagonal in region $\mathfrak{2}$ or $\mathfrak{3}$.  The decision of
whether we place this additional defect depends on the relationship between
$\lambda(v,t)^\dag_1$ and $r(v)$.

In any case, to recover $\gamma(t)$, it suffices to determine the leaf values
for those valleys where the number of plain-zeros $p(v)$ in column $v$ is equal
to the leaf value $e_v$.  We call such valleys {\bf primitive}.  Once we
determine the leaf values $e_v$ for the primitive valleys $v$, we can construct
$\gamma(t)$ by ``zeroing out'' the top $e_v$ entries in column $v$ for each
primitive valley $v$. 

Next, observe that we can determine which valleys are primitive from $P(t)$.  If
a valley $v$ is primitive, then it will have the maximal number of defects in
region $\mathfrak{1}$ of column $v+1$, which means that the segment of defects
will come all the way up to the entry just below the ridgeline.

Suppose $v$ is a primitive valley, and suppose that from $P(t)$ we find a
segment of defects of length $k$ in column $v+1$.  Then the preimage
$\sigma(t)$ of $P(t)$ has one of the two forms shown in Figure~\ref{f:inj}.
In Case (a), the lowest defect $d$ in column $v+1$ is not part of region
$\mathfrak{1}$, while in Case (b), the lowest defect $d$ in column $v+1$ is
part of region $\mathfrak{1}$.

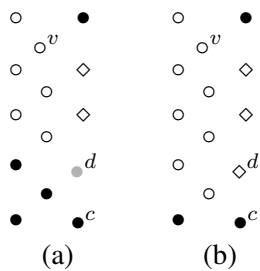
\begin{figure}[h]
\begin{center}
\begin{tabular}{cc}
\xymatrix @-1.7pc@M=0pt {
{\hs} & & {\hz} & & {\hf} & & {\hs} & \\
& {\hs} & & {\hz^v} & & {\hs} & & {\hs} \\
{\hs} & & {\hz} & & {\hd} & & {\hs} & \\
& {\hs} & & {\hz} & & {\hs} & & {\hs} \\
{\hs} & & {\hz} & & {\hd} & & {\hs} & \\
& {\hs} & & {\hz} & & {\hs} & & {\hs} \\
{\hs} & & {\hf} & & {\hb^d} & & {\hs} & \\
& {\hs} & & {\hf} & & {\hs} & & {\hs} \\
{\hs} & & {\hf} & & {\hf^c} & & {\hs} & \\
 & & & \\
} &
\xymatrix @-1.7pc@M=0pt {
{\hs} & & {\hz} & & {\hf} & & {\hs} & \\
& {\hs} & & {\hz^v} & & {\hs} & & {\hs} \\
{\hs} & & {\hz} & & {\hd} & & {\hs} & \\
& {\hs} & & {\hz} & & {\hs} & & {\hs} \\
{\hs} & & {\hz} & & {\hd} & & {\hs} & \\
& {\hs} & & {\hz} & & {\hs} & & {\hs} \\
{\hs} & & {\hz} & & {\hd^d} & & {\hs} & \\
& {\hs} & & {\hz} & & {\hs} & & {\hs} \\
{\hs} & & {\hf} & & {\hf^c} & & {\hs} & \\
 & & & \\
} \\
(a) & (b) \\
\end{tabular}
\end{center}
\caption{Cases for the preimage of $P(t)$}\label{f:inj}
\end{figure}

Observe that in Case (a), any defects that lie on the diagonal extending to the
southeast below $c$ must belong to a segment for some valley $u$ that lies to
the left of $v$.  Moreover, the diagonal containing $c$ must be the rightmost
diagonal in the segment associated to $u$.  But it follows from our construction
that the rightmost valley diagonal in region $\mathfrak{2}$ for any valley $u$
can only contain defects that lie in columns directly below region
$\mathfrak{3}$ for the valley $u$.  In particular, there are no defects lying
below $c$ on the diagonal extending to the southeast of $c$.

On the other hand, in Case (b), the diagonal containing $c$ must belong to the
segment associated to $v$.  Hence, there must exist at least one defect lying
below $c$ on the diagonal extending to the southeast of $c$ as otherwise
$\lambda(v,t)^\dag$ would not be a valid partition.

Therefore, we have that $e_v = k$ if there do not exist any defects on the
valley diagonal extending southeast from $c$, and otherwise $e_v = k + 1$.
Hence, we can determine $e_v$ from $P(t)$ and thereby recover $\gamma(t)$
from $P(t)$.  Then we can determine the correct decomposition of the heap of
$\w$ into segments and regions.  Finally, we can simply count the number of
defects on each valley diagonal in the segment associated to each valley $v$ to
recover $\lambda(v,t)^\dag$ and thus recover the edge labels of the tree $t$.
\end{proof}

Finally, Theorem~\ref{t:cog_bp} together with Lemmas~\ref{l:good_insertion} and
\ref{l:injective} imply the following result.

\begin{theorem}\label{t:main}
If $w$ is cograssmannian, then 
\[ C_{w}' = q^{-\frac{1}{2} \ell(w)} \sum_{\s \in \mathcal{E}_{\w}} q^{d(\s)}
\text{\ \ \ where \ \ \ } \mathcal{E}_{\w} = \bigcup_{t \in A_w} \bigcup_{\stackrel{\s \in \mathcal{F}_{\w}^{P(t)}}{\w^{\s} \leq x(t)} } \{ \s \}. \]
\end{theorem}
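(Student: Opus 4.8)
The plan is to read off the statement directly by combining Theorem~\ref{t:cog_bp} with the lemmas just proved, reorganizing the $B'$-expansion of $C_w'$ as a sum over masks. Recall from Theorem~\ref{t:cog_bp} that
\[ C_w' = q^{-\frac12\ell(w)} \sum_{t \in A_w} q^{d_\w(t)} B_{x(t)}', \qquad d_\w(t) = |t| + \tfrac12\ell(x(t)). \]
Unfolding the definition $B_{x(t)}' = q^{-\frac12\ell(x(t))}\sum_{x \leq x(t)} T_x$ gives $q^{d_\w(t)} B_{x(t)}' = q^{|t|}\sum_{x \leq x(t)} T_x$, so it suffices to show, for each $t \in A_w$, that
\[ \sum_{\substack{\s \in \mathcal{F}_\w^{P(t)} \\ \w^\s \leq x(t)}} q^{d(\s)} T_{\w^\s} = q^{|t|} \sum_{x \leq x(t)} T_x, \]
and then that the index sets $\mathcal{E}(t) = \{\s \in \mathcal{F}_\w^{P(t)} : \w^\s \leq x(t)\}$ are pairwise disjoint, so that summing over $t$ and dividing by $q^{\frac12\ell(w)}$ yields the claimed formula.

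For a fixed $t$, the three needed facts are the following. First, by Lemma~\ref{l:good_insertion} the mask $\s(t)$ is well defined, satisfies $\w^{\s(t)} = x(t)$, and has exactly $|t|$ defects: the edge labels of $t$ are partitioned among the segments, the segment of a valley $v$ receiving $|\lambda(v,t)|$ of them, and $\sum_v |\lambda(v,t)| = |t|$. Since by construction every mask in $\mathcal{F}_\w^{P(t)}$ has defect set precisely $P(t)$, this means $d(\s) = |P(t)| = |t|$ for all $\s \in \mathcal{F}_\w^{P(t)}$. Second, by Lemma~\ref{l:fwp} the assignment $\s \mapsto \w^\s$ is injective on $\mathcal{F}_\w^{P(t)}$ with image the Bruhat lower order ideal $F_\w^{P(t)}$; as $x(t) = \w^{\s(t)} \in F_\w^{P(t)}$, we get $[1, x(t)] \subseteq F_\w^{P(t)}$, and hence $\s \mapsto \w^\s$ restricts to a bijection from $\mathcal{E}(t)$ onto $\{v \in F_\w^{P(t)} : v \leq x(t)\} = [1, x(t)]$. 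Together these give the displayed equality for a single $t$.

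Finally, Lemma~\ref{l:injective} shows that distinct $t, t' \in A_w$ yield distinct defect sets $P(t) \neq P(t')$ and disjoint $\mathcal{E}(t) \cap \mathcal{E}(t') = \varnothing$, so $\mathcal{E}_\w = \bigsqcup_{t \in A_w} \mathcal{E}(t)$ and
\[ q^{-\frac12\ell(w)} \sum_{\s \in \mathcal{E}_\w} q^{d(\s)} T_{\w^\s} = q^{-\frac12\ell(w)} \sum_{t \in A_w} q^{d_\w(t)} B_{x(t)}' = C_w'. \]
Essentially all the genuine content is already in Lemmas~\ref{l:good_insertion} and~\ref{l:injective}, so the remaining argument is bookkeeping; the only point requiring attention is the uniform exponent $d(\s) = |t|$ on $\mathcal{E}(t)$, which is immediate once one notes that membership in $\mathcal{F}_\w^{P(t)}$ pins down the defect positions and that $|P(t)| = |t|$. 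Hence there is no real obstacle beyond matching the two indexing conventions correctly.
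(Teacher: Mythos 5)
Your proposal is correct and follows the same route as the paper: the paper's actual proof is the single sentence that "Theorem~\ref{t:cog_bp} together with Lemmas~\ref{l:good_insertion} and \ref{l:injective} imply the following result," and you have simply spelled out the bookkeeping — unfolding $B_{x(t)}'$, noting $\mathcal{F}_\w^{P(t)}$ fixes the defect count at $|P(t)|=|t|$, using Lemma~\ref{l:fwp} to identify $\{\w^\s : \s\in\mathcal{E}(t)\}$ with $[1,x(t)]$, and invoking Lemma~\ref{l:injective} for disjointness — exactly as the authors intend.
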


\bigskip
\section{Deodhar's model and Bott--Samelson resolutions}\label{s:b-s}

The aim of this section is to describe how Deodhar's model relates to
Bott--Samelson resolutions of Schubert varieties.  Indeed, it is possible to
deduce Deodhar's model from the Decomposition Theorem on intersection
cohomology applied to Bott--Samelson resolutions.  This connection is well
known to the appropriate experts and was hinted at in Deodhar's paper, but as
far as we know has never been carefully explained in print.

\subsection{Grassmannians, flag varieties, and Schubert varieties}

The {\bf Grassmannian} $\Gr(k,n)$ is an algebraic variety whose points
correspond to subspaces $V\subseteq\mathbb{C}^n$ of dimension $k$.  Group
elements $g\in\mathrm{GL}_n(\mathbb{C})$ send a subspace $V$ to another
subspace $gV$ of the same dimension, so $\mathrm{GL}_n(\mathbb{C})$ acts on
$\Gr(k,n)$.  We fix an ordered basis $e_1,\ldots,e_n$ of $\mathbb{C}^n$; this
implicitly fixes a Borel subgroup $B$, the group of upper-triangular matrices
with respect to this basis, and a torus $T\cong(\mathbb{C}^*)^n$ of diagonal
matrices with respect to this basis.  Note that the coordinate subspaces,
which are the $\binom{n}{k}$ subspaces spanned by $k$ of the $n$ vectors
$e_1,\ldots,e_n$, are precisely the subspaces which are fixed by every element
in $T$.

For the purposes of this paper, we define the {\bf flag variety}
$\mathcal{F}_n$ as a subvariety of
$\Gr(1,n)\times\Gr(2,n)\times\cdots\times\Gr(n-1,n)$.  A point in
$\Gr(1,n)\times\Gr(2,n)\times\cdots\times\Gr(n-1,n)$ corresponds to a sequence
of subspaces $F_1,\ldots,F_{n-1}$ with $\dim F_i=i$; we denote the point
corresponding to this sequence by $[F_1,\ldots,F_{n-1}]$.  A point
$[F_1,\ldots,F_{n-1}]$ is in $\mathcal{F}_n$ if $F_i\subseteq F_{i+1}$ for all
$i$.  If $[F_1,\ldots,F_{n-1}]\in\mathcal{F}_n$, then the collection of
subspaces $F_1\subset\cdots\subset F_{n-1}$ is called a {\bf flag}.  The torus
$T$ acts diagonally on $\Gr(1,n)\times\cdots\times\Gr(n-1,n)$ and preserves
inclusion relations on subspaces; hence it acts on $\mathcal{F}_n$.  The
$T$-fixed points correspond to flags of coordinate subspaces.  It is natural
to bijectively correspond permutations $w\in S_n$ with $T$-fixed points by
letting $$p_w=[\Span(e_{w(1)}),\Span(e_{w(1)},e_{w(2)}),
  \ldots,\Span(e_{w(1)},\ldots,e_{w(n-1)})].$$ We define $E_i$ by
$E_i=\Span(e_1,\ldots,e_i)$, so $p_\id=[E_1,\ldots,E_{n-1}]$.

Let $w\in S_n$ be a permutation.  We define the {\bf Schubert cell} $C_w$ as
the following cell in $\mathcal{F}_n$.  For all $i,j$ with $1\leq i,j\leq n$,
let $r_{ij}(w)$ be defined by $$r_{ij}(w)=\#\{k\mid k\leq j, w(k)\leq i\}.$$
Now a point $[F_1,\ldots,F_{n-1}]\in\mathcal{F}_n$ is in $C_w$ if
$\dim(F_j\cap E_i)=r_{ij}(w)$ for all $i,j$.  The Schubert cell $C_w$ is
isomorphic to affine space $\mathbb{C}^{\ell(w)}$.  The {\bf Schubert variety}
$X_w$ is the closure of the Schubert cell $C_w$, and a point
$[F_1,\ldots,F_{n-1}]\in X_w$ if and only if $\dim(F_j\cap E_i)\geq r_{ij}(w)$
for all $i,j$.  It is an alternative definition of Bruhat order that $v\leq w$
if and only if $r_{ij}(v)\geq r_{ij}(w)$ for all $i,j$, so $X_w=\bigcup_{v\leq
  w} C_v$.  Note that $p_w$ is the only $T$-fixed point in $C_w$ and
furthermore that $C_w$ is precisely the orbit of $p_w$ under the action of the
group $B$ of upper triangular matrices.

\subsection{The Bott--Samelson resolution}

Instead of the original definition~\cite{BS58} of the Bott--Samelson
resolution, we use the alternative definition given by Magyar \cite{magyar};
Perrin \cite{perrin} later suggested the connection between this definition and
heaps.  Magyar's definition allows us to easily think of a point in the
Bott--Samelson variety as corresponding to a configuration of vector subspaces
of $\mathbb{C}^n$, just as a point in a Schubert variety corresponds to a flag
in $\mathbb{C}^n$.  Furthermore, from this viewpoint, $(\mathbb{C}^*)^n$ acts
on the Bott--Samelson variety by moving the vector subspaces of $\mathbb{C}^n$
in the configuration associated to a point.  This definition makes the geometry
more concrete at the expense of making some statements harder to prove.

Given a reduced expression $\w=\w_{1} \w_{2}\cdots \w_{\ell}$ for $w\in S_n$,
we define $d_j$ for each $j$, $1\leq j\leq \ell$, by requiring that
$s_{d_j}=\w_j$.  Now we define the Bott--Samelson variety $Z_\w$ as a
subvariety of $\Gr(d_1,n)\times\cdots\times\Gr(d_\ell,n)$ as follows.

We have fixed a standard ordered basis $e_1,\ldots,e_n$ for $\mathbb{C}^n$,
which implicitly fixes a standard flag $E_1 \subset \cdots \subset E_{n-1}$.
Now for each $j$ define $\before(j)$ to be the greatest index such that
$\before(j)<j$ and $\w_{\before(j)}=s_{d_j-1}$; this is the index of the point
in the heap directly NW of (the point corresponding to) $j$.
Similarly define $\after(j)$ to be the greatest index such that $\after(j)<j$
and $\w_{\after(j)}=s_{d_j+1}$; this is the index of the point in the 
heap directly NE of $j$.  Note that there could be no indices satisfying the
required properties; in this case we leave $\before(j)$ and or $\after(j)$
undefined.

A point in $\Gr(d_1,n)\times\cdots\times\Gr(d_\ell,n)$ corresponds to a
sequence of subspaces $V_1,\ldots,V_\ell$ of $\mathbb{C}^n$.  We denote this
point by $[V_1,\ldots,V_\ell]$.  Then the {\bf Bott--Samelson variety} $Z_\w$
is defined by
$$Z_{\w}=\{[V_1,\ldots,V_\ell]\mid V_{\before(j)}\subset V_j \subset
V_{\after(j)} \forall j\}.$$ If $\before(j)$ is undefined, then we require
instead that $E_{d_j-1}\subset V_j$, and if $\after(j)$ is undefined, we
require that $V_j\subseteq E_{d_j+1}$.

Note that the covering relations in the heap poset of $\w$ are precisely that
$\before(j)$ covers $j$ and $\after(j)$ covers $j$.  In terms of the heap
diagram, we have a subspace $V_j$ living on each point of the heap.  The
dimension of $V_j$ is given by the index of the column it lives in.  The
inclusion relations state that each subspace must contain those in the same
diagonal to the NW and be contained in those in the same diagonal to the NE.

Now we describe a map $\pi_\w: Z_\w\rightarrow X_w$.  For each $d$, $1\leq
d\leq n-1$, let $\last(d)$ be the index of the last occurence of $s_d$ in $\w$.
We leave $\last(d)$ undefined if $s_d$ does not occur in $\w$ (and hence not in
any reduced word for $w$).  The map $\pi_\w$ is defined by
$$\pi_\w([V_1,\ldots,V_\ell])=[V_{\last(1)},\ldots,V_{\last(n-1)}],$$ with the
convention that $V_{\last(d)}=E_d$ if $\last(d)$ is not defined.  The map
$\pi_\w$ is known as the {\bf Bott--Samelson resolution}.  In terms of the
heap, this map forgets every subspace except for the bottom one in
each column.

It follows from the original definition of the Bott--Samelson~\cite{BS58} and
Magyar's proof of equivalence~\cite{magyar} that the image of this map is
indeed $X_w$.  This fact can also be deduced from the fixed point analysis of
the next section and the $B$-equivariance of $\pi_\w$.

\begin{example}
Let $w=[321]\in S_3$, and $\w=s_1s_2s_1$.  A point $p\in X_w$ corresponds to a
flag $F_1\subset F_2$ of subspaces of $\mathbb{C}^3$ with
$\dim(F_j)=j$.  A point $q\in Z_\w$ corresponds to a configuration
$V_1,V_2,V_3$ of subspaces of $\mathbb{C}^3$, respectively of dimensions
$1,2,1$, satisfying the conditions $V_1\subset E_2$, $V_1\subset V_2$, and
$V_3\subset V_2$.  The map $\pi_\w$ sends $q$ to the point corresponding to the
flag $V_3\subset V_2$.  
\end{example}

\begin{example}
Let $w=[3412]\in S_4$, and $\w=s_2s_1s_3s_2$.  A point $p\in X_w$ corresponds to
a flag $F_1\subset F_2\subset F_3$ in $\mathbb{C}^4$ such that $F_1\subset
E_3$ and $E_1\subset F_3$.  A point $q\in Z_\w$ corresponds to subspaces
$V_1,V_2,V_3,V_4$, of dimensions $2,1,3,2$, satisfying $E_1\subset
V_1\subset E_3$, $V_2\subset V_1\subset V_3$, and $V_2\subset V_4\subset V_3$.
The point $\pi_w(q)$ corresponds to the configuration $V_2\subset V_4\subset
V_3$.
\end{example}

\subsection{Fixed points on Bott--Samelson varieties and Deodhar masks}

Suppose we are given a smooth complex projective variety $X$ with a
$T=\mathbb{C}^*$ action with finitely many $T$-fixed points.  We denote this
action by $t \cdot x$ for $t \in T$ and $x \in X$.  Bialynicki-Birula
\cite{b-b-cells,b-b-cells2} showed that, in this situation, $X$ has a cell
decomposition $X=\bigsqcup_{x\in X^T} C_x$, where $X^T$ is the set of
$T$-fixed points of $X$, and the cell $C_x$ is defined by $$C_x=\{y\in X\mid
\lim_{t\rightarrow0} t\cdot y=x\}.$$ In particular, the homology $H_*(X)$ of
$X$ has a basis given by the classes $\{[\overline{C_x}]\mid x\in X^T\}$ of
the closures of the cells $C_x$.  If $\dim(C_x)=i$, then $[\overline{C_x}]\in
H_{2i}(X)$.

Before describing the cells, we describe the fixed points and give a bijection
between masks and $T$-fixed points on Bott--Samelson varieties.  Our bijection
agrees with the previously mentioned bijection between permutations and
$T$-fixed points on Schubert varieties.  In the next section we will also
point out that this bijection takes the defect statistic to the dimension of the
cell canonically associated with the $T$-fixed point.  The results of this
section are implicit in the work of Magyar \cite{magyar}.

Let $\s$ be a mask on $\w = \w_1 \cdots \w_{\ell}$, and recall that we define
$d_j$ by $s_{d_j} = \w_j$.  We define the point $p_\s$ to be the $T$-fixed
point
$$p_\s=[V_1,\ldots,V_\ell]\,\mbox{where}\,V_j=\Span(e_{\w^{\s[j]}(1)},\ldots,e_{\w^{\s[j]}(d_j)}).$$
From the string diagram of a heap, one can read off $V_j$ for a heap point
$\w_j$.  Label the strings $1,\ldots, n$ consecutively as they appear at the
top of the heap.  The subspace $V_j$ is then the span of the basis
vectors $e_k$ for all labels $k$ of strings to the left of $\w_j$ immediately
below $\w_j$ in the heap.  The following propositions show that this
correspondence gives a bijection between masks and $T$-fixed points.

\begin{proposition}
The $T$-fixed point $p_\s$ is actually in $Z_\w$.  In other words, the defined
collection of subspaces satisfies the inclusion conditions for configurations
corresponding to points in $Z_\w$.
\end{proposition}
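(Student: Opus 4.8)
The plan is to reduce the inclusion conditions to a single elementary observation about how the \emph{prefix sets} of the partial products $\w^{\s[j]}$ evolve with $j$. For a permutation $u\in S_n$ and $0\le m\le n$, write $\Pi_m(u)=\{u(1),\dots,u(m)\}$, so that by definition $V_j=\Span(e_k : k\in\Pi_{d_j}(\w^{\s[j]}))$. The elementary fact I would record first is: for any permutation $u$ and any index $k$, right multiplication by $s_k$ merely swaps the values of $u$ in positions $k$ and $k+1$, hence $\Pi_m(us_k)=\Pi_m(u)$ whenever $m\ne k$ (if $m<k$ the first $m$ values are untouched; if $m>k$ both affected positions lie among $1,\dots,m$, so their values stay inside $\Pi_m$). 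Since $\w^{\s[i]}=\w^{\s[i-1]}\cdot\w_i^{\s_i}$ equals either $\w^{\s[i-1]}$ or $\w^{\s[i-1]}s_{d_i}$, it follows that $\Pi_m(\w^{\s[i]})=\Pi_m(\w^{\s[i-1]})$ unless $\w_i=s_m$. This ``one-step invariance'' is the engine of the whole proof.

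Next I would prove $V_{\before(j)}\subseteq V_j$. Put $b=\before(j)$ and $d=d_j$, so $\w_b=s_{d-1}$ and $d_b=d-1$. By maximality of $b$ in its defining property there is no index $i$ with $b<i<j$ and $\w_i=s_{d-1}$; also $\w_j=s_d\ne s_{d-1}$. Hence $\w_i\ne s_{d-1}$ for all $b<i\le j$, and applying one-step invariance with $m=d-1$ shows that $\Pi_{d-1}(\w^{\s[i]})$ is constant for $b\le i\le j$. Therefore $\Pi_{d-1}(\w^{\s[b]})=\Pi_{d-1}(\w^{\s[j]})\subseteq\Pi_d(\w^{\s[j]})$, which is exactly the statement $V_{\before(j)}\subseteq V_j$. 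If $\before(j)$ is undefined, then $s_{d-1}$ occurs in none of $\w_1,\dots,\w_j$, so $\Pi_{d-1}(\w^{\s[j]})=\Pi_{d-1}(\w^{\s[0]})=\Pi_{d-1}(\id)=\{1,\dots,d-1\}$, giving $E_{d-1}\subseteq V_j$ as required.

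The inclusion $V_j\subseteq V_{\after(j)}$ is entirely symmetric, using $m=d+1$: with $a=\after(j)$ we have $\w_a=s_{d+1}$ and $d_a=d+1$, and maximality of $a$ forces $\w_i\ne s_{d+1}$ for $a<i\le j$, so $\Pi_{d+1}(\w^{\s[i]})$ is constant on $a\le i\le j$; then $\Pi_d(\w^{\s[j]})\subseteq\Pi_{d+1}(\w^{\s[j]})=\Pi_{d+1}(\w^{\s[a]})$ yields $V_j\subseteq V_{\after(j)}$. When $\after(j)$ is undefined, $\Pi_{d+1}(\w^{\s[j]})=\{1,\dots,d+1\}$, hence $V_j\subseteq E_{d+1}$. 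These four verifications establish the proposition.

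I do not expect a serious obstacle; the only point needing care is to notice that $V_{\before(j)}$ and $V_j$ are spans determined by the \emph{different} partial products $\w^{\s[b]}$ and $\w^{\s[j]}$, so the real content is that the relevant prefix set is unchanged between step $b$ and step $j$ — and this is precisely where the maximality in the definitions of $\before(j)$ and $\after(j)$ enters. The argument can also be phrased directly on string diagrams: the set of strings lying to the left of column $d_j-1$ does not change between the heights of $\before(j)$ and $j$ because no crossing occurs in column $d_j-1$ in that range, and symmetrically for column $d_j+1$ and $\after(j)$; I would include whichever phrasing reads more cleanly alongside the surrounding discussion.
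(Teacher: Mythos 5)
Your proof is correct and follows essentially the same route as the paper: both identify that the set $\{\w^{\s[i]}(1),\dots,\w^{\s[i]}(d_j-1)\}$ (your $\Pi_{d_j-1}(\w^{\s[i]})$) is unchanged between indices $\before(j)$ and $j$ because no $s_{d_j-1}$ occurs there, and then symmetrically for $\after(j)$. You merely make the ``one-step invariance'' lemma explicit where the paper compresses it into a single justifying clause; the underlying idea and case analysis (including the undefined cases) are identical.
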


\begin{proof}
Suppose $p_s=[V_1,\ldots,V_\ell]$.  We need to show that
$V_{\before(j)}\subset V_j\subset V_{\after(j)}$ for all $j$, $1\leq j\leq
\ell$.  Since there is no occurence of $s_{d_j-1}$ in $\w$ between indices
$\before(j)$ and $j$,
\begin{align*}
\{\w^{\s[\before(j)]}(1),\ldots,\w^{\s[\before(j)]}(d_j-1)\}
& =\{\w^{\s[j]}(1),\ldots,\w^{\s[j]}(d_j-1)\} \\
& \subset\{\w^{\s[j]}(1),\ldots,\w^{\s[j]}(d_j-1),\w^{\s[j]}(d_j)\},
\end{align*}
so $V_{\before(j)}\subset V_j$.  If $\before(j)$ is undefined, then
\begin{align*}
\{1,\ldots,d_j-1\}
& =\{\w^{\s[j]}(1),\ldots,\w^{\s[j]}(d_j-1)\} \\
& \subset\{\w^{\s[j]}(1),\ldots,\w^{\s[j]}(d_j-1),\w^{\s[j]}(d_j)\},
\end{align*}
so $E_{d_j-1}\subset V_j$.  Therefore $V_{\before(j)}\subset V_j$ (or
$E_{d_j-1}\subset V_j$ if $\before(j)$ is undefined).  A similar argument
shows that $V_j\subset V_{\after(j)}$ (or $V_j\subset V_{d_j+1}$ if
$\after(j)$ is undefined).
\end{proof}

\begin{proposition}
Every $T$-fixed point on $Z_\w$ is $p_\s$ for some mask $\s$ on $\w$.
\end{proposition}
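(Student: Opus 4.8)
The plan is to reconstruct a mask $\s$ from a given $T$-fixed point $[V_1,\ldots,V_\ell]\in Z_\w$ by sweeping through the positions $j=1,2,\ldots,\ell$ in order and reading off the bit $\s_j$ at step $j$. Since $T\cong(\mathbb{C}^*)^n$ acts diagonally on $\mathbb{C}^n$, a subspace is $T$-fixed exactly when it is a coordinate subspace; so at the outset I would record that $V_j=\Span(e_k:k\in A_j)$ for some $A_j\subseteq\{1,\ldots,n\}$ with $|A_j|=d_j$, and that the dimension of $V_{\before(j)}$ is $d_j-1$ and of $V_{\after(j)}$ is $d_j+1$, directly from the definitions of $\before$ and $\after$.

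The technical heart is a bookkeeping observation about one-line notation. For a generator $s_c$, right multiplication $y\mapsto ys_c$ swaps the values of $y$ in positions $c$ and $c+1$ and fixes all others; hence for any mask $\tau$ the set $\{\w^{\tau[i]}(1),\ldots,\w^{\tau[i]}(c)\}$ changes as $i$ increases only when $\w_i=s_c$ (any other generator permutes positions $1,\ldots,c$ among themselves or permutes positions $c+1,\ldots,n$ among themselves). Applying this with $c=d_j-1$ on the range of indices between $\before(j)$ and $j$, and using that no $s_{d_j-1}$ occurs strictly in between by the definition of $\before(j)$, I get: if $\s_1,\ldots,\s_{j-1}$ have already been fixed and $y:=\w^{\s[j-1]}$, then $V_{\before(j)}=\Span(e_{y(1)},\ldots,e_{y(d_j-1)})$, and when $\before(j)$ is undefined this same span equals $E_{d_j-1}$. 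Symmetrically $V_{\after(j)}=\Span(e_{y(1)},\ldots,e_{y(d_j+1)})$, and when $\after(j)$ is undefined the boundary condition $V_j\subseteq E_{d_j+1}$ is again the condition $V_j\subseteq\Span(e_{y(1)},\ldots,e_{y(d_j+1)})$.

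With this in hand the induction is immediate. Assume inductively that $\s_1,\ldots,\s_{j-1}$ have been chosen so that $V_i=\Span(e_{\w^{\s[i]}(1)},\ldots,e_{\w^{\s[i]}(d_i)})$ for all $i<j$, and set $y=\w^{\s[j-1]}$; since $\before(j),\after(j)<j$, the previous paragraph applies. The defining inclusions $V_{\before(j)}\subset V_j\subset V_{\after(j)}$ say that the $d_j$-dimensional coordinate subspace $V_j$ contains $\Span(e_{y(1)},\ldots,e_{y(d_j-1)})$ and lies inside $\Span(e_{y(1)},\ldots,e_{y(d_j+1)})$, forcing $V_j=\Span(e_{y(1)},\ldots,e_{y(d_j-1)})\oplus\Span(e_m)$ with $m\in\{y(d_j),y(d_j+1)\}$ --- exactly two possibilities. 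I then set $\s_j=0$ if $m=y(d_j)$ and $\s_j=1$ if $m=y(d_j+1)$. Because right multiplication by $s_{d_j}$ fixes positions $1,\ldots,d_j-1$ and sends position $d_j$ to the value $y(d_j+1)$, in either case $\w^{\s[j]}(i)=y(i)$ for $i<d_j$ and $\w^{\s[j]}(d_j)=m$, so $V_j=\Span(e_{\w^{\s[j]}(1)},\ldots,e_{\w^{\s[j]}(d_j)})$ and the inductive hypothesis is maintained. After step $\ell$ the mask $\s=(\s_1,\ldots,\s_\ell)$ satisfies $p_\s=[V_1,\ldots,V_\ell]$, as desired.

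I do not expect a genuine obstacle here: the argument is essentially the computation proving the previous proposition run in reverse, and the only points requiring care are the one-line-notation bookkeeping in the second paragraph and the degenerate cases in which $\before(j)$ or $\after(j)$ is undefined, both of which are routine.
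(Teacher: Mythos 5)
Your proof is correct and follows essentially the same approach as the paper: induct on $j$, use that there are no occurrences of $s_{d_j-1}$ (resp.\ $s_{d_j+1}$) between $\before(j)$ (resp.\ $\after(j)$) and $j$ to rewrite $V_{\before(j)}$ and $V_{\after(j)}$ in terms of $\w^{\s[j-1]}$, and conclude that the coordinate subspace $V_j$ has exactly two possibilities, which determines $\s_j$. The extra paragraph of one-line-notation bookkeeping simply spells out the same fact the paper states directly.
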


\begin{proof}
Given a $T$-fixed point $[V_1,\ldots,V_\ell]$ in $Z_\w$ (so each $V_j$ is a
coordinate subspace), we construct the mask $\s$.

Assume by induction that $\s_1,\ldots,\s_{j-1}$ have already been determined
correctly from $V_1,\ldots,V_{j-1}$; this means that
$V_k=\Span(e_{\w^{\s[k]}(1)},\ldots,e_{\w^{\s[k]}(d_k)})$ for all $k<j$.
Since there are no occurences of $s_{d_j-1}$ between indices $\before(j)$ and
$j-1$,
$$\{\w^{\s[\before(j)]}(1),\ldots,\w^{\s[\before(j)]}(d_j-1)\}=\{\w^{\s[j-1]}(1),\ldots,\w^{\s[j-1]}(d_j-1)\}.$$
Similarly there are no occurences of $s_{d_j+1}$ between indices $\after(j)$ and
$j-1$, so
$$\{\w^{\s[\after(j)]}(1),\ldots,\w^{\s[\after(j)]}(d_j+1)\}=\{\w^{\s[j-1]}(1),\ldots,\w^{\s[j-1]}(d_j+1)\}.$$
Therefore, the requirement that $V_{\before(j)}\subset V_j \subset
V_{\after(j)}$ can be given explicitly as
$$\Span(e_{\w^{\s[j-1]}(1)},\ldots,e_{\w^{\s[j-1]}(d_j-1)})\subset
V_j\subset \Span(e_{\w^{\s[j-1]}(1)},\ldots,e_{\w^{\s[j-1]}(d_j+1)}).$$

As $V_j$ must be a coordinate subspace, it must be that
$$V_j=\Span(e_{\w^{\s[j-1]}(1)},\ldots,e_{\w^{\s[j-1]}(d_j-1)},e_{w^{\s[j-1]}(d_j)}),$$
or that
$$V_j=\Span(e_{\w^{\s[j-1]}(1)},\ldots,e_{\w^{\s[j-1]}(d_j-1)},e_{w^{\s[j-1]}(d_j+1)}).$$
In the first case, we let $\s_j=0$, and in the second, we let
$\s_j=1$.  In either case,
$$V_j=\Span(e_{\w^{\s[j]}(1)},\ldots,e_{\w^{\s[j]}(d_j)})$$ as desired.

The special cases where $\before(j)$ or $\after(j)$ is undefined are similar and left
to the reader.
\end{proof}

\begin{proposition}
If $\s^{(1)}\neq\s^{(2)}$ are two masks, then $p_{\s^{(1)}}\neq
p_{\s^{(2)}}$.
\end{proposition}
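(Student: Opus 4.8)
The plan is to locate the first index at which the two masks differ and show that the two configurations already disagree at the corresponding coordinate. Concretely, I would let $j$ be the smallest index with $\s^{(1)}_j\neq\s^{(2)}_j$. Then $\s^{(1)}[j-1]=\s^{(2)}[j-1]$, so the two initial subwords coincide; write $u:=\w^{\s^{(1)}[j-1]}=\w^{\s^{(2)}[j-1]}$. After possibly interchanging the two masks I may assume $\s^{(1)}_j=0$ and $\s^{(2)}_j=1$, so that $\w^{\s^{(1)}[j]}=u$ while $\w^{\s^{(2)}[j]}=u s_{d_j}$, where $s_{d_j}=\w_j$. Writing $p_{\s^{(i)}}=[V_1^{(i)},\ldots,V_\ell^{(i)}]$, it then suffices to show $V_j^{(1)}\neq V_j^{(2)}$, since the two points of $Z_\w$ differ in their $j$th coordinate.

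For the comparison I would simply unwind the definition of $p_\s$. The subspace $V_j^{(1)}=\Span(e_{u(1)},\ldots,e_{u(d_j)})$ is the coordinate subspace associated to the set $\{u(1),\ldots,u(d_j)\}$, while $V_j^{(2)}$ is the coordinate subspace associated to $\{(u s_{d_j})(1),\ldots,(u s_{d_j})(d_j)\}$. Since right multiplication by $s_{d_j}$ swaps the entries in positions $d_j$ and $d_j+1$ of the one-line notation, this second set equals $\{u(1),\ldots,u(d_j-1),u(d_j+1)\}$. Because $u$ is a bijection, $u(d_j)$ lies in the first set but not the second, so the two sets, and hence the two coordinate subspaces, are distinct. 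This yields $p_{\s^{(1)}}\neq p_{\s^{(2)}}$.

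I expect no real obstacle here: the argument is a direct bookkeeping computation once one notices that the defect status of position $j$ is irrelevant. Regardless of whether $j$ is a defect, the two initial subwords at step $j$ are $u$ and $u s_{d_j}$, and these are always distinct since $s_{d_j}$ has length $1$. The only point requiring care is tracking the effect of $s_{d_j}$ on the one-line notation and recording that $u(d_j)\neq u(d_j+1)$, which is just injectivity of $u$.
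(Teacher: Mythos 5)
Your proof is correct and follows essentially the same route as the paper's: both identify the first index $j$ of disagreement, note that the initial subwords through $j-1$ coincide, and compare the two candidate coordinate subspaces $V_j^{(1)}$ and $V_j^{(2)}$. You simply make explicit the final observation the paper leaves implicit, namely that $u(d_j)\neq u(d_j+1)$ by injectivity of the permutation, so the two coordinate subspaces differ.
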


\begin{proof}
Let $p_{\s^{(1)}}=[V^{(1)}_1,\ldots,V^{(1)}_\ell]$ and
$p_{\s^{(2)}}=[V^{(2)}_1,\ldots,V^{(2)}_\ell]$.  Let $j$ be the first index
where $\s^{(1)}_j\neq\s^{(2)}_j$.  Assume without loss of generality that
$\s^{(1)}_j=0$ and $\s^{(2)}_j=1$.  Then
$$V^{(1)}_j=\Span(e_{\w^{\s^{(1)}[j-1]}(1)},\ldots,e_{\w^{\s^{(1)}[j-1]}(d_j-1)},e_{\w^{\s^{(1)}[j-1]}(d_j)})$$
while 
$$V^{(2)}_j=\Span(e_{\w^{\s^{(1)}[j-1]}(1)},\ldots,e_{\w^{\s^{(1)}[j-1]}(d_j-1)},e_{\w^{\s^{(1)}[j-1]}(d_j+1)}).$$
\end{proof}

Finally we show that this bijection is natural with respect to the map
$\pi_\w$.

\begin{proposition}\label{p:fixed_point_images}
For any mask $\s$, $\pi_\w(p_\s)=p_{\w^\s}.$
\end{proposition}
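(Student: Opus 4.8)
The plan is to check the asserted equality of points in $X_w$ coordinate by coordinate. By definition $\pi_\w(p_\s)=[V_{\last(1)},\ldots,V_{\last(n-1)}]$, with the convention $V_{\last(d)}=E_d$ when $\last(d)$ is undefined, while $p_{\w^\s}$ is the flag whose $d$-th subspace is $\Span(e_{\w^\s(1)},\ldots,e_{\w^\s(d)})$. So it suffices to prove, for each $d$ with $1\le d\le n-1$, that $V_{\last(d)}=\Span(e_{\w^\s(1)},\ldots,e_{\w^\s(d)})$.

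First I would treat the generic case, where $\last(d)$ is defined. Since $\w_{\last(d)}=s_d$ we have $d_{\last(d)}=d$, so by the definition of $p_\s$ the subspace is $V_{\last(d)}=\Span(e_{\w^{\s[\last(d)]}(1)},\ldots,e_{\w^{\s[\last(d)]}(d)})$, and everything reduces to the set equality
\[ \{\w^{\s[\last(d)]}(1),\ldots,\w^{\s[\last(d)]}(d)\}=\{\w^\s(1),\ldots,\w^\s(d)\}. \]
The crux of the argument — the one observation I expect to do all the work — is that right multiplication of a permutation $x$ by a generator $s_i$ simply transposes the entries in positions $i$ and $i+1$ of the one-line notation of $x$, and therefore leaves the set $\{x(1),\ldots,x(d)\}$ unchanged whenever $i\ne d$, because then $\{i,i+1\}$ is either contained in $\{1,\ldots,d\}$ or disjoint from it. Now $\w^\s=\w^{\s[\last(d)]}\cdot\w_{\last(d)+1}^{\s_{\last(d)+1}}\cdots\w_\ell^{\s_\ell}$, and by the definition of $\last(d)$ every factor to the right of $\w^{\s[\last(d)]}$ is either the identity or some $s_i$ with $i\ne d$; applying the observation once per factor yields the displayed set equality, hence $V_{\last(d)}=\Span(e_{\w^\s(1)},\ldots,e_{\w^\s(d)})$.

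Finally I would dispose of the case where $\last(d)$ is undefined, i.e.\ $s_d$ does not occur in $\w$: then $\w^\s$ is a product of generators $s_i$ with $i\ne d$ (or the identity), so the same observation, applied starting from the identity permutation (for which the entries in positions $1,\ldots,d$ form the set $\{1,\ldots,d\}$), gives $\{\w^\s(1),\ldots,\w^\s(d)\}=\{1,\ldots,d\}$ and thus $\Span(e_{\w^\s(1)},\ldots,e_{\w^\s(d)})=E_d$, which agrees with the stated convention. As the two configurations of subspaces agree in every coordinate, $\pi_\w(p_\s)=p_{\w^\s}$. There is no real obstacle here; the only care required is the bookkeeping with undefined indices and the identification $d_{\last(d)}=d$.
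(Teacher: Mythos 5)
Your proof is correct and follows essentially the same route as the paper: the key step in both is that, since $s_d$ does not occur in $\w$ after position $\last(d)$, the set $\{\w^{\s[\last(d)]}(1),\ldots,\w^{\s[\last(d)]}(d)\}$ equals $\{\w^{\s}(1),\ldots,\w^{\s}(d)\}$, whence $V_{\last(d)}=\Span(e_{\w^\s(1)},\ldots,e_{\w^\s(d)})$. You have simply spelled out the one-line--notation justification for that set equality and handled the undefined-$\last(d)$ convention explicitly, both of which the paper leaves implicit.
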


\begin{proof}
Suppose $p_\s=[V_1,\ldots,V_\ell]$ and $\pi_\w(p_\s)=[F_1,\ldots,F_{n-1}]$.
Since there are no occurences of $s_d$ in $\w$ after $\last(d)$, we have that
$$\{\w^{\s[\last(d)]}(1),\ldots,\w^{\sigma[\last(d)]}(d)\}=\{\w^{\s}(1),\ldots,\w^{\s}(d)\}.$$
Therefore, for each $d$, $1\leq d\leq
n-1$, $$F_d=V_{\last(d)}=\Span(e_{\w^{\s}(1)},\ldots,e_{\w^{\s}(d)}),$$
so $p_{\w^\s}=[F_1,\ldots, F_{n-1}]$ as claimed.
\end{proof}

\subsection{Cells of the Bott--Samelson variety}\label{sect:b-s-cells}

We now apply the Bialynicki-Birula theorem to $Z_\w$ to obtain a cell
decomposition for $Z_\w$ and hence a basis for $H_*(Z_\w)$.  Fix the
$\mathbb{C}^*$ action on $\mathbb{C}^n$ where $t\cdot e_i=t^{n-i}e_i$ for $t
\in \mathbb{C}^*$, extending linearly.  This induces a $\mathbb{C}^*$ action on
$\Gr(i,n)$ and therefore (diagonally) on $Z_\w$ since all inclusion relations
are preserved.  The cell $C_\s$ associated to the $T$-fixed point $p_\s$ is
then defined by $$C_\s=\{p\in Z_\w\mid \lim_{t\rightarrow0} t\cdot p=p_\s\}.$$

Given a subspace $V\subset\mathbb{C}^n$ having a basis (written with respect to
$(e_1,\ldots,e_n)$) as the rows of a matrix $M$, $\lim_{t\rightarrow0} t\cdot
V$ is the coordinate subspace spanned by the coordinate vectors corresponding
to the ``pivot columns'' of the ``right-to-left row echelon form'' of $M$.
This means that, instead of performing row reduction by the usual method of
starting with the leftmost column, trying it as a pivot, and moving rightwards
to find successive pivots, we start by trying the rightmost column as a pivot
and move leftwards to find successive pivots.  Therefore, we have the following
proposition.

\begin{proposition}\label{p:cell_pivots}
A point $p=[V_1,\ldots,V_\ell]\in Z_\w$ is in $C_\s$ if, for all $j$, the
right-to-left row echelon form of a matrix whose rows span $V_j$ has pivots
in columns $\w^{\s[j]}(1),\ldots,\w^{\s[j]}(d_j)$.
\end{proposition}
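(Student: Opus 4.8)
The plan is to reduce the proposition directly to the description of $\lim_{t\to 0} t\cdot V$ recorded in the paragraph preceding it, using that the $\mathbb{C}^*$-action on $Z_\w$ is diagonal.

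First I would observe that the action on $Z_\w\subset\Gr(d_1,n)\times\cdots\times\Gr(d_\ell,n)$ is induced componentwise from the action on each factor $\Gr(d_j,n)$, and that each $\Gr(d_j,n)$ is projective, hence complete; consequently the limit $\lim_{t\to 0} t\cdot p$ of a point $p=[V_1,\ldots,V_\ell]$ exists and is computed factor by factor, $\lim_{t\to 0} t\cdot p=[\lim_{t\to 0} t\cdot V_1,\ldots,\lim_{t\to 0} t\cdot V_\ell]$. Then, writing $P_j\subseteq\{1,\ldots,n\}$ for the set of pivot columns of the right-to-left row echelon form of a matrix whose rows span $V_j$ (this set is independent of the choice of such matrix and has exactly $d_j=\dim V_j$ elements), the cited fact gives $\lim_{t\to 0} t\cdot V_j=\Span(e_k:k\in P_j)$.

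Next I would compare with $p_\s$. By definition $p_\s=[V_1^\s,\ldots,V_\ell^\s]$ with $V_j^\s=\Span(e_{\w^{\s[j]}(1)},\ldots,e_{\w^{\s[j]}(d_j)})$, again a coordinate subspace. Since a coordinate subspace is determined by the set of standard basis vectors that span it, for each $j$ the equality $\lim_{t\to 0} t\cdot V_j=V_j^\s$ holds if and only if $P_j=\{\w^{\s[j]}(1),\ldots,\w^{\s[j]}(d_j)\}$. Assembling these equalities over all $j$ shows that $\lim_{t\to 0} t\cdot p=p_\s$ --- i.e.\ $p\in C_\s$ --- exactly when, for every $j$, the right-to-left row echelon form of $V_j$ has pivots in columns $\w^{\s[j]}(1),\ldots,\w^{\s[j]}(d_j)$. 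This yields the proposition (indeed in the sharper ``if and only if'' form, which is what makes the $C_\s$ a genuine cell decomposition, since the $P_j$ are determined by $p$).

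I do not expect a serious obstacle: the substance has been front-loaded into the preceding description of $\lim_{t\to 0} t\cdot V$ and into the definition of $p_\s$, so the argument is essentially bookkeeping. The two points needing care are that the limit may be taken componentwise (from completeness of the Grassmannians together with the diagonal action) and that equality of two coordinate subspaces is equivalent to equality of their indexing sets. If one wished to make the echelon-form description self-contained rather than invoking it, the mildly technical step would be to check that, scaling a basis of $V_j$ by $t\cdot e_i=t^{n-i}e_i$ and extracting leading terms from the right, the limit subspace is spanned precisely by the $e_k$ with $k\in P_j$; but since this is already stated as known, I would simply cite it.
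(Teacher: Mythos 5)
Your argument is exactly the one the paper intends: Proposition~\ref{p:cell_pivots} is stated there with no proof, just the word ``Therefore'' after the paragraph describing $\lim_{t\to0}t\cdot V$ via right-to-left row echelon pivots, and your write-up simply makes that implicit bookkeeping explicit (componentwise limits on the product of Grassmannians, matching pivot sets to the coordinate subspace $V_j^\s$). Correct, and the same approach.
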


For an alternative definition of $C_\sigma$, let $r_{ij}(\s)=\#\{k\mid k<d_j,
\w^{\s[j]}(k)\leq i\}$.  A point in $Z_\w$ is in $C_\s$ if $\dim(V_j\cap
E_i)=r_{ij}(\s)$ for all $i,j$.  The only $T$-fixed point in $C_\s$ is $p_\s$.
However, unlike Schubert cells on flag varieties, the cell $C_\s$ need not be
a single $B$-orbit, and the closure of a cell might not be a union of cells
and might not include all points corresponding to subspaces satisfying
$\dim(V_j\cap E_i)\geq r_{ij}(\s)$.

We now introduce a second notation for masks that is more convenient for
describing the dimensions of cells.  Given a mask $\sigma$, let
$e(\sigma)=(e(\sigma)_1,\ldots,e(\sigma)_\ell)$ be a string of $+$'s and $-$'s
defined by 
$$ e(\sigma)_i =
\begin{cases}
- & \text{ if $\sigma$ has plain-zero or one-defect at $i$} \\
+ & \text{ if $\sigma$ has a zero-defect or plain-one at $i$.}
\end{cases} $$

\begin{proposition}
The mask $\sigma$ can be recovered from $e(\sigma)$.
\end{proposition}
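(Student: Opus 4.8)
The plan is to reconstruct $\sigma$ from $e(\sigma)$ one coordinate at a time, scanning from left to right and exploiting the observation recorded just after the definition of a defect: the defect status of a position $j$ does not depend on the value $\sigma_j$, being determined entirely by the initial subexpression $\w^{\s[j-1]}$. I would formalize this as an induction on $j$, showing that $\sigma_1,\ldots,\sigma_j$ are recovered from $e(\sigma)_1,\ldots,e(\sigma)_j$.

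For the base case $j=1$, a defect position must satisfy $2\le j$, so position $1$ is never a defect; hence it is a plain-zero when $\sigma_1=0$ and a plain-one when $\sigma_1=1$. Accordingly, $e(\sigma)_1=-$ forces $\sigma_1=0$ while $e(\sigma)_1=+$ forces $\sigma_1=1$.

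For the inductive step, assume $\sigma_1,\ldots,\sigma_{j-1}$ have been determined. Then $\w^{\s[j-1]}$ is known explicitly, so one can test whether $\w^{\s[j-1]}\w_j<\w^{\s[j-1]}$ and thereby decide whether position $j$ is a defect. If $j$ is not a defect, then it is a plain-zero or a plain-one, so $e(\sigma)_j=-$ gives $\sigma_j=0$ and $e(\sigma)_j=+$ gives $\sigma_j=1$. If $j$ is a defect, then it is a one-defect or a zero-defect, so $e(\sigma)_j=-$ gives $\sigma_j=1$ and $e(\sigma)_j=+$ gives $\sigma_j=0$. In every case $\sigma_j$ is uniquely determined, which closes the induction and shows that $e$ is injective on masks of $\w$.

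There is no real obstacle here; the whole content is the remark that the defect status at $j$ does not see $\sigma_j$, which is exactly what makes the left-to-right decoding well defined. Equivalently, one may phrase the encoding as: $e(\sigma)_j$ records the $\{0,1\}$-value $\sigma_j$ XORed with the indicator that $j$ is a defect, so knowing the defect status at $j$ (computed from the already-recovered prefix $\sigma[j-1]$) together with the sign $e(\sigma)_j$ pins down $\sigma_j$.
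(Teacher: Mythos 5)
Your argument is exactly the paper's proof, just spelled out in full: the paper gives the one-line version (``since the defect status of a position depends only on the mask to its left, we can reconstruct $\sigma$ from $e(\sigma)$ position by position, starting from the left'') and you have correctly unpacked it into a left-to-right induction. Nothing is missing and nothing is different in substance.
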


\begin{proof}
Since the defect status of a position depends only on the mask to its left, we
can reconstruct $\sigma$ from $e(\sigma)$ position by position, starting from
the left.
\end{proof}

The entries of $e(\sigma)$ also have meaning in the heap.  A $+$
indicates that the string with the larger label exits to the left below the
heap point, and a $-$ indicates that the string with the smaller label exits to
the left.

\begin{proposition}\label{p:nplus}
The number of $+$'s in $e(\sigma)$ is equal to $\ell(\w^\sigma)+d(\sigma)$,
where $d(\sigma)$ is the number of defects.
\end{proposition}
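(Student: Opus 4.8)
The plan is to prove this by a direct counting argument, tracking how the length of the initial subword $\w^{\s[j]}$ changes as $j$ runs from $1$ to $\ell$. First I would recall the four-fold classification of positions: a position $i$ is a plain-zero, plain-one, zero-defect, or one-defect according to its defect status (which does not depend on $\s_i$) and its mask value $\s_i$. By definition of $e(\s)$, the number of $+$'s in $e(\s)$ equals $\#\{\text{plain-ones}\} + \#\{\text{zero-defects}\}$, so it suffices to show that $\ell(\w^\s) + d(\s)$ equals this same quantity.

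Next I would analyze the effect of each position $j$ on the length. Since $\w^{\s[j]} = \w^{\s[j-1]}\w_j^{\s_j}$, if $\s_j = 0$ then $\w^{\s[j]} = \w^{\s[j-1]}$ and the length is unchanged (this covers plain-zeros and zero-defects); if $\s_j = 1$ then $\w^{\s[j]} = \w^{\s[j-1]}\w_j$, and by the definition of a defect position this has length $\ell(\w^{\s[j-1]}) + 1$ when $j$ is not a defect (a plain-one) and length $\ell(\w^{\s[j-1]}) - 1$ when $j$ is a defect (a one-defect). Here position $1$ is never a defect, so it is either a plain-zero or a plain-one and is handled by the same rule. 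Telescoping from $\w^{\s[0]} = \mathrm{id}$ gives
\[ \ell(\w^\s) = \#\{\text{plain-ones}\} - \#\{\text{one-defects}\}. \]

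Finally I would combine this with $d(\s) = \#\{\text{zero-defects}\} + \#\{\text{one-defects}\}$, which is immediate from the definition of the defect statistic split according to mask value. Adding,
\[ \ell(\w^\s) + d(\s) = \#\{\text{plain-ones}\} + \#\{\text{zero-defects}\}, \]
which is exactly the number of $+$'s in $e(\s)$, completing the proof. I do not anticipate any real obstacle here; the only point requiring a word of care is the boundary position $j = 1$, where the defect status is undefined but the position is trivially non-defective, so it contributes to the count of plain-zeros or plain-ones in agreement with the length bookkeeping above.
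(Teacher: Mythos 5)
Your proof is correct and takes essentially the same approach as the paper: both are a position-by-position bookkeeping argument, yours phrased as a telescoping sum after splitting positions into the four types, the paper's phrased as a one-step induction on the length of $\w$ that tracks how each new letter changes $\ell(\w^\sigma)$, $d(\sigma)$, and the number of $+$'s simultaneously.
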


\begin{proof}
We induct on the length of $\w$ and $\sigma$.  The proposition is clear for
words of length 0.  When we multiply on the right at a non-defect position,
then a 0 in $\sigma$ increases neither the length of $\w^\sigma$ nor the
number of $+$'s in $e(\sigma)$, while a 1 in $\sigma$ increases both the
length of $\w^\sigma$ and the number of $+$'s.  On the other hand, when we
multiply at a defect position, a 0 in $\sigma$ increases the number of defects
as well as the number of $+$'s, while a 1 in $\sigma$ adds one defect,
subtracts 1 from the length, and leaves the number of $+$'s unchanged.
\end{proof}

We introduced $e(\sigma)$ because, as we will see in the proof, it is the
natural way to compute the dimension.  Indeed, every $+$ corresponds to a
natural coordinate function on a cell.

\begin{proposition}\label{p:plusdim}
The dimension of $C_{\sigma}$ is the number of $+$'s in $e(\sigma)$.
\end{proposition}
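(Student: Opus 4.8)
\emph{Approach.} I would argue by induction on $\ell = \ell(w)$, building a point of $C_\sigma$ one heap point at a time (equivalently, peeling off the last heap point via the $\mathbb{C}^*$-equivariant $\mathbb{P}^1$-bundle $Z_\w \to Z_{\w_1 \cdots \w_{\ell-1}}$ that forgets $V_\ell$). The starting point is Proposition~\ref{p:cell_pivots}: as $\sigma$ ranges over all masks, the loci it cuts out by the pivot conditions partition $Z_\w$ — just as the Bialynicki--Birula cells do — so the inclusion in that proposition is an equality. Thus $C_\sigma$ is exactly the set of $[V_1,\ldots,V_\ell] \in Z_\w$ for which, for every $j$, the right-to-left reduced echelon form of $V_j$ has pivot columns $\w^{\sigma[j]}(1),\ldots,\w^{\sigma[j]}(d_j)$, and I want to parametrize this set.

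\emph{The step at heap point $j$.} Suppose $V_1,\ldots,V_{j-1}$ have been chosen subject to these pivot conditions; then $V_{\before(j)}$ and $V_{\after(j)}$ are already fixed (both indices are $<j$), and $V_j$ must be a line in the two-plane $V_{\after(j)}/V_{\before(j)}$. By the same computation used in verifying that $p_\sigma \in Z_\w$ (no occurrence of $s_{d_j \mp 1}$ between $\before(j)$, $\after(j)$ and $j-1$), the pivot set of $V_{\before(j)}$ is $P := \{\w^{\sigma[j-1]}(1),\ldots,\w^{\sigma[j-1]}(d_j-1)\}$ and that of $V_{\after(j)}$ is $P \cup \{a,b\}$ with $a = \w^{\sigma[j-1]}(d_j)$ and $b = \w^{\sigma[j-1]}(d_j+1)$. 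Let $f_a, f_b$ be the reduced-echelon rows of $V_{\after(j)}$ with pivots at columns $a,b$; an admissible $V_j$ is $V_{\before(j)} + \langle \gamma f_a + \delta f_b \rangle$ with $(\gamma,\delta) \neq (0,0)$. Since $f_a$ and $f_b$ vanish at all columns of $P$, so does the spanning vector, so the pivot set of $V_j$ is $P$ together with the rightmost nonzero column of $\gamma f_a + \delta f_b$; as $f_a$ has a $1$ at column $a$ and vanishes to its right, and likewise $f_b$ at $b$, that column is $\max(a,b)$ when the corresponding coefficient is nonzero and $\min(a,b)$ otherwise. The pivot condition on $V_j$ forces this new pivot to be $\w^{\sigma[j]}(d_j)$, and checking the four cases plain-zero/plain-one/zero-defect/one-defect against the definition of $e(\sigma)$ (using that $j$ is a defect iff $a>b$ and that $\sigma_j = 1$ swaps positions $d_j, d_j+1$) shows $\w^{\sigma[j]}(d_j) = \max(a,b)$ precisely when $e(\sigma)_j = +$. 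Hence, when $e(\sigma)_j = +$ we normalize the coefficient of $f_{\max}$ to $1$ and the remaining coefficient becomes a free parameter $t_j \in \mathbb{C}$, while when $e(\sigma)_j = -$ the line, hence $V_j$, is uniquely determined.

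\emph{Assembly.} Each parameter $t_j$, one for every $j$ with $e(\sigma)_j = +$, ranges over all of $\mathbb{C}$ independently of the earlier choices, and $[V_1,\ldots,V_\ell]$ depends regularly on these $t_j$ with a regular inverse; so $C_\sigma$ is isomorphic to affine space of dimension $\#\{j : e(\sigma)_j = +\}$, and the base case $\ell = 0$ is trivial. (As a consistency check, combined with Proposition~\ref{p:nplus} this recovers $\dim C_\sigma = \ell(\w^\sigma) + d(\sigma)$.)

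\emph{Main obstacle.} The hard part is the bookkeeping in the middle step: transporting the pivot-set hypotheses correctly through the indices $\before(j)$ and $\after(j)$, and — the real crux — matching the linear-algebra dichotomy ``the new pivot column of $V_j$ is the larger versus the smaller of the two candidates $a, b$'' with the combinatorial sign $e(\sigma)_j$, keeping straight which value of $\sigma_j$ pairs with defect versus non-defect status. A smaller point to handle carefully is that the one-parameter choices at successive heap points genuinely fit together into a product (so the $t_j$ are honest global coordinates on $C_\sigma$), not merely a bijection on closed points.
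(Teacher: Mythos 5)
Your proof is correct and uses essentially the same argument as the paper's: parametrize $C_\sigma$ by choosing $V_j$ inductively subject to $V_{\before(j)} \subset V_j \subset V_{\after(j)}$, identify the two candidate pivot columns (your $a,b$ are the paper's $m_j, M_j$), and observe that $e(\sigma)_j = +$ gives a one-parameter family while $e(\sigma)_j = -$ forces $V_j$. The two extra points you flag as concerns --- that the pivot loci from Proposition~\ref{p:cell_pivots} genuinely equal the Bialynicki--Birula cells, and that the successive one-parameter choices assemble into honest global coordinates on $C_\sigma$ --- are both implicitly assumed in the paper's proof as well, so there is no gap on your end that the paper fills.
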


\begin{proof}
Let $y=[V_1,\ldots,V_\ell]$ be a point in $C_{\sigma}$.  We show that, once
$V_1,\ldots, V_{j-1}$ are chosen, the subspace $V_j$ is already determined if
$e(\sigma)_j$ is a $-$, and there is a one-dimensional choice for $V_j$ if
$e(\sigma)_j$ is a $+$.  Recall that, given our choices, $V_j$ can be any
subspace with $V_{\before(j)}\subset V_j\subset V_{\after(j)}$.  Let
$M_j(\sigma)=\max\{\w^{\sigma[j-1]}(d_j),\w^{\sigma[j-1]}(d_j+1)\}$ and
$m_j(\sigma)=\min\{\w^{\sigma[j-1]}(d_j),\w^{\sigma[j-1]}(d_j+1)\}$; these are
the labels of the strings meeting at the heap point $j$.  Then by
Proposition~\ref{p:cell_pivots}, $V_{\after(j)}$ is spanned by
$V_{\before(j)}$, some vector $a_j=e_{m_j}+\sum_{k<m_j} c_ke_k$, and some
vector $A_j=e_{M_j}+\sum_{k<M_j} C_ke_k$.  (Note that $a_j$ and $A_j$ are not
canonically determined as we can modify them by adding vectors in
$V_{\before(j)}$, but these modifications amount only to a change of
coordinates and do not change the dimension.)  Now if $e(\sigma)_j$ is a
$-$, then the string with the smaller label, which is $m_j$, exits the mask
point $j$ to the left, so $V_j$ is spanned by $V_{\before(j)}$ and some vector
with leading coordinate $m_j$.  This vector must be $a_j$.  On the other hand,
if $e(\sigma)_j$ is a $+$, then the string with the larger label exits to the
left, so $V_j$ is spanned by $V_{\before(j)}$ and a vector with leading
coordinate $M_j$.  This vector must be $A_j+\alpha_j a_j$ for some
$\alpha_j\in\mathbb{C}$, giving a one-dimensional choice for $V_j$.
\end{proof}

\begin{example}
The following table describes the cells associated with all the masks for
$\w=s_1s_2s_1$.
\begin{tabular}{c|c|c|c|c|c}
$\sigma$ & $e(\sigma)$ & $e^\sigma$ & $d(\sigma)$ & $p_\sigma$ & $C_\sigma$ \\
\hline
$000$ & $---$ & $id$ & $0$
& $\langle e_1\rangle, \langle e_1, e_2\rangle, \langle e_1\rangle$
& $\langle e_1\rangle, \langle e_1, e_2\rangle, \langle e_1\rangle$
\\
$001$ & $--+$ & $s_1$ & $0$
& $\langle e_1\rangle, \langle e_1, e_2\rangle, \langle e_2\rangle$
& $\langle e_1\rangle, \langle e_1, e_2\rangle, \langle \alpha_3e_1+e_2\rangle$
\\
$010$ & $-+-$ & $s_2$ & 0
& $\langle e_1\rangle, \langle e_1, e_3\rangle, \langle e_1\rangle$
& $\langle e_1\rangle, \langle e_1, \alpha_2e_2+e_3\rangle, \langle e_1\rangle$
\\
$100$ & $+-+$ & $s_1$ & 1
& $\langle e_2\rangle, \langle e_1, e_2\rangle, \langle e_2\rangle$
& $\langle \alpha_1e_1+e_2\rangle, \langle e_1, e_2\rangle, \langle \alpha_3e_1+e_2\rangle$
\\
$101$ & $+--$ & $id$ & 1
& $\langle e_2\rangle, \langle e_1, e_2\rangle, \langle e_1\rangle$
& $\langle \alpha_1e_1+e_2\rangle, \langle e_1, e_2\rangle, \langle e_1\rangle$
\\
$110$ & $++-$ & $s_1s_2$ & 0
& $\langle e_2\rangle, \langle e_2, e_3\rangle, \langle e_2\rangle$
& $\langle \alpha_1e_1+e_2\rangle, \langle \alpha_1e_1+e_2, \alpha_2e_1+e_3\rangle, \langle \alpha_1e_1+e_2\rangle$
\\
$011$ & $-++$ & $s_2s_1$ & 0
& $\langle e_1\rangle, \langle e_1, e_3\rangle, \langle e_3\rangle$
& $\langle e_1\rangle, \langle e_1, \alpha_2e_2+e_3\rangle, \langle \alpha_3e_1+\alpha_2e_2+e_3\rangle$
\\
$111$ & $+++$ & $s_1s_2s_1$ & 0
& $\langle e_2\rangle, \langle e_2, e_3\rangle, \langle e_3\rangle$
& \parbox[t]{2.5in}{$\langle \alpha_1e_1+e_2\rangle, \langle \alpha_1e_1+e_2,
    \alpha_2e_1+e_3\rangle$,
$\langle \alpha_3(\alpha_1e_1+e_2)+(\alpha_2e_1+e_3)\rangle$}
\end{tabular}

Note that this example shows that the closure of a cell is not necessarily a
union of cells.  In particular, the closure of $C_{110}$ is defined by the
condition $V_1=V_3$ (which implies $V_3 \subset E_2$).  This includes the cells
$C_{000}$ and $C_{010}$, as well as the line in $C_{100}$ defined by
$\alpha_3=\alpha_1$ (in the local coordinates above for $C_{100}$).  (See
also~\cite[Sect. 1.2.5]{Kn-BSeg} for further information on this example.)

Nevertheless, the classes of the closures of the cells of (complex) dimension
$d$ do form a basis for $H_{2d}(Z_\w)\cong H^{2d}(Z_\w)$.
\end{example}

\subsection{A geometric proof of Deodhar's theorem}

Since we have developed the necessary machinery, we give in this section a
proof of \cite[Prop. 3.9]{d}, which was stated without proof in the original.
We also sketch how the existence of a bounded admissible set $\mathcal{E}$
(for $w$ in a Weyl group) can be mostly explained from the Bott-Samelson
resolution and the Decomposition Theorem \cite{bbd} of Beilinson, Berstein,
and Deligne.  We believe this proof was indeed known at
least in outline to the anonymous referee of Deodhar's paper and to others,
but it has never appeared in print.  Nothing in this subsection is necessary
for the remainder of the paper.

\begin{proposition}{\bf \cite[Prop. 3.9]{d}} \label{p:deodhar_b-s}
\begin{enumerate}
\item For any point $p\in C_x$, the polynomial $\sum_{\sigma: \w^\sigma=x}
  q^{d(\sigma)}$ is equal to the Poincar\'e polynomial $\sum_i
    \dim(H_{2i}(\pi_\w^{-1}(p))) q^i$.
\item $\dim_{\mathbb{C}}(\pi_\w^{-1}(p))=\max_{\sigma:\w^\sigma=x} d(\sigma)$.
\item If $2\cdot d(\sigma)<\ell(w)-\ell(x)$ for all $\sigma$ with
  $\w^\sigma=x$, then $\pi_\w$ is a small resolution.
\end{enumerate}
\end{proposition}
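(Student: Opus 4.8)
The plan is to prove the three statements in order, building everything out of the affine paving of $Z_\w$ from Section~\ref{sect:b-s-cells}.

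For part (1) I would first reduce to the case $p=p_x$: since $\pi_\w$ is $B$-equivariant and the Schubert cell $C_x=B\cdot p_x$ is a single $B$-orbit, all fibers of $\pi_\w$ over points of $C_x$ are isomorphic, so it suffices to analyze $F:=\pi_\w^{-1}(p_x)$. This $F$ is closed in $Z_\w$, hence projective, and it is invariant under the $\mathbb{C}^*$-action of Section~\ref{sect:b-s-cells}: that action descends to $X_w$ and fixes $p_x$, so $\pi_\w$ is $\mathbb{C}^*$-equivariant. From $\lim_{t\to 0}t\cdot p=p_\sigma$ together with Proposition~\ref{p:fixed_point_images} we get $\lim_{t\to 0}t\cdot\pi_\w(p)=p_{\w^\sigma}$, hence $\pi_\w(C_\sigma)\subseteq C_{\w^\sigma}$; so $F=\bigsqcup_{\sigma:\,\w^\sigma=x}(C_\sigma\cap F)$, and $C_\sigma\cap F$ is precisely the Bialynicki--Birula cell of $p_\sigma$ inside $F$. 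The crux is to show that $C_\sigma\cap F$ is an affine space of dimension $d(\sigma)$: in the coordinates $\alpha_j$ (indexed by the positions $j$ with $e(\sigma)_j=+$) from the proof of Proposition~\ref{p:plusdim}, the condition $\pi_\w(p)=p_x$, namely that each $V_{\last(d)}$ be the prescribed coordinate subspace, becomes the vanishing of exactly $\ell(x)=\ell(\w^\sigma)$ of these coordinates, leaving $\bigl(\ell(\w^\sigma)+d(\sigma)\bigr)-\ell(\w^\sigma)=d(\sigma)$ free, where the total count $\#\{j:e(\sigma)_j=+\}=\ell(\w^\sigma)+d(\sigma)$ is Proposition~\ref{p:nplus}. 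Then $F$ is paved by affine spaces, one of dimension $d(\sigma)$ for each $\sigma$ with $\w^\sigma=x$, so $H_{\mathrm{odd}}(F)=0$ and $\dim H_{2i}(F)=\#\{\sigma:\w^\sigma=x,\ d(\sigma)=i\}$, which is exactly (1). Part (2) then follows at once, since a projective variety paved by affine cells has dimension equal to the largest cell dimension.

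For part (3) I would start from the facts that $Z_\w$ is smooth (a tower of $\mathbb{P}^1$-bundles) and that $\pi_\w$ is proper, being a morphism of projective varieties with image $X_w$. It is birational: the unique mask $\sigma$ with $\w^\sigma=w$ is $(1,\dots,1)$, since any mask with a $0$ has fewer than $\ell(w)$ ones and hence $\ell(\w^\sigma)<\ell(w)$; so $\pi_\w^{-1}(p_w)$ is a single point, and by $B$-equivariance $\pi_\w$ is an isomorphism over the dense open cell $C_w$. Thus $\pi_\w$ is a resolution, and it is \emph{small} provided that for every integer $k\ge 1$ the locus $Y_k:=\{y\in X_w:\dim\pi_\w^{-1}(y)\ge k\}$ satisfies $\dim Y_k<\dim X_w-2k=\ell(w)-2k$. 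Stratifying $X_w=\bigsqcup_{x\le w}C_x$ and invoking part (2), $Y_k=\bigsqcup_{x\le w,\ D(x)\ge k}C_x$ with $D(x):=\max_{\sigma:\,\w^\sigma=x}d(\sigma)$, so $\dim Y_k=\max\{\ell(x):x\le w,\ D(x)\ge k\}$. For such an $x$, choose $\sigma$ with $\w^\sigma=x$ and $d(\sigma)=D(x)\ge k\ge 1$; then $x\neq w$ (as $D(w)=0$), and the hypothesis gives $2D(x)=2d(\sigma)<\ell(w)-\ell(x)$, i.e.\ $\ell(x)<\ell(w)-2D(x)\le\ell(w)-2k$. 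Hence $\dim Y_k<\ell(w)-2k$ for all $k\ge 1$, so $\pi_\w$ is small.

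The hard part will be the dimension count inside part (1): showing cleanly that $C_\sigma\cap F$ is an affine space of dimension $d(\sigma)$, equivalently that imposing $\pi_\w(p)=p_x$ cuts out $\ell(x)$ of the $\ell(x)+d(\sigma)$ coordinate functions on $C_\sigma$, and does so by linear equations rather than higher-degree ones. This amounts to tracking, through the recursive construction of the subspaces $V_j$ in the proof of Proposition~\ref{p:plusdim}, which coordinates $\alpha_j$ are recorded by the forgetful map $\pi_\w$ --- those attached to the last occurrences $\last(d)$ of each generator --- and which lie purely in the fiber; once this bookkeeping is in place, the remaining arguments are formal.
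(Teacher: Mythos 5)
Your proposal is correct and takes essentially the same approach as the paper: reduce to the fiber over $p_x$ via $B$-equivariance, decompose it into the sets $C_\sigma\cap\pi_\w^{-1}(p_x)$ using the Bialynicki–Birula decomposition and $T$-equivariance, compute each piece's dimension as $\dim C_\sigma - \dim C_x = d(\sigma)$ via Propositions~\ref{p:nplus} and~\ref{p:plusdim}, and deduce (2) and (3) formally. The step you flag as needing care --- that $C_\sigma\cap\pi_\w^{-1}(p_x)$ is actually an affine cell of dimension $d(\sigma)$, i.e.\ that the constraint $\pi_\w(p)=p_x$ cuts out exactly $\ell(x)$ of the coordinates linearly --- is left at the same level of informality in the paper itself, which simply asserts the dimension subtraction and cites the two propositions.
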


\begin{proof}
By Proposition \ref{p:fixed_point_images}, $\pi_\w(p_\sigma)=p_{\w^\sigma}$.
Because the map $\pi_\w$ is $T$-equivariant and the cells $C_{\w^\sigma}$ and
$C_\sigma$ are defined in terms of the $T$-action,
$\pi_\w(C_\sigma)=C_{\w^\sigma}$.  Therefore,
$\pi_\w^{-1}(C_x)=\bigcup_{\w^\sigma=x} C_{\sigma}$.  Since $\pi_\w$ is
equivariant under the action of the Borel subgroup $B$, and $B$ acts
transitively on $C_x$, all the fibers are isomorphic.  Therefore,
$\pi_\w^{-1}(p)$ has a cell decomposition
$\bigcup_{\w^\sigma=x}\pi_\w^{-1}(p)\cap C_{\sigma}$.  The cell
$\pi_\w^{-1}(p_x)\cap C_{\sigma}$ has $\mathbb{C}$-dimension
$$\dim(C_{\sigma})-\dim(C_x)=\ell(x)+d(\sigma)-\ell(x)=d(\sigma)$$ by
Propositions~\ref{p:nplus} and \ref{p:plusdim}.  The classes of the closures
of the cells gives a basis for the homology, proving the first statement.

The second statement follows directly from the first.

A resolution $\pi:Z\rightarrow X$ is small if for all $i$ the locus $\{p\in
X\mid \dim(\pi^{-1}(p))\geq d\}$ has codimension at least $2d$.  Since the
  codimension of $C_x$ in $X_w$ is $\ell(w)-\ell(x)$, the third statement
  follows directly from the second.
\end{proof}

\begin{remark}
We are now in a position to give a geometric proof of Theorem~\ref{t:deodhar}.
The Kazhdan-Lusztig polynomial $P_{x,w}(q)$ is known to be the Poincar\'e
polynomial for the rational local intersection homology
$IH^{p}_*(X_w,\mathbb{Q})$ for the Schubert variety $X_w$ at any point $p\in
C_x$.  The Decomposition Theorem \cite{bbd} of Beilinson, Berstein, and
Deligne, applied to the Bott-Samelson resolution $\pi_\w:Z_\w\rightarrow X_w$,
implies that $IH^{p}_*(X_w,\mathbb{Q})$ is isomorphic (as a graded vector
space) to some subspace of $H_*(\pi_\w^{-1}(p))$.  (For a statement of the
Decomposition Theorem, see \cite[Thm. 8.4.3]{KirWol},
\cite[Thm. 1.6.1]{deCatalMig}, or \cite[Sect. 2.3]{polo}.  Our statement
follows from noticing that $Z_\w$ is smooth and hence has homology equal to
intersection homology, that $X_w$ is simply connected and hence all local
systems are trivial, that $\pi_\w$ is surjective and birational and hence the
intersection homology sheaf $\mathcal{IC}(X_w)$ appears with multiplicity one
in the decomposition of $(\pi_\w)_*(\mathcal{IC}(Z_\w)$, and that local
intersection homology is simply the homology of the stalk of the intersection
homology sheaf.)

By the first part of Proposition~\ref{p:deodhar_b-s}, 
$$H_{2i}(\pi_\w^{-1}(p_x),\mathbb{Q})
\cong \bigoplus_{\substack{\w^\sigma=x \\ d(\sigma)=i}} \mathbb{Q}.$$
Since $IH^{p}_*(X_w,\mathbb{Q})$ is a subspace of
$H_*(\pi_\w^{-1}(p))$, 
$$IH_{2i}^{p}(\mathbb{Q})
\cong \bigoplus_{\substack{\sigma\in\mathcal{E} \\ \w^\sigma=x \\ d(\sigma)=i}}
\mathbb{Q}$$ for some set $\mathcal{E}$ of masks.

By the definition of $P_{x,w}(q)$, the set $\mathcal{E}$ must be bounded, and
furthermore,
$$h(\mathcal{E})=q^{-{1 \over 2} \ell(w)}\sum_{\sigma\in\mathcal{E}}
q^{d(\sigma)} T_{\w^\sigma}$$ must be invariant under the bar involution.
Since $P_{w,w}(q)=1$, the mask of all $1$'s must be in $\mathcal{E}$.  This
approach does not appear to easily show that we can force $\mathcal{E}$ to
satisfy the requirement (2) for admissibility in
Definition~\ref{d:admissible}.

Note that the known proofs of the Decomposition Theorem do not give a way to
identify $IH^{p}_*(X_w,\mathbb{Q})$ with an explicit subspace of
$H_*(\pi^{-1}_\w(p))$.  Moreover, even if such an identification was found, it
would likely produce subspaces which are not spanned by some subset of the
classes of the cells $C_\sigma$.  Hence, there is no canonical choice for a
set of bounded, admissible masks from the geometric viewpoint.
\end{remark}

\section{The Zelevinsky resolution and the geometric construction}\label{s:zel}

\subsection{The Zelevinsky resolution}

Our description of the Zelevinsky resolution is based on the original one of
Zelevinsky \cite{zelevinsky}, though following Perrin~\cite[Sect. 5]{perrin},
we unwind some of the inductive definitions.  We also re-interpret all
statements to describe the Zelevinsky resolution in a manner analogous to
Magyar's description \cite{magyar} of the Bott--Samelson resolution as given
in the previous section.

Let $w$ be a cograssmannian permutation with unique ascent $s_z$.  As noted
previously, $w=vw_0^J$ where $v$ is a grassmannian permutation.  We first fix,
as in Section~\ref{s:cog}, a reduced word $\w$ for $w$ which begins with a
reduced word $\v$ for $v$ and continues by
\[ (s_1 s_2 \cdots s_{z-1}) (s_1 s_2 \cdots s_{z-2}) \cdots \text{ and } (s_{n-1} s_{n-2} \cdots s_{z+1}) (s_{n-1}
s_{n-2} \cdots s_{z+2}) \cdots. \]

Define a {\bf peak} of $\v$ to be an element in the heap for $\v$ with
no other elements above it in the heap.  Let
$\mathbf{P}=(P_1,P_2,\ldots,P_p)$ be a complete ordered list of the peaks (so,
in particular, $p$ denotes the number of peaks).  Given the ordering
$\mathbf{P}$, define $R_j$ for each $j \in \{1, \ldots, p\}$ to be the subset
of the heap of $\v$ consisting of all entries which are below $P_j$ but
not below $P_k$ for any $k>j$.

\begin{deflemma}\label{l:rectangles}
\mbox{}
\begin{enumerate}
\item Each $R_j$ is a (diagonally aligned) rectangle with highest (in
  the heap) element $P_j$.
\item Each rectangle $R_j$ has a unique lowest element.  We denote this
  element and the index of the associated entry of the reduced word $\w$ (or
  equivalently $\v$) by $b_j$.
\item The leftmost element of $R_j$ is either immediately SE of $b_k$ for some
  $k<j$ or immediately to the right of a valley column of the ridgeline.  In
  the first case, define $\before^{\mathbf{P}}(j)=k$; otherwise, leave
  $\before^{\mathbf{P}}(j)$ undefined.  Let $\ldim(j)$ denote the column index
  of $b_{\before^{\mathbf{P}}(j)}$, or the column index of this valley.
\item The rightmost element of $R_j$ is either immediately SW of $b_k$ for
  some $k<j$ or immediately to the left of a valley column of the ridgeline.
  In the first case, define $\after^{\mathbf{P}}(j)=k$; otherwise, leave
  $\after^{\mathbf{P}}(j)$ undefined.  Let $\rdim(j)$ denote the column index
  of $b_{\after^{\mathbf{P}}(j)}$, or the column index of this valley.
\end{enumerate}
\end{deflemma}

This lemma is essentially proved both in \cite{zelevinsky} and
\cite[Sect. 5.4]{perrin}, and we leave the details to the reader.  Note that
by definition $b_p=\last(z)$, the last occurence of our unique ascent $s_z$ in
$\w$.

\begin{example}
Consider the cograssmannian permutation $w\in S_{23}$ (with $z=12$) given by
the heap of Figure~\ref{f:zel_rectangles}.

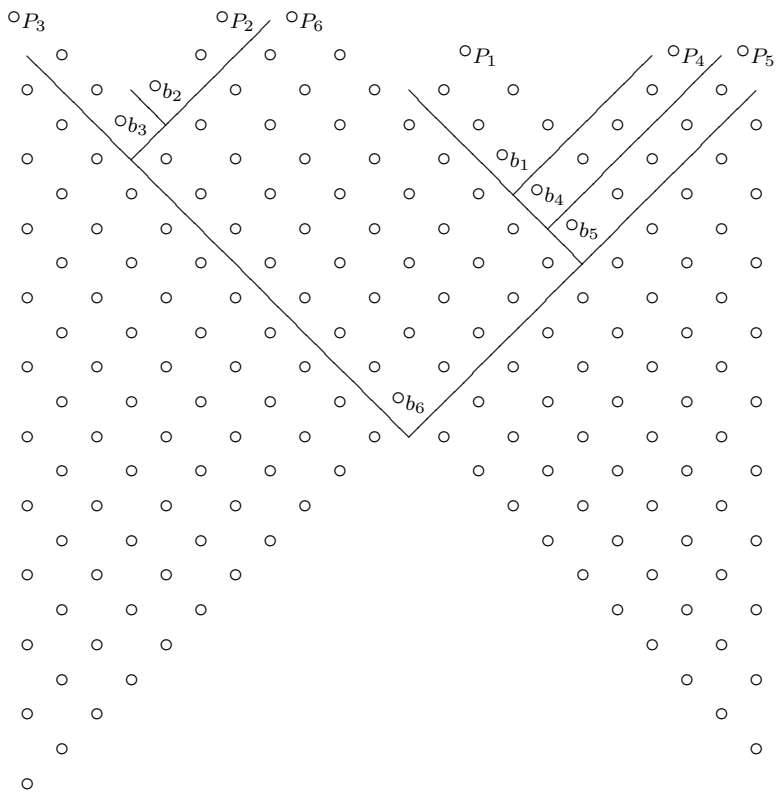
\begin{figure}[h]
\begin{center}
\begin{tabular}{c}
\xymatrix @-2.2pc @M=0pt @! {
{\hz_{P_3}} & & {\hs} & & {\hs} & & {\hz_{P_2}} & & {\hz_{P_6}} & & {\hs} & & {\hs} & & {\hs} & & {\hs} & & {\hs} & & {\hs} & \\
& {\hz} & & {\hs} & & {\hz} & & {\hz} & & {\hz} & & {\hs} & & {\hz_{P_1}} & & {\hs} & & {\hs} & & {\hz_{P_4}} & & {\hz_{P_5}} \\
{\hz} & & {\hz} & & {\hz_{b_2}} & & {\hz} & & {\hz} & & {\hz} & & {\hz} & & {\hz} & & {\hs} & & {\hz} & & {\hz} & \\
& {\hz} & & {\hz_{b_3}} & \ar@{-}[ul] & {\hz} & & {\hz} & & {\hz} & & {\hz} & & {\hz} & & {\hz} & & {\hz} & & {\hz} & & {\hz} \\
{\hz} & & {\hz} & \ar@{-}[urururur] & {\hz} & & {\hz} & & {\hz} & & {\hz} & & {\hz} & & {\hz_{b_1}} & & {\hz} & & {\hz} & & {\hz} & \\
& {\hz} & & {\hz} & & {\hz} & & {\hz} & & {\hz} & & {\hz} & & {\hz} & \ar@{-}[urururur]& {\hz_{b_4}} & & {\hz} & & {\hz} & & {\hz} \\
{\hz} & & {\hz} & & {\hz} & & {\hz} & & {\hz} & & {\hz} & & {\hz} & & {\hz} & \ar@{-}[ururururur] & {\hz_{b_5}} & & {\hz} & & {\hz} & \\
& {\hz} & & {\hz} & & {\hz} & & {\hz} & & {\hz} & & {\hz} & & {\hz} & & {\hz} & \ar@{-}[ululululul] & {\hz} & & {\hz} & & {\hz} \\
{\hz} & & {\hz} & & {\hz} & & {\hz} & & {\hz} & & {\hz} & & {\hz} & & {\hz} & & {\hz} & & {\hz} & & {\hz} & \\
& {\hz} & & {\hz} & & {\hz} & & {\hz} & & {\hz} & & {\hz} & & {\hz} & & {\hz} & & {\hz} & & {\hz} & & {\hz} \\
{\hz} & & {\hz} & & {\hz} & & {\hz} & & {\hz} & & {\hz} & & {\hz} & & {\hz} & & {\hz} & & {\hz} & & {\hz} & \\
& {\hz} & & {\hz} & & {\hz} & & {\hz} & & {\hz} & & {\hz_{b_6}} & & {\hz} & & {\hz} & & {\hz} & & {\hz} & & {\hz} \\
{\hz} & & {\hz} & & {\hz} & & {\hz} & & {\hz} & & {\hz} & \ar@{-}[ululululululululululul]\ar@{-}[urururururururururur] & {\hz} & & {\hz} & & {\hz} & & {\hz} & & {\hz} & \\
& {\hz} & & {\hz} & & {\hz} & & {\hz} & & {\hz} & & {\hs} & & {\hz} & & {\hz} & & {\hz} & & {\hz} & & {\hz} \\
{\hz} & & {\hz} & & {\hz} & & {\hz} & & {\hz} & & {\hs} & & {\hs} & & {\hz} & & {\hz} & & {\hz} & & {\hz} & \\
& {\hz} & & {\hz} & & {\hz} & & {\hz} & & {\hs} & & {\hs} & & {\hs} & & {\hz} & & {\hz} & & {\hz} & & {\hz} \\
{\hz} & & {\hz} & & {\hz} & & {\hz} & & {\hs} & & {\hs} & & {\hs} & & {\hs} & & {\hz} & & {\hz} & & {\hz} & \\
& {\hz} & & {\hz} & & {\hz} & & {\hs} & & {\hs} & & {\hs} & & {\hs} & & {\hs} & & {\hz} & & {\hz} & & {\hz} \\
{\hz} & & {\hz} & & {\hz} & & {\hs} & & {\hs} & & {\hs} & & {\hs} & & {\hs} & & {\hs} & & {\hz} & & {\hz} & \\
& {\hz} & & {\hz} & & {\hs} & & {\hs} & & {\hs} & & {\hs} & & {\hs} & & {\hs} & & {\hs} & & {\hz} & & {\hz} \\
{\hz} & & {\hz} & & {\hs} & & {\hs} & & {\hs} & & {\hs} & & {\hs} & & {\hs} & & {\hs} & & {\hs} & & {\hz} & \\
& {\hz} & & {\hs} & & {\hs} & & {\hs} & & {\hs} & & {\hs} & & {\hs} & & {\hs} & & {\hs} & & {\hs} & & {\hz} \\
{\hz} & & {\hs} & & {\hs} & & {\hs} & & {\hs} & & {\hs} & & {\hs} & & {\hs} & & {\hs} & & {\hs} & & {\hs} & \\
}
\end{tabular}
\end{center}
\caption{Rectangles for neat ordering on cograssmannian
  heap}\label{f:zel_rectangles}
\end{figure}

In the figure, the peaks $P_1,\ldots,P_6$ are labelled as are the minimal
elements $b_1,\ldots,b_6$.  The rectangles $R_1,\ldots,R_6$ are divided by
lines but not explicitly labelled.
\end{example}

For all $j$, $1\leq j\leq p$, we define $d^{\mathbf{P}}_j$ to be the index
such that $\v_{b_j}=\w_{b_j}=s_{d^{\mathbf{P}}_j}$, or, in the notation of
Section~\ref{s:b-s}, $d^{\mathbf{P}}_j=d_{b_j}$.  Now we define the {\bf
  Zelevinsky variety} $Z_\mathbf{P}$ as a subvariety of
$$\Gr(d^{\mathbf{P}}_1,n)\times\cdots\times\Gr(d^{\mathbf{P}}_{p-1},n)\times\Gr(1,n)\times\cdots\times\Gr(n-1,n)$$
as follows.

A point
$q\in\Gr(d^{\mathbf{P}}_1,n)\times\cdots\times\Gr(d^{\mathbf{P}}_{p-1},n)\times\Gr(1,n)\times\cdots\times\Gr(n-1,n)$
corresponds to a sequence of subspaces $W_1,\ldots,W_{p-1},F_1,\ldots,F_{n-1}$
of $\mathbb{C}^n$.  As before, we denote the point corresponding to such a
sequence of subspaces by $[W_1,\ldots,W_{p-1},F_1,\ldots,F_{n-1}]$.  This
point is in $Z_\mathbf{P}$ if these vector spaces satisfy all of the following
three conditions:
\begin{enumerate}
\item $$W_{\before^{\mathbf{P}}(j)}\subset W_{j}\subset
  W_{\after^{\mathbf{P}}(j)}$$ for $1\leq j\leq p-1$.  If
  $\before^{\mathbf{P}}(j)$ is undefined, we require instead that
  $E_{\ldim(j)}\subset W_{j}$, and if $\after^{\mathbf{P}}(j)$ is undefined,
  we require instead that $W_{j}\subset E_{\rdim(j)}$.
\item $$W_{\before^{\mathbf{P}}(p)}\subset F_z\subset
  W_{\after^{\mathbf{P}}(p)}.$$  (Recall that $z$ is the index of the unique
  ascent in $w$.)  As in the previous item, we require instead that
  $E_{\ldim(p)}\subset F_z$ if $\before^{\mathbf{P}}(p)$ is undefined and
  $F_z\subset E_{\rdim(p)}$ if $\after^{\mathbf{P}}(p)$ is undefined.
\item $$F_j\subset F_{j+1}$$ for $1\leq j\leq n-2$.
\end{enumerate}

\begin{example}
Given $w\in S_{23}$ and $\mathbf{P}$ as in Figure~\ref{f:zel_rectangles}, a
point $[W_1,\ldots,W_5, F_1,\ldots,F_{22}]$ is in $Z_{\mathbf{P}}$ if and only
if $E_{12}\subset W_1\subset E_{17}$, $E_4\subset W_2\subset E_8$, $W_3\subset
W_2$, $W_1\subset W_4\subset E_{21}$, $W_4\subset W_5$, $W_3\subset
F_{12}\subset W_5$, and $F_1\subset\cdots\subset F_{22}$.
\end{example}

As with Bott--Samelson varieties, there is a forgetful map $\pi_{\mathbf{P}}:
Z_{\mathbf{P}}\rightarrow X_w$, defined by
$$\pi_{\mathbf{P}}([W_1,\ldots,W_{p-1},F_1,\ldots,F_{n-1}])=[F_1,\ldots,F_{n-1}].$$
We call this map the {\bf Zelevinsky resolution}.  Zelevinsky
\cite{zelevinsky} shows that the image of $\pi_{\mathbf{P}}$ is indeed $X_w$.
Furthermore, given any ordering $\mathbf{P}$ of the peaks, there is a map
$\rho_{\mathbf{P}}:Z_{\w}\rightarrow Z_{\mathbf{P}}$ from the Bott--Samelson
variety (for our fixed choice of $\w$) to the Zelevinsky variety
$Z_{\mathbf{P}}$, defined by
$$\rho_{\mathbf{P}}([V_1,\ldots,V_\ell])=[V_{b_1},\ldots,V_{b_{p-1}},V_{\last(1)},\ldots,V_{\last(n-1)}].$$
It follows from Lemma~\ref{l:rectangles} that the image of $\rho_{\mathbf{P}}$ is
$Z_{\mathbf{P}}$ and not some other subvariety of
$\Gr(d^{\mathbf{P}}_1,n)\times\cdots\times\Gr(d^{\mathbf{P}}_{p-1},n)\times\Gr(1,n)\times\cdots\times\Gr(n-1,n)$.
It is clear from the definitions that
$\pi_\w=\pi_{\mathbf{P}}\circ\rho_{\mathbf{P}}$.

\subsection{Fixed points of the Zelevinsky resolution}

In preparation for describing a cell decomposition of the Zelevinsky variety,
we describe the $T$-fixed points and give a combinatorial indexing set for them.

A $T$-fixed point $q \in Z_{\mathbf{P}}$ is of the form
$q=[W_1,\ldots,W_{p-1},F_1,\ldots,F_{n-1}]$, where each $W_j$ and $F_k$ is a
coordinate subspace spanned by some subset of the coordinate vectors
$\{e_1,\ldots,e_n\}$.  Given such subspaces, we construct 
$\bftau = (\tau^{(1)},\ldots,\tau^{(p)}; x_\tau)$ 
consisting of a sequence of partitions $\tau^{(j)}$ such that each $\tau^{(j)}$
fits inside the rectangle $R_j$ together with a permutation $x_\tau\in
S_{z}\times S_{n-z}$.  For notational convenience, we let $W_p=F_z$.

Since the conditions defining $Z_{\mathbf{P}}$ require that
$W_{\before^{\mathbf{P}}(j)}\subset W_j\subset W_{\after^{\mathbf{P}}(j)}$,
and $W_j$ is a coordinate subspace, choosing $W_j$ once we are given
$W_1,\ldots,W_{j-1}$ amounts to choosing $d_j-\ldim(j)$ coordinate vectors
from the $\rdim(j)-\ldim(j)$ coordinate vectors in
$W_{\after^{\mathbf{P}}(j)}$ but not in $W_{\before^{\mathbf{P}}(j)}$.  We set
some notation to describe such choices.  For each $j$, $1\leq j\leq p$, we
define $A(j)$ to be set of indices
$$A(j)=\{k \mid e_k\in W_{\after^{\mathbf{P}}(j)} \} \setminus \{k \mid e_k\in
W_{\before^{\mathbf{P}}(j)} \}.$$  (Here and below,
whenever $\after^{\mathbf{P}}(j)$ or $\before^{\mathbf{P}}(j)$ is undefined,
one should substitute $E_{\rdim(j)}$ for $W_{\after^{\mathbf{P}}(j)}$ or
respectively $E_{\ldim(j)}$ for $W_{\before^{\mathbf{P}}(j)}$.)  Now let
$T(j)$ be the set of indices
$$T(j)=\{k \mid e_k\in W_j \} \setminus \{k \mid e_k\in
W_{\before^{\mathbf{P}}(j)} \};$$ these are the indices for the coordinate
vectors in $W_j$.  Finally, let $$D(j)=A(j)\setminus T(j)$$ be the indices for
the coordinate vectors not in $W_j$.

We now have two equivalent ways to define the partition $\tau^{(j)}$.  First,
we can do so by drawing a lattice path on the heap points in $R_j$ from the
leftmost heap point to the rightmost heap point of $R_j$ as follows.  Read the
elements of $A(j)$ from smallest to largest, drawing a SE segment whenever the
element is in $T(j)$ and a NE segment whenever the element is in $D(j)$.  Now
define the partition to be the heap points of $R_j$ on or below this path.

Our convention is to consider the NE-SW diagonals as the parts of the
partition.  This means that in order to fit inside $R_j$, $\tau^{(j)}$ has at
most $d^{\mathbf{P}}_j-\ldim(j)$ parts, each of size at most
$\rdim(j)-d^{\mathbf{P}}_j$.  (The dimensions of $R_j$ are determined by
Lemma~\ref{l:rectangles}.)

Equivalently, if we order $T(j)$ so that
$T(j)_1>T(j)_2>\cdots>T(j)_{d^{\mathbf{P}}_j-\ldim(j)}$, then $\tau^{(j)}$ is the
partition with $k$-th part defined by $$\tau^{(j)}_k=\#\{m\in D(j)\mid
m<T(j)_k\}.$$

We now construct $x_\tau$.  Let $u_\bftau$ be the grassmannian permutation encoding
the subspace $F_z=W_p$ in the standard fashion by $F_z=\Span(e_{u_\bftau(1)},\ldots,e_{u_\bftau(z)})$
where $u_\bftau(1)<\cdots<u_\bftau(z)$ and $u_\bftau(z+1)<\cdots<u_\bftau(n)$.
Then $x_\tau$ is the unique permutation in $S_z\times S_{n-z}$ such that
$F_j=\Span\{e_{u_\bftau x_\tau(1)},\ldots, e_{u_\bftau x_\tau(j)}\}$ for all
$j$.

\begin{example}
\label{e:zel_large_cell}
Let $w$ and $\mathbf{P}$ be as in Figure~\ref{f:zel_rectangles}.  Suppose
$p=[W_1,\ldots,W_5,F_1,\ldots,F_{22}]\in Z_{\mathbf{P}}$ is the $T$-fixed
point with
\begin{eqnarray*}
W_1 & = & \Span(e_1,\ldots,e_{12}, e_{15}, e_{16}, e_{17}) \\
W_2 & = & \Span(e_1, e_2, e_3, e_4, e_7) \\
W_3 & = & \Span(e_1, e_2, e_3, e_4) \\
W_4 & = & \Span(e_1,\ldots,e_{12}, e_{14}, e_{15}, e_{16}, e_{17}) \\
W_5 & = & \Span(e_1,\ldots,e_{12}, e_{13}, e_{14}, e_{15}, e_{16}, e_{17}) \\
W_6=F_{12} & = & \Span(e_1,\ldots,e_4, e_6, e_7, e_9, e_{10}, e_{11}, e_{13},
e_{15}, e_{17}).
\end{eqnarray*}
We leave $F_1,\ldots, F_{11}$ and $F_{13},\ldots,F_{22}$ unspecified to
concentrate on the partition part of $\bftau$.  The reader can check that
$\tau^{(1)}=(2,2,2)$, $\tau^{(2)}=(2)$, $\tau^{(3)}=\emptyset$,
$\tau^{(4)}=(1)$, $\tau^{(5)}=\emptyset$, and $\tau^{(6)}=(5,4,3,2,2,2,1,1)$.
\end{example}

We let $\mathcal{T}_{\mathbf{P}}$ be the indexing set of sequences
$\bftau=(\tau^{(1)},\ldots,\tau^{(p)};x_\tau)$ where for each $j$, $\tau^{(j)}$
is a partition fitting inside $R_j$, and $x_\tau$ is a permutation in the
Young subgroup $S_z \times S_{n-z}$.

\begin{proposition}
\label{p:zel_fixed_points}
The above construction describes a bijection between the $T$-fixed points of
$Z_{\mathbf{P}}$ and $\mathcal{T}_{\mathbf{P}}$.
\end{proposition}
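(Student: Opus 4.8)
The plan is to exhibit an explicit inverse to the construction and to check that the two maps are mutually inverse, processing the peaks in the order $P_1, \ldots, P_p$ and then building the flag $F_1 \subset \cdots \subset F_{n-1}$. The key structural input is Lemma~\ref{l:rectangles}: part~(1) pins down the box dimensions of each $R_j$ (it has $d^{\mathbf{P}}_j - \ldim(j)$ diagonals, each of length $\rdim(j) - d^{\mathbf{P}}_j$), and parts~(3)--(4) guarantee that $\before^{\mathbf{P}}(j)$ and $\after^{\mathbf{P}}(j)$, when defined, are strictly less than $j$, so that an induction on $j$ is legitimate.

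First I would verify that the forward map really lands in $\mathcal{T}_{\mathbf{P}}$. For each $j$, the defining inclusions $W_{\before^{\mathbf{P}}(j)} \subset W_j \subset W_{\after^{\mathbf{P}}(j)}$ of $Z_{\mathbf{P}}$ force $T(j) \subseteq A(j)$, so the prescribed lattice path on $R_j$ is well-defined; since $|A(j)| = \rdim(j) - \ldim(j)$, $|T(j)| = d^{\mathbf{P}}_j - \ldim(j)$, and $|D(j)| = \rdim(j) - d^{\mathbf{P}}_j$, the path has exactly as many SE-segments and NE-segments as $R_j$ has diagonals and codiagonals, so $\tau^{(j)}$ fits inside $R_j$. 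For $x_\tau$, one notes that $d^{\mathbf{P}}_p = d_{b_p} = d_{\last(z)} = z$, so $W_p = F_z$ has dimension $z$ and the grassmannian permutation $u_\bftau$ it encodes is well-defined; then $x_\tau$ is the unique permutation with $F_j = \Span(e_{u_\bftau x_\tau(1)}, \ldots, e_{u_\bftau x_\tau(j)})$ for all $j$, and it lies in $S_z \times S_{n-z}$ precisely because $F_z = \Span(e_{u_\bftau(1)},\ldots,e_{u_\bftau(z)})$.

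Next I would build the inverse. Given $\bftau \in \mathcal{T}_{\mathbf{P}}$, construct coordinate subspaces $W_1, \ldots, W_p$ by induction on $j$: with $W_1, \ldots, W_{j-1}$ already built, the set $A(j)$ is determined (using $E_{\ldim(j)}$ or $E_{\rdim(j)}$ when $\before^{\mathbf{P}}(j)$ or $\after^{\mathbf{P}}(j)$ is undefined), and the classical bijection between partitions in a box and subsets---read off from $\tau^{(j)}_k = \#\{m \in D(j) \mid m < T(j)_k\}$---recovers the subset $T(j) \subseteq A(j)$ of size $d^{\mathbf{P}}_j - \ldim(j)$; one sets $W_j := W_{\before^{\mathbf{P}}(j)} + \Span(e_k : k \in T(j))$, which by construction satisfies $W_{\before^{\mathbf{P}}(j)} \subset W_j \subset W_{\after^{\mathbf{P}}(j)}$. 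Then one takes $u_\bftau$ to be the grassmannian permutation encoding $F_z := W_p$ and sets $F_j := \Span(e_{u_\bftau x_\tau(1)}, \ldots, e_{u_\bftau x_\tau(j)})$; since $x_\tau$ is a permutation these form a flag, and since $x_\tau \in S_z \times S_{n-z}$ preserves $\{1, \ldots, z\}$ setwise we recover $F_z = W_p$, so all three conditions defining $Z_{\mathbf{P}}$ hold and we obtain a genuine $T$-fixed point $q(\bftau)$.

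Finally I would check that the two assignments are mutually inverse; at this point it is bookkeeping, since the recipe producing $\tau^{(j)}$ from $(A(j),T(j))$ and the recipe producing $T(j)$ from $(A(j),\tau^{(j)})$ are literally the two directions of the same bijection, and the flag data is recovered symmetrically from $u_\bftau$ and $x_\tau$. The main obstacle I anticipate is not conceptual but organizational: keeping the index sets $A(j)$, $T(j)$, $D(j)$ and their orderings consistent between the ``lattice path drawn on $R_j$'' picture and the closed formula $\tau^{(j)}_k = \#\{m \in D(j) \mid m < T(j)_k\}$, and carefully using Lemma~\ref{l:rectangles} to justify that the inductive construction of the $W_j$ never refers to a subspace that has not yet been built, i.e.\ that the peaks can legitimately be processed in their given order.
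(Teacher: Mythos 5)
Your proposal follows essentially the same route as the paper: both verify that each $\tau^{(j)}$ fits inside $R_j$ by counting $|T(j)|$ and $|D(j)|$, and both construct the inverse by inducting on $j$, recovering $A(j)$ from the previously built $W_k$'s, then recovering $T(j)$ from $\tau^{(j)}$ (the paper writes out the explicit formula $T(j)=\{A(j)_{k+(\rdim(j)-d^{\mathbf{P}}_j-\tau^{(j)}_k)}\}$, you invoke the standard partition-in-a-box bijection, which is the same thing), and finally reassembling the flag from $u_\bftau$ and $x_\tau$. Your write-up is a bit more explicit about checking well-definedness in both directions, but the argument is the same.
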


\begin{proof}
Since for each $j$, $D(j)$ must have $\rdim(j)-d^{\mathbf{P}}_j$ elements, and
$T(j)$ must have $d^{\mathbf{P}}_j-\ldim(j)$ elements, the partition
$\tau^{(j)}$ fits inside $R_j$.  Therefore we need only show that we can
recover a $T$-fixed point $[W_1,\ldots,W_{p-1},F_1,\ldots,F_{n-1}]$ of
$Z_{\mathbf{P}}$ from a sequence of partitions
$(\tau^{(1)},\ldots,\tau^{(p)})$ and the permutation $x_\tau\in S_z\times
S_{n-z}$.  We do so by recovering the coordinate subspaces
$W_1,\ldots,W_{p-1},W_p=F_z$ one at a time, then recovering
$F_1,\ldots,F_{n-1}$.

Given a sequence of partitions $(\tau^{(1)},\ldots,\tau^{(p)})$, suppose
$W_1,\ldots,W_{j-1}$ have been recovered.  Since the subspaces
$W_{\before^{\mathbf{P}}(j)}$ and $W_{\after^{\mathbf{P}}(j)}$ that $W_j$ must
include and be included in are known at this point, the set $A(j)$ is
determined.  For convenience, order $A(j)$ so that
$A(j)_1>A(j)_2>\cdots>A(j)_{\rdim(j)-\ldim(j)}$.  We can now recover $T(j)$ as
the set $\{A(j)_{k+(\rdim(j)-d^{\mathbf{P}}_j-\tau^{(j)}_k)}\mid k=1,\ldots,d^{\mathbf{P}}_j-\ldim(j)\}$
for each $j$ (where we consider $\tau^{(j)}$ to have exactly
$d^{\mathbf{P}}_j-\ldim(j)$ parts, possibly including some zero parts).
Knowing $T(j)$ and $W_{\before^{\mathbf{P}}(j)}$ allows one to recover $W_j$
exactly as $W_j = \Span \{e_k\}_{k \in T(j)} \cup W_{\before^{\mathbf{P}}(j)}$.

The subspace $W_p=F_z$ determines the grassmannian permutation $u_\bftau$.
Finally we recover $F_j$ as $F_j=\Span\{e_{u_\bftau
  x_\tau(1)},\ldots,e_{u_\bftau x_\tau(j)}\}$.
\end{proof}

Given $\bftau\in\mathcal{T}_\mathbf{P}$, we denote by $p_\bftau$ the
corresponding $T$-fixed point.

\subsection{Cells of the Zelevinsky variety and their dimensions}

We again use the Bialynicki-Birula theorem to give a cell decomposition of
$Z_{\mathbf{P}}$, using the $T=\mathbb{C}^*$ action on $Z_{\mathbf{P}}$
induced by the $T$-action on $\mathbb{C}^n$ given in
Section~\ref{sect:b-s-cells}.  We denote the cell associated to $p_\bftau$ by
$C_\bftau$.  The cell $C_\bftau$ is defined according the Bialynicki-Birula
decomposition as
$$C_\bftau=\{p\in Z_{\mathbf{P}} \mid \lim_{t\rightarrow0} t\cdot p=p_\bftau\}$$

As with the Bott--Samelson resolution, we have the following proposition.
Given the complexity of the definition of $p_\bftau$, it seems unlikely this
proposition can be put into a simpler form.
\begin{proposition}\label{p:zel_cell_pivots}
Let $p_\bftau=[W^\prime_1,\ldots,W^\prime_{p-1},F^\prime_1,\ldots,F^\prime_{n-1}]$.
A point $p=[W_1,\ldots,W_{p-1},F_1,\ldots,F_{n-1}]$ of $Z_{\mathbf{P}}$ is in
$C_\bftau$ if, for all $j$ and $k$, the right-to-left row echelon forms of the
matrices whose rows span $W_j$ and $F_k$ have pivots in the same columns as the
matrices whose rows are a basis for $W^\prime_j$ and $F^\prime_k$.
\end{proposition}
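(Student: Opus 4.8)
The plan is to reduce the statement to the analogous computation already carried out for the Bott--Samelson variety in Proposition~\ref{p:cell_pivots}, using the description recalled just before that proposition of $\lim_{t\to 0}t\cdot V$ for a single subspace $V$ under the action $t\cdot e_i=t^{n-i}e_i$.

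First I would observe that the $\mathbb{C}^*$-action on $Z_{\mathbf{P}}$ is, by construction, the restriction of the diagonal action on the ambient product of Grassmannians
\[ \mathcal{G}=\Gr(d^{\mathbf{P}}_1,n)\times\cdots\times\Gr(d^{\mathbf{P}}_{p-1},n)\times\Gr(1,n)\times\cdots\times\Gr(n-1,n), \]
which is projective, and that $Z_{\mathbf{P}}$ is a closed, $T$-stable subvariety of $\mathcal{G}$. Hence for any $p=[W_1,\ldots,W_{p-1},F_1,\ldots,F_{n-1}]\in Z_{\mathbf{P}}$ the limit $\lim_{t\to 0}t\cdot p$ exists, is computed factor by factor, and automatically lies in $Z_{\mathbf{P}}$. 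By the fact recalled before Proposition~\ref{p:cell_pivots}, the factor of this limit corresponding to $W_j$ (respectively $F_k$) is the coordinate subspace spanned by the coordinate vectors indexed by the pivot columns of the right-to-left row echelon form of any matrix whose rows span $W_j$ (respectively $F_k$).

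Writing $p_{\bftau}=[W'_1,\ldots,W'_{p-1},F'_1,\ldots,F'_{n-1}]$ --- a configuration of coordinate subspaces lying in $Z_{\mathbf{P}}$ by Proposition~\ref{p:zel_fixed_points} --- one has $p\in C_{\bftau}$ if and only if $\lim_{t\to 0}t\cdot p=p_{\bftau}$, and two such $T$-fixed configurations agree exactly when they agree in every factor. Combining this with the previous step, the condition becomes that for every $j$ and every $k$ the pivot columns of the right-to-left echelon form of a spanning matrix for $W_j$ (resp. $F_k$) are those of $W'_j$ (resp. $F'_k$), which is precisely the assertion. The only point requiring care --- and hence the main (modest) obstacle --- is the passage through the ambient projective variety, namely checking that the Bialynicki--Birula cell of $p_{\bftau}$ inside $Z_{\mathbf{P}}$ coincides with the intersection of $Z_{\mathbf{P}}$ with the locus in $\mathcal{G}$ whose factorwise limits are the prescribed coordinate subspaces; this is immediate from the closedness and $T$-invariance of $Z_{\mathbf{P}}$. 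Everything else is a verbatim transcription of the Bott--Samelson discussion, so no genuinely new difficulty arises.
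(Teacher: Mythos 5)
The paper gives no explicit proof of this proposition; it is simply asserted as an immediate consequence of the factorwise description of $\lim_{t\to 0}t\cdot V$ recalled before Proposition~\ref{p:cell_pivots}. Your write-up spells out exactly that implicit argument—computing the limit factor by factor in the ambient product of Grassmannians, noting that the limit stays in the closed $T$-stable subvariety $Z_{\mathbf{P}}$, and matching pivot data against the coordinate subspaces of $p_\bftau$—so it is correct and takes essentially the same approach the authors had in mind.
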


As with Schubert cells and cells on the Bott--Samelson, this linear algebra
condition is equivalent to the equalities $\dim(E_m\cap W_j)=\dim(E_m\cap
W^\prime_j)$ and $\dim(E_m\cap F_k)=\dim(E_m\cap F^\prime_k)$ for all relevant
$j$, $k$, and $m$.  Unlike in the Schubert and Bott--Samelson cases, these
dimensions do not seem to have a simple combinatorial description.

We now describe the dimension of the cell $C_\bftau$.  This description is
analogous to the description of the dimension of $C_\s$ as the number of
$+$'s.

\begin{proposition}\label{p:zeldim} The dimension of $C_\bftau$ is
$$\sum_i \left|\tau^{(i)}\right| + \ell(x_\tau).$$
\end{proposition}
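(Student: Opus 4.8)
The plan is to imitate the argument for the dimension of the Bott--Samelson cell $C_\s$ (Proposition~\ref{p:plusdim}), recovering the point $p=[W_1,\ldots,W_{p-1},F_1,\ldots,F_{n-1}]$ in $C_\bftau$ coordinate subspace by coordinate subspace and counting the free parameters at each stage. First I would handle the $W_j$'s, processing $j$ from $1$ to $p$ (recall $W_p=F_z$), and then I would handle the remaining $F_k$'s. For each $j$, once $W_1,\ldots,W_{j-1}$ have been chosen, Proposition~\ref{p:zel_cell_pivots} tells us that $W_{\before^{\mathbf P}(j)}\subseteq W_j\subseteq W_{\after^{\mathbf P}(j)}$ and that the right-to-left row echelon form of a matrix spanning $W_j$ has pivots in the columns prescribed by $p_\bftau$; equivalently, in the quotient $W_{\after^{\mathbf P}(j)}/W_{\before^{\mathbf P}(j)}$, which has a basis indexed by $A(j)$, we must choose a coordinate subspace of dimension $|T(j)|$ whose pivot set is $T(j)$. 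The space of such subspaces (with fixed pivots) is an affine cell whose dimension equals the number of ``inversions'' of the pivot pattern, namely $\#\{(k,m): k\in T(j),\, m\in D(j),\, m<k\}$. By the second description of $\tau^{(j)}$ in Proposition~\ref{p:zel_fixed_points}, this count is exactly $\sum_k \tau^{(j)}_k = |\tau^{(j)}|$. Summing over $j=1,\ldots,p$ contributes $\sum_i |\tau^{(i)}|$ to the dimension.

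Next I would count the contribution of $F_1,\ldots,F_{n-1}$. Here $F_z=W_p$ is already fixed, so I am choosing a flag $F_1\subseteq\cdots\subseteq F_{n-1}$ refining the one-step flag $0\subseteq F_z\subseteq\mathbb C^n$, constrained so that the right-to-left echelon pivots agree with those of $p_\bftau$. Writing $F_z=\Span(e_{u_\bftau(1)},\ldots,e_{u_\bftau(z)})$ and recalling that $x_\tau\in S_z\times S_{n-z}$ is the permutation with $F_j=\Span(e_{u_\bftau x_\tau(1)},\ldots,e_{u_\bftau x_\tau(j)})$, the choice of such an $F_\bullet$ amounts to choosing a pair of complete flags, one inside $F_z$ and one inside $\mathbb C^n/F_z$, with the prescribed pivot data. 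This is precisely the situation of the Bialynicki--Birula decomposition of a product of two (full) flag varieties, where the cell indexed by a permutation $x_\tau$ of the Young subgroup $S_z\times S_{n-z}$ has dimension $\ell(x_\tau)$ — the same computation as in Proposition~\ref{p:plusdim} applied to the portion of the heap giving a full flag, or simply the classical fact that the Schubert cell $C_{x}$ in a flag variety has dimension $\ell(x)$. This contributes $\ell(x_\tau)$, and adding the two contributions gives the claimed formula.

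The main obstacle I expect is the bookkeeping in the first part: making precise that, after passing to the quotient $W_{\after^{\mathbf P}(j)}/W_{\before^{\mathbf P}(j)}$ and identifying its coordinate basis with $A(j)$ ordered decreasingly, the echelon-form condition from Proposition~\ref{p:zel_cell_pivots} cuts out an affine space whose dimension is exactly the inversion count $\#\{(k,m)\in T(j)\times D(j): m<k\}$, and then matching that count to $|\tau^{(j)}|$ via the formula $\tau^{(j)}_k=\#\{m\in D(j): m<T(j)_k\}$ from Proposition~\ref{p:zel_fixed_points}. One must also check that the parameters are genuinely independent across different $j$ — that choices for $W_j$ do not retroactively constrain the already-made choices, which follows because $\rho_{\mathbf P}$ realizes $Z_{\mathbf P}$ as an iterated fibration (a tower of Grassmannian bundles followed by a flag bundle), exactly as the Bott--Samelson variety is a tower of $\mathbb P^1$-bundles. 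Once the local-coordinate description is set up carefully, the rest is the same inductive parameter count as in the proof of Proposition~\ref{p:plusdim}, and the total is additive in the cells of the tower.
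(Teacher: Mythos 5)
Your proposal is correct and takes essentially the same approach as the paper: proceed from $j=1$ to $p$ choosing $W_j$ (then $F_z=W_p$, then the remaining $F_k$'s), using Proposition~\ref{p:zel_cell_pivots} to identify the local choice at each stage with a Schubert cell whose dimension is $\lvert\tau^{(j)}\rvert$ (resp.\ matches $\ell(x_\tau)$ in total). The paper's proof makes the same count by writing explicit local coordinates (the vectors $b_k$ with parameters $\alpha^{(j)}_{mk}$ and the vectors with parameters $\beta_{kj}$), whereas you phrase it via pivot patterns in the quotients $W_{\after^{\mathbf P}(j)}/W_{\before^{\mathbf P}(j)}$, but the underlying argument and the use of the formula $\tau^{(j)}_k=\#\{m\in D(j)\mid m<T(j)_k\}$ to match the inversion count with $\lvert\tau^{(j)}\rvert$ are identical.
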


\begin{proof}
Let $y=[W_1,\ldots,W_{p-1},F_1,\ldots,F_{n-1}]$ be a point of $C_\bftau$.  We
show that, once $W_1,\ldots,W_{j-1}$ are chosen, we have a
$\left|\tau^{(j)}\right|$-dimensional choice for $W_j$.  Note that we are
merely describing the standard Schubert cell decomposition for the Grassmannian
$\Gr(d^{\mathbf{P}}_j-\ldim(j),\rdim(j)-\ldim(j))$ with slight complications
because the subspaces that $W_j$ is required to contain and be contained in are
not fixed.

Given $\bftau$ we recover by Proposition~\ref{p:zel_fixed_points} the fixed
point
$p_\bftau=[W^\prime_1,\ldots,W^\prime_{p-1},F^\prime_1,\ldots,F^\prime_{n-1}]$
and the sets $A(j)$, $T(j)$, and $D(j)$.  We order each $T(j)$ in decreasing
order and each $D(j)$ in increasing order.  By
Proposition~\ref{p:zel_cell_pivots}, the subspace $W_{\after^{\mathbf{P}}(j)}$
(or $E_{\rdim(j)}$) is spanned by $W_{\before^{\mathbf{P}}(j)}$ (or
$E_{\ldim(j)}$) and some vectors $a_1,\ldots, a_{\rdim(j)-\ldim(j)}$ where
$a_k=e_{A(j)_k}+\sum_{m<A(j)_k} c_{mk}e_m$ for some scalars $c_{mk}$.  (As in
the proof of Proposition~\ref{p:plusdim}, $a_k$ is not canonically determined
as it can be modified by adding vectors in $W_{\before^{\mathbf{P}}(j)}$ as
  well as $a_{k^\prime}$ where $A(j)_{k^\prime}<A(j)_k$, but this only amounts
  to a change of coordinates and does not change the dimension.)

If $y\in C_\bftau$, then $W_j$ is spanned by $W_{\before^{\mathbf{P}}(j)}$ (or
$E_{\ldim(j)}$) and vectors $b_1,\ldots,b_{d^{\mathbf{P}}_j-\ldim(j)}$
where $$b_k=a_{T(j)_k}+\sum_{m=1}^{\tau^{(j)}_k} \alpha^{(j)}_{mk}a_{D(j)_m}$$
for some parameters $\alpha^{(j)}_{mk}$, as the pivots in the right-to-left
row echelon form of $W_j$ must be in the columns indexed by $T(j)$ (and the
indices $m$ for $e_m\in W^\prime_{\before^{\mathbf{P}}(j)}$).  There are
exactly $\left|\tau^{(j)}\right|$ of these parameters, so the choice of $W_j$
adds $\left|\tau^{(j)}\right|$ to the dimension of $Z_{\mathbf{P}}$.  The same
argument applies to $W_p=F_z$, so the choices for $W_1,\ldots,W_{p-1}$ and
$F_z$ contribute $\sum_{j=1}^p \left|\tau^{(j)}\right|$ to the dimension.

The subspace $F_z$ has a basis $a_1,\ldots,a_z$
where $$a_k=e_{u_\bftau(k)}+\sum_{m=1}^{u_\bftau(k)} c_{mk}e_k$$ for some
scalars $c_{mk}$.  Let $Inv(j)$ be the inversions involving $j$ and some
$k>j$ in $x_\tau$, or $Inv(j)=\{k>j\mid x_\tau(k)<x_\tau(j)\}$.  If $y\in
C_\bftau$, then by Proposition~\ref{p:zel_cell_pivots}, $F_j$ will be spanned by
$F_{j-1}$ and some vector of the form $$a_{x_\tau(j)}+\sum_{k\in Inv(j)}
\beta_{kj}a_{x_\tau(k)}$$ for $j\leq z-1$.  For $j>z$, $F_j$ will be spanned by
$F_{j-1}$ and some vector of the form $$e_{u_\bftau x_\tau(j)}+\sum_{k\in
  Inv(j)} \beta_{kj}e_{u_\bftau x_\tau(k)}.$$  The number of parameters in the
choices for $F_1,\ldots,F_{z-1},F_{z+1},\ldots,F_n$ is the number of
inversions of $x_\tau$, which is precisely $\ell(x_\tau)$.
\end{proof}

\begin{example}
The following table describes all the cells $C_\bftau$ with $x_\tau=1234$ for
$w=4231$ under the ordering of the peaks where $P_1$ is the one to the left.\\
\begin{tabular}{c|c|c|c|c}
$\bftau$ & $u_\bftau$ & $\dim(C_\bftau)$ & $p_\bftau$ & $C_\bftau$ \\
\hline
$\emptyset, \emptyset$ & $1234$ & 0 & 
$\langle e_1\rangle, \langle e_1\rangle, \langle e_1,e_2\rangle, \langle
e_1,e_2,e_3\rangle$ &
$\langle e_1\rangle, \langle e_1\rangle, \langle e_1,e_2\rangle, \langle
e_1,e_2,e_3\rangle$ \\
$(1), \emptyset$ & $1234$ & 1 &
$\langle e_2\rangle, \langle e_1\rangle, \langle e_1,e_2\rangle, \langle
e_1,e_2,e_3\rangle$ &
$\langle e_2+\alpha^{(1)}_{11}e_1 \rangle, \langle e_1\rangle, \langle
e_1,e_2\rangle, \langle e_1,e_2,e_3\rangle$ \\
$\emptyset, (1)$ & $1324$ & 1 &
$\langle e_1\rangle, \langle e_1\rangle, \langle e_1,e_3\rangle, \langle
e_1,e_2,e_3\rangle$ &
$\langle e_1\rangle, \langle e_1\rangle, \langle
e_1,e_3+\alpha^{(2)}_{11}e_2\rangle, \langle e_1,e_2,e_3\rangle$ \\
$(1), (1)$ & $2314$ & 2 &
$\langle e_2\rangle, \langle e_2\rangle, \langle e_2,e_3\rangle, \langle
e_1,e_2,e_3\rangle$ &
\parbox[t]{2.3in}{$\langle e_2+\alpha^{(1)}_{11}\rangle$, $\langle
e_2+\alpha^{(1)}_{11}\rangle$, \\ $\langle
e_2+\alpha^{(1)}_{11},e_3+\alpha^{(2)}_{11}e_1\rangle$, $\langle
e_1,e_2,e_3\rangle$} \\
$\emptyset, (2)$ & $1423$ & 2 &
$\langle e_1\rangle, \langle e_1\rangle, \langle e_1,e_4\rangle, \langle
e_1,e_3,e_4\rangle$ &
\parbox[t]{2.3in}{$\langle e_1\rangle$, $\langle e_1\rangle$, $\langle e_1,
e_4+\alpha^{(2)}_{11}e_2+\alpha^{(2)}_{21}e_3\rangle$, $\langle e_1, e_2,
e_4+\alpha^{(2)}_{11}e_2+\alpha^{(2)}_{21}e_3\rangle$} \\
$(1), (2)$ & $2413$ & 3 &
$\langle e_2\rangle, \langle e_2\rangle, \langle e_2,e_4\rangle, \langle
e_1,e_2,e_4\rangle$ &
\parbox[t]{2.3in}{$\langle e_2+\alpha^{(1)}_{11}\rangle$, $\langle e_2+\alpha^{(1)}_{11}\rangle$,\\
$\langle e_2+\alpha^{(1)}_{11},
e_4+\alpha^{(2)}_{11}e_1+\alpha^{(2)}_{21}e_3\rangle$, \\$\langle e_1,
e_2+\alpha^{(1)}_{11}, e_4+\alpha^{(2)}_{11}e_1+\alpha^{(2)}_{21}e_3\rangle$}
\end{tabular}

The following table describes all the cells $C_\bftau$ with $x_\tau=2134$ for
$w=4231$ under the same ordering of the peaks.\\
\begin{tabular}{c|c|c|c|c}
$\bftau$ & $u_\bftau x_\tau$ & $\dim(C_\bftau)$ & $p_\bftau$ & $C_\bftau$ \\
\hline
$\emptyset, \emptyset$ & $2134$ & 1 & 
$\langle e_1\rangle, \langle e_2\rangle, \langle e_1,e_2\rangle,
\langle e_1,e_2,e_3\rangle$ &
$\langle e_1\rangle, \langle e_2+\beta_{21}e_1\rangle, \langle e_1,e_2\rangle,
\langle e_1,e_2,e_3\rangle$ \\

$(1), \emptyset$ & $2134$ & 2 &
$\langle e_2\rangle, \langle e_2\rangle, \langle e_1,e_2\rangle, \langle
e_1,e_2,e_3\rangle$ &
\parbox[t]{2.3in}{$\langle e_2+\alpha^{(1)}_{11}e_1 \rangle, \langle e_2+\beta_{21}e_1\rangle,
\langle e_1,e_2\rangle$,\\ $\langle e_1,e_2,e_3\rangle$} \\
$\emptyset, (1)$ & $3124$ & 2 &
$\langle e_1\rangle, \langle e_3\rangle, \langle e_1,e_3\rangle, \langle
e_1,e_2,e_3\rangle$ &
\parbox[t]{2.3in}{$\langle e_1\rangle, \langle e_3+\alpha^{(2)}_{11}e_2+\beta_{21}e_1\rangle$,\\
$\langle e_1,e_3+\alpha^{(2)}_{11}e_2\rangle, \langle e_1,e_2,e_3\rangle$} \\
$(1), (1)$ & $3214$ & 3 &
$\langle e_2\rangle, \langle e_3\rangle, \langle e_2,e_3\rangle, \langle
e_1,e_2,e_3\rangle$ &
\parbox[t]{2.3in}{$\langle e_2+\alpha^{(1)}_{11}\rangle, \langle
e_3+\beta_{21}(e_2+\alpha^{(1)}_{11})\rangle$,\\ $\langle
e_2+\alpha^{(1)}_{11},e_3+\alpha^{(2)}_{11}e_1\rangle, \langle
e_1,e_2,e_3\rangle$} \\
$\emptyset, (2)$ & $4123$ & 3 &
$\langle e_1\rangle, \langle e_4\rangle, \langle e_1,e_4\rangle, \langle
e_1,e_3,e_4\rangle$ &
\parbox[t]{2.3in}{$\langle e_1\rangle, \langle
e_4+\alpha^{(2)}_{11}e_2+\alpha^{(2)}_{21}e_3+\beta_{21}e_1\rangle$,\\ $\langle
e_1, e_4+\alpha^{(2)}_{11}e_2+\alpha^{(2)}_{21}e_3\rangle$,\\ $\langle e_1, e_2,
e_4+\alpha^{(2)}_{11}e_2+\alpha^{(2)}_{21}e_3\rangle$} \\
$(1), (2)$ & $4213$ & 4 &
$\langle e_2\rangle, \langle e_4\rangle, \langle e_2,e_4\rangle, \langle
e_1,e_2,e_4\rangle$ &
\parbox[t]{2.3in}{$\langle e_2+\alpha^{(1)}_{11}\rangle$,\\ $\langle
e_4+\alpha^{(2)}_{11}e_1+\alpha^{(2)}_{21}e_3+\beta_{21}(e_2+\alpha^{(1)}_{11})
\rangle$,\\ $\langle e_2+\alpha^{(1)}_{11},
e_4+\alpha^{(2)}_{11}e_1+\alpha^{(2)}_{21}e_3\rangle$,\\ $\langle e_1,
e_2+\alpha^{(1)}_{11}, e_4+\alpha^{(2)}_{11}e_1+\alpha^{(2)}_{21}e_3\rangle$}
\end{tabular}
\end{example}

\subsection{Zelevinsky resolutions and Deodhar sets}

In this section, we come to the reason that Zelevinsky resolutions were
introduced.  First we need two more definitions.  Each peak $P$ has a {\bf
height} $h(P)$, which is defined to be the length of a maximal chain in the
heap poset from the unique lowest element of $\v$ (considered as a heap).  We
say our ordering of peaks $\mathbf{P}$ is {\bf neat} if, for all $j$,
$h(P_j)\geq h(P_{\before^{\mathbf{P}}(j)})$ and $h(P_j)\geq
h(P_{\after^{\mathbf{P}}(j)})$ whenever $\before^{\mathbf{P}}(j)$ or
$\after^{\mathbf{P}}(j)$ is defined.  Explicitly, this occurs if for all $j<k$,
either $h(P_j)\leq h(P_k)$, or there exists $m>j$ such that $P_m$ is between
$P_j$ and $P_k$ (in the ridgeline of the heap).  By induction, if in a neat
ordering $j<k$ and $h(P_j)>h(P_k)$, then there necessarily exists $m>k$ such
that $P_m$ is between $P_j$ and $P_k$, and $h(P_m)\geq h(P_j)$.

Even though the heights of $P_2$ and $P_3$ are greater than those of $P_4$ and
$P_5$, our ordering of peaks in Figure~\ref{f:zel_rectangles} is neat as $P_6$
is in between $P_2$ and $P_4$.

A map $\pi:X\rightarrow Y$ is {\bf small} if, for all $i$, the locus $$\{p\in
  Y\mid \dim(\pi^{-1}(p))\geq i\}$$ has codimension (in $Y$) strictly greater
  than $2i$.  Goresky and MacPherson~\cite[Sect. 6.2]{g-m} (see also
  \cite[Thm. 8.4.7]{KirWol}) show that when $\pi$ is a small map, the
  intersection homology of $Y$ is isomorphic (as a group) to the intersection
  homology of $X$, which is simply the homology of $X$ if $X$ is smooth.
  Zelevinsky~\cite{zelevinsky} showed to following as an explanation of the
  Lascoux--Sch\"utzenberger formula.

\begin{theorem}[Zelevinsky \cite{zelevinsky}]\label{t:zel}
If $\mathbf{P}$ is a neat ordering of the peaks, then $\pi_{\mathbf{P}}$ is a
small map.  In particular, if $\mathbf{P}$ is neat,
  then $$P_{x,w}(q)=\sum_{\bftau: \pi_{\mathbf{P}}(p_\bftau)=p_x}
  q^{\dim(C_\bftau)-\ell(x)}.$$
\end{theorem}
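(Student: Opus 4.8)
The plan is to derive the second statement from the first by the same intersection-homology argument used in our geometric proof of Deodhar's theorem (Proposition~\ref{p:deodhar_b-s} and the remark following it), and to prove the first statement by reducing smallness of $\pi_{\mathbf{P}}$ to an explicit combinatorial dimension inequality which the neatness hypothesis is precisely tailored to guarantee.

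Granting that $\pi_{\mathbf{P}}$ is small, Goresky--MacPherson \cite{g-m} identify $R(\pi_{\mathbf{P}})_*\underline{\mathbb{Q}}$, up to shift, with the intersection cohomology complex of $X_w$, so for $p\in C_x$ the Poincar\'e polynomial of $H_*(\pi_{\mathbf{P}}^{-1}(p))$ equals $P_{x,w}(q)$. Since $\pi_{\mathbf{P}}$ is $B$-equivariant and $B$ acts transitively on $C_x$, all fibers over $C_x$ are isomorphic, so I would take $p=p_x$. By Proposition~\ref{p:zel_cell_pivots} the Bialynicki-Birula cells $C_\bftau$ with $\pi_{\mathbf{P}}(p_\bftau)=p_x$ restrict to an affine paving of $\pi_{\mathbf{P}}^{-1}(p_x)$, the piece coming from $C_\bftau$ having dimension $\dim C_\bftau-\dim C_x=\dim C_\bftau-\ell(x)$. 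Hence $\dim H_{2i}(\pi_{\mathbf{P}}^{-1}(p_x))$ counts the $\bftau$ with $\pi_{\mathbf{P}}(p_\bftau)=p_x$ and $\dim C_\bftau-\ell(x)=i$, which yields the stated formula exactly as in the proof of Proposition~\ref{p:deodhar_b-s}.

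For smallness, $B$-equivariance shows that $p\mapsto\dim\pi_{\mathbf{P}}^{-1}(p)$ is constant on Schubert cells and that each locus $\{p\in X_w:\dim\pi_{\mathbf{P}}^{-1}(p)\geq i\}$ is closed and $B$-stable, hence a union of Schubert varieties. Since each $\tau^{(j)}$ is contained in the rectangle $R_j$ of Lemma~\ref{l:rectangles} and the $R_j$ partition the heap of $v$ into $\ell(v)$ boxes, Proposition~\ref{p:zeldim} gives $\dim C_\bftau=\sum_j|\tau^{(j)}|+\ell(x_\tau)\leq\ell(v)+\ell(w_0^J)=\ell(w)$, so $\pi_{\mathbf{P}}^{-1}(p_w)$ is $0$-dimensional; consequently smallness is equivalent to $\ell(w)-\ell(x)>2\dim\pi_{\mathbf{P}}^{-1}(p_x)$ for every $x<w$. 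Using that $\pi_{\mathbf{P}}(p_\bftau)=p_{u_\bftau x_\tau}$ with $u_\bftau\in W^J$ grassmannian and $x_\tau\in W_J$, so that $\ell(u_\bftau x_\tau)=\ell(u_\bftau)+\ell(x_\tau)$, and applying Proposition~\ref{p:zeldim} once more, this reduces to the purely combinatorial inequality
\[ 2\sum_{j}\bigl|\tau^{(j)}\bigr| + \ell(x_\tau) < \ell(w) + \ell(u_\bftau) \]
for every admissible sequence $\bftau$ with $u_\bftau x_\tau\neq w$, the two sides being equal (both to $2\ell(v)+\ell(w_0^J)$) at the big cell; here the $\tau^{(j)}$ range over partitions inside the $R_j$ subject to the containment conditions defining $Z_{\mathbf{P}}$, and $u_\bftau$ is determined by the $\tau^{(j)}$ through the subspace $F_z$.

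This inequality is the heart of the matter, and I expect its combinatorial verification to be the main obstacle. Neatness enters exactly here: it pins down the nesting of the maps $\before^{\mathbf{P}}$ and $\after^{\mathbf{P}}$ so that the rectangles $R_j$ stack in a controlled fashion, giving a bookkeeping relation between the total ``missing area'' $\sum_j(|R_j|-|\tau^{(j)}|)$ and the length drop $\ell(v)-\ell(u_\bftau)$, from which the strict inequality can be extracted by a case analysis (with equality occurring only at the big cell, where $x=w$). An alternative route avoids some of this: the projection $\mathcal{F}_n\to\Gr(z,n)$ sending a flag to its $z$-dimensional piece carries $X_w$ onto a Schubert variety $X_{\overline v}$ in $\Gr(z,n)$ with smooth flag-variety fibers, and $Z_{\mathbf{P}}$ is naturally identified with the fiber product over $X_{\overline v}$ of $X_w$ with the Zelevinsky resolution of $X_{\overline v}$ built from the subspaces $W_j$ and $F_z$; since pullback along a smooth fibration preserves smoothness and smallness, smallness of $\pi_{\mathbf{P}}$ then follows from Zelevinsky's original small-resolution theorem for Schubert varieties in Grassmannians, which is the same inequality with the factor $x_\tau$ suppressed.
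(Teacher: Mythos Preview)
The paper does not give its own proof of this theorem; it is stated with attribution to Zelevinsky and used as input to the rest of Section~\ref{s:zel}. So there is no paper proof to compare against, and your proposal should be judged on its own merits.

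Your derivation of the formula from smallness is correct and is exactly the argument the paper sketches in the remark following Proposition~\ref{p:deodhar_b-s}: $B$-equivariance makes all fibers over $C_x$ isomorphic, the Bialynicki--Birula cells $C_\bftau$ with $\pi_{\mathbf{P}}(p_\bftau)=p_x$ restrict to an affine paving of $\pi_{\mathbf{P}}^{-1}(p_x)$ with pieces of dimension $\dim C_\bftau-\ell(x)$, and Goresky--MacPherson identifies the fiber's Poincar\'e polynomial with $P_{x,w}(q)$.

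For smallness, your reduction to the displayed inequality is correct. Your approach (a) is, as you acknowledge, only a sketch: you have not actually carried out the ``case analysis,'' and that is where the content of neatness lives. Your approach (b), by contrast, is a complete and clean argument. Because $w=vw_0^J$ is cograssmannian, $X_w$ is the full preimage of the Grassmannian Schubert variety $X_{\overline v}\subset\Gr(z,n)$ under the projection $\mathcal{F}_n\to\Gr(z,n)$, and this projection is a smooth fibration with fiber $\mathcal{F}_z\times\mathcal{F}_{n-z}$. The definition of $Z_{\mathbf{P}}$ in the paper is visibly the fiber product of $X_w$ with Zelevinsky's Grassmannian resolution of $X_{\overline v}$ (the latter being the variety of $(W_1,\ldots,W_{p-1},F_z)$ with the stated inclusions). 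Smallness pulls back along smooth base change since fibers are preserved and strata codimensions are preserved by flatness; one then only needs to observe separately that when $u_\bftau=v$ the fiber is a point, so the locus $u_\bftau=v$, $x_\tau\neq w_0^J$ causes no trouble. This reduces the theorem precisely to Zelevinsky's original Grassmannian result, which is what is cited. One could complain that you have relocated rather than eliminated the combinatorial inequality, but given that the statement is attributed to \cite{zelevinsky}, reducing to what that paper actually proves is exactly the right move.
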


Note that $\pi_{\mathbf{P}}(p_\bftau)=p_x$ if and only if $u_\bftau x_\tau =
x$.  The combinatorial bijection to the trees of Lascoux--Sch\"utzenberger is
far from obvious.

Deodhar's formula states that says that a bounded admissible set of masks
$\mathcal{E}$ is one for which
$$P_{x,w}(q)=\sum_{\sigma \in \mathcal{E} : \w^\sigma=x}
q^{\dim(C_\sigma)-\ell(x)},$$ recalling that $d(\sigma)+\ell(x) = $ number of
$+$'s in $e(\sigma) = \dim(C_\sigma)$.  Comparing these two formulas, we see
that one way to show that a set of masks $\mathcal{E}$ is bounded and
admissible is to take a neat ordering $\mathbf{P}$ and find a combinatorial
bijection $ R_{\mathbf{P}}: \mathcal{E}\rightarrow \mathcal{T}_{\mathbf{P}}$
having the following properties for all $\sigma\in\mathcal{E}$:
\begin{itemize}
\item $\pi_{\mathbf{P}}(p_{ R_{\mathbf{P}}(\sigma)})=p_{\w^\sigma}$
\item $\dim(C_{ R_{\mathbf{P}}(\sigma)})-\ell(\w^\sigma)=d(\sigma)$
\end{itemize}

Furthermore, recall that for any ordering of the peaks $\mathbf{P}$ there
exists a map $\rho_{\mathbf{P}}: Z_\w \rightarrow Z_{\mathbf{P}}$.  Therefore,
given some neat ordering $\mathbf{P}$, one can construct a bounded and
admissible mask set by finding one mask $\sigma(\bftau)$ for each
$\bftau\in\mathcal{T}_{\mathbf{P}}$ with the properties that
$\rho_{\mathbf{P}}(p_{\sigma(\bftau)})=p_\bftau$ and
$\dim(C_{\sigma(\bftau)})=\dim(C_\bftau)$. The combinatorial bijection
$R_{\mathbf{P}}$ in this case will just send $\sigma(\bftau)$ to $\bftau$.

We say that a mask set $\mathcal{E}$ is {\bf geometric} (with respect to
$\mathbf{P}$) if it can arise from such a construction.  One can check whether
a mask set $\mathcal{E}$ is geometric by checking if $\rho_{\mathbf{P}}$
restricts to a bijection between $\{p_\sigma\mid \sigma\in\mathcal{E}\}$ and
$\{p_\bftau\mid \bftau\in\mathcal{T}_{\mathbf{P}}\}$.

For all $\sigma$ and $\bftau$, $\pi_\w(C_\sigma)$ and
$\pi_{\mathbf{P}}(C_\bftau)$ will be all of some Schubert cell $C_x$, as $C_x$
is a single orbit of the group $B$, and the maps $\pi_\w$ and
$\pi_{\mathbf{P}}$ are $B$-equivariant.  However,
$\rho_{\mathbf{P}}(C_\sigma)$ can be some subvariety (which will be a linear
subspace under the appropriate coordinates) of $C_\bftau$.  Therefore, even if
$\dim(C_\sigma)=\dim(C_\bftau)$, it is not necessary for $\rho_{\mathbf{P}}$
to restrict to an isomorphism between $C_\sigma$ and $C_\bftau$.  We say that
a mask set $\mathcal{E}$ is {\bf strongly geometric} (with respect to
$\mathbf{P}$) if $\rho_{\mathbf{P}}$ restricts not only to a bijection between
$\{p_\sigma\mid \sigma\in\mathcal{E}\}$ and $\{p_\bftau\mid
\bftau\in\mathcal{T}_{\mathbf{P}}\}$ but also to a pointwise bijection between
$\bigcup_{\sigma\in\mathcal{E}} C_\sigma$ and $Z_{\mathbf{P}}$.  Note this
bijection will {\em not} be a topological homeomorphism as $Z_{\mathbf{P}}$ is
compact whereas $\bigcup_{\sigma\in\mathcal{E}} C_\sigma$ is not.

Note that it is not automatically desirable for a mask set to be geometric
with respect to some neat ordering $\mathbf{P}$.  In particular, while the
cograssmannian formula has been generalized to covexillary permutations by
Lascoux, there seem to be covexillary permutations $w$ for which $X_w$ admits
no small resolution.  This seems to imply that it may be easier to generalize
a mask set that is not geometric to generalize to the
covexillary case.

\subsection{A geometric mask set}

The aim of this subsection is to give an algorithm that, given an ordering
$\mathbf{P}$ of the peaks, produces a mask set $\mathcal{E}_{\mathbf{P}}$
which is geometric with respect to $\mathbf{P}$.  If our ordering $\mathbf{P}$
is neat, then this mask set will automatically bounded and admissible.
We do not know if our mask set is strongly geometric.


We aim to construct for each $\bftau\in\mathcal{T}_{\mathbf{P}}$ a mask
$\sigma(\bftau)$ (which we will describe as $+$'s and $-$'s drawn on the
heap) such that $\rho_{\mathbf{P}}(p_\sigma)=p_\bftau$, and
$\dim(C_\sigma)=\dim(C_\bftau)$.  We construct the mask $\sigma(\bftau)$
rectangle by rectangle, starting from $R_1$.  Note that if $\bftau_1\neq\bftau_2$,
the masks $\s_1=\sigma(\bftau_1)$ and $\s_2=\sigma(\bftau_2)$ constructed from
them will necessarily be different, since $p_{\sigma_1}$ and $p_{\sigma_2}$
will be different points (as they map under $\rho_{\mathbf{P}}$ to different
points of $Z_{\mathbf{P}}$).

Fix for the remainder of this section some
$\bftau\in\mathcal{T}_{\mathbf{P}}$, and let the $T$-fixed point $p_\bftau$
associated to $\bftau$ be $p_\bftau=[W_1,\ldots,W_{p-1}, F_1,\ldots,F_{n-1}]$.
Recall that associated to $\bftau$ (or to $p_\bftau$) are sets $A(j)$, $T(j)$,
and $D(j)$ for all $j$, $1\leq j\leq p$.  The following lemma is key to our
construction.

\begin{lemma}\label{l:plusind}
Fix some $j$ with $1\leq j\leq p$.  Suppose that we have a string diagram for
some mask on the heap of $\w$ such that for all $k$, $1\leq k\leq
j-1$, the strings exiting $R_k$ to the SW are the ones with labels in $T(k)$,
and the strings exiting $R_k$ to the SE are the ones with labels in $D(k)$.
Then
\begin{enumerate}
\item the strings entering $R_j$ from the NW and NE are the ones with labels
  in $A(j)$.
\item for all $k\leq j-1$, the strings to the left of $\w_{b_k}$ immediately
  below $\w_{b_k}$ are the ones labelled $m$ for $e_m\in W_k$.
\end{enumerate}
\end{lemma}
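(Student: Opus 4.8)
The plan is to prove both statements by tracking, position by position, which string of the string diagram for $\w$ occupies which slot at each level, using only the combinatorial structure of the rectangles $R_k$ recorded in Definition-Lemma~\ref{l:rectangles} together with the hypothesis on the exits of $R_1,\ldots,R_{j-1}$; the particular assignment of $+$'s and $-$'s to heap points plays no role beyond that hypothesis. Throughout I would write ``position $m$'' for the $m$th string slot from the left at a given level, and recall that a heap point in column $c$ involves the strings in positions $c$ and $c+1$, routing one to position $c$ (``exit SW'') and one to position $c+1$ (``exit SE'') just below.

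First I would record the geometric facts that are the ``details left to the reader'' in Definition-Lemma~\ref{l:rectangles}, reading them off the explicit cograssmannian heap fixed in Section~\ref{s:cog} (as in Figure~\ref{f:zel_rectangles}). \textbf{(a)} $R_k$ occupies exactly columns $\ldim(k)+1,\ldots,\rdim(k)-1$, its lowest point $b_k$ lies in column $d^{\mathbf{P}}_k$, its lower-left edge runs through columns $\ldim(k)+1,\ldots,d^{\mathbf{P}}_k$ and its lower-right edge through columns $d^{\mathbf{P}}_k,\ldots,\rdim(k)-1$; consequently the $d^{\mathbf{P}}_k-\ldim(k)$ strings exiting $R_k$ to the SW occupy, just below $\w_{b_k}$, precisely positions $\ldim(k)+1,\ldots,d^{\mathbf{P}}_k$, and the $\rdim(k)-d^{\mathbf{P}}_k$ strings exiting to the SE occupy positions $d^{\mathbf{P}}_k+1,\ldots,\rdim(k)$, with no string from columns $\le\ldim(k)$ or $\ge\rdim(k)+1$ mixing in. \textbf{(b)} When $\before^{\mathbf{P}}(k)$ is defined, $b_{\before^{\mathbf{P}}(k)}$ lies in column $\ldim(k)$, immediately NW of the westmost point of $R_k$, and no heap point of $\w$ lies in columns $1,\ldots,\ldim(k)$ between the levels of $b_{\before^{\mathbf{P}}(k)}$ and $b_k$; symmetrically for $\after^{\mathbf{P}}(k)$ on the right. \textbf{(c)} When $\before^{\mathbf{P}}(k)$ is undefined, the westmost point of $R_k$ sits immediately to the right of a valley column $c=\ldim(k)$ of the ridgeline, and, by the chosen form of $\w=\v w_0^J$, below this valley the heap in columns $1,\ldots,c$ is the $w_0^J$-staircase, through which the strings pass keeping positions $1,\ldots,c$ occupied by $e_1,\ldots,e_{\ldim(k)}$; symmetrically on the right. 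Verifying (a)--(c) from the heap picture is the bulk of the work, and is the step I expect to be the main obstacle: one must check that the tiling by rectangles is clean enough that strings are not permuted ``between'' adjacent rectangles, and that a valley column genuinely separates the left and right portions of the heap lying below it.

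With (a)--(c) in hand, I would prove part (2) by induction on $k$ in the order $1,2,\ldots,j-1$ supplied by $\mathbf{P}$. By (a) and (b), the strings just below $\w_{b_k}$ and to its left split as a disjoint union of the strings exiting $R_k$ to the SW -- which carry the labels $T(k)$ by hypothesis -- and the strings in positions $1,\ldots,\ldim(k)$, which are unchanged from their state just below $b_{\before^{\mathbf{P}}(k)}$, or are $e_1,\ldots,e_{\ldim(k)}$ when $\before^{\mathbf{P}}(k)$ is undefined, by (c). By the inductive hypothesis (respectively the convention substituting $E_{\ldim(k)}$ for $W_{\before^{\mathbf{P}}(k)}$), the latter carry exactly the labels $\{m\mid e_m\in W_{\before^{\mathbf{P}}(k)}\}$. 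Since $W_{\before^{\mathbf{P}}(k)}\subseteq W_k$ by the inclusions defining $Z_{\mathbf{P}}$, and $T(k)=\{m\mid e_m\in W_k\}\setminus\{m\mid e_m\in W_{\before^{\mathbf{P}}(k)}\}$, this union is exactly $\{m\mid e_m\in W_k\}$, which is part (2) for $k$.

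Finally, part (1) follows by tracing the strings that enter $R_j$ along its NW and NE boundary edges. Because $R_j$ lies below $P_j$ but below no later peak, every rectangle having a point immediately NW or NE of the boundary of $R_j$ has index strictly less than $j$, so the hypothesis together with part (2) already describes all strings on that entire frontier; combined with (b) and (c) -- which say nothing permutes the strings in columns $1,\ldots,\ldim(j)$ or $\ge\rdim(j)$ between that frontier and $R_j$ -- the strings entering $R_j$ are precisely those in positions $\ldim(j)+1,\ldots,\rdim(j)$. Applying part (2) to $\after^{\mathbf{P}}(j)$ and $\before^{\mathbf{P}}(j)$ (both $<j$), these labels are $\{m\mid e_m\in W_{\after^{\mathbf{P}}(j)}\}\setminus\{m\mid e_m\in W_{\before^{\mathbf{P}}(j)}\}=A(j)$, where we use $W_{\before^{\mathbf{P}}(j)}\subseteq W_j\subseteq W_{\after^{\mathbf{P}}(j)}$; the cases where $\before^{\mathbf{P}}(j)$ or $\after^{\mathbf{P}}(j)$ is undefined are handled by substituting $E_{\ldim(j)}$ or $E_{\rdim(j)}$ and invoking (c), exactly matching the convention in the definition of $A(j)$.
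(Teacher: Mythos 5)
Your proposal is correct and takes essentially the same route as the paper: both prove part (2) inductively using the key decomposition $\{m\mid e_m\in W_{j-1}\}=\{m\mid e_m\in W_{\before^{\mathbf{P}}(j-1)}\}\cup T(j-1)$ together with the observation that strings to the left of $\w_{b_{\before^{\mathbf{P}}(j-1)}}$ stay to the left of $\w_{b_{j-1}}$ along the shared diagonal, and then derive part (1) from part (2) applied to $\before^{\mathbf{P}}(j)$ and $\after^{\mathbf{P}}(j)$. The geometric facts (a)--(c) you single out as the potential obstacle are exactly the content of Definition-Lemma~\ref{l:rectangles}, which the paper cites to Zelevinsky and Perrin, so you are not missing anything there.
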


\begin{proof}
Our proof is by induction on $j$.  For part (2), the inductive hypothesis
allows us to assume that the statement is true for $k\leq j-2$.  In
particular, it is true for $k=\before^{\mathbf{P}}(j-1)$ if
$\before^{\mathbf{P}}(j-1)$ is defined.  Therefore in this case, the strings
labelled $m$ for $e_m\in W_{\before^{\mathbf{P}}(j-1)}$ are to the left of
$\w_{b_{\before^{\mathbf{P}}(j-1)}}$.  Since
$\w_{b_{\before^{\mathbf{P}}(j-1)}}$ and $\w_{b_{j-1}}$ are on the same
diagonal, these same strings must pass to the left of $\w_{b_{j-1}}$.  If
$\before^{\mathbf{P}}(j-1)$ is undefined, then the same argument shows that
the strings labelled $m$ for $m\leq\ldim(j-1)$ pass to the left of
$\w_{b_{j-1}}$; these are the indices $m$ for $e_m\in E_{\ldim(j-1)}$, .

By hypothesis, the strings with labels in $T({\before^{\mathbf{P}}(j-1)})$
pass to the left of $\w_{b_{j-1}}$.  Since $$\{m\mid e_m\in W_{j-1}\}=\{m\mid
e_m\in W_{\before^{\mathbf{P}}(j-1)}\}\cup T(j-1)$$ (with the substitution of
$E_{\ldim(j-1)}$ for $W_{\before^{\mathbf{P}}(j-1)}$ if necessary), this
proves part (2).

The strings entering $R_j$ from the NW and NE are the ones which pass to the
left of $\w_{b_{\after^{\mathbf{P}}(j)}}$ and to the right of
$\w_{b_{\before^{\mathbf{P}}(j)}}$.  By part (2), these are the strings
labelled $m$ for $e_m\in W_{\after^{\mathbf{P}}(j)}$ but $e_m\not\in
W_{\before^{\mathbf{P}}(j)}$.  This is precisely the definition of
$A(j)$.  (If $\before^{\mathbf{P}}(j)$ or $\after^{\mathbf{P}}(j)$ is
undefined, then the necessary equivalent statements are trivial.)  This
proves part (1).
\end{proof}

Based on this lemma, we construct $e(\sigma(\bftau))$ which then uniquely
defines $\sigma(\bftau)$.  We think of each rectangle $R_j$ as a sorting
process.  The inputs to the process are strings with labels in $A(j)$ coming
in through the NW and NE edges.  The process takes these strings and uses $+$'s
and $-$'s at the heap points to rearrange them so that strings with labels
in $T(j)$ exit through the SW edge and strings with labels in $D(j)$ exit
through the SE edge.  Additionally, we want the number of $+$'s in each
rectangle $R_j$ to equal $\left|\tau^{(j)}\right|$ and the number of $+$'s
below the grassmannian part to equal $\ell(x_\tau)$; this will ensure that
$\dim(C_\sigma)=\dim(C_\bftau)$.

If the set $A(j)$ always entered $R_j$ in sorted order with the lowest
labelled string on the left, then we would be able to simply put $\tau^{(j)}$
at the bottom of $R_j$, fill the area covered by $\tau^{(j)}$ with $+$'s, and
fill the rest of $R_j$ with $-$'s.  However, since $A(j)$ does not always
enter in sorted order, we need a more complicated construction.

Our construction fills $R_j$ with $+$'s and $-$'s one NE-SW diagonal at a
time, starting with the diagonal at the NW edge.  We wish to put $+$'s and
$-$'s on that diagonal so that the string $k$ leaving $R_j$ at the SW end of
the diagonal is in $T(j)$ and furthermore, that the number of $+$'s we put in
the diagonal equals $\#\{m\in D(j)\mid m<k\}$.

Let $C$ be the set of labels of strings entering the diagonal we are working
on.  We first choose the string $k\in C\cap T(j)$ which will exit the diagonal
to the SW.  To make the number of $+$'s come out correctly, we want $k$ to
have the property that the number of elements of $\{m\in C \mid m<k\}$ (which
we call fake inversions) equals the number of elements of $\{m \in D(j) \mid
m<k\}$ (which we call real inversions).

We now show that such an element $k\in C\cap T(j)=C\setminus D(j)$ exists.
Place the elements of $A(j)$ in order, and for each $k\in A(j)$ let
$$g(k)=\#\{m\in C \mid m<k\} - \#\{m\in D(j) \mid m<k\}.$$ Observe that $g$
stays the same, goes up one, or goes down one with each successive $k\in
A(j)$.  Furthermore, $g$ is $0$ for the smallest element of $A(j)$, and since
$C$ has exactly one more element than $D(j)$, we have that $g$ is $1$ for the
largest element of $A(j)$.  Now, let $k$ be the first element of $A(j)$ for
which $g(k)=0$ but $g(k^\prime)=1$, where $k^\prime$ is the next largest
element from $k$ in $A(j)$.  By the definition of $g$, we have both that $k\in
C\setminus D(j)$ and that $k$ has an equal number of real and fake inversions.

Next, we fill the diagonal with $+$'s and $-$'s.  To be precise, we put a $+$
at each entry of the diagonal where the string entering from the NW (i.e. the
long side) is labelled $m$ with $m<k$.  Moreover, if the string entering the
diagonal from the NE is labeled $m$ with $m<k$, then we also put a $+$ at the
entry where $k$ enters from the NW.

Observe that the number of $+$'s in the diagonal equals $\#\{m \in C \mid
m<k\}$, which by our choice of $k$ is $\#\{m \in D(j) \mid m<k\}$.  The reader
can check that our construction will result in the string labelled $k$ exiting
the diagonal to the SW.  In particular, if the string entering the diagonal
from the NE is labelled $m$ for some $m<k$, the string meeting $k$ from the NE
where $k$ enters from the NW will be labelled $m^\prime$ for some
$m^\prime<k$, as any string with a label greater than $k$ entering above $k$
is immediately met with a $-$ and sent out of the diagonal.

We repeat this entire procedure for each diagonal of $R_j$.  The number of
$+$'s in $R_j$ will then be 
$$\sum_{k\in T(j)} \#\{m \in D(j) \mid m<k\} = \left|\tau^{(j)}\right|.$$ 
(Note however that the $m$-th diagonal of $R_j$ may not have $\tau^{(j)}_m$
$+$'s.)  Iterating this procedure for all rectangles $R_j$ produces a mask on
the grassmannian permutation $v=ww_0^J$ where the number of $+$'s equals $\sum_j
\left| \tau^{(j)}\right|$.  Observe that we could have used a similar
construction using NW-SE diagonals instead.  In general this may produce a
different mask.

Now we need to produce a mask for the bottom part of the heap representing
$w_0^J$ such that the number of $+$'s equals $\ell(x_\tau)$.  We use two
different procedures that are mirror images of each other for the two sides of
the bottom.

On the left side of the heap for the $w_0^J$ part, consider the NW-SE
diagonals one at a time, starting from the top.  For any given diagonal, $x$
specifies that a particular string with label $k$ must exit that diagonal to
the SE.  Define the set 
\[ Inv(k)=\{m\mid m>k, x^{-1}(m)<x^{-1}(k)\}. \]
(Since $u_\bftau(1)<\cdots<u_\bftau(z)$, it does not matter whether we consider
$x_\tau$ or $x=u_\bftau x_\tau$.)  We also want the number of $+$'s on the
diagonal to equal the size of $Inv(k)$.  As above, we place a $+$ in the
diagonal wherever a string labelled $m$ with $m<k$ enters from the NE (the long
side).  Furthermore, we place a $+$ where the string labelled $k$ enters if the
the string entering the diagonal at the NW edge is labelled $m$ for some $m<k$.
The reader can check that the string labelled $k$ exits the diagonal to the SE
given our construction.  Since a string enters our diagonal only if
$x^{-1}(m)<x^{-1}(k)$, the number of $+$'s we place in the diagonal is equal to
the size of $Inv(k)$.

On the right side of the heap for the $w_0^J$ part, we consider the NE-SW
diagonals one at a time.  As before, let $k$ be the label for the string which
must exit the diagonal to the SW according to $x$.  We let
\[ Inv^{\prime}(k)=\{m\mid m<k, x^{-1}(m)>x^{-1}(k)\}. \]
Place a $+$ wherever a string labelled $m$ with $m<k$ enters from the NW and a
$+$ where the string $k$ enters if the string entering the diagonal at the NE
edge is labelled $m$ for some $m<k$.  The reader can check that the string
labelled $k$ exits the diagonal to the SW, and the number of $+$'s we place in
the diagonal is the size of $Inv^\prime(k)$.  This entire procedure places
$\ell(x_\tau)$ $+$'s in the $w_0^J$ part, as desired.

\begin{example}
Let $w$, $\mathbf{P}$, and $\tau^{(1)},\ldots,\tau^{(6)}$ be as in
Example~\ref{e:zel_large_cell}, and let $x_\tau$ be the permutation (in 1-line
notation) $$x_\tau=(1,2,4,3,5,6,7,11,12,9,8,10,15,13,16,14,18,17,19,20,21,22,23).$$

The reader can check that $\sigma$ is the mask in Figure~\ref{f:maskfortau}.
For example, we assign $+$'s and $-$'s on the highest NE-SW diagonal of $R_6$
using $C = \{7, 5, 6, 8, 9, 10\}$, $A(6) = \{5, 6, \ldots, 17\}$, and $D(6) =
\{5, 8, 12, 14, 16\}$, so $k = 6$. 

\begin{figure}[h]
\begin{center}
\begin{tabular}{c}
\xymatrix @-2.4pc @M=0pt @! {
{\hm_{P_3}} & & {\hs} & & {\hs} & & {\hm_{P_2}} & & {\hm_{P_6}} & & {\hs} & & {\hs} & & {\hs} & & {\hs} & & {\hs} & & {\hs} & \\
& {\hm} & & {\hs} & & {\hp} & & {\hm} & & {\hm} & & {\hs} & & {\hp_{P_1}} & & {\hs} & & {\hs} & & {\hm_{P_4}} & & {\hm_{P_5}} \\
{\hm} & & {\hm} & & {\hp} & & {\hm} & & {\hm} & & {\hm} & & {\hp} & & {\hp} & & {\hs} & & {\hm} & & {\hm} & \\
& {\hm} & & {\hm} & \ar@{-}[ul] & {\hp} & & {\hm} & & {\hm} & & {\hm} & & {\hp} & & {\hp} & & {\hm} & & {\hm} & & {\hm} \\
{\hm} & & {\hm} & \ar@{-}[urururur] & {\hm} & & {\hm} & & {\hm} & & {\hm} & & {\hm} & & {\hp} & & {\hm} & & {\hm} & & {\hm} & \\
& {\hm} & & {\hm} & & {\hp} & & {\hp} & & {\hm} & & {\hm} & & {\hp} & \ar@{-}[urururur]& {\hp} & & {\hm} & & {\hm} & & {\hm} \\
{\hm} & & {\hm} & & {\hm} & & {\hp} & & {\hp} & & {\hm} & & {\hp} & & {\hm} & \ar@{-}[ururururur] & {\hm} & & {\hm} & & {\hm} & \\
& {\hm} & & {\hm} & & {\hm} & & {\hp} & & {\hp} & & {\hp} & & {\hp} & & {\hm} & \ar@{-}[ululululul] & {\hm} & & {\hm} & & {\hm} \\
{\hm} & & {\hm} & & {\hm} & & {\hm} & & {\hp} & & {\hp} & & {\hp} & & {\hm} & & {\hm} & & {\hm} & & {\hm} & \\
& {\hm} & & {\hm} & & {\hm} & & {\hm} & & {\hp} & & {\hp} & & {\hp} & & {\hm} & & {\hm} & & {\hm} & & {\hm} \\
{\hm} & & {\hm} & & {\hm} & & {\hm} & & {\hp} & & {\hp} & & {\hp} & & {\hm} & & {\hm} & & {\hm} & & {\hm} & \\
& {\hm} & & {\hm} & & {\hm} & & {\hp} & & {\hp} & & {\hp} & & {\hp} & & {\hm} & & {\hm} & & {\hm} & & {\hm} \\
{\hm} & & {\hm} & & {\hm} & & {\hp} & & {\hp} & & {\hm} & \ar@{-}[ululululululululululul]\ar@{-}[urururururururururur] & {\hp} & & {\hm} & & {\hm} & & {\hm} & & {\hm} & \\
& {\hm} & & {\hm} & & {\hm} & & {\hp} & & {\hp} & & {\hs} & & {\hm} & & {\hm} & & {\hm} & & {\hm} & & {\hm} \\
{\hm} & & {\hm} & & {\hm} & & {\hm} & & {\hm} & & {\hs} & & {\hs} & & {\hp} & & {\hm} & & {\hm} & & {\hm} & \\
& {\hm} & & {\hm} & & {\hm} & & {\hm} & & {\hs} & & {\hs} & & {\hs} & & {\hm} & & {\hm} & & {\hm} & & {\hm} \\
{\hm} & & {\hm} & & {\hm} & & {\hm} & & {\hs} & & {\hs} & & {\hs} & & {\hs} & & {\hp} & & {\hm} & & {\hm} & \\
& {\hm} & & {\hm} & & {\hm} & & {\hs} & & {\hs} & & {\hs} & & {\hs} & & {\hs} & & {\hm} & & {\hm} & & {\hm} \\
{\hm} & & {\hm} & & {\hm} & & {\hs} & & {\hs} & & {\hs} & & {\hs} & & {\hs} & & {\hs} & & {\hm} & & {\hm} & \\
& {\hm} & & {\hm} & & {\hs} & & {\hs} & & {\hs} & & {\hs} & & {\hs} & & {\hs} & & {\hs} & & {\hm} & & {\hm} \\
{\hm} & & {\hp} & & {\hs} & & {\hs} & & {\hs} & & {\hs} & & {\hs} & & {\hs} & & {\hs} & & {\hs} & & {\hm} & \\
& {\hm} & & {\hs} & & {\hs} & & {\hs} & & {\hs} & & {\hs} & & {\hs} & & {\hs} & & {\hs} & & {\hs} & & {\hm} \\
{\hm} & & {\hs} & & {\hs} & & {\hs} & & {\hs} & & {\hs} & & {\hs} & & {\hs} & & {\hs} & & {\hs} & & {\hs} & \\
}
\end{tabular}
\end{center}
\caption{Mask for specified $\bftau$}\label{f:maskfortau}
\end{figure}
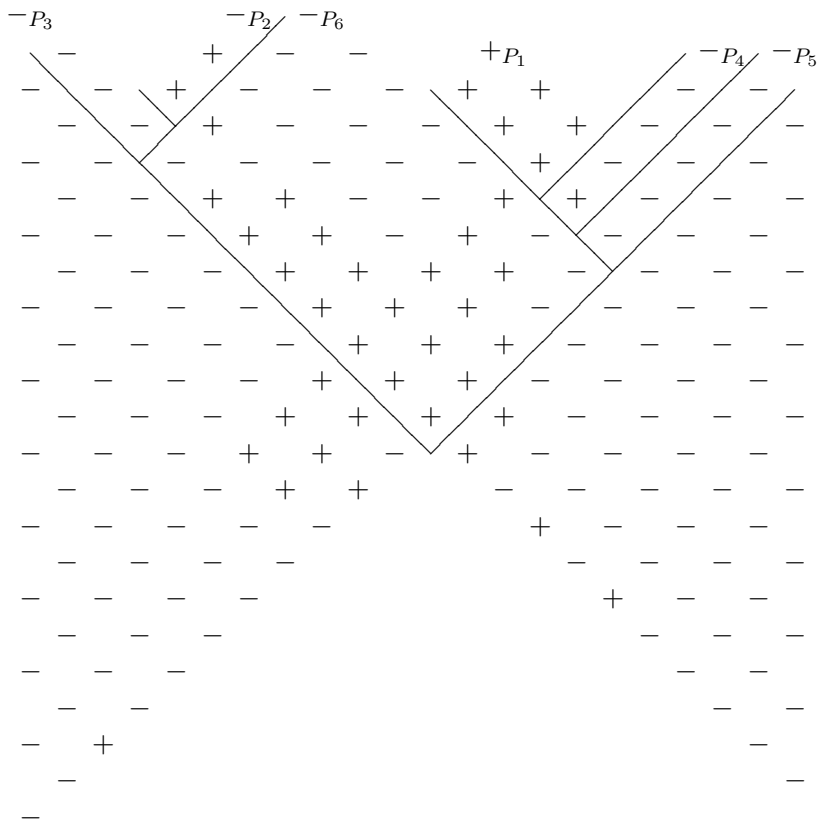
\end{example}

\begin{lemma}
The number of $+$'s in $e(\sigma(\bftau))$ equals $\dim(C_\bftau)$.
\end{lemma}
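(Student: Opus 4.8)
The plan is to reduce the statement to a counting fact using machinery already in hand. By Proposition~\ref{p:plusdim} the number of $+$'s in $e(\sigma(\bftau))$ equals $\dim(C_{\sigma(\bftau)})$, and by Proposition~\ref{p:zeldim} we have $\dim(C_\bftau)=\sum_i |\tau^{(i)}| + \ell(x_\tau)$. So it suffices to show that the construction of $\sigma(\bftau)$ places exactly $\sum_j |\tau^{(j)}|$ signs $+$ inside the rectangles $R_1,\ldots,R_p$ and exactly $\ell(x_\tau)$ signs $+$ in the $w_0^J$-part of the heap.

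First I would make precise and verify the invariant that makes the rectangle-by-rectangle construction well posed: for each $j$, after the signs in $R_1,\ldots,R_{j-1}$ have been chosen, the strings exiting $R_k$ to the SW are exactly those labelled by $T(k)$ and those exiting to the SE are exactly those labelled by $D(k)$, for all $k<j$. Granting this for $k<j$, Lemma~\ref{l:plusind} tells us that the set of strings entering $R_j$ is $A(j)$, so the diagonal-by-diagonal sorting described for $R_j$ makes sense. The inductive step then consists of processing the NE--SW diagonals of $R_j$ from NW to SE and checking two things for the diagonal currently under consideration, with incoming label set $C$: that the element $k\in C\setminus D(j)$ selected by the monovariant $g$ actually exists (the stated argument: $g$ starts at $0$ for the least element of $A(j)$, changes by $0$ or $\pm1$ at each successive element, and ends at $1$ because $|C|=|D(j)|+1$, so some $k$ has $g(k)=0$ while its successor $k'$ has $g(k')=1$), and that the prescribed placement of $+$'s routes the string labelled $k$ out of the diagonal to the SW while carrying the other strings of $C$ onward. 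Verifying this routing — essentially that a string with label $>k$ entering above $k$ is immediately turned back by a $-$, and that the label comparison at the head entry of the diagonal has the claimed effect — is the combinatorial heart of the argument and the step I expect to be the main obstacle, because the same verification simultaneously shows that over all diagonals of $R_j$ the selected strings are exactly the elements of $T(j)$ in decreasing order, which is precisely what closes the induction.

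Once the routing is established, the count in $R_j$ is immediate: the diagonal whose SW-exiting string is $T(j)_m$ (with $T(j)$ in decreasing order) receives $\#\{m'\in D(j)\mid m'<T(j)_m\}=\tau^{(j)}_m$ signs $+$ by the very definition of $\tau^{(j)}$, so $R_j$ receives $\sum_m \tau^{(j)}_m = |\tau^{(j)}|$ signs $+$, and summing over $j$ gives $\sum_j |\tau^{(j)}|$ in the grassmannian part. For the $w_0^J$-part I would run the mirror-image version of the same analysis, noting that the invariant propagates correctly out of the grassmannian part so that each diagonal's prescribed exiting string is indeed the string with the claimed label: on the left half, processing NW--SE diagonals, the diagonal whose SE exit is $k$ receives $|Inv(k)|$ signs $+$; on the right half, processing NE--SW diagonals, the diagonal whose SW exit is $k$ receives $|Inv'(k)|$ signs $+$. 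Since $x_\tau\in S_z\times S_{n-z}$, every inversion of $x_\tau$ lies inside one of the two blocks and is counted exactly once — the inversions within the first block by the $Inv$ terms on the left half and those within the second block by the $Inv'$ terms on the right half — so the $w_0^J$-part receives $\ell(x_\tau)$ signs $+$ in total. Adding the grassmannian and $w_0^J$ contributions and invoking Propositions~\ref{p:plusdim} and \ref{p:zeldim} completes the proof.
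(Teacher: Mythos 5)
Your proof is correct and follows essentially the same route as the paper's: count the number of $+$'s placed in each rectangle $R_j$, observe that this is $\sum_{k\in T(j)}\#\{m\in D(j)\mid m<k\}=|\tau^{(j)}|$, count the $+$'s in the $w_0^J$-part by splitting the inversions of $x_\tau$ between the two blocks via $Inv(k)$ and $Inv'(k)$, and invoke Proposition~\ref{p:zeldim}. The paper treats the routing invariant (that each diagonal's prescribed exit string is achieved, so that the exits of $R_j$ are exactly $T(j)$) as already established by the surrounding construction text and by Lemma~\ref{l:plusind}, so the lemma's proof itself is just the counting step; your re-derivation of that invariant is not wrong, only redundant relative to where the paper places that work, and your extra claim that the exits occur ``in decreasing order'' is unnecessary for the count (only the multiset of exits matters).
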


\begin{proof}
The number of $+$'s the construction of $\sigma(\bftau)$ places in $R_j$
is $$\sum_{k\in T(j)} \#\{m \in D(j) \mid k>m\} = \left|\tau^{(j)}\right|.$$

The permutation $x_\tau$ only has inversions between elements both before (or
including) $z$ and inversions between elements after $z$.  Each of these
inversions is counted exactly once either in $Inv(k)$ or in $Inv^\prime(k)$
(depending on whether $u_\bftau(k)\leq z$ or not) for some $k$.  Therefore,
the number of $+$'s placed below the cograssmannian part is $\ell(x_\tau)$.
Hence the number of $+$'s in $\sigma(\bftau)$ is
$$\sum_{j=1}^p \left|\tau^{(j)}\right| + \ell(x_\tau),$$ which is
$\dim(C_\bftau)$ by Proposition~\ref{p:zeldim}.
\end{proof}

\begin{lemma}
The map $\rho_{\mathbf{P}}$ sends $p_{\sigma(\bftau)}$ to $p_\bftau$.
\end{lemma}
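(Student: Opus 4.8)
The plan is to unwind the definitions of $\rho_{\mathbf{P}}$ and of the two $T$-fixed points, and then match coordinates using Lemma~\ref{l:plusind} together with Proposition~\ref{p:fixed_point_images}. Write $p_{\sigma(\bftau)}=[V_1,\ldots,V_\ell]$, where by construction $V_j=\Span(e_{\w^{\sigma(\bftau)[j]}(1)},\ldots,e_{\w^{\sigma(\bftau)[j]}(d_j)})$; recall that, in terms of the string diagram, $V_j$ is the span of the basis vectors whose labels appear on the strings passing to the left of $\w_j$ immediately below $\w_j$ in the heap. Since $\rho_{\mathbf{P}}(p_{\sigma(\bftau)})=[V_{b_1},\ldots,V_{b_{p-1}},V_{\last(1)},\ldots,V_{\last(n-1)}]$ while $p_\bftau=[W_1,\ldots,W_{p-1},F_1,\ldots,F_{n-1}]$, it suffices to prove $V_{b_k}=W_k$ for $1\leq k\leq p-1$ and $V_{\last(d)}=F_d$ for $1\leq d\leq n-1$.

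First I would verify, by induction on $j$, that the string diagram of $\sigma(\bftau)$ satisfies the hypothesis of Lemma~\ref{l:plusind}: for every rectangle $R_k$, the strings exiting $R_k$ to the SW are exactly those labelled by $T(k)$, and the strings exiting $R_k$ to the SE are exactly those labelled by $D(k)$. For the inductive step, assuming this for $R_1,\ldots,R_{j-1}$, part (1) of Lemma~\ref{l:plusind} tells us that the strings entering $R_j$ are exactly those labelled by $A(j)$; the explicit diagonal-by-diagonal assignment of $+$'s and $-$'s in the construction of $\sigma(\bftau)$ then routes the chosen string $k\in A(j)\setminus D(j)$ out of each diagonal to the SW and the remaining entering strings out to the SE, which is precisely the content of the ``reader can check'' verifications in the construction. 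Once this induction is complete, part (2) of Lemma~\ref{l:plusind} applied with $j=p$ says that for each $k\leq p-1$ the strings passing to the left of $\w_{b_k}$ immediately below it are exactly those labelled $m$ with $e_m\in W_k$; hence $V_{b_k}=W_k$.

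For the flag coordinates, Proposition~\ref{p:fixed_point_images} gives $\pi_\w(p_{\sigma(\bftau)})=p_{\w^{\sigma(\bftau)}}$, i.e. $V_{\last(d)}=\Span(e_{\w^{\sigma(\bftau)}(1)},\ldots,e_{\w^{\sigma(\bftau)}(d)})$ for all $d$. Since $F_d=\Span(e_{u_\bftau x_\tau(1)},\ldots,e_{u_\bftau x_\tau(d)})$, it remains only to check that $\w^{\sigma(\bftau)}=u_\bftau x_\tau$, equivalently that $\{\w^{\sigma(\bftau)}(1),\ldots,\w^{\sigma(\bftau)}(d)\}=\{u_\bftau x_\tau(1),\ldots,u_\bftau x_\tau(d)\}$ for every $d$. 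I would get this by tracking strings through the $w_0^J$ part of the heap: the induction above shows the strings entering each side of the $w_0^J$ region are exactly those prescribed by $W_p=F_z$ (so $u_\bftau$ is correctly realized on entry), and then the two mirror-image procedures governed by $Inv(k)$ and $Inv^{\prime}(k)$ sort them diagonal by diagonal so that the string labelled $k$ leaves the corresponding diagonal in the direction dictated by $x_\tau$ — once more the ``reader can check'' assertion in the construction. Combining the two parts yields $\rho_{\mathbf{P}}(p_{\sigma(\bftau)})=p_\bftau$.

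The main obstacle is the sorting verification: making rigorous the ``reader can check'' claims that, given the diagonal-filling rule, the designated string really does exit each diagonal (of $R_j$ and of the two $w_0^J$ halves) in the intended direction. This is an elementary but fiddly argument about how crossings and bounces permute string labels across a diagonally aligned rectangle; the delicate point is the extra $+$ placed at the entry point of string $k$ when a smaller label enters from the short side, which is exactly what keeps the tally of $+$'s equal to the number of real inversions while still routing $k$ correctly. A secondary bookkeeping issue is that $\sigma(\bftau)$ is built incrementally, so one must phrase the induction carefully enough that Lemma~\ref{l:plusind} is only ever invoked on the already-constructed portion of the mask.
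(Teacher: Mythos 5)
Your proposal follows essentially the same route as the paper: first establish $V_{b_k}=W_k$ by feeding the rectangle-by-rectangle sorting property of $\sigma(\bftau)$ into Lemma~\ref{l:plusind} (part (2) with $j=p$), and then get $V_{\last(d)}=F_d$ by noting $\w^{\sigma(\bftau)}=u_\bftau x_\tau$ and invoking Proposition~\ref{p:fixed_point_images}. The only difference is that you spell out the induction that verifies the hypothesis of Lemma~\ref{l:plusind} and flag the ``reader can check'' routing steps, whereas the paper's proof simply asserts that the construction has the required exit property; both are correct and use the same key tools.
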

\begin{proof}
Let $p_{\sigma(\bftau)}=[V_1,\ldots,V_\ell]$ and
$p_\bftau=[W_1,\ldots,W_{p-1},F_1,\ldots,F_{n-1}]$.  Since the construction of
$\sigma(\bftau)$ is such that the strings with labels in $T(k)$ always exit to
the left of $b_k$, by Lemma~\ref{l:plusind}, the strings exiting to the left
of $\w_{b_j}$ immediately below $\w_{b_j}$ are the ones labelled $m$ for
$e_m\in W_j$.  Applying the lemma to all $j$, $1\leq j\leq p-1$, shows that
$V_{b_j}=W_j$ for all $j$, $1\leq j\leq p-1$.

The construction of $\sigma(\bftau)$ explicitly ends up with
$\w^{\sigma(\bftau)}=x$, where $x=u_\bftau x_\tau$.  This implies
$p_x=\pi_{\mathbf{P}}(p_\bftau)$.  Therefore, $V_{\last(j)}=F_j$ for all $j$.

The map $\rho_{\mathbf{P}}$ sends $p_{\sigma(\bftau)}$ to the point
$[V_{b_1},\ldots,V_{b_{p-1}},V_{\last(1)},\ldots,V_{\last(n-1)}]$, which as
argued above is
precisely $[W_1,\ldots,W_{p-1}, F_1,\ldots,F_{n-1}]$.  This point is
$p_\bftau$.
\end{proof}

\begin{theorem}
\label{t:main2}
If $w$ is cograssmannian and $\mathbf{P}$ is a neat order of the peaks of
$w$, then
$$C^\prime_w=q^{-\frac{1}{2}\ell(w)}\sum_{\sigma\in\mathcal{E}_{\mathbf{P}}}
q^{d(\sigma)}, \text{\ \  where \ \ } \mathcal{E}_{\mathbf{P}}=\{\sigma(\bftau) \mid \bftau\in\mathcal{T}_{\mathbf{P}}\}.$$
\end{theorem}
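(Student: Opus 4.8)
The plan is to read the asserted identity off directly from Zelevinsky's theorem (Theorem~\ref{t:zel}) together with the two lemmas just proved, so that boundedness and admissibility of $\mathcal{E}_{\mathbf{P}}$ need not be checked by hand; they will follow \emph{a posteriori}, since $h(\mathcal{E}_{\mathbf{P}})=C'_w$ is exactly the output that Theorem~\ref{t:deodhar} extracts from a bounded admissible set of masks. The first step is to observe that $\bftau\mapsto\sigma(\bftau)$ is a bijection from $\mathcal{T}_{\mathbf{P}}$ onto $\mathcal{E}_{\mathbf{P}}$: surjectivity is the definition of $\mathcal{E}_{\mathbf{P}}$, and injectivity was already noted during the construction, because distinct $\bftau$ yield distinct fixed points $p_\bftau$, hence distinct $p_{\sigma(\bftau)}$ (as $\rho_{\mathbf{P}}(p_{\sigma(\bftau)})=p_\bftau$ by the second of the two preceding lemmas), hence distinct masks.

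Next I would assemble the numerical dictionary under this bijection. By construction $\w^{\sigma(\bftau)}=u_\bftau x_\tau$, and by the remark following Theorem~\ref{t:zel} this equals $x$ exactly when $\pi_{\mathbf{P}}(p_\bftau)=p_x$; this is also consistent with Proposition~\ref{p:fixed_point_images} and the relation $\pi_\w=\pi_{\mathbf{P}}\circ\rho_{\mathbf{P}}$. For the statistic, Proposition~\ref{p:nplus} gives that the number of $+$'s in $e(\sigma(\bftau))$ equals $\ell(\w^{\sigma(\bftau)})+d(\sigma(\bftau))$, while the first of the two preceding lemmas says this number of $+$'s equals $\dim(C_\bftau)$; subtracting, $d(\sigma(\bftau))=\dim(C_\bftau)-\ell(\w^{\sigma(\bftau)})$. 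Fixing $x\leq w$ and reindexing the sum $\sum_{\sigma\in\mathcal{E}_{\mathbf{P}},\ \w^\sigma=x}q^{d(\sigma)}$ along the bijection then gives
\[ \sum_{\substack{\sigma\in\mathcal{E}_{\mathbf{P}}\\ \w^\sigma=x}} q^{d(\sigma)} = \sum_{\substack{\bftau\in\mathcal{T}_{\mathbf{P}}\\ \pi_{\mathbf{P}}(p_\bftau)=p_x}} q^{\dim(C_\bftau)-\ell(x)} = P_{x,w}(q), \]
where the last equality is Theorem~\ref{t:zel}, valid because $\mathbf{P}$ is neat.

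Finally I would multiply by $T_x$ and sum over $x\leq w$, obtaining
\[ q^{-\frac12\ell(w)}\sum_{\sigma\in\mathcal{E}_{\mathbf{P}}} q^{d(\sigma)}\,T_{\w^\sigma} = q^{-\frac12\ell(w)}\sum_{x\leq w} P_{x,w}(q)\,T_x = C'_w, \]
which is the desired formula. The substantive work of the theorem has in fact already been carried out: the delicate point is not in this assembly but in the construction of $\sigma(\bftau)$, which had to be rigged so that \emph{simultaneously} $\rho_{\mathbf{P}}(p_{\sigma(\bftau)})=p_\bftau$ (the right fixed point) and the number of $+$'s in $e(\sigma(\bftau))$ equals $\dim(C_\bftau)$ (the right dimension). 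I would emphasise that only the fixed-point bijection and the dimension count are used here; in particular we neither need nor claim that $\rho_{\mathbf{P}}$ restricts to a bijection on the cells $C_{\sigma(\bftau)}$ themselves --- the ``strongly geometric'' property --- which remains open.
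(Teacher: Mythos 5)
Your proof is correct and follows essentially the same route as the paper's: both assemble the identity directly from Zelevinsky's theorem (Theorem~\ref{t:zel}), the two preceding lemmas establishing $\rho_{\mathbf{P}}(p_{\sigma(\bftau)})=p_\bftau$ and the $+$-count equaling $\dim(C_\bftau)$, and Propositions~\ref{p:nplus} and~\ref{p:plusdim} converting the $+$-count to the defect statistic. You spell out the injectivity of $\bftau\mapsto\sigma(\bftau)$ and the reindexing more explicitly, but the underlying argument is identical.
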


\begin{proof}
By Theorem~\ref{t:zel},
$$C^\prime_w=q^{-\frac{1}{2}\ell(w)}\sum_{\bftau\in\mathcal{T}_{\mathbf{P}}}
q^{\dim(C_\tau)-\ell(x)},$$ where $x=u_\bftau x_\tau$.  Since we have a
bijection between $\mathcal{E}_{\mathbf{P}}$ and $\mathcal{T}_{\mathbf{P}}$
such that $\dim(C_{\sigma(\bftau)})=\dim(C_\bftau)$ and $x=\w^\sigma$, and
$d(\sigma)=\dim(C_\sigma)-\ell(\w^\sigma)$ by Propositions~\ref{p:nplus}
and~\ref{p:plusdim}, the theorem is proved.
\end{proof}

\subsection{A non-geometric mask set}

In this section we explain why the mask set $\mathcal{E}_\w$ constructed in
Section~\ref{s:cog} may in some cases not be geometric with respect to any
(neat) ordering $\mathbf{P}$.  In particular, we give an example of a
permutation $\w$ and two masks $\sigma_1,\sigma_2\in\mathcal{E}_\w$ such that
$\rho_{\mathbf{P}}(p_{\sigma_1})=\rho_{\mathbf{P}}(p_{\sigma_2})$.  As a
similar argument can be applied to any of the obvious variants of both
constructions, this shows that our two constructions of mask sets are in some
sense fundamentally different.

Consider the permutation $w$ of Figure~\ref{f:heap_segments}, and the
pair of labelled trees $t_1$ and $t_2$ in Figure~\ref{f:cexp_trees}.

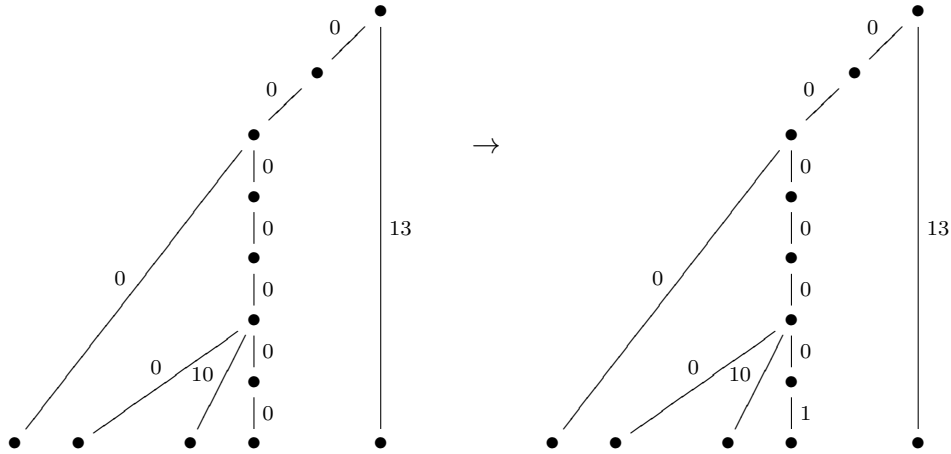
\begin{figure}[h]
\begin{center}
\begin{tabular}{ccc}
\xymatrix @-1pc {
& & & & & & \gn \ar@{-}[dl]_{0} \ar@{-}[ddddddd]^{13} & \\
& & & & & \gn \ar@{-}[dl]_{0} & & \\
& & & & \gn \ar@{-}[d]^{0} \ar@{-}[dddddllll]_{0} & & \\
& & & & \gn \ar@{-}[d]^{0} & & \\
& & & & \gn \ar@{-}[d]^{0} & & \\
& & & & \gn \ar@{-}[d]^{0} \ar@{-}[ddl]_{10} \ar@{-}[ddlll]_{0} & & \\
& & & & \gn \ar@{-}[d]^{0} & & \\
\gn & \gn & & \gn & \gn & & \gn \\
} & \parbox{0.2in}{\vspace{1.5in}$\rightarrow$} &
\xymatrix @-1pc {
& & & & & & \gn \ar@{-}[dl]_{0} \ar@{-}[ddddddd]^{13} & \\
& & & & & \gn \ar@{-}[dl]_{0} & & \\
& & & & \gn \ar@{-}[d]^{0} \ar@{-}[dddddllll]_{0} & & \\
& & & & \gn \ar@{-}[d]^{0} & & \\
& & & & \gn \ar@{-}[d]^{0} & & \\
& & & & \gn \ar@{-}[d]^{0} \ar@{-}[ddl]_{10} \ar@{-}[ddlll]_{0} & & \\
& & & & \gn \ar@{-}[d]^{1} & & \\
\gn & \gn & & \gn & \gn & & \gn \\
}
\end{tabular}
\end{center}
\caption{Edge-labelled trees for }\label{f:cexp_trees}
\end{figure}

Let $\sigma_1=\sigma(t_1)$ and $\sigma_2=\sigma(t_2)$ as constructed in
Section~\ref{s:cog}.  These are both masks in $\mathcal{E}_\w$ for the
permutation $x=x(t_1)=x(t_2)$ whose constant mask is shown in
Figure~\ref{f:heap_segments}.  These masks only differ in two places, both in
the fourth segment from the left.  One place is in the lowest diagonal in
region 1, and the other is at the end of the right edge.

Let $p_{\s_1}=[V_1,\ldots,V_{\ell(w)}]$ and
$p_{\s_2}=[V^\prime_1,\ldots,V^\prime_{\ell(w)}]$.  Let $\mathbf{P}$ be any
order of the peaks, and let the rectangles $R_j$ and the heap points $b_j$ be
defined as above.  Note that both places where $\sigma_1$ and $\sigma_2$
differ are below every peak.  No matter what $\mathbf{P}$ is, the first
difference is in the rectangle $R_6$, and the second is below the
grassmannian part.  Therefore, $V_{b_j}=V^\prime_{b_j}$ for all $j$, $1\leq
j\leq 5$.  Furthermore, since $\w^{\sigma_1}=\w^{\sigma_2}=x$,
$V_{\last(j)}=V^\prime_{\last(j)}$ for all $j$.  Therefore,
$\rho_{\mathbf{P}}(p_{\sigma_1})=\rho_{\mathbf{P}}(p_{\sigma_2})$.

Since $\rho_{\mathbf{P}}$ is not a bijection, $\mathcal{E}_\w$ is not a
geometric mask set.  This failure to be geometric applies to any ordering
$\mathbf{P}$ of the masks.

\section*{Acknowledgments}

The first author wishes to thank F.~Brenti for helpful conversations related
to this work.  The second author wishes to thank A.~Yong for discussions
leading up to this work and A.~Knutson for properly
explaining the Bott--Samelson resolution to him many years ago, including at
BADMath Day in Fall 2003.


\bibliographystyle{alpha}
\bibliography{our}

\def\cprime{$'$}
\begin{thebibliography}{dCM09}

\bibitem[Arm09]{armstrong}
Drew Armstrong.
\newblock The sorting order on a {C}oxeter group.
\newblock {\em J. Combin. Theory Ser. A}, 116(8):1285--1305, 2009.

\bibitem[BB73]{b-b-cells}
A.~Bia{\l}ynicki-Birula.
\newblock Some theorems on actions of algebraic groups.
\newblock {\em Ann. of Math. (2)}, 98:480--497, 1973.

\bibitem[BB74]{b-b-cells2}
A.~Bia{\l}ynicki-Birula.
\newblock On fixed points of torus actions on projective varieties.
\newblock {\em Bull. Acad. Polon. Sci. S\'er. Sci. Math. Astronom. Phys.},
  22:1097--1101, 1974.

\bibitem[BB81]{BeilBern}
A.~Beilinson and J.~Bernstein.
\newblock Localization of $\mathfrak{g}$-modules.
\newblock {\em C. R. Acad.Sci. Paris Ser. {I} Math}, 292:15--18, 1981.

\bibitem[BB05]{b-b}
Anders Bj\"orner and Francesco Brenti.
\newblock {\em Combinatorics of Coxeter groups}, volume 231 of {\em Graduate
  Texts in Mathematics}.
\newblock Springer, New York, 2005.

\bibitem[BB07]{billera-brenti}
L.~J. Billera and F.~Brenti.
\newblock Quasisymmetric functions and {K}azhdan-{L}usztig polynomials.
\newblock {\em {\tt arXiv:0710.3965 [math.CO]}}, 2007.

\bibitem[BBD82]{bbd}
A.~A. Be{\u\i}linson, J.~Bernstein, and P.~Deligne.
\newblock Faisceaux pervers.
\newblock In {\em Analysis and topology on singular spaces, {I} ({L}uminy,
  1981)}, volume 100 of {\em Ast\'erisque}, pages 5--171. Soc. Math. France,
  Paris, 1982.

\bibitem[BJ07]{billey-jones}
Sara~C. Billey and Brant~C. Jones.
\newblock Embedded factor patterns for {D}eodhar elements in
  {K}azhdan-{L}usztig theory.
\newblock {\em Ann. Comb.}, 11:285--333, 2007.

\bibitem[BK81]{BryKash}
J.-L. Brylinski and M.~Kashiwara.
\newblock {Kazhdan}-{Lusztig} conjectures and holonomic systems.
\newblock {\em Invent. Math.}, 64:387--410, 1981.

\bibitem[BM01]{BM}
Tom Braden and Robert MacPherson.
\newblock From moment graphs to intersection cohomology.
\newblock {\em Math. Ann.}, 321(3):533--551, 2001.

\bibitem[Boe88]{boe}
Brian~D. Boe.
\newblock { Kazhdan-Lusztig polynomials for Hermitian symmetric spaces}.
\newblock {\em Trans. Amer. Math. Soc.}, 309:279--294, 1988.

\bibitem[Bre98a]{Brenti98}
Francesco Brenti.
\newblock {{Kazhdan}-{Lusztig}} polynomials and {$R$}-polynomials from a
  combinatorial point of view.
\newblock {\em Discrete Math}, 193(1-3):93--116, 1998.

\bibitem[Bre98b]{brenti}
Francesco Brenti.
\newblock Lattice paths and {K}azhdan-{L}usztig polynomials.
\newblock {\em J. Amer. Math. Soc.}, 11(2):229--259, 1998.

\bibitem[BS58]{BS58}
Raoul Bott and Hans Samelson.
\newblock Applications of the theory of {M}orse to symmetric spaces.
\newblock {\em Amer. J. Math.}, 80:964--1029, 1958.

\bibitem[BW01]{b-w}
Sara Billey and Gregory~S. Warrington.
\newblock Kazhdan-{L}usztig polynomials for 321-hexagon-avoiding permutations.
\newblock {\em J. Algebraic Combin.}, 13(2):111--136, 2001.

\bibitem[CF69]{cartier-foata}
P.~Cartier and D.~Foata.
\newblock {\em Probl\`emes combinatoires de commutation et r\'earrangements}.
\newblock Lecture Notes in Mathematics, No. 85. Springer-Verlag, Berlin, 1969.

\bibitem[Cor03]{Cortez}
Aur{\'e}lie Cortez.
\newblock Singularit\'es g\'en\'eriques et quasi-r\'esolutions des vari\'et\'es
  de {S}chubert pour le groupe lin\'eaire.
\newblock {\em Adv. Math.}, 178(2):396--445, 2003.

\bibitem[dCM09]{deCatalMig}
Mark Andrea~A. de~Cataldo and Luca Migliorini.
\newblock The decomposition theorem, perverse sheaves and the topology of
  algebraic maps.
\newblock {\em Bull. Amer. Math. Soc. (N.S.)}, 46(4):535--633, 2009.

\bibitem[Deo77]{deodhar_mobius}
Vinay~V. Deodhar.
\newblock Some characterizations of {B}ruhat ordering on a {C}oxeter group and
  determination of the relative {M}\"obius function.
\newblock {\em Invent. Math.}, 39(2):187--198, 1977.

\bibitem[Deo90]{d}
Vinay~V. Deodhar.
\newblock A combinatorial setting for questions in {K}azhdan-{L}usztig theory.
\newblock {\em Geom. Dedicata}, 36(1):95--119, 1990.

\bibitem[Fan98]{fan1}
C.~K. Fan.
\newblock Schubert varieties and short braidedness.
\newblock {\em Transform. Groups}, 3(1):51--56, 1998.

\bibitem[GM83]{g-m}
M.~Goresky and R.~MacPherson.
\newblock Intersection homology-ii.
\newblock {\em Invent. Math.}, 71:77--129, 1983.

\bibitem[Gre03]{green1}
R.~M. Green.
\newblock On rank functions for heaps.
\newblock {\em J. Combin. Theory Ser. A}, 102(2):411--424, 2003.

\bibitem[Gre04a]{green2}
R.~M. Green.
\newblock Acyclic heaps of pieces. {I}.
\newblock {\em J. Algebraic Combin.}, 19(2):173--196, 2004.

\bibitem[Gre04b]{green3}
R.~M. Green.
\newblock Acyclic heaps of pieces. {II}.
\newblock {\em Glasg. Math. J.}, 46(3):459--476, 2004.

\bibitem[Irv88]{irving}
Ronald~S. Irving.
\newblock The socle filtration of a {V}erma module.
\newblock {\em Ann. Sci. \'Ecole Norm. Sup. (4)}, 21(1):47--65, 1988.

\bibitem[Jon09]{jones-match}
Brant~C. Jones.
\newblock An explicit derivation of the {M}\"obius function for {B}ruhat order.
\newblock {\em Order}, 26(4):319--330, 2009.

\bibitem[KL79]{k-l}
David Kazhdan and George Lusztig.
\newblock Representations of {C}oxeter groups and {H}ecke algebras.
\newblock {\em Invent. Math.}, 53(2):165--184, 1979.

\bibitem[KL80]{K-L2}
D.~{Kazhdan} and G.~{Lusztig}.
\newblock {Schubert varieties and Poincar\'e duality}.
\newblock {\em Proc. Symp. Pure. Math., A.M.S.}, 36:185--203, 1980.

\bibitem[Knu10]{Kn-BSeg}
Allen Knutson.
\newblock A compactly supported formula for equivariant localization, and,
  simplicial complexes of bialynicki-birula decompositions.
\newblock {\em Pure Appl. Math. Q.}, 6(2):501--544, 2010.

\bibitem[KW06]{KirWol}
Frances Kirwan and Jonathan Woolf.
\newblock {\em An introduction to intersection homology theory}.
\newblock Chapman \& Hall/CRC, Boca Raton, FL, second edition, 2006.

\bibitem[Las95]{Lascoux95}
Alain Lascoux.
\newblock {Polynômes de {Kazhdan}-{Lusztig} pour les variétés de Schubert
  vexillaires. (French) [{Kazhdan}-{Lusztig} polynomials for vexillary Schubert
  varieties]}.
\newblock {\em C. R. Acad. Sci. Paris Sér. I Math.}, 321(6):667--670, 1995.

\bibitem[LS81]{LS11}
Alain Lascoux and Marcel-Paul Schutzenberger.
\newblock {Polynomes de {Kazhdan}- {Lusztig} pour les grassmanniennes. (French)
  [{Kazhdan}-{Lusztig} polynomials for Grassmannians]}.
\newblock {\em Astérisque}, 87--88:249--266, 1981.
\newblock Young tableaux and Schur functions in algebra and geometry (Toru\'n,
  1980).

\bibitem[LY10]{LiYong10}
L.~Li and A.~Yong.
\newblock {K}azhdan--{L}usztig polynomials and drift configurations.
\newblock {\em {\tt arXiv:1006.1887v1 [math.CO]}}, 2010.

\bibitem[Mag98]{magyar}
Peter Magyar.
\newblock Schubert polynomials and {B}ott-{S}amelson varieties.
\newblock {\em Comment. Math. Helv.}, 73(4):603--636, 1998.

\bibitem[Mat64]{matsumoto}
Hideya Matsumoto.
\newblock G\'en\'erateurs et relations des groupes de {W}eyl g\'en\'eralis\'es.
\newblock {\em C. R. Acad. Sci. Paris}, 258:3419--3422, 1964.

\bibitem[MR04]{marsh-rietsch}
R.~J. Marsh and K.~Rietsch.
\newblock Parametrizations of flag varieties.
\newblock {\em Represent. Theory}, 8:212--242 (electronic), 2004.

\bibitem[MW03]{MW02}
Timothy~J. McLarnan and Gregory~S. Warrington.
\newblock Counterexamples to the 0-1 conjecture.
\newblock {\em Represent. Theory}, 7:181--195 (electronic), 2003.

\bibitem[Per07]{perrin}
Nicolas Perrin.
\newblock Small resolutions of minuscule {S}chubert varieties.
\newblock {\em Compos. Math.}, 143(5):1255--1312, 2007.

\bibitem[Pol99]{polo}
Patrick Polo.
\newblock Construction of arbitrary {K}azhdan-{L}usztig polynomials in
  symmetric groups.
\newblock {\em Represent. Theory}, 3:90--104 (electronic), 1999.

\bibitem[RW10]{rietsch-williams}
Konstanze Rietsch and Lauren Williams.
\newblock Discrete {M}orse theory for totally non-negative flag varieties.
\newblock {\em Adv. Math.}, 223(6):1855--1884, 2010.

\bibitem[Ste96]{s1}
John~R. Stembridge.
\newblock On the fully commutative elements of {C}oxeter groups.
\newblock {\em J. Algebraic Combin.}, 5(4):353--385, 1996.

\bibitem[Ste98]{s2}
John~R. Stembridge.
\newblock The enumeration of fully commutative elements of {C}oxeter groups.
\newblock {\em J. Algebraic Combin.}, 7(3):291--320, 1998.

\bibitem[SV94]{SanVan}
Parameswaran Sankaran and P.~Vanchinathan.
\newblock Small resolutions of {S}chubert varieties in symplectic and
  orthogonal {G}rassmannians.
\newblock {\em Publ. Res. Inst. Math. Sci.}, 30(3):443--458, 1994.

\bibitem[SZJ10]{SZJ10}
K.~Shigechi and P.~Zinn-Justin.
\newblock Path representation of maximal parabolic {K}azhdan-{L}usztig
  polynomials.
\newblock {\em {\tt arXiv:1001.1080v1 [math.CO]}}, 2010.

\bibitem[Tit69]{t}
Jacques Tits.
\newblock Le probl\`eme des mots dans les groupes de {C}oxeter.
\newblock In {\em Symposia Mathematica (INDAM, Rome, 1967/68), Vol. 1}, pages
  175--185. Academic Press, London, 1969.

\bibitem[Ver71]{verma}
Daya-Nand Verma.
\newblock M\"obius inversion for the {B}ruhat ordering on a {W}eyl group.
\newblock {\em Ann. Sci. \'Ecole Norm. Sup. (4)}, 4:393--398, 1971.

\bibitem[Vie89]{viennot}
G{\'e}rard~Xavier Viennot.
\newblock Heaps of pieces. {I}. {B}asic definitions and combinatorial lemmas.
\newblock In {\em Graph theory and its applications: East and West (Jinan,
  1986)}, volume 576 of {\em Ann. New York Acad. Sci.}, pages 542--570. New
  York Acad. Sci., New York, 1989.

\bibitem[Zel83]{zelevinsky}
A.~V. Zelevinski{\u\i}.
\newblock Small resolutions of singularities of {S}chubert varieties.
\newblock {\em Funktsional. Anal. i Prilozhen.}, 17(2):75--77, 1983.

\end{thebibliography}

\end{document}